\documentclass[a4paper,11pt]{amsart}
\usepackage[margin=1.2in]{geometry}
\pdfoutput=1

\usepackage{amsmath,amssymb,amscd,color,xcolor,mathrsfs}
\usepackage{tikz}
\usepackage{tikz-cd}
\usepackage{float}
\usepackage{comment}
\definecolor{green}{RGB}{0,144,0}
\definecolor{bluegreen}{RGB}{17,100,180}
\usepackage[linkcolor = bluegreen, citecolor = bluegreen, colorlinks = True]{hyperref}
\numberwithin{equation}{section}
\numberwithin{figure}{section}
\numberwithin{table}{section}
\usepackage[all]{hypcap}
\usepackage{mathtools}

\usepackage{pythonhighlight}

\newtheorem{theorem}{Theorem}[section]
\newtheorem*{theorem*}{Theorem}
\newtheorem{conjecture}[theorem]{Conjecture}
\newtheorem{lemma}[theorem]{Lemma}
\newtheorem{corollary}[theorem]{Corollary}
\newtheorem{proposition}[theorem]{Proposition}

\newtheorem*{conjwild}{Conjecture~\ref*{conj:wild}}
\newtheorem*{conjDelLel}{Conjecture~\ref*{conj:Del-Lel}}
\newtheorem*{conjPar}{Conjecture~\ref*{conj:parity}}
\newtheorem*{theorem-e3}{Theorem~\ref*{thm:e3}}

\newtheorem{remark}[theorem]{Remark}

\newcommand{\teichmuller}{Teichm{\"u}ller }
\newcommand{\calH}{\mathcal{H}}
\newcommand{\odd}{\mathcal{H}^{odd}}

\newcommand{\hyp}{\mathcal{H}^{hyp}}

\newcommand{\C}{\mathbb{C}}
\newcommand{\Z}{\mathbb{Z}}
\newcommand{\R}{\mathbb{R}}
\newcommand{\Q}{\mathbb{Q}}
\newcommand{\N}{\mathbb{N}}
\newcommand{\cuspwidth}{{\rm{lcm}}\left(\frac{w_{1}}{\gcd(w_{1},h_{1})},\frac{w_{2}}{\gcd(w_{2},h_{2})}\right)}
\newcommand{\orb}{\mathcal{G}}
\DeclareMathOperator{\SL}{{\rm{SL}}}
\DeclareMathOperator{\SO}{{\rm{SO}}}

\DeclareMathOperator{\GL}{{\rm{GL}}}
\DeclareMathOperator{\PSL}{{\rm{PSL}}}
\DeclareMathOperator{\PSO}{{\rm{PSO}}}

\DeclareMathOperator{\Sym}{{\rm{Sym}}}
\DeclareMathOperator{\Alt}{{\rm{Alt}}}

\DeclareMathOperator{\Mod}{{\rm{Mod}}}

\DeclareMathOperator{\End}{{\rm{End}}}
\DeclareMathOperator{\Aut}{{\rm{Aut}}}
\DeclareMathOperator{\Out}{{\rm{Out}}}
\DeclareMathOperator{\Jac}{{\rm{Jac}}}

\author{Luke Jeffreys}
\address{School of Mathematics, University of Bristol, Fry Building, Woodland Road, Bristol BS8 1UG, UK}
\curraddr{}
\email{luke.jeffreys@bristol.ac.uk}
\thanks{The first author is a Leverhulme Early Career Fellow (ECF-2023-553) and so thanks the Leverhulme Trust for their support.}

\author{Carlos Matheus}
\address{Centre de Math{\'e}matiques Laurent Schwartz, {\'E}cole Polytechnique, 91128 Palaiseau Cedex, France}
\curraddr{}
\email{carlos.matheus@math.cnrs.fr}

\title[Euler characteristics of $\SL(2,\Z)$-orbit graphs of origamis]{Euler characteristics of $\boldsymbol{\SL(2,\Z)}$-orbit graphs of origamis}

\subjclass[2020]{Primary: 32G15, 30F30, 30F60. Secondary: 05C10.}
\keywords{origamis, orbit graphs, graph genus}

\begin{document}

\begin{abstract}
    The $\SL(2,\Z)$-orbits of primitive $n$-squared origamis can be represented by finite four-regular graphs. It is a conjecture of McMullen that the family of orbit graphs of such origamis in the stratum $\calH(2)$ is an expander family. In this work, we provide indirect evidence for this conjecture by proving that the absolute values of the Euler characteristics of the graphs in this family go to infinity with the number of squares $n$. This generalises previous work of the authors, in which we established eventual non-planarity for this family, and provides the strongest indirect evidence to date for McMullen's conjecture.

    We also prove that the same phenomenon holds for the $\SL(2,\Z)$-orbits of primitive origamis in the Prym loci of $\calH(4)$ and $\calH(6)$ that have been classified by Lanneau--Nguyen.
    
    Assuming conjectures of Zmiaikou and Delecroix--Leli\`evre concerning $\SL(2,\Z)$-orbits in $\calH(1,1)$ and $\calH(4)$, respectively, we establish the same result for all origamis in $\calH(1,1)$ and for two families of non-Prym origamis in $\calH(4)$.

    Finally, assuming a stronger conjecture concerning orbit growth in low-complexity strata, we prove an analogous result in greater generality. That is, we establish that for any family of $\SL(2,\Z)$-orbit graphs of primitive $n$-squared origamis in a stratum of translation surfaces with one or two conical singularities, the absolute values of the Euler characteristics of the graphs typically go to infinity with the number of squares in the sense that this holds along a density one subsequence.

    By relating the genus of the graphs constructed with elliptic generators to the genus of the associated arithmetic \teichmuller curves, we are also able to recover results of Mukamel and extend results of Torres-Teigel--Zachhuber establishing that the genera of these \teichmuller curves in $\calH(2)$ and the Prym loci of $\calH(4)$ and $\calH(6)$ also go to infinity.

    The proofs rely on counts of integral points on algebraic hypersurfaces using methods of Bombieri--Pila and Browning--Gorodnik, on counts of orbifold points on Teichm\"uller curves (via CM points on Hilbert modular surfaces in some cases), and on counts of pseudo-Anosov diffeomorphisms with bounded dilatation.
\end{abstract}

\maketitle
\tableofcontents

%%%%%%%%%%%%%%%%%%%%%%%%%%%%%%%%%%%%%%%%%
%%%%%%%%%%%%%%%%%%%%%%%%%%%%%%%%%%%%%%%%%

\section{Introduction}

A \textit{square-tiled surface} is an orientable connected surface realised as a finite cover of the unit square torus $\mathbb{T}^2:=\mathbb{C}/(\mathbb{Z}\oplus i\mathbb{Z})$ possibly branched only at $0\in\mathbb{T}^2$. In the literature, square-tiled surfaces are also called \textit{origamis} and we will use this terminology in the sequel. The covering map allows us to pullback the holomorphic one-form $dz$ from $\mathbb{T}^{2}$ to a non-zero holomorphic one-form on the origami. As such, an origami is a special case of a \textit{translation surface} --- a Riemann surface $M$ paired with a non-zero holomorphic one-form $\omega\in\Omega^{1}(M)$.

The moduli space of translation surfaces admits a natural action by the matrix group $\SL(2,\R)$, and origamis have played an important role in understanding this action. For example, since origamis correspond to integral points of such moduli spaces, one can compute Masur--Veech volumes of these spaces by counting origamis (cf. \cite{Zo} and \cite{EsOk}). Moreover, their $\SL(2,\mathbb{R})$-orbits are closed subvarieties of the moduli spaces known as \emph{arithmetic Teichm\"uller curves}. We refer the reader to the recent book \cite{AtMa} and survey \cite{Fil} for further explanations about moduli spaces of translation surfaces and their applications to areas of mathematics including, for example, dynamical systems and algebraic geometry.

It is possible to restrict the above action of $\SL(2,\R)$ to an action of $\SL(2,\Z)$ on origamis. These $\SL(2,\mathbb{Z})$-orbits can be turned into Schreier graphs sitting on the corresponding arithmetic Teichm\"uller curves. That is, one turns an $\SL(2,\Z)$-orbit into a combinatorial graph by setting the vertices to be the origamis in the orbit with the edges corresponding to the action of a generating set $S$ for $\SL(2,\Z)$ --- this is isomorphic to the coset Schreier graph $\text{Sch}(\SL(2,\Z), H ,S)$ with $H$ being the stabiliser of an origami in the orbit and $S$ as before. In the context of the moduli space $\calH(2)$ of genus two translation surfaces whose one-forms have a single zero of order two --- one of only three settings where the $\SL(2,\Z)$-orbits of primitive origamis have been classified (see Hubert--Leli\`evre~\cite{HL} and McMullen~\cite{McM}) --- McMullen gave the following conjecture.

\begin{conjecture}
    Let $(\mathcal{G}_{n})_{n}$ be the family of $\SL(2,\Z)$-orbit graphs of primitive $n$-squared origamis in $\calH(2)$. Then $(\mathcal{G}_{n})_{n}$ is a family of expander graphs.
\end{conjecture}

There are two natural generating sets to consider for these Schreier graphs: the \textit{parabolic generators} $T:=\begin{bsmallmatrix}
    1 & 1 \\
    0 & 1
\end{bsmallmatrix}$ and $S:=\begin{bsmallmatrix}
    1 & 0 \\
    1 & 1
\end{bsmallmatrix}$, and the \textit{elliptic generators} $R:=\begin{bsmallmatrix}
    0 & -1 \\
    1 & 0
\end{bsmallmatrix}$ and $U:=\begin{bsmallmatrix}
    0 & 1 \\
    -1 & 1
\end{bsmallmatrix}$.

In previous work~\cite{JM}, the authors established the first piece of indirect evidence for this conjecture by demonstrating that the family of graphs with the parabolic generators is eventually non-planar. That this gives indirect evidence for expansion follows from the Lipton--Tarjan Separator Theorem~\cite{LT} that asserts that a family of planar graphs cannot be an expander family. Extending this result, the Gilbert--Hutchinson--Tarjan Separator Theorem~\cite{GHT} asserts that a family of graphs with bounded genus cannot be an expander family either. In other words, the absolute values of the Euler characteristics of a family of expander graphs go to infinity. In this work, we establish this behaviour for the family of orbit graphs in $\calH(2)$ giving the strongest indirect evidence so far for McMullen's conjecture. We also establish the same phenomena for the orbit graphs of primitive origamis in the Prym loci of $\calH(4)$ and $\calH(6)$ --- the only places outside of $\calH(2)$ where the $\SL(2,\Z)$-orbits have been classified (see Lanneau--Nguyen~\cite{LN14,LN20}).

\begin{theorem}\label{thm:main}
    Let $(\mathcal{G}_{n})_{n}$ be a family of $\SL(2,\Z)$-orbit graphs of primitive $n$-squared origamis in $\calH(2)$ or in the Prym loci of $\calH(4)$ or $\calH(6)$ with either the parabolic or elliptic generators. Then $|\chi(\mathcal{G}_{n})|\to\infty$ as $n\to\infty$.
\end{theorem}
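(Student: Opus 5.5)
We would compute $\chi(\mathcal{G}_n)$ directly from the structure of the Schreier graph and show that it is dominated by a quantity that grows with $n$. Since $\mathcal{G}_n$ is four-regular on $V_n$ vertices (counting loops and multi-edges), we have $\chi(\mathcal{G}_n)=V_n-E_n=V_n-2V_n=-V_n$ as an abstract graph. However, the quantity relevant to the separator heuristic is the Euler characteristic of the underlying simple graph, or equivalently its genus. We therefore first discard loops and identify multi-edges, and write $\chi=V-E'$, where $E'$ is the number of distinct edges between distinct vertices. The task then reduces to showing that the number of \emph{degenerate} edges (loops and repeated edges) is $V_n-o(V_n)$ short of $2V_n$ in a controlled way. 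Concretely, we aim for
\[
E'-V \;\ge\; V_n-\#\{\text{fixed points and short cycles of the generators}\}\to\infty.
\]

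For the \textbf{elliptic generators}, loops and doubled edges arise exactly from fixed points of $R$ (order $2$ in $\PSL$) and of $U$ (order $3$). These correspond to orbifold points of order $2$ and $3$ on the arithmetic Teichm\"uller curve. Via Riemann--Hurwitz for $\mathbb{H}/\Gamma\to\mathbb{H}/\PSL(2,\Z)$, the genus of the Teichm\"uller curve satisfies
\[
2g-2=\frac{V_n}{6}-\frac{e_2}{2}-\frac{2e_3}{3}-c,
\]
where $c$ is the number of cusps. The graph genus is then bounded below in terms of $V_n$, $e_2$, $e_3$ and $c$. For the inputs we use the following:
\begin{itemize}
\item $V_n$ grows like $n^{3}$ in $\calH(2)$ (Hubert--Leli\`evre, McMullen) and polynomially in the Prym loci (Lanneau--Nguyen).
\item $e_2$ and $e_3$ are $O(n^{1+\epsilon})$, by counting orbifold points. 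In $\calH(2)$ this follows from Mukamel; in the Prym cases we would count CM points on Hilbert modular surfaces, with class-number bounds.
\item The cusp count $c$ is at most the number of cylinder diagrams with integer parameters. This is $O(n^{2+\epsilon})$, since it amounts to counting integer points on the hypersurface $\sum w_ih_i=n$ subject to primitivity.
\end{itemize}
Hence $V_n$ dominates and $|\chi|\to\infty$.

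For the \textbf{parabolic generators}, loops and multi-edges come from short $T$- or $S$-cycles, i.e.\ origamis whose horizontal or vertical direction has small cusp width, at most $2$. We would bound the number of such origamis by counting integral solutions of the cylinder-parameter equations with bounded twist period. This is where a Bombieri--Pila or Browning--Gorodnik type count of integral points on the relevant hypersurfaces gives $O(n^{2+\epsilon})$, which is again $o(V_n)$.

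The main obstacle is the Prym loci. There we need uniform upper bounds on orbifold points and on small-width cusps, together with lower bounds on orbit sizes for each of the Lanneau--Nguyen orbits, not just for their union. We would first try the $(w,h,t)$ cylinder-diagram parametrisations from Lanneau--Nguyen, combined with the Browning--Gorodnik counts.
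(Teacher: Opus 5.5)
There is a genuine gap, and it starts with the quantity you set out to bound.

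\textbf{Parabolic generators.} In the theorem, $\chi(\mathcal{G}_n)$ is the Euler characteristic $V-E+F$ of a minimal-genus cellular embedding, i.e.\ $2-2g(\mathcal{G}_n)$. It is not $V-E'$ for the simplified graph, which only measures cycle rank. Deleting loops and parallel edges leaves the genus unchanged, and $E'-V\to\infty$ says nothing about it. For example, the $m\times m$ toroidal grid ($m\ge 3$) is a simple $4$-regular graph with $E'-V=m^2$, yet it quadrangulates the torus, so $\chi=0$. The same example shows that controlling loops and multi-edges (faces of length at most $2$) can never suffice.

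The paper's argument is that in a minimal embedding $2E=\sum_i i f_i\ge 5F-\sum_{i\le4}(5-i)f_i$. With $E=2V$ this gives $-5\chi\ge V-\sum_{i\le 4}(5-i)f_i$, so one needs the number of faces of length at most \emph{four} to be $o(V)$. Such faces come from words of length at most four in $T^{\pm1},S^{\pm1}$ lying in a Veech group. These include:
\begin{itemize}
\item parabolic words beyond the ones you treat: you stop at cusp width $2$, but $T^3,T^4,S^3,S^4$ and $ST^{-2}S$ also occur;
\item hyperbolic words such as $ST$, $ST^2$, $S^2T^2$, $(TS)^{-1}ST$;
\item elliptic words such as $S^{-1}T$, $R=T^{-1}ST^{-1}$, $ST^{-1}S$.
\end{itemize}
Note that $ST$ and $S^{-1}T$ already produce multi-edges, contrary to your claim that these come only from short $T$- or $S$-cycles.

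You have no mechanism for the hyperbolic words. The missing idea is that a fixed hyperbolic matrix in a Veech group is the derivative of a pseudo-Anosov map with bounded stretch factor on a fixed topological surface. By Arnoux--Yoccoz and Ivanov there are only finitely many conjugacy classes of these, so such faces disappear once $n$ is large (Corollary~\ref{cor:hyperbolic}). The elliptic words must be charged to orbifold points. In the Prym locus of $\calH(4)$ this requires extending the Torres-Teigell--Zachhuber count to the square discriminants that origamis actually have.

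\textbf{Elliptic generators.} Riemann--Hurwitz computes $g(\mathcal{C}_n)$, but the sentence ``the graph genus is then bounded below in terms of $V_n,e_2,e_3,c$'' is precisely what needs proof. The natural embedding of the Schreier graph in the compactified curve has as faces the $R$-loops and $R^2$-digons, the $U$-loops and $U^3$-triangles, and one $(RU)^k$ face per cusp. It only yields $g(\mathcal{G}_n)\le g(\mathcal{C}_n)$; a lower bound has to control every embedding.

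Since about $V/2$ digons and $V/3$ triangles are unavoidable, the threshold-$5$ inequality above is useless here. The paper instead uses $-13\chi\ge 9V-\sum_{i\le 12}(13-i)f_i$ with $f_2\le V/2+o(V)$ and $f_3\le V/3+o(V)$. All other $f_i$ with $i\le 12$ are $o(V)$, since they correspond to orbifold points and cusps of width at most $6$. This gives $-13\chi\ge V/6-o(V)$.

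Your estimates for $e_2$, $e_3$ and $c$ are the right inputs. Without such an inequality, or a proof that the natural embedding has minimal genus, they show $g(\mathcal{C}_n)\to\infty$, not $g(\mathcal{G}_n)\to\infty$.
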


We prove this result for the parabolic generators by establishing the following result concerning faces of length at most four in the orbit graphs.

\begin{theorem}\label{thm:submain-1}
    Let $(\mathcal{G}_{n})_{n}$ be a family of $\SL(2,\Z)$-orbit graphs of primitive $n$-squared origamis in $\calH(2)$ or in the Prym loci of $\calH(4)$ or $\calH(6)$ with the parabolic generators. The number of faces of length at most four in $\mathcal{G}_{n}$ is $o(|\mathcal{G}_{n}|)$.
\end{theorem}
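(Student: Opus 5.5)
The plan is to translate "faces of length at most four" into algebraic conditions on an origami, and then show that only $o(|\mathcal{G}_n|)$ vertices can satisfy them.

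\textbf{Step 1: reduce faces to short relations.} With the parabolic generators, a closed walk of length $\ell\le 4$ based at a vertex $O$ of $\mathcal{G}_n$ is a word $w$ of length $\ell$ in $T^{\pm1},S^{\pm1}$ with $w\in\mathrm{Stab}(O)$. There are finitely many such words, so they fall into a few types:
\begin{itemize}
\item \emph{pure powers} $T^k$ and $S^k$ with $|k|\le 4$. These give loops and short cycles along the horizontal and vertical cusp orbits. A cusp of width $k$ corresponds to a cylinder decomposition whose moduli have lcm-type width $k$ (the \verb|\cuspwidth| expression). The number of cusps of width at most $4$ is bounded by counting cylinder diagrams with bounded twists. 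In $\calH(2)$ there are only two diagrams, so this gives $O(n^{1+\epsilon})$-type counts, or even $O(d(n))$ per cusp-width.
\item \emph{mixed words}, such as $TS^{-1}T$, $TST^{-1}S^{-1}$, $T^2S^{\pm1}$, $TSTS$, and so on. Such a word is either elliptic, parabolic or hyperbolic. If it is elliptic (for example $TS^{-1}T=R$-like, or $(TS^{-1})^3$), then $O$ is fixed by an elliptic element, so it is an orbifold point of the Teichm\"uller curve. If it is hyperbolic with trace at most some absolute bound, then $O$ has an affine pseudo-Anosov with bounded dilatation.
\end{itemize}

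Since each face is counted at most $\ell$ times from its vertices, it suffices to bound the number of vertices $O$ in $\mathcal{G}_n$ stabilised by each fixed short word $w$.

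\textbf{Step 2: count the fixed points of each type.}
\begin{itemize}
\item \emph{Parabolic words.} $O$ has a cylinder decomposition in direction $v$ with all moduli constrained by $|k|\le4$. Writing heights, widths and twists as integers with total area $n$, the constraint forces solutions of a polynomial equation of bounded degree. The number of such origamis is $O(n^{1-\delta})$, by divisor bounds or by Bombieri--Pila / Browning--Gorodnik counts of integral points on the resulting hypersurface.
\item \emph{Elliptic words.} The number of orbifold points is bounded. In $\calH(2)$, Mukamel's count gives $O(1)$ or $O(\sqrt n)$ points of order two. In the Prym loci, orbifold points correspond to CM points on Hilbert modular surfaces, giving a class-number bound $O(n^{1/2+\epsilon})$.
\item \emph{Hyperbolic words of bounded trace.} The derivative of the pseudo-Anosov has trace $t$ with $|t|\le C$, so the dilatation is bounded. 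An origami with such an affine map is determined by an eigenform on a bounded set of data, and the count of translation surfaces admitting it with $n$ squares should be small. This requires an explicit parametrisation: the fixed-point condition $w\cdot O=O$ on the cylinder coordinates (in the $\calH(2)$ prototype coordinates of McMullen, and the Lanneau--Nguyen prototypes for Prym) becomes a system of polynomial equations, and Bombieri--Pila bounds its integral points in a box of size $n$ by $O(n^{1-\delta})$.
\end{itemize}

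\textbf{Step 3: compare with orbit sizes.} I then use that $|\mathcal{G}_n|\gg n^{2-\epsilon}$, in fact $\asymp n^2$ up to constants, which comes from the classification (Hubert--Leli\`evre, McMullen, Lanneau--Nguyen) and explicit orbit-size formulas. All the counts above are $O(n^{3/2+\epsilon})$, hence $o(|\mathcal{G}_n|)$.

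\textbf{Main obstacle.} The hardest step is the hyperbolic bounded-trace words and the mixed parabolic ones. Here one must show that the polynomial fixed-point equations define a genuinely lower-dimensional, non-degenerate (for instance irreducible, non-linear) hypersurface in the prototype parameter box, so that Bombieri--Pila saves a power. I would first try to handle each of the finitely many words explicitly in prototype coordinates. Where the equations are linear, I would fall back on the Browning--Gorodnik counts of integral points on quadrics.
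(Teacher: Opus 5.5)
There is a genuine gap in the hyperbolic case. Your overall architecture matches the paper's: reduce to the finitely many cyclically reduced words of length at most four, sort them into hyperbolic, parabolic and elliptic, bound the vertices fixed by each, and compare with $|\mathcal{G}_n|$.

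\textbf{The hyperbolic case.} You flag this as the main obstacle and leave it as a plan. You give no parametrisation in which $w\cdot O=O$ becomes a non-degenerate hypersurface, and nothing supports the claimed $O(n^{1-\delta})$. The missing idea is the finiteness theorem of Arnoux--Yoccoz and Ivanov: $\Mod(S_{g,s})$ has only finitely many conjugacy classes of pseudo-Anosovs with stretch factor at most a given $D$. The argument then runs as follows.
\begin{enumerate}
\item A hyperbolic $M$ in the Veech group of $O$ is the derivative of an affine pseudo-Anosov whose stretch factor is the spectral radius of $M$.
\item The conjugacy class of that map determines the flat structure up to $\GL^+(2,\R)$, so only finitely many $\GL^+(2,\R)$-orbits contain such origamis.
\item Primitive origamis in one such orbit all have the same number of squares.
\end{enumerate}
Hence for each fixed hyperbolic word there are \emph{no} fixed vertices once $n$ is large (Corollary~\ref{cor:hyperbolic}), and no point counting is needed.

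The ``mixed parabolic'' case is not an obstacle either. Up to cyclic permutation and inversion the only such word is $ST^{-2}S=-T^{2}$. Since $-I$ lies in every Veech group in these loci, a vertex is fixed by it if and only if it is fixed by $T^{2}$.

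\textbf{Quantitative errors.} Several of your estimates are false, though they become harmless once the orbit size is corrected.
\begin{itemize}
\item The orbits have size $\Theta(n^{3})$ (e.g.\ $\frac{3}{16}(n-1)n^{2}\prod_{p\mid n}(1-p^{-2})$ for the A-orbit), not $\asymp n^{2}$. This matters: the paper's parabolic bound $O(n^{11/4+\epsilon})$ beats $n^{3}$ but not $n^{2}$. That bound comes from Bombieri--Pila on the ellipses $xw_1^2+yw_2^2=n$, multiplied by the $w_1w_2$ twist choices.
\item The number of $T$-fixed origamis is not $O(n^{1-\delta})$, let alone $O(d(n))$. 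A single solution of $w_1^2+w_2^2=n$ with $w_1\approx w_2$ already gives $w_1w_2\approx n/2$ fixed origamis through the twists.
\item Your elliptic counts are too optimistic. The relevant class numbers and sums-of-three-squares counts for discriminants of size $n^{2}$ grow roughly like $n$, not like $O(1)$, $O(\sqrt n)$ or $O(n^{1/2+\epsilon})$. Any $O(n^{3-\delta})$ bound suffices, and the paper simply uses Mukamel's $e_2(W_{n^2})\le n^2/2$.
\item For the Prym locus of $\calH(4)$ you cannot just quote Torres-Teigell--Zachhuber, whose counts are proved only for non-square discriminants. Their order-three argument carries over to square $D$, but the order-two argument does not. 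The paper has to redo their Lemma~5.3 for $D=n^2$ and replace $H_2(D)$ by $H_2^{sq}(n^2)$ before it gets the $O(n^2)$ bound.
\end{itemize}
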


\begin{proof}[Proof of the first part of Theorem~\ref{thm:main} given Theorem~\ref{thm:submain-1}]
    Let $\chi = \chi(\mathcal{G}_{n})$ and let $V$ be the number of vertices in the graph, $E$ the number of edges, and $F$ the number of faces for the minimal embedding of $\mathcal{G}_{n}$. Then we have, by definition,
    \begin{equation}\label{eqn:F-for-chi}
      V-E+F = \chi \Rightarrow F = E-V+\chi,
    \end{equation}
    and, since each vertex has degree $4$, the degree sum formula yields 
    \begin{equation}\label{eqn:E-for-V}
        2E = 4V \Rightarrow 3E = 6V.
    \end{equation}
    
    Now, let $f_{i}$ denote the number of faces of length $i$. Hence, $F = \sum_{i}f_{i}$. Observe that, since every edge lies in two faces,
    \begin{equation}
        \begin{split}
        2E = \sum_{i} i\cdot f_{i} = \sum_{i\geq 5}i\cdot f_{i} + \sum_{i=1}^{4} i \cdot f_{i} &\geq \sum_{i\geq 5}5\cdot f_{i} + \sum_{i=1}^{4} (5-(5-i)) \cdot f_{i} \\
        &= 5\sum_{i}f_{i} - \sum_{i = 1}^{4}(5-i)\cdot f_{i} \\
        &= 5F - \sum_{i = 1}^{4}(5-i)\cdot f_{i}. 
        \end{split} \label{eqn:main-lower-bound}
    \end{equation}
    We now substitute ~\eqref{eqn:F-for-chi} into ~\eqref{eqn:main-lower-bound} to get
    \[
        2E\geq 5E - 5V + 5\chi - \sum_{i = 1}^{4}(5-i)\cdot f_{i} \Rightarrow -5\chi \geq 3E - 5V - \sum_{i = 1}^{4}(5-i)\cdot f_{i}.
    \]
    From this, on substituting ~\eqref{eqn:E-for-V}, we get
    \[-5\chi \geq V - \sum_{i = 1}^{4}(5-i)\cdot f_{i}.\]
    Finally, by Theorem~\ref{thm:submain-1}, we have $\sum_{i=1}^{4}f_{i} = o(V)$, and so we see that
    \[-5\chi\geq V - o(V),\]
    and so $-\chi$ goes to infinity with $V$ (which goes to infinity with $n$).
\end{proof}

We prove Theorem~\ref{thm:main} for the elliptic generators by establishing that the genus of the orbit graph agrees with the genus of the associated \teichmuller curve. The latter is known to go to infinity in genus two by results of Mukamel~\cite{Muk}. This result for $\calH(2)$ confirms~\cite[Conjecture 1.2]{JM} in the previous work of the authors. For the Prym loci in $\calH(4)$ and $\calH(6)$, we use combinatorial methods to establish that the genus goes to infinity.

%%%%%%%%%%%%%%%%%%%%%%%%%%%%%%%%%%%%%%%%%

\subsection{Conditional extensions to $\calH(1,1)$ and $\calH(4)$}

As discussed above, the $\SL(2,\Z)$-orbits of primitive origamis have only been classified in $\calH(2)$ and in the Prym loci in $\calH(4)$ and $\calH(6)$. In $\calH(1,1)$, Zmiaikou made the following conjecture supported by computational evidence. 

\begin{conjecture}[{\cite[Conjecture 1]{Zm}}]\label{conj:zmiaikou}
    For any $n\geq7$, there are exactly two orbits of primitive $n$-squared origamis in the stratum $\calH(1,1)$. Moreover, when $n$ is odd, the positive integers
    $$\mathcal{A}_n(1,1) = \frac{1}{24}n^2(n-3)(n-5)\prod_{\substack{p|n\\p\text{ prime}}}\left(1-\frac{1}{p^2}\right),$$
    and
    $$\mathcal{S}_n(1,1) = \frac{1}{8}n^2(n-1)(n-3)\prod_{\substack{p|n\\p\text{ prime}}}\left(1-\frac{1}{p^2}\right)$$
    are the cardinals of the orbits of $n$-squared origamis with monodromy group $\Alt(n)$ and $\Sym(n)$, respectively.
    When $n$ is even, the positive integers
    $$\mathcal{A}_n(1,1) = \frac{1}{24}n^3(n-2)\prod_{\substack{p|n\\p\text{ prime}}}\left(1-\frac{1}{p^2}\right),$$
    and
    $$\mathcal{S}_n(1,1) = \frac{1}{8}n^2(n-2)(n-4)\prod_{\substack{p|n\\p\text{ prime}}}\left(1-\frac{1}{p^2}\right)$$
    are the cardinals of the orbits of $n$-squared origamis with monodromy group $\Alt(n)$ and $\Sym(n)$, respectively.
\end{conjecture}

In particular, this would imply that the orbits are of size $\Theta(n^4)$. We direct the reader to Section~\ref{sec:prelims} for the definition of the monodromy group of an origami. Assuming this conjecture, we prove an analogue of Theorem~\ref{thm:main} in this setting.

\begin{theorem}\label{thm:submain-2}
    Assuming Conjecture~\ref{conj:zmiaikou}, let $(\mathcal{G}_{n})_{n}$ be a family of $\SL(2,\Z)$-orbit graphs of primitive $n$-squared origamis in $\mathcal{H}(1,1)$ with the parabolic generators. Then the number of faces of length at most four in $\mathcal{G}_{n}$ is $o(|\mathcal{G}_{n}|)$ and, \emph{a fortiori}, one has $|\chi(\mathcal{G}_{n})|\to\infty$ as $n\to\infty$. 
\end{theorem}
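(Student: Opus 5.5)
Since the "a fortiori" part follows verbatim from the Euler characteristic computation given above for Theorem~\ref{thm:main} (it only uses four-regularity and $\sum_{i\le 4} f_i = o(V)$), the whole task is to show that faces of length at most four are $o(n^4)$. Conjecture~\ref{conj:zmiaikou} gives $|\mathcal{G}_n| = \Theta(n^4)$ for either orbit: the product $\prod_{p\mid n}(1-p^{-2})$ is bounded below by $6/\pi^2$. So it suffices to show that the total number of primitive $n$-squared origamis in $\calH(1,1)$ lying on a short closed walk is $o(n^4)$.

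A face of length $\le 4$ in the minimal embedding is a closed walk of length $\le 4$ in the Schreier graph with generators $T^{\pm1}, S^{\pm1}$. Its vertices are therefore fixed by some nontrivial reduced word $W$ of length $\le 4$ in $T^{\pm 1}, S^{\pm 1}$. I treat non-reduced walks, such as backtracking loops and loops from a single generator, separately: a vertex fixed by $T$ or $S$ gives a face of length one or two. There are finitely many such words up to conjugation and inversion, so the count reduces to bounding, for each $W$, the number of origamis $O$ in the orbit with $W\cdot O = O$, i.e.\ $W$ in the Veech group of $O$. I split the words by the type of $W$ in $\SL(2,\Z)$.

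\emph{Parabolic words} ($W = T^k$ or $S^k$, or conjugates such as $TST^{-1}$ type words whose trace is $\pm2$) fix origamis for which the relevant twist $T^k$ stabilises the horizontal (or other rational direction) cylinder decomposition. In $\calH(1,1)$ there are one, two or three horizontal cylinders with heights $h_i$, widths $w_i$ and twists $t_i$. For a single small $k$, stabilisation forces each $w_i \mid k h_i$. That makes the widths bounded by a constant times the heights, and since $\sum h_i w_i = n$, parameter counting leaves $O(n^{3}\cdot n^{\epsilon})$ such origamis.

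\emph{Elliptic words} (trace $0,\pm1$): a vertex fixed by a finite-order element corresponds to an orbifold point of the Teichmüller curve. I bound these by counting origamis with an automorphism-like symmetry, which is a lower-dimensional condition giving $O(n^{3+\epsilon})$.

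\emph{Hyperbolic words} (trace $\ge 3$ in absolute value, e.g.\ $TS$, $T^2S^{-1}$, $TSTS^{-1}$): these are the main obstacle. Here $O$ admits an affine pseudo-Anosov with derivative $W$ and bounded dilatation. I would write $O$ in cylinder coordinates, $(h_i, w_i, t_i)$ with at most $6$ parameters and one linear constraint $\sum h_iw_i = n$. The condition that $W$ maps the horizontal decomposition to the decomposition in direction $W(1,0)$ with compatible combinatorics becomes a system of polynomial equations in these integers. The aim is to show this system cuts out a hypersurface (or worse) in the $\approx 5$-dimensional region of height $\le n$, and then apply Bombieri--Pila/Browning--Gorodnik counts of integral points to get $O(n^{4-\delta})$. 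The delicate step is verifying, for each of the finitely many cylinder diagrams of $\calH(1,1)$ and each word, that the resulting polynomial is irreducible of degree $\ge 2$, or at least not identically satisfied. Without that, Browning--Gorodnik does not save a power. Alternatively, one could bound such $O$ by counts of pseudo-Anosovs of bounded dilatation, which are $o(n^4)$ in lattice-point terms.

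Summing over the finitely many words gives $\sum_{i\le4} f_i = O(n^{4-\delta}) = o(|\mathcal{G}_n|)$, completing the proof.
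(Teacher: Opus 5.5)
Your proposal has a genuine gap in the elliptic faces. The sentence ``counting origamis with an automorphism-like symmetry, which is a lower-dimensional condition giving $O(n^{3+\epsilon})$'' is a heuristic, not an argument, and this is where the real work of the proof lies. Consider an origami in $\calH(1,1)$ fixed by an order-three element. After a $\GL(2,\R)$-change of coordinates making $U$ a rotation, its underlying curve lies in the one-dimensional family $\mathcal{M}_2(D_{12})$, which the paper identifies with $Y_0(3)$ minus its elliptic point, and the origamis among these are CM points. Counting them for a single $n$ is a class-number problem, not a count of lattice points in a box, and no dimension count gives it for free. Compare Subsection~\ref{subsec:gen-ells}: the only way the paper exploits ``the orbifold locus has measure zero'' in general is on average, via Eskin--Okounkov, and that gives the conclusion only along a density-one subsequence, which is weaker than the present statement. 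What the proof actually needs is two things:
\begin{enumerate}
\item There are no order-two orbifold points in $\calH(1,1)$. An affine automorphism with derivative $R$ would be an order-four automorphism whose square is the hyperelliptic involution. That square fixes each of the two zeros, while the hyperelliptic involution swaps them.
\item There is an explicit bound on order-three orbifold points. The paper supplies this in Theorem~\ref{thm:e3} via CM points on Hilbert modular surfaces: on each component, $e_3(W_{n^2}[1])$ is either $0$ or $\tfrac12\tilde h(-3n^2)$, hence $O(n^2)=o(n^4)$. For the $\Sym(n)$-orbit, the block system of Proposition~\ref{prop:symblocksys} already rules these words out.
\end{enumerate}

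Your hyperbolic step is also incomplete as written. Here the fix is easy and is what the paper does: drop the hypersurface approach, whose non-degeneracy you leave unverified, and use Arnoux--Yoccoz/Ivanov properly. There are only finitely many conjugacy classes of pseudo-Anosovs in $\Mod(S_{2,2})$ whose dilatation is at most the largest spectral radius among your finitely many words. An affine pseudo-Anosov with derivative $M$ determines the flat structure up to the centraliser of $M$ in $\GL^+(2,\R)$. For a primitive, hence reduced, origami this pins down the number of squares $n$. So for each hyperbolic $M$ only finitely many $n$ occur (Corollary~\ref{cor:hyperbolic}), and hyperbolic faces of length at most four simply vanish for large $n$. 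They are the easy case, not the main obstacle, and ``$o(n^4)$ in lattice-point terms'' misstates what the finiteness theorem gives.

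Your parabolic step is fine. Use that $-I$ lies in every Veech group in genus two to reduce $ST^{-2}S=-T^2$ to $T^2$. The step can even be made rigorous more elementarily than in the paper. For fixed widths, the condition $w_i\mid kh_i$ is linear in the heights. This leaves $O(n/\max_i w_i^2)$ height vectors for two cylinders and $O(n^2/(w_2^2\max(w_1,w_3)^2))$ for three. Summing against the twist and saddle-connection choices gives $O(n^{5/2})$, better than the $O(n^{47/12+\epsilon})$ the paper obtains from Bombieri--Pila and Browning--Gorodnik.
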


In general, arithmetic \teichmuller curves in $\calH(1,1)$ are associated to subloci $W_{d^2}[n]$ of the moduli space $\mathcal{M}_{2}$. The loci $W_{d^{2}}[1]$ are projections of the $\SL(2,\R)$-orbits of primitive origamis. See Section~\ref{sec:H(1,1)} for the general definition. For $n$ odd, $W_{d^2}[n]$ is already a disjoint union of $W_{d^2}^{\epsilon}[n]$ for $\epsilon\in\{0,1\}$ (see Section~\ref{sec:H(1,1)} for the definitions). In this setting, the following conjecture has been made (see, for example,~\cite[Conjecture 1.1]{Dur}).

\begin{conjecture}[Parity Conjecture]\label{conj:parity}
    Provided that $(d,n) \neq (2,1),(3,1),(4,1)$ or $(5,1)$, it holds that $W_{d^2}[n]$ is irreducible when $n$ is even, and consists of two irreducible components when $n$ is odd (exactly $W_{d^2}^{\epsilon}[n]$ for $\epsilon\in\{0,1\}$).
\end{conjecture}

Assuming this conjecture, we can also prove:

\begin{theorem}\label{thm:submain-3}
    Assuming Conjecture~\ref{conj:parity}, fix $n>1$ and let $(\mathcal{G}_{d})_{d}$ be a family of $\SL(2,\Z)$-orbit graphs of primitive $dn$-squared origamis in $\mathcal{H}(1,1)$ associated to the curves $W_{d^{2}}[n]$ and with the parabolic generators. Then the number of faces of length at most four in $\mathcal{G}_{d}$ is $o(|\mathcal{G}_{d}|)$ and, \emph{a fortiori}, one has $|\chi(\mathcal{G}_{d})|\to\infty$ as $d\to\infty$.
\end{theorem}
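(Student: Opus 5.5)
The plan is to reduce the statement to a counting problem, as in the proof of Theorem~\ref{thm:main} from Theorem~\ref{thm:submain-1}. Once the number of faces of length at most four is shown to be $o(|\mathcal{G}_d|)$, the Euler characteristic argument given there applies verbatim, since the graphs are again four-regular. So the work is to bound those short faces.

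\textbf{Step 1: short faces come from bounded-length relations.} A face of length at most four in the minimal embedding is a closed walk of length at most four in the Schreier graph. Such a walk corresponds to an origami $O$ in the orbit fixed by a word $w$ of length at most four in $T^{\pm1},S^{\pm1}$. Freely reduced words are finitely many, so it suffices to bound, for each fixed $w$, the number of $O$ in the orbit with $w\cdot O=O$. There are then two cases.
\begin{itemize}
\item \emph{Trivially reducing words.} Here $w$ is trivial in the free group, e.g.\ $TT^{-1}$. A face bounded by such a walk uses an edge twice. Faces of this kind can occur only at vertices where $T$ or $S$ acts with very short cycles, that is, where the horizontal or vertical cusp width is at most $4$.
\item \emph{Genuinely nontrivial words.} If $w$ is a nontrivial element of $\SL(2,\Z)$, it is parabolic, elliptic, or hyperbolic, up to conjugacy, with bounded entries.
\end{itemize}

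\textbf{Step 2: counting the fixed origamis.} For the parabolic case ($T^k$, $S^k$ with $k\le 4$, and their conjugates), being fixed means some cylinder direction has all cylinder moduli compatible with a twist of order at most $4$. I would use the cylinder-diagram coordinates for $\mathcal{H}(1,1)$, which have widths, heights and twists summing to $dn$ squares. In these coordinates, the fixed origamis lie on a sub-family with one fewer free parameter. That gives $O((dn)^{3+\epsilon})$ such origamis, against an orbit of size $\Theta_n(d^4)$ up to arithmetic factors. I take that orbit size from Conjecture~\ref{conj:parity}, which makes $W_{d^2}[n]$ a single component or two comparable components. Its degree over the modular curve is then computable from the total count of primitive $dn$-squared origamis in the relevant locus, via the Eskin--Masur--Schmoll type formula.

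For the elliptic words, fixed points are orbifold points of the \teichmuller curve, whose number is $o$ of the degree. For hyperbolic $w$ with bounded trace, $O$ has a pseudo-Anosov affine automorphism with derivative $w$. Fixing the eigendirection, this becomes a count of integral points on an algebraic hypersurface in the cylinder coordinates. I would bound that count by Bombieri--Pila or Browning--Gorodnik to get $O((dn)^{3-\delta})$.

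\textbf{Main obstacle.} The main obstacle is the hyperbolic case. There one must show the relevant hypersurface is irreducible of degree at least $2$, or at least not containing a linear subspace of full dimension, so that the dimension-growth bounds give a power saving over $d^4$. Uniformity in $d$ also requires that $n$ is fixed. I would first try writing the fixed-point condition as a quadratic relation in the widths and heights arising from the trace of $w$. I would then apply Browning--Gorodnik to that quadric, and handle degenerate (reducible) cases by direct parametrisation.
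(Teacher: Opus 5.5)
Your skeleton matches the paper's. You reduce to short faces via the Euler-characteristic count, split words into hyperbolic, parabolic and elliptic, and get $|\mathcal{G}_d|=\Theta(d^4)$ from the Parity Conjecture together with the Kappes--M\"oller/Duryev counts $t_{d,n,\epsilon}$. However, the elliptic case is a genuine gap.

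You assert that the number of orbifold points of the \teichmuller curve is $o$ of its degree, but nothing general gives this. For a subgroup of index $V$ in $\PSL(2,\Z)$, with $g$ the genus and $c$ the number of cusps, one has $V/6=2g-2+c+\frac{e_2}{2}+\frac{2e_3}{3}$. This only yields $e_2+e_3=O(V)$, and there are finite-index subgroups in which a positive proportion of cosets are elliptic fixed points. So this is exactly where information specific to $W_{d^2}[n]$ has to enter, and it is the part of the paper that requires real work:
\begin{itemize}
\item Order-two points are excluded because they would need an automorphism $\phi$ with $\phi^*\omega=\pm i\omega$. Its square is then the hyperelliptic involution, which swaps the two zeros, yet $\phi^2$ fixes each zero since $\phi$ permutes a two-element set.
\item Order-three points lie on the family $\mathcal{M}_2(D_{12})$ of genus-two curves with a $D_{12}$-action, which the paper identifies with $Y_0(3)$ minus its elliptic point. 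They are CM points, and counting them through class numbers gives Theorem~\ref{thm:e3}. Hence there are $O(d^2)=o(d^4)$ elliptic faces for fixed $n$; the block systems of Propositions~\ref{prop:all_diff} and~\ref{prop:l_1diff} already exclude them when $n$ is even or $\epsilon=1$.
\end{itemize}
Without an argument of this kind your proof does not go through.

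Conversely, the hyperbolic case, which you flag as the main obstacle and leave as an untested plan, is the easy one. Your proposed route is also doubtful. An affine pseudo-Anosov with derivative $w$ does not preserve the horizontal cylinder decomposition, and its eigendirections have irrational slope. So being fixed by $w$ is not a single quadric in the widths and heights of one diagram.

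The missing idea is Corollary~\ref{cor:hyperbolic}:
\begin{itemize}
\item Only finitely many hyperbolic words of length at most four occur, so the stretch factors are bounded.
\item By Arnoux--Yoccoz and Ivanov there are then finitely many conjugacy classes of such pseudo-Anosovs on the genus-two surface with two marked points.
\item A class determines the flat structure up to an element of the centraliser of $w$ in $\GL^{+}(2,\R)$. Requiring the relative periods to be $\Z^2$ forces that element into $\GL(2,\Z)$, so the number of squares is determined.
\end{itemize}
Hence these faces do not occur at all once $dn$ is large.

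Finally, ``one fewer free parameter'' is not an argument for the parabolic bound. The condition for $T^k$ is $w_i\mid kh_i$. You must count integer points on $xw_1^2+yw_2^2=dn$ and $xw_1^2+y(w_1+w_3)^2+zw_3^2=dn$, weighted by twists and saddle-connection lengths, as the paper does with Bombieri--Pila and Browning--Gorodnik. A uniform $(dn)^{\epsilon}$ bound per binary form, summed against these weights, does give your $O((dn)^{3+\epsilon})$. Note also that $ST^{-2}S=-T^2$ is not conjugate to a power of $T$; it is covered only because $-I$ lies in every Veech group here.
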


In proving these theorems, we extend work of Mukamel~\cite{Muk} counting orbifold points on \teichmuller curves in $\calH(2)$ by proving the following. Here, $\widetilde{h}$ is a reduced class number defined by
\[\tilde{h}(-D) = \frac{2h(-D)}{|\mathcal{O}_{-D}^{\times}|}.\]

\begin{theorem-e3}
    Let $e_{3}$ denote the number of order three orbifold points. If $n$ is even, then $e_{3}(W_{d^2}[n]) = 0$ for all $d\geq 2$. Otherwise, for odd $n$, if $d\equiv 0\!\!\mod 3$, we have
    \[e_{3}(W_{d^2}^{\epsilon}[n]) = \left\lbrace\begin{array}{cl}
        \frac{1}{2}(3\tilde{h}(-d^2/3) + \tilde{h}(-3d^2)), & \text{if $n = 3$ and $\epsilon = 0$} \\
        0, & \text{otherwise},
    \end{array}\right.\]
    if $d\equiv 1\!\!\mod 3$, we have
    \[e_{3}(W_{d^2}^{\epsilon}[n]) = \left\lbrace\begin{array}{cl}
        \frac{1}{2}\tilde{h}(-3d^2), & \text{if $n = 1$ and $\epsilon = 0$} \\
        0, & \text{otherwise},
    \end{array}\right.\]
    and, if $d\equiv 2\!\!\mod 3$, we have
    \[e_{3}(W_{d^2}^{\epsilon}[n]) = \left\lbrace\begin{array}{cl}
        \frac{1}{2}\tilde{h}(-3d^2), & \text{if $n = 3$ and $\epsilon = 0$} \\
        0, & \text{otherwise}.
    \end{array}\right.\]
\end{theorem-e3}

The $\SL(2,\Z)$-orbits of primitive origamis in $\calH(4)$ outside of the Prym locus are subject to the following conjecture of Delecroix--Leli\`evre (supported by substantial computational evidence). We quote it as it appears in~\cite{MMY}. See Section~\ref{sec:prelims} for the definition of the HLK-invariant.

\begin{conjecture}[{\cite[Conjecture 6.8]{MMY}}]\label{conj:Del-Lel}
    For $n > 8$, the number of $\SL(2,\Z)$-orbits of primitive $n$-squared origamis in $\calH(4)$ is as follows:
    \begin{itemize}
        \item there are precisely two such $\SL(2,\Z)$-orbits in $\odd(4)$ outside of the Prym locus, distinguished by their monodromy groups being $\Alt(n)$ or $\Sym(n)$;
        \item for odd $n$, there are precisely four such $\SL(2,\Z)$-orbits in $\hyp(4)$, distinguished by their HLK-invariant being $(4,[1,1,1]), (2,[3,1,1]), (0,[5,1,1])$ or $(0,[3,3,1])$;
        \item for even $n$, there are precisely three such $\SL(2,\Z)$-orbits in $\hyp(4)$, distinguished by their HLK-invariant being $(3,[2,2,0]), (1,[4,2,0])$ or $(1,[2,2,2])$.
    \end{itemize}
\end{conjecture}

Implicit in the conjecture (in the belief of the authors) is that each orbit has size $\Omega(n^{\dim_\C\calH(4) - 1}) = \Omega(n^5)$. Assuming this conjecture, we can prove the following.

\begin{theorem}\label{thm:submain-4}
    Assuming Conjecture~\ref{conj:Del-Lel}, let $(\mathcal{G}_{n})_{n}$ be a family of $\SL(2,\Z)$-orbit graphs of primitive $n$-squared origamis in $\hyp(4)$ with HLK-invariant $(1,[4,2,0])$ or in the non-Prym locus of $\odd(4)$ with symmetric monodromy group, using the parabolic generators in both cases. Then the number of faces of length at most four in $\mathcal{G}_{n}$ is $o(|\mathcal{G}_{n}|)$ and, \emph{a fortiori}, one has $|\chi(\mathcal{G}_{n})|\to\infty$ as $n\to\infty$.
\end{theorem}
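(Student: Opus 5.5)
Assuming Conjecture~\ref{conj:Del-Lel}, each of the two families in the statement is a single $\SL(2,\Z)$-orbit for every $n>8$. We will use the implicit lower bound $|\mathcal{G}_{n}| = \Omega(n^5)$; if needed, we instead get it by comparing with the count of all primitive $n$-squared origamis in the relevant component, which is $\asymp n^{4}\cdot n$, divided over boundedly many orbits. So it suffices to show that the number of faces of length at most four is $o(n^5)$. As in the proof of the first part of Theorem~\ref{thm:main}, the Euler characteristic statement then follows verbatim from the degree sum and face-counting inequality.

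\emph{Step 1: classify short closed walks.} A face of length $\ell\le 4$ in the orbit graph with generators $T,S$ (and their inverses) corresponds to a vertex $O$ fixed by a reduced word $w$ of length $\ell\le4$ in $T^{\pm1},S^{\pm1}$. We will list all such words up to cyclic rotation and inversion. These are powers $T^{k},S^{k}$ with $k\le4$, and mixed words such as $TS^{-1}T$-type products, $TSTS$, $TS^{-1}TS^{-1}$, $T^{2}S^{2}$, $TST^{-1}S^{-1}$, and so on. Each vertex lies on at most boundedly many faces, so the number of short faces is at most a constant times the number of origamis fixed by one of finitely many fixed matrices $A\in\SL(2,\Z)$, $A\neq I$.

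\emph{Step 2: count fixed points of each word.} For $A=T^{k}$, being fixed means that all horizontal cylinders have moduli compatible with $k\le 4$. This forces the widths of cylinders to divide $k\cdot$height, which leaves at most $O(n^{4})$ origamis out of the $\Theta(n^5)$-dimensional count. To see this, parametrise the origamis in $\calH(4)$ by cylinder diagrams with heights, widths and twists; there are $\le 3$ cylinders and $\sim 6$ parameters. The twist parameters become bounded by $4\times$height/width, so one free parameter drops out. For the mixed words, $A$ is either elliptic, parabolic or hyperbolic.
\begin{itemize}
\item If $A$ is elliptic, we use that an origami in $\calH(4)$ fixed by a finite-order element corresponds to an orbifold point of the Teichm\"uller curve. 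These occur only for origamis with extra automorphisms, whose number is $O(n^{4})$ or smaller; in fact it is bounded in terms of lattice points on a lower-dimensional locus.
\item If $A$ is parabolic, we conjugate to a power of $T$ and reduce to the previous case.
\item If $A$ is hyperbolic of bounded trace, the origami has a pseudo-Anosov affine map with fixed bounded dilatation. Its fixed-point condition expresses the period coordinates (relative to the eigen-directions of $A$) as solutions of a polynomial system. Concretely, the cylinder parameters must satisfy an algebraic equation in the $\asymp 6$ integer variables of height $\le n$ that is not identically satisfied.
\end{itemize}

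\emph{Step 3 (main obstacle): the hyperbolic words.} Here we must show that the integral points of size $\le n$ on the relevant hypersurface are $o(n^5)$. We plan to write, for each cylinder diagram of $\hyp(4)$ and $\odd(4)$, the explicit condition that the image of the horizontal cylinder decomposition under $A$ matches a cylinder decomposition of the same origami. This gives a polynomial equation $F=0$ in the heights, widths and twists. We then apply Bombieri--Pila / Browning--Gorodnik counting, which gives $O(n^{d-1+\varepsilon})$ or better for $d$ variables once $F$ is absolutely irreducible of degree $\ge 2$, or $O(n^{d-1})$ trivially for a nontrivial linear form. With $d$ the number of free parameters for $n$ squares, this is $O(n^{4+\varepsilon})=o(n^5)$. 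The delicate points are the following. First, we must verify that $F$ does not vanish identically on any component; this holds because the Prym/Veech-type origamis satisfying it form a proper locus. Second, we must handle the restriction that the total area equals $n$, which costs one variable, so we need a saving of a power of $n$ beyond that. This is exactly where the specific choices of HLK-invariant $(1,[4,2,0])$ and symmetric monodromy matter: for these orbits, origamis lying in the Prym locus or with extra symmetry are excluded. The degenerate solution sets that could otherwise carry positive proportion are therefore absent from the orbit. Summing the finitely many contributions gives $o(|\mathcal{G}_{n}|)$ short faces, completing the proof.
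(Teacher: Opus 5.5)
Your reduction of short faces to origamis fixed by finitely many words is fine, but there are two genuine gaps.

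\textbf{Elliptic words (the main gap).} You assert that origamis fixed by an elliptic element number $O(n^4)$ because they lie on a lower-dimensional locus. You give no count of $n$-squared origamis on such loci that holds for each individual $n$, and such a count is not routine. Outside the settings where orbifold points are counted explicitly ($\calH(2)$, $\calH(1,1)$, the Prym loci), the paper's only tool is that orbifold points form a Masur--Veech null set (Subsection~\ref{subsec:gen-ells}). That controls them only on average over $n$, which is exactly why Theorem~\ref{thm:general} holds only along a density one subsequence. Were your bound available, the same argument would cover every orbit in Conjecture~\ref{conj:Del-Lel}, not just two.

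The missing idea is the block systems of Section~\ref{sec:blocks}, which show these two families have \emph{no} elliptic faces at all.
\begin{itemize}
\item For symmetric monodromy, Proposition~\ref{prop:symblocksys} gives three blocks on which $T$ and $S$ act as distinct transpositions. Every element of order $3$ or $6$ is conjugate to $U^{\pm1}$ or $U^{\pm2}$, where $U=TS^{-1}$, so it acts on the blocks as a $3$-cycle. It therefore fixes no block, and hence no origami. Since $-I$ is central and the orbit is neither Prym nor hyperelliptic, $-I$ lies in no Veech group, and so neither does any element of order $2$ or $4$.
\item For HLK-invariant $(1,[4,2,0])$ the entries $4,2,0$ are distinct, so Proposition~\ref{prop:all_diff} gives six blocks permuted regularly through $\SL(2,\Z)\to\SL(2,\Z/2\Z)\cong\Sym(3)$. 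Every Veech group then lies in $\Gamma(2)$, whose only torsion is $\pm I$.
\end{itemize}
This, not the absence of ``degenerate solution sets'', is why the theorem is restricted to these two families.

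\textbf{Hyperbolic words (Step~3).} As written this is not yet an argument: $F$ is never written down, its non-vanishing is justified only by appeal to ``a proper locus'', and the bookkeeping against the area constraint is not done. It is also aimed at a non-issue. A hyperbolic $M$ in a Veech group gives a pseudo-Anosov whose stretch factor is the spectral radius of $M$. By Arnoux--Yoccoz and Ivanov there are only finitely many conjugacy classes of pseudo-Anosovs with bounded stretch factor. Hence (Corollary~\ref{cor:hyperbolic}) each of the finitely many hyperbolic words of length at most four lies in no Veech group of an $n$-squared origami in $\calH(4)$ once $n$ is large, and such faces simply disappear.

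\textbf{Smaller points.}
\begin{itemize}
\item The parabolic count must hold for each $n$, and ``one free parameter drops out'' is only a heuristic; the condition $w_i\mid kh_i$ constrains heights, not twists. The paper makes this rigorous in Subsection~\ref{subsec:H4-paras} via Browning--Gorodnik on $xw_1^2+yw_2^2+zw_3^2=n$ and $xw_1^2+yw_2^2=n$, obtaining $O(n^{9/2+\epsilon})$.
\item $ST^{-2}S=-T^{2}$ is not conjugate to a power of $T$; handle it by squaring to $T^4$.
\item Your fallback for $|\mathcal{G}_n|=\Omega(n^5)$ fails. Dividing the total count among boundedly many orbits bounds only the largest orbit from below. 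This growth must be taken as part of the conjecture, as the paper does.
\end{itemize}
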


In this setting, we rule out the existence of orbifold points by considering a block system (in the sense of imprimitive permutation groups) for the action of $\SL(2,\Z)$ on the orbits.

%%%%%%%%%%%%%%%%%%%%%%%%%%%%%%%%%%%%%%%%%

\subsection{A conditional extension to low-complexity strata}

Based on computational evidence and aligning with Conjectures~\ref{conj:zmiaikou},~\ref{conj:parity}, and~\ref{conj:Del-Lel}, some experts believe in the following conjecture concerning $\SL(2,\Z)$-orbit growth in general strata.

\begin{conjecture}\label{conj:wild}
    Given a family $(\mathcal{G}_{n})_{n}$ of $\SL(2,\Z)$-orbit graphs of primitive $n$-squared origamis in a connected component of a stratum $\mathcal{H}:=\calH_{g}(k_{1},\dots,k_{s})$ not contained in a strictly smaller arithmetic subvariety, we have $|\mathcal{G}_{n}| = \Omega(n^{\dim_{\C}\mathcal{H}-1}) = \Omega(n^{2g+s-2})$.
\end{conjecture}

Assuming this conjecture, we can establish behaviour similar to Theorem~\ref{thm:main} in low complexity strata. Here, we again use the parabolic generators.

\begin{theorem}\label{thm:general}
    Assuming Conjecture~\ref{conj:wild}, let $(\mathcal{G}_{n})_{n}$ be a family of $\SL(2,\Z)$-orbit graphs of primitive $n$-squared origamis in a connected component of a stratum $\calH(k_{1},\ldots,k_s)$ with $1\leq s\leq 2$ and not contained in a strictly smaller arithmetic subvariety. Then there exists a density one subsequence $A\subset\N$ so that, for $n\in A$, the number of faces of length at most four in $\mathcal{G}_{n}$ is $o(|\mathcal{G}_{n}|)$ and, \emph{a fortiori}, one has $|\chi(\mathcal{G}_{n})|\to\infty$ as $n$ goes to infinity in $A$.
\end{theorem}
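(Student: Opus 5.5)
The plan is the same reduction as in the proof of the first part of Theorem~\ref{thm:main}. The Euler characteristic computation there used only that $\mathcal{G}_n$ is four-regular and that the faces of length at most four number $o(V)$. So it suffices to show that, along a density one set $A$ of $n$, the number of faces of length at most four is $o(|\mathcal{G}_n|)$. By Conjecture~\ref{conj:wild}, $|\mathcal{G}_n| = \Omega(n^{2g+s-2})$, so the goal is to bound the number of short faces by $o(n^{2g+s-2})$ for $n\in A$.

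\textbf{Step 1: classify short faces.} A face of length at most four in a minimal embedding is a closed walk of length at most four in the Schreier graph. Each such walk corresponds to an origami $O$ fixed by a word $W$ of length at most four in $T^{\pm1},S^{\pm1}$. There are two cases.
\begin{itemize}
\item If $W$ is trivial in $\SL(2,\Z)$, for instance $TT^{-1}$ or a backtracking walk, it can only bound a face through degenerate loops. These I will handle as in the $\calH(2)$ case: they are controlled by origamis fixed by $T$, $S$, $T^2$ or $S^2$.
\item If $W$ is nontrivial, there are finitely many such words. Each is elliptic, parabolic or hyperbolic, and $O$ lies in the fixed locus of $W$ acting on the stratum.
\end{itemize}

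\textbf{Step 2: count each type.} Parabolic words are conjugate to powers $T^k$ with $|k|\le 4$. An origami fixed by $T^k$ has horizontal cylinders with moduli constrained to divide $k$. Counting such origamis amounts to counting integral points with bounded cylinder heights, which loses a full power of $n$. This gives $O(n^{2g+s-3+\epsilon})$, and the loss is exactly what makes it $o(n^{2g+s-2})$.

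Elliptic words (orders 2, 3, 4, 6) correspond to orbifold points. Origamis fixed by an elliptic element carry an automorphism of the underlying surface, so they lie in a proper subvariety of smaller dimension, and their number is likewise $O(n^{2g+s-3+\epsilon})$.

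Hyperbolic words $W$ have bounded trace, so $O$ carries a pseudo-Anosov affine diffeomorphism of bounded dilatation. I would count these using finiteness of bounded-dilatation pseudo-Anosovs up to conjugacy. The fixed origamis then lie in finitely many rank-one loci, equivalently integral points on algebraic hypersurfaces cut out by period relations. Bombieri--Pila / Browning--Gorodnik bounds then give $O(n^{\dim-1-\delta})$ integral points for irreducible hypersurfaces of degree at least $2$.

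\textbf{Step 3: why $s\le 2$ and density one.} The restriction $s\le 2$ should enter here. In low dimension the Browning--Gorodnik saving beats the dimension count; for larger $s$ the relative-period directions can make the hypersurface contain linear pieces. The density one subsequence arises because the integral point count is naturally over areas $\le N$. Summing gives $\sum_{n\le N}\#\{\text{short faces}\} = o(N^{2g+s-1})$. Chebyshev/Markov then yields $\#\{\text{short faces at } n\} = o(n^{2g+s-2})$ outside a density zero set of $n$, and combining with Conjecture~\ref{conj:wild} finishes the proof.

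\textbf{Main obstacle.} The hard step is the hyperbolic case. It requires showing that each fixed-point equation defines a hypersurface that is genuinely nonlinear, or at least not containing a positive-proportion linear subspace, so that the point-counting saving applies uniformly. I would first try to write the affine action on period coordinates as an integer matrix $\rho(W)$ and show that the eigenvector condition is quadratic and irreducible when $s\le 2$.
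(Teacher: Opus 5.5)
The outer reduction is the same as the paper's: the Euler characteristic inequality, sorting short faces by the type of the word, and Markov's inequality to reach a density one set. But you place the difficulty and the hypotheses in the wrong steps, and none of the three face counts is actually established.

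\textbf{Hyperbolic faces.} The step you call the main obstacle follows immediately from the finiteness theorem you already cite; this is Corollary~\ref{cor:hyperbolic}. An affine pseudo-Anosov with derivative $M$ determines the flat structure up to the centraliser of $M$ in $\GL(2,\R)$. If $X$ and $gX$ are both primitive origamis, both relative period lattices equal $\Z^2$, so $g\in\GL(2,\Z)$ and the areas agree. Hence each of the finitely many conjugacy classes accounts for at most one value of $n$, and for large $n$ there are no hyperbolic short faces in any stratum. Your planned route also rests on a false premise. For a fixed action $\rho(W)$ on relative homology, the condition $\mathrm{hol}\circ\rho(W)=M\cdot\mathrm{hol}$ is linear in period coordinates. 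So there is no nonlinear hypersurface for Bombieri--Pila to act on, and $s\le 2$ plays no role here.

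\textbf{Parabolic faces.} The hypothesis $s\le 2$ enters in the parabolic count, which you have not carried out. A $T^k$-fixed origami need not have bounded heights; the condition is $w_i/\gcd(w_i,h_i)\mid k$ for every horizontal cylinder. For $k=1$, writing $h_i=x_iw_i$ turns the area condition into the cusp equation $\sum_i x_iw_i^2=n$. This is a quadric whose coefficients $x_i$ can be as large as $n$, and its integer points must be bounded uniformly in $x$. That count is then multiplied by the free saddle-length choices and the $\prod_i w_i$ twists. The paper uses the Browning--Gorodnik bound $O(N^{r-2+\epsilon})$ for rank-$r$ quadrics, splitting by whether $\min_i x_i<n^{1-\beta}$. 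Both regimes are $o(n^{2g+s-2})$ only for $\frac{s-1}{2}<\beta<\frac{2g-2}{2g+s-3}$, and such a $\beta$ exists only when $s\le 2$.

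Your stated exponent can in fact be reached by a fibre count. Fix the saddle lengths and let $w_{i_0}$ be the smallest width. There are at most $\prod_{i\ne i_0}n/w_i^2$ vectors $x$, so each choice of lengths gives at most $n^{c-1}$ origamis once twists are included. Summing over the $O(n^{(2g+s-1-c)/2})$ length choices gives $O(n^{(2g+s-3+c)/2})$ in total. Some argument of this kind has to be supplied; ``bounded heights'' is not the mechanism.

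\textbf{Elliptic faces.} Your elliptic bound is unsupported. Lying in a proper subvariety gives no control, for an individual $n$, over how many integer points have area exactly $n$. The paper proves only an averaged statement. It uses the Eskin--Okounkov convergence of the square-tiled counting measures to Masur--Veech measure, together with the fact that the orbifold locus has measure zero. That averaging is the only reason the theorem holds just along a density one subsequence. The hyperbolic and parabolic bounds hold for every large $n$, contrary to the attribution in your Step~3.
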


The requirement $s\leq 2$ is a consequence of the limits of our methods for counting faces corresponding to parabolic elements in $\SL(2,\Z)$. Indeed, we count such faces by counting integer points on certain algebraic hypersurfaces and the state-of-the-art bounds only allow us to handle $s\leq 2$.

%%%%%%%%%%%%%%%%%%%%%%%%%%%%%%%%%%%%%%%%%

\subsection{Applications to arithmetic \teichmuller curve genera}

For any origami $X$ with $-I$ contained in the Veech group $\SL(X,\omega)$, we can again prove that the orbit graph with the elliptic generators has the same genus as the associated \teichmuller curve. Generalising the combinatorial method of proving that the orbit graphs with parabolic generators have genus going to infinity, we can establish the same for the elliptic generators and achieve the following results.

First, in the Prym loci of $\calH(4)$ and $\calH(6)$ we establish:

\begin{theorem}\label{thm:prym-genus}
    Let $W_{n^{2}}$ be a Prym \teichmuller curve associated to an orbit of primitive $n$-squared origamis in $\calH(4)$ or $\calH(6)$. Then the genus of $W_{n^{2}}$ goes to infinity with $n$.
\end{theorem}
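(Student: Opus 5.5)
We plan to compute the genus of $W_{n^2}$ with the Riemann--Hurwitz/Gauss--Bonnet formula for a finite-index subgroup $\Gamma=\SL(X,\omega)$ of $\SL(2,\Z)$ containing $-I$ (the Prym involution acts as $-I$, so $-I\in\Gamma$). Writing $d=[\PSL(2,\Z):P\Gamma]$ for the size of the $\SL(2,\Z)$-orbit (up to the factor coming from $\pm I$), we have
\[
  g(W_{n^2}) \;=\; 1+\frac{d}{12}-\frac{e_2}{4}-\frac{e_3}{3}-\frac{c}{2},
\]
where $e_2,e_3$ count the orbifold points of order two and three and $c$ counts the cusps. In graph terms, with the elliptic generators $R,U$ the orbit graph embeds with faces given by the $R$-cycles, the $U$-cycles and the $RU$-cycles, i.e. the cusps. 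So the approach is to show $d$ grows and dominates $e_2$, $e_3$ and $c$ with a definite margin; equivalently, $e_2+e_3+c=o(d)$ is more than enough.

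\textbf{Step 1: orbit size.} By the Lanneau--Nguyen classification~\cite{LN14,LN20}, the $\SL(2,\Z)$-orbits of primitive $n$-squared Prym origamis in $\calH(4)$ and $\calH(6)$ are explicitly described and have cardinality of order $n^2$ times an arithmetic factor bounded below by $n^{2-\varepsilon}$. This is the analogue of the Eskin--Masur--Schmoll/Hubert--Leli\`evre count in $\calH(2)$, and we would extract it from the explicit prototype counts, or equivalently from the Euler characteristic formula for Prym Teichm\"uller curves (Möller, Torres-Teigel--Zachhuber). So $d\gg n^{2-\varepsilon}$.

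\textbf{Step 2: cusps.} Cusps correspond to cylinder decompositions in the horizontal direction up to the parabolic action, i.e. to the $T$-orbits, which are the Prym prototypes. Prototypes are parametrised by integer tuples $(w,h,t,e,\dots)$ satisfying a quadratic relation such as $D=e^2+8wh$ (with $D=n^2$, and analogues for $\calH(6)$), subject to bounded-size inequalities. Counting these as divisor-type sums gives $c=O(n^{1+\varepsilon})=o(d)$. This is the combinatorial input we already use for faces of length at most four with the parabolic generators (Theorem~\ref{thm:submain-1}).

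\textbf{Step 3: elliptic points.} Here $e_2$ and $e_3$ count origamis fixed, up to $\pm I$, by $R$ or $U$. Such a surface has extra automorphisms of order $4$ or $6$, which forces it to lie at CM points of the relevant Prym/Hilbert modular surface. Following Mukamel's approach in $\calH(2)$ and Torres-Teigel--Zachhuber's for the Prym loci, these are bounded by class numbers $h(-cn^2)\ll n^{1+\varepsilon}$, again $o(d)$. If those formulas are unavailable in the needed form, we would instead count directly the origamis in the orbit invariant under rotation by $\pi/2$ or $2\pi/3$. Such an origami is determined by a quotient origami with roughly $n/4$ or $n/3$ squares and bounded branching data, which gives $O(n^{1+\varepsilon})$.

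Combining the three steps gives $g(W_{n^2})\geq d/12-O(n^{1+\varepsilon})\to\infty$. We expect the main obstacle to be Step 3: turning the combinatorial description of the orbit into a sufficiently good bound on elliptic fixed points uniformly in both loci, and in particular handling order-three points in $\calH(6)$, where the CM structure is over a cubic field.
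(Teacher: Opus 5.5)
Your skeleton is fine: since $-I$ lies in every Veech group, $g(W_{n^2})=1+\frac{d}{12}-\frac{e_2}{4}-\frac{e_3}{3}-\frac{c}{2}$, and this is exactly what the paper recovers by embedding the orbit graph with the elliptic generators $R,U$. The genuine gap is in Step 2, whose cusp bound is false, and Step 1 is too weak to absorb the true cusp count. The Lanneau--Nguyen prototypes with a relation like $D=e^2+8wh$ parametrise only the three-cylinder decompositions (four-cylinder in $\calH(6)$). Those are the only cusps when $D$ is not a square. For $D=n^2$ the orbit also contains one-cylinder origamis (in $\calH(4)$) and two-cylinder origamis, whose cusps have width $n$ and $n/2$. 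After the identifications forced by the involution, such a surface still has order $n^2$ choices of saddle-connection lengths summing to the circumference, plus a free twist. So there are of order $n^2$ such cusps; already in $\calH(2)$ the one-cylinder cusps alone number of order $n^2$. Hence $c=O(n^{1+\varepsilon})$ is false. Your Step 1 bound $d\gg n^{2-\varepsilon}$ also has the wrong exponent: the orbits have size of order $n^3$ (the paper uses $\Omega(n^3)$; compare $\frac{3}{16}(n-1)n^2\prod_{p\mid n}(1-p^{-2})$ in $\calH(2)$). With the two bounds as you state them, $c=o(d)$ does not follow.

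The missing idea is that you should not count all cusps. The cusp widths sum to $d$, so cusps of width at least $7$ number at most $d/7$. For large $n$, every cusp of width at most $6$ is a three- or four-cylinder cusp containing an origami fixed by some $T^j$ with $j\le 6$. The Bombieri--Pila ellipse count bounds these by $O(n^{11/4+\varepsilon})=o(n^3)$. With $d\gg n^3$ and $e_2+e_3=o(d)$, this gives $g(W_{n^2})\ge 1+\frac{d}{12}-\frac{d}{14}-o(d)=\frac{d}{84}-o(d)$. The paper's proof is exactly this, phrased as the face-length inequality $-13\chi\ge 9V-\sum_{i\le 12}(13-i)f_i$ for the elliptic-generator graph. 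There, only $f_1=e_2+e_3$, the $R^2$- and $U^3$-cycles, and cusps of width at most $6$ enter.

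For Step 3, note that Torres-Teigell--Zachhuber's genus-three counts are stated only for non-square discriminants. The paper checks that the order-three count $|H_3(D)|$ survives with $f=\sqrt{D}$, and modifies their Lemma 5.3 to handle order-two points for square $D$. This gives $e_2+e_3=O(n^2)$, which is all that is needed against $d$ of order $n^3$; no class-number precision is required. Finally, there is no cubic field: in both loci the Prym variety is an abelian surface with real multiplication by a quadratic order.
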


Torres-Teigel--Zachhuber~\cite{TTZ18,TTZ19} established this result for non-square discriminants, and so this completes the analysis.

Second, in $\calH(1,1)$, we get:

\begin{theorem}\label{thm:H(1,1)-genus}
    Fix $n\geq 1$. Assuming Conjectures~\ref{conj:zmiaikou} and~\ref{conj:parity}, the genus of $W_{d^{2}}[n]$ goes to infinity as $d$ goes to infinity.
\end{theorem}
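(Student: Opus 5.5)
We plan to prove Theorem~\ref{thm:H(1,1)-genus} by combining the orbifold Euler characteristic formula for $W_{d^2}[n]$ with the orbifold-point counts of Theorem~\ref{thm:e3}. The link to the orbit graphs is the following. The curve $W_{d^2}[n]$ is a quotient $\mathbb{H}/\Gamma$, where $\Gamma$ is the Veech group, of index $N_d$ in $\SL(2,\Z)$ ($N_d$ being the size of the orbit). Since $-I\in\Gamma$ in genus two via the hyperelliptic involution, we can apply Riemann--Hurwitz to the cover of the modular curve. This gives
\[
2g(W_{d^2}[n])-2=\frac{N_d}{6}-\frac{e_2}{2}-\frac{2e_3}{3}-e_\infty .
\]
Here $e_2,e_3$ count the orbifold points of order two and three, and $e_\infty$ counts the cusps. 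Equivalently, we use the fact, announced above, that the genus of the elliptic-generator orbit graph equals the genus of the curve. The vertices of that graph are orbit points, $R$ and $U$ give the faces, and the fixed points of $R$ and $U$ are exactly $e_2$ and $e_3$. So it suffices to show $N_d\to\infty$ with $e_2,e_3,e_\infty=o(N_d)$.

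\emph{Step 1 (orbit size).} By Conjecture~\ref{conj:parity}, each irreducible component $W^\epsilon_{d^2}[n]$ is a single $\SL(2,\R)$-orbit. Therefore $N_d$ is the total number of primitive $dn$-squared origamis of the relevant type, divided among at most two components. By Conjecture~\ref{conj:zmiaikou} this count is $\Theta((dn)^4)$. Since $n$ is fixed, each component has $N_d=\Theta(d^4)$.

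\emph{Step 2 (cusps).} Cusps correspond to cylinder diagrams in horizontal direction up to the action of $T$. So $e_\infty$ is at most the number of origamis with one or two horizontal cylinders counted up to twist, which is $O(d^{3+\varepsilon})$ by a standard divisor-sum count of cylinder parameters (heights, widths, twists) with total area $dn$. This is $o(d^4)$.

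\emph{Step 3 (elliptic points).} Theorem~\ref{thm:e3} gives $e_3$ in terms of $\tilde h(-3d^2)$ and $\tilde h(-d^2/3)$. Class numbers of orders of discriminant $-D$ satisfy $h(-D)\ll D^{1/2}\log D$, so $e_3=O(d^{1+\varepsilon})$. For $e_2$ we would invoke Mukamel's count of order-two points on $W_D$ (again a class-number expression, of size $O(d^{1+\varepsilon})$). Alternatively, we would note that an order-two point is an origami fixed by $R$, and that such origamis are parametrised by $O(d^{2+\varepsilon})$ data. This last alternative is the step we expect to be the main obstacle: we must confirm that Mukamel-type $e_2$ formulas extend to the level-$n$ curves $W_{d^2}[n]$, or else bound $R$-fixed origamis directly by a crude parameter count. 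Either route gives $o(d^4)$. Combining the three steps yields $2g-2\geq N_d/6-o(d^4)\to\infty$.
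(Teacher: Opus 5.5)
Your skeleton is sound and is in substance the paper's: the identity $2g-2=\frac{V}{6}-\frac{e_2}{2}-\frac{2e_3}{3}-e_\infty$ is exactly what the paper derives when it shows $g(\mathcal{G}_n)=g(\mathcal{C}_n)$ for the elliptic generators, and $e_3$ is handled by Theorem~\ref{thm:e3} as you say. However, the step you flag as the main obstacle is left open, and neither route you offer works as stated. Mukamel's count of order-two points concerns the Weierstrass curves in $\calH(2)$, not $W_{d^2}[n]\subset\calH(1,1)$. The $O(d^{2+\varepsilon})$ parametrisation of $R$-fixed origamis is only asserted. Since $e_2$ enters with coefficient $\frac12>\frac16$, the trivial bound $e_2\le V$ is useless, so this is a genuine gap. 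The missing observation is that $e_2=0$ in $\calH(1,1)$. An order-two point requires an automorphism $\phi$ with $\phi^*\omega=\pm i\omega$. Then $\phi^2$ realises $-I$, i.e.\ it is the hyperelliptic involution $\eta$. But $\eta$ interchanges the two simple zeros of $\omega$, since $(\omega)=p+\eta(p)$ with $p\neq\eta(p)$. On the other hand, $\phi$ permutes the two zeros, so $\phi^2$ fixes each of them, which is a contradiction.

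Two further steps need repair. First, the orbit size. For $n>1$ the origamis in $W_{d^2}[n]$ are degree-$d$ covers of the $n$-squared torus and hence not primitive, so Conjecture~\ref{conj:zmiaikou} does not apply to them. The size of each component comes from the Kappes--M\"oller/Duryev formulas for $t_{d,n,\epsilon}$ (each $\asymp d^4$ for fixed $n$) combined with Conjecture~\ref{conj:parity}; Zmiaikou's conjecture is needed only for $n=1$. Note also that a total of size $\Theta(d^4)$ split among two components would not by itself make each component large. Second, your cusp count is mis-stated. $\calH(1,1)$ also has three-cylinder diagrams, and a cusp is a $T$-orbit, so fixed widths, heights and saddle lengths carry $\prod_i w_i/\mathrm{lcm}_i\bigl(w_i/\gcd(w_i,h_i)\bigr)$ cusps, not one. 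With that bookkeeping your $O(d^{3+\varepsilon})$ bound can be recovered by divisor sums: for two cylinders there are at most $w_1\gcd(w_2,h_2)$ cusps per datum, and $w\gcd(w,h)\le wh$. This would be a more elementary alternative to what the paper does. The paper never bounds the total number of cusps. It only needs cusps of width at most $6$ to be $o(V)$, which it gets from the Bombieri--Pila/Browning--Gorodnik loop counts of Subsection~\ref{subsec:H(1,1)-paras}, because cusps of width at least $7$ number at most $V/7<V/6$. This is what its face-length inequality with threshold $13$ encodes.
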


Finally, applying the techniques used to prove Theorem~\ref{thm:general}, we get:

\begin{theorem}\label{thm:general-genus}
    Let $(\mathcal{G}_{n})_{n}$ be a family of $\SL(2,\Z)$-orbit graphs of primitive $n$-squared origamis in a hyperelliptic connected component and not contained in a strictly smaller arithmetic subvariety, and let $\mathcal{C}_{n}$ be the associated arithmetic \teichmuller curves. Then, assuming Conjecture~\ref{conj:wild}, there exists a density one subsequence $A\subset\N$ so that the genus of $\mathcal{C}_{n}$ goes to infinity as $n$ goes to infinity in $A$.
\end{theorem}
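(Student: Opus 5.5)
The plan is to compute the genus of $\mathcal{C}_n$ with the Riemann--Hurwitz (orbifold Euler characteristic) formula for $\mathcal{C}_n=\mathbb{H}/\SL(X,\omega)$ and to show that the main term dominates the correction terms along a density one subsequence. Since $X$ lies in a hyperelliptic component, the hyperelliptic involution acts as $-I$ on $\omega$, so $-I\in\SL(X,\omega)$. The degree of $\mathcal{C}_n\to\mathbb{H}/\SL(2,\Z)$ is therefore $|\mathcal{G}_n|$ (the $\SL(2,\Z)$-orbit, which equals the $\PSL(2,\Z)$-orbit). Riemann--Hurwitz then gives
\[
2g(\mathcal{C}_n)-2 \;=\; \frac{|\mathcal{G}_n|}{6} \;-\; \frac{e_2}{2}\;-\;\frac{2e_3}{3}\;-\;e_\infty ,
\]
where $e_2$ and $e_3$ count orbifold points of orders two and three and $e_\infty$ counts cusps. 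Equivalently, as noted before Theorem~\ref{thm:prym-genus}, this genus equals the genus of the orbit graph with the elliptic generators $R,U$. In that graph, $e_2$ and $e_3$ are the fixed points of $R$ and $U$, and $e_\infty$ is the number of faces (cycles of $RU$, i.e.\ $T$-orbits). By Conjecture~\ref{conj:wild}, $|\mathcal{G}_n|=\Omega(n^{2g+s-2})$, so it suffices to show $e_2+e_3+e_\infty=o(|\mathcal{G}_n|)$ for $n$ in a density one set $A$.

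\emph{Cusps.} Cusps correspond to $T$-orbits, i.e.\ to horizontal cylinder diagrams of origamis in the orbit up to horizontal twists. An $n$-squared origami in a stratum of complex dimension $2g+s-1$ has $O(n^{2g+s-2-\delta})$ cylinder decompositions up to twist, for some $\delta>0$. To see this, count the choices of cylinder diagram (finitely many), heights and widths, and discard the twist parameters, which contribute a full factor of $n$. More concretely, if a cylinder diagram has $k$ cylinders, the widths and heights satisfy $\sum w_ih_i=n$ with the widths subject to linear relations, leaving roughly $2g+s-2$ free parameters. The number of solutions is then bounded by divisor-type sums of size $n^{2g+s-2}\cdot n^{-1+\varepsilon}$ times factors of the form $\prod w_i$. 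I would reuse the counting already developed for the parabolic faces in Theorem~\ref{thm:general}, since faces of $T$ are exactly these cusps.

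\emph{Elliptic points.} A fixed point of $R$ or $U$ is an origami in the orbit whose Veech group contains an elliptic element of order $4$ or $6$ (order $2$ or $3$ projectively). Such origamis have an automorphism of order $4$ or $6$ acting on $\omega$ by $i$ or $e^{2\pi i/6}$. These surfaces form proper subloci of the stratum, namely fixed loci of a finite-order affine symmetry of strictly smaller dimension. Hence the number of $n$-squared origamis with this property is $O(n^{2g+s-2-1})$, which is $o(|\mathcal{G}_n|)$. To obtain this bound, I would count integer points in the lower-dimensional period coordinates of the cyclic covers of the quotient orbifold, using that the quotient by the symmetry has strictly smaller complex dimension.

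The main obstacle is the cusp count. Bounding it by $o(n^{2g+s-2})$ uniformly in $n$ fails, because divisor-function fluctuations (highly composite $n$) can make the number of cylinder diagrams unusually large. This is the reason for passing to a density one subsequence. I would average the cusp count over $n\le N$: the total is $O(N^{2g+s-2}\log^C N)$ while the lower bound summed over $n\le N$ is $\gg N^{2g+s-1}$. Markov's inequality then shows that the set of $n$ where $e_\infty(n)>|\mathcal{G}_n|/\log n$ has density zero.

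Along the complementary set $A$ we get $2g(\mathcal{C}_n)-2\ge |\mathcal{G}_n|/6-o(|\mathcal{G}_n|)\to\infty$, which proves Theorem~\ref{thm:general-genus}.
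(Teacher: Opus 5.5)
Your overall frame matches the paper's: the genus of $\mathcal{C}_n$ via orbifold Riemann--Hurwitz (equivalently the elliptic-generator graph), with $|\mathcal{G}_n|=\Omega(n^{2g+s-2})$ from Conjecture~\ref{conj:wild}. However, both correction-term estimates have gaps, and you have attached the density-one passage to the wrong term.

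\textbf{Cusps.} A cusp is a $T$-orbit, not a cylinder diagram up to twist. For a fixed diagram, widths, heights and saddle-connection lengths, the twist space $\prod_i \Z/w_i\Z$ splits into $\prod_i w_i/\mathrm{lcm}_i\bigl(w_i/\gcd(w_i,h_i)\bigr)$ cusps. For instance, when every $w_i\mid h_i$ there are $\prod_i w_i$ cusps, all of width one. So the twists do not ``contribute a full factor of $n$'' precisely for the small-width cusps that matter. Your averaged bound $O(N^{2g+s-2}\log^C N)$ is asserted rather than derived. Moreover, the parabolic count from Theorem~\ref{thm:general} that you propose to reuse only controls cusps of bounded width, not all cusps.

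The missing observation is that you do not need to count all cusps. The cusp widths sum to $|\mathcal{G}_n|$, so there are at most $|\mathcal{G}_n|/7$ cusps of width at least $7$. Since $\frac16-\frac17>0$, it suffices that cusps of width at most $6$ number $o(|\mathcal{G}_n|)$; these are $T^w$-fixed origamis with $w\le 6$. The Browning--Gorodnik count gives this for \emph{every} $n$. This is exactly what the paper's face inequality with threshold $13$ in the $R,U$-graph encodes, so no averaging is needed for cusps, contrary to your diagnosis.

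\textbf{Elliptic points.} Your uniform bound $O(n^{2g+s-3})$ is not justified. Surfaces with an affine symmetry of order $3$, $4$ or $6$ do lie in lower-dimensional, non-compact linear subloci. But that alone does not control the number of lattice points of area exactly $n$ for each individual $n$, and you supply no counting theorem that does. This is precisely where the paper needs the density-one subsequence. By Eskin--Okounkov, the normalised counting measures of $d$-squared origamis, $d\le n$, converge to a multiple of Masur--Veech measure. The orbifold locus has measure zero, so the number of such symmetric origamis, summed over $d\le n$, is $o(n^{\dim_{\C}\calH})$. Your Markov-type argument then gives $e_2+e_3=o(|\mathcal{G}_n|)$ along a density-one set; it is the right tool, but it belongs here.

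With the cusp step replaced by the bounded-width reduction and the averaging moved to the elliptic step, your Riemann--Hurwitz computation yields $2g(\mathcal{C}_n)-2\geq |\mathcal{G}_n|/42-o(|\mathcal{G}_n|)$ along that set.
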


%%%%%%%%%%%%%%%%%%%%%%%%%%%%%%%%%%%%%%%%%

\subsection{Proof strategy} Each face in an orbit graph corresponds to a word in the generators of $\SL(2,\Z)$ (up to inversion and conjugation). We count faces differently depending on whether the corresponding element in $\SL(2,\Z)$ is hyperbolic, parabolic, or elliptic. Hyperbolic faces give rise to pseudo-Anosov diffeomorphisms on the underlying surface and we count such faces by counting the corresponding pseudo-Anosovs using results of Arnoux--Yoccoz~\cite{AY} and Ivanov~\cite{Iva}. Parabolic faces of bounded length, it turns out, can be counted (up to a constant factor) by determining the number of loops in the graph. Each loop corresponds (up to bounded multiplicity) to an integral solution of some algebraic hypersurface. We use methods of Bombieri--Pila~\cite{BoPi} and Browning--Gorodnik~\cite{BG} to count these. Finally, elliptic faces correspond to orbifold points on the associated \teichmuller curves and so we use known counts of Mukamel~\cite{Muk}, and Torres-Teigel--Zachhuber~\cite{TTZ18,TTZ19}, or produce new counts of such orbifold points to bound these faces.

%%%%%%%%%%%%%%%%%%%%%%%%%%%%%%%%%%%%%%%%%

\subsection{Outline of the paper} In Section~\ref{sec:prelims}, we give the background on origamis and their $\SL(2,\Z)$-orbits. In Section~\ref{sec:blocks}, we describe block systems (in the sense of permutation groups) for the action of $\SL(2,\Z)$ on an orbit. These block systems allow us to rule out certain faces in some of the orbit graphs, but we believe this material to be of independent interest. In Section~\ref{sec:T-S-H(2)}, we count the faces in the $\calH(2)$ orbit graphs corresponding to hyperbolic, parabolic and elliptic elements of $\SL(2,\Z)$ and establish Theorem~\ref{thm:submain-1}. In Section~\ref{sec:prym}, we extend these counts to the Prym loci in $\calH(4)$ and $\calH(6)$. In Section~\ref{sec:H(1,1)}, we remind the reader of what is known about $\SL(2,\Z)$-orbits of origamis in $\calH(1,1)$ and complete the proof of Theorems~\ref{thm:submain-2} and ~\ref{thm:submain-3}. In Section~\ref{sec:non-prym}, we do the same for two families of non-Prym orbits in $\calH(4)$ and prove Theorem~\ref{thm:submain-4}. In Section~\ref{sec:general}, we prove Theorem~\ref{thm:general}. Finally, in Section~\ref{sec:curve-genus}, we analyse the orbit graphs built with the elliptic generators, completing the proof of Theorem~\ref{thm:main} and establishing Theorems~\ref{thm:prym-genus},~\ref{thm:H(1,1)-genus} and~\ref{thm:general-genus}.

%%%%%%%%%%%%%%%%%%%%%%%%%%%%%%%%%%%%%%%%%

\subsection{Acknowledgements} We thank Matthew de Courcy-Ireland for very useful discussions and the first author thanks the Banff International Research Station for their hospitality while these discussions took place. We also thank Pascal Hubert for suggesting the proof method used in Subsection~\ref{subsec:gen-ells}. For the purpose of open access, the authors have applied a Creative Commons Attribution (CC BY) licence to any Author Accepted Manuscript version arising from this submission.

%%%%%%%%%%%%%%%%%%%%%%%%%%%%%%%%%%%%%%%%%
%%%%%%%%%%%%%%%%%%%%%%%%%%%%%%%%%%%%%%%%%

\section{Preliminaries}\label{sec:prelims}

Here, we remind the reader of the necessary background on origamis and their $\SL(2,\Z)$-orbits.

%%%%%%%%%%%%%%%%%%%%%%%%%%%%%%%%%%%%%%%%%

\subsection{Translation surfaces and their moduli spaces} A compact Riemann surface $M$ of genus $g\geq 1$ equipped with a non-trivial Abelian differential $\omega$ is called a \textit{translation surface}. This nomenclature is justified by the fact that the local primitives of $\omega$ away from the set $\Sigma$ of its zeroes yields a collection of charts on $M\setminus\Sigma$ whose changes of coordinates are given by translations of the complex plane $\mathbb{C}$. By the Riemann-Roch theorem, $\omega$ has $2g-2$ zeroes counted with multiplicities, i.e., if $s=|\Sigma|$ is the cardinality of $\Sigma$ and $k_1,\dots,k_{s}$ are the vanishing orders of $\omega$ at the elements of $\Sigma$, then $\sum\limits_{j=1}^{s}k_j=2g-2$. 

By gathering together translation surfaces $X=(M,\omega)$ with a prescribed list $\underline{\kappa}=(k_1,\dots,k_{s})$ of orders of zeroes of $\omega$, one obtains a \textit{stratum} $\mathcal{H}(\underline{\kappa})$ of the moduli space of translation surfaces of genus $g$. This is a complex orbifold of dimension $2g+s-1$ carrying a natural $\SL(2,\mathbb{R})$ action (consisting of post-composing the local primitives of $\omega$ with the usual action of $\SL(2,\mathbb{R})$ on $\mathbb{R}^2=\mathbb{C}$). For further information and references about these objects, the reader is encouraged to consult Athreya--Masur's book \cite{AtMa}.

%%%%%%%%%%%%%%%%%%%%%%%%%%%%%%%%%%%%%%%%%

\subsection{Origamis and their ${\SL(2,\Z)}$-orbits} An \textit{origami} (or square-tiled surface) is a translation surface $(M,\pi^*(dz))$ obtained from a finite cover $\pi:M\to \mathbb{T}^2=\mathbb{C}/(\mathbb{Z}+i\mathbb{Z})$ possibly branched only at $0\in\mathbb{T}^2$. Alternatively, an origami is constructed from a pair $(h,v)$ of permutations acting transitively on $n$ symbols by taking unit squares $sq(i)$, $i=1,\dots,n$, and gluing the rightmost vertical side of $sq(i)$ to the leftmost vertical side of $sq(h(i))$ and the topmost horizontal side of $sq(i)$ to the bottommost horizontal side of $sq(v(i))$. An origami is said to be \textit{primitive} if it is not a cover of another origami different from itself or the unit-square torus. Equivalently, an origami is primitive if the \textit{monodromy group} $\langle h, v\rangle\leqslant\Sym(n)$ is primitive as a permutation group (see Section~\ref{sec:blocks} for a definition).

Restricting the action of $\SL(2,\R)$ on translation surfaces, the group $\SL(2,\mathbb{Z})$ leaves invariant the set of primitive origamis. In fact, $\SL(2,\mathbb{Z})$ is generated by the matrices 
\[T = \begin{bmatrix}
    1 & 1 \\
    0 & 1
\end{bmatrix} \quad \textrm{and} \quad 
S = \begin{bmatrix}
    1 & 0 \\
    1 & 1
\end{bmatrix}\] and one can check that their actions on pairs of permutations are $T(h,v)=(h,vh^{-1})$ and $S(h,v)=(hv^{-1},v)$ (see, for example,~\cite[Figure 2.2]{JM}).

Some of the origamis $\pi:(M,\omega)\to(\mathbb{T}^2,dz)$ studied in this paper come equipped with an involution $\iota$ of $M$ taking $\omega$ to $-\omega$. In this case, one can count the number $l_0$ of fixed points of $\iota$ over $0\in\mathbb{T}^2$ and the numbers $l_1, l_2, l_3$ of fixed points of $\iota$ over the other three $2$-torsion points of $\mathbb{T}^2$. As it turns out, the pair $(l_0,[l_1,l_2,l_3])$, where $[l_1,l_2,l_3]$ is an unordered triple, is an invariant of the $\SL(2,\mathbb{Z})$ orbit of $(M,\omega)$ called its \textit{HLK-invariant}.  

The HLK-invariant was originally used by Hubert and Leli\`evre \cite{HL} to distinguish between two $\SL(2,\mathbb{Z})$ orbits (called A and B) of primitive origamis in $\mathcal{H}(2)$ tiled by a prime number of unit squares. In general, by putting together the works \cite{HL}, \cite{McM} and \cite{LR}, if $X$ is a primitive origami in $\mathcal{H}(2)$ tiled by $n\geq 3$ squares, then 
\begin{itemize}
    \item $X$ falls into a single $\SL(2,\mathbb{Z})$ orbit whenever $n=3$ or $n\geq 4$ is even, and 
    \item $X$ falls into one of two possible $\SL(2,\mathbb{Z})$ orbits (called A and B) whenever $n\geq 5$ is odd: the A orbit has cardinality $\frac{3}{16}(n-1)n^2\prod\limits_{p|n, p \textrm{ prime}}(1-p^{-2})$ and the B orbit has cardinality $\frac{3}{16}(n-3)n^2\prod\limits_{p|n, p \textrm{ prime}}(1-p^{-2})$.
\end{itemize}
In the language of HLK-invariants, the orbit for even $n\geq 4$ corresponds to the HLK-invariant $(1,[2,2,0])$, the A orbit and the case of $n=3$ corresponds to the HLK-invariant $(0,[3,1,1])$, and the B orbit corresponds to the HLK-invariant $(2,[1,1,1])$.

Note that in all cases the size of the orbit is $\Theta(n^{3})$.

For $n = 3$ and $n\geq 4$ even, let $\mathcal{G}_{n}$ denote the 4-regular orbit graph; that is, the graph whose vertices are $n$-squared primitive origamis in $\calH(2)$ with edges given by the action of $T$ and $S$. For $n\geq 5$ odd, let $\mathcal{G}_{n}^{A}$ and $\mathcal{G}_{n}^{B}$ denote the 4-regular orbit graphs for the A and B orbits, respectively. These orbit graphs have as vertices the primitive $n$-squared origamis in $\calH(2)$ with edges coming from the action of $T$ and $S$. See Figure~\ref{fig:G3} for a directed version of the graph $\mathcal{G}_{3}$.

In previous work of the authors~\cite{JM}, we proved that $\mathcal{G}_{3}$ and $\mathcal{G}_{5}^{B}$ are the only orbit graphs that are planar. We extend this result in the current work by proving that in fact the genera of the orbits graphs go to infinity with $n$.

As noted in the introduction, we can also form graphs by using the generators $R:=\begin{bsmallmatrix}
    0 & -1 \\
    1 & 0
\end{bsmallmatrix} = T^{-1}ST^{-1}$ and $U:=\begin{bsmallmatrix}
    0 & 1 \\
    -1 & 1
\end{bsmallmatrix} = TS^{-1}$. Eventual non-planarity for these graphs was not established by the authors but was conjectured to hold true~\cite[Conjecture 1.2]{JM}.

Each orbit is associated to an algebraic curve in the moduli space $\mathcal{M}_{2}$. Indeed, for each $n$-squared $\SL(2,\Z)$-orbit and some choice of vertex stabiliser (i.e., Veech group) $\SL(X,\omega)$, we get a \teichmuller curve $\mathbb{H}/\SL(X,\omega)$ in $\mathcal{M}_{2}$. The primitive \teichmuller curves coming from translation surfaces in $\calH(2)$ are called Weierstrass curves and are denoted by $W_{D}$ for positive discriminants $D$. The Weierstrass curves coming from $n$-squared origamis have discriminant $D = n^2$. See~\cite{McM} for more details.

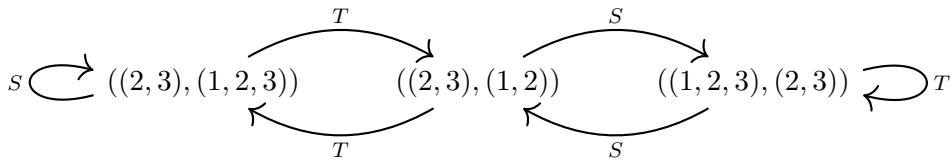
\begin{figure}[b]
\begin{center}
\begin{tikzcd}[cells={nodes={}}, ]
        \arrow[loop left, distance=3em, start anchor={[yshift=-1ex]west}, end anchor={[yshift=1ex]west}, line width = 0.8pt]{}{S} \arrow[bend left, line width = 0.8pt]{r}{T} ((2,3),(1,2,3))  
        & \arrow[bend left, line width = 0.8pt]{l}{T} ((2,3),(1,2)) \arrow[bend left, line width = 0.8pt]{r}{S} & \arrow[bend left, line width = 0.8pt]{l}{S} ((1,2,3),(2,3)) \arrow[loop right, distance=3em, start anchor={[yshift=1ex]east}, end anchor={[yshift=-1ex]east}, line width = 0.8pt]{}{T}
\end{tikzcd}
\end{center}
\caption{The graph $\mathcal{G}_{3}$ with the paraboic generators is the undirected version of this graph.}
\label{fig:G3}
\end{figure}

%%%%%%%%%%%%%%%%%%%%%%%%%%%%%%%%%%%%%%%%%

\subsection{Surface parameters and cusps}\label{subsec:params} 

\begin{figure}[b]
    \centering
    \begin{tikzpicture}[scale = 0.8, line width = 1.5pt]
        \draw (0,0) -- node[rotate = 90]{$|$}(2,1) -- node[rotate = 90]{$||$}(3,4) -- node{$|$}(7,4) -- node[rotate = 90]{$||$}(6,1) -- node{$||$}(9,1) -- node[rotate = 90]{$|$}(7,0) -- node{$||$}(4,0) -- node{$|$} cycle;
        \draw[|<->|,line width = 1pt] (1.5,1) -- node[left]{$h_{1}$}(1.5,4);
        \draw[|<->|,line width = 1pt] (-0.5,0) -- node[left]{$h_{2}$}(-0.5,1);
        \draw[|<->|,line width = 1pt] (3,4.5) -- node[above]{$w_{1}$}(7,4.5);
        \draw[|<->|,line width = 1pt] (0,-0.5) -- node[below]{$w_{2}$}(7,-0.5);
        \draw[|<->,line width = 1pt] (2,4.5) -- node[above]{$t_{1}$}(3,4.5);
        \draw[<->|,line width = 1pt] (7,-0.5) -- node[above]{$t_{2}$}(9,-0.5);
        \foreach \x/\y in {0/0,2/1,3/4,7/4,6/1,9/1,7/0,4/0}{
            \node at (\x,\y) {$\bullet$};
        }
    \end{tikzpicture}
    \caption{Two-cylinder surface parameters in $\calH(2)$.}
    \label{fig:H2-params}
\end{figure}
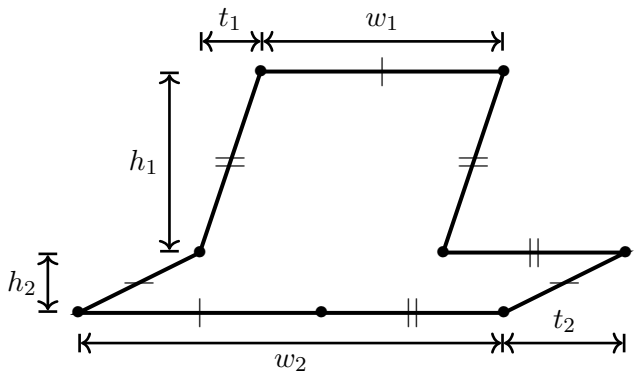

The \emph{cusp} of an origami, $X$, is its orbit under the action of the horizontal shear $T$ and the \emph{cusp width} is the size of this orbit (i.e., the minimal $i$ for which $T^{i}(X) = X$).

Any origami in $\calH(2)$ has one or two horizontal cylinders. If $X$ is a two-cylinder $n$-squared origami then it is characterised by the widths $w_1$, $w_2$ and heights $h_1$, $h_2$ of these cylinders, and the twists $t_1$, $t_2$ (i.e., the relative positions of the conical singularity in the boundaries of these cylinders). See Figure \ref{fig:H2-params}. Note that these parameters satisfy $h_1w_1+h_2w_2=n$ (because the total area of $X$ is $n$). It can then be checked that the origami has cusp width
\[\cuspwidth.\]
The \textit{cusp representative} of such an $X$ is the unique surface in the cusp with $0\leq t_{i} < \gcd(w_{i},h_{i})$ (see \cite[Lemma 3.1]{HL}).
For $n\geq 4$, the cusp width of a one-cylinder origami is $n$ (for $n = 3$ the unique one-cylinder origami has cusp width 1). Since the cusp width grows linearly with $n$, we will not need to consider one-cylinder cusps in the discussion of short faces below.

%%%%%%%%%%%%%%%%%%%%%%%%%%%%%%%%%%%%%%%%%
%%%%%%%%%%%%%%%%%%%%%%%%%%%%%%%%%%%%%%%%%

\section{Block systems for the action of $\SL(2,\Z)$}\label{sec:blocks}

Suppose that a group $G$ acts on some discrete set $\Omega$ and think of $G$ as a subgroup of the symmetric group $\Sym(\Omega)$. A partition $\mathscr{B} = \{\Delta_{1},\ldots,\Delta_{k}\}$ of $\Omega$ is said to be a \emph{block system} for $G$ if for all $\Delta\in \mathscr{B}$ and all $g\in G$ either $g(\Delta) = \Delta$ or $g(\Delta)\cap\Delta=\varnothing$. If the action of $G$ is transitive then it is said to also be \emph{primitive} if the only block systems that exist for $G$ are the trivial block systems $\mathscr{B} = \{\Omega\}$ and $\mathscr{B}' = \{\{x\}\,:\,x\in\Omega\}$. We will analyse the block systems for the permutation action of $\SL(2,\Z)$ on an orbit of an origami.

%%%%%%%%%%%%%%%%%%%%%%%%%%%%%%%%%%%%%%%%%

\subsection{Block systems for orbits of origamis admitting anti-involutions}

Here, we give a block system for the action of $\SL(2,\Z)$ on its orbits of origamis admitting anti-involutions.  The block systems depend on the number of distinct values in the unordered triple $[l_{1},l_{2},l_{3}]$ of the HLK-invariant.

\subsubsection{The case ${l_{1}\neq l_{2}\neq l_{3}\neq l_{1}}$}

\begin{proposition}\label{prop:all_diff}
    Let $\orb$ be an $\SL(2,\Z)$-orbit of primitive origamis all having $-I$ in their Veech group, with $|\orb|\geq 6$, and such that the HLK-invariant $(l_{0},[l_{1},l_{2},l_{3}])$ of any origami in $\orb$ has $l_{1}\neq l_{2}\neq l_{3}\neq l_{1}$. Define the following subsets of $\orb$:
    \[B_{1} = \left\{X \in \orb\,\,: \,\,\text{the ordered HLK-triple is }(l_{1},l_{2},l_{3})\right\},\]
    \[B_{2} = \left\{X \in \orb\,\,: \,\,\text{the ordered HLK-triple is }(l_{2},l_{1},l_{3})\right\},\]
    \[B_{3} = \left\{X \in \orb\,\,: \,\,\text{the ordered HLK-triple is }(l_{2},l_{3},l_{1})\right\},\]
    \[B_{4} = \left\{X \in \orb\,\,: \,\,\text{the ordered HLK-triple is }(l_{3},l_{2},l_{1})\right\},\]
    \[B_{5} = \left\{X \in \orb\,\,: \,\,\text{the ordered HLK-triple is }(l_{3},l_{1},l_{2})\right\},\]
    \[B_{6} = \left\{X \in \orb\,\,: \,\,\text{the ordered HLK-triple is }(l_{1},l_{3},l_{2})\right\}.\]
    Then the sets $\{B_{i}\}_{i = 1}^{6}$ form a block system with the dynamics shown in Figure~\ref{fig:blocksys1}, the sets $\{B_{1}\cup B_{6}, B_{2}\cup B_{5}, B_{3}\cup B_{4}\}$ form a block system with the dynamics shown in Figure~\ref{fig:blocksys2}, and the sets $\{B_{1}\cup B_{3}\cup B_{5},B_{2}\cup B_{4}\cup B_{6}\}$ form a block system with dynamics shown in Figure~\ref{fig:blocksys3}.
\end{proposition}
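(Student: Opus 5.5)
The plan is to understand how $T$ and $S$ act on the \emph{ordered} HLK-triple, and then read off the block systems from the resulting action of $\SL(2,\Z)$ on orderings. The ordered triple $(l_{1},l_{2},l_{3})$ records the numbers of fixed points of the anti-involution $\iota$ over the three non-zero $2$-torsion points of $\mathbb{T}^{2}$. We label these points $\tfrac12$, $\tfrac{i}{2}$ and $\tfrac{1+i}{2}$, i.e.\ the non-zero elements of $(\tfrac12\Z/\Z)^{2}\cong(\Z/2)^{2}$.

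First, for $A\in\SL(2,\Z)$ and $\pi:X\to\mathbb{T}^{2}$, the origami $A\cdot X$ is the same surface with covering map $A\circ\pi$, where $A$ acts on $\mathbb{T}^{2}$ fixing $0$. Since $-I$ lies in every Veech group, $\iota$ is the unique anti-involution (an affine map with derivative $-I$ in the Veech group; for primitive origamis it is unique since the translation group is trivial). It is also the anti-involution of $A\cdot X$. So the fixed points of $\iota$ are unchanged, and only their images in $\mathbb{T}^{2}$ are permuted by the action of $A$ mod $2$ on the non-zero $2$-torsion points. Concretely, $T$ fixes $\tfrac{i}{2}\mapsto\tfrac{1+i}{2}$? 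We will compute: $T(x,y)=(x+y,y)$ fixes $\tfrac12$ and swaps $\tfrac i2$ and $\tfrac{1+i}{2}$. Similarly, $S(x,y)=(x,x+y)$ fixes $\tfrac i2$ and swaps $\tfrac12$ and $\tfrac{1+i}2$. The ordering convention of the triple must be fixed consistently with the paper's figures, and that is bookkeeping. The conclusion is that $\SL(2,\Z)$ acts on ordered triples through the reduction map $\SL(2,\Z)\to\SL(2,\Z/2)\cong\Sym(3)$, and $T,S$ map to transpositions.

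Second, each $B_{i}$ is the set of $X\in\orb$ with a given ordering of the triple. Since the $l_{j}$ are pairwise distinct, the six orderings are distinct, and the sets $B_{i}$ partition $\orb$. The action of $g$ sends $B_{i}$ to $B_{\sigma_{g}(i)}$, where $\sigma_{g}$ is the image of $g$ in $\Sym(3)$ acting on the six orderings, which form a regular $\Sym(3)$-set. Hence $g(B_{i})$ is either $B_{i}$ or disjoint from it, so $\{B_{i}\}$ is a block system. Transitivity of $\SL(2,\Z)$ on $\orb$, together with surjectivity of reduction mod $2$, shows all six $B_{i}$ are non-empty, and the hypothesis $|\orb|\geq6$ ensures the system is non-trivial. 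The dynamics in Figure~\ref{fig:blocksys1} are then read off by applying the two transpositions to the listed orderings. For example, $T$ swaps the last two entries, giving $B_{1}\leftrightarrow B_{6}$, $B_{2}\leftrightarrow B_{3}$, $B_{4}\leftrightarrow B_{5}$; $S$ is handled similarly. The paper's labelling is chosen so that $B_{1},B_{2},\dots,B_{6}$ form a hexagon alternating $T$- and $S$-edges.

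Third, the coarser systems come from blocks of the regular $\Sym(3)$-action, i.e.\ cosets of subgroups. The union $\{B_1\cup B_6,\dots\}$ corresponds to the orbits of the order-two subgroup $\langle T\rangle$; these are left cosets, so we must check that they give blocks. Equivalently, we identify them with \emph{which value sits in the first position}: $B_1\cup B_6$ has first entry $l_1$, $B_2\cup B_3$ has first entry $l_2$, and so on. The paper's pairing $B_2\cup B_5$, $B_3\cup B_4$ suggests the invariant used is instead the position of a fixed value, e.g.\ where $l_{3}$ sits or where the point $\tfrac12$ goes. I will find the function (value at a fixed position, or position of a fixed value) realizing exactly the stated pairs. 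Any such function is $\SL(2,\Z)$-equivariant onto a $3$-element set, so its fibres are blocks. The two-block system is the fibres of the sign map, i.e.\ even versus odd orderings relative to $(l_1,l_2,l_3)$, and every transposition swaps them.

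The main obstacle is the first step: justifying rigorously that the anti-involution and its fixed points are preserved while their projections are permuted by $A$ mod $2$, and matching orientation and labelling conventions so that the stated pairings hold. The rest is finite group bookkeeping.
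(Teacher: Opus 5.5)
\textbf{Overall.} Your strategy is the paper's. $\SL(2,\Z)$ acts on the ordered triple through $\SL(2,\Z/2\Z)\cong\Sym(3)$, permuting the three non-zero $2$-torsion points, so the six orderings form a regular $\Sym(3)$-set and the $B_i$ are fibres of an equivariant map. Your first two steps are fine, as is the sign argument for $\{B_1\cup B_3\cup B_5,\,B_2\cup B_4\cup B_6\}$. Your justification that $\iota$ persists under the action, with its fixed points re-projected by $A$, is more careful than the paper's one-line appeal to the action of $T$ and $S$ on $(1/2,0),(0,1/2),(1/2,1/2)$.

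\textbf{The error in the third step.} You assert that the orbits of an order-two subgroup such as $\langle T\rangle$, or the fibres of a ``value at a fixed position'' function, give blocks. This is false.
\begin{itemize}
\item An element $A$ moves positions, not values: a fixed point over $p$ on $X$ lies over $A(p)$ on $A\cdot X$. So only ``position of a fixed value'' is equivariant.
\item Order-two subgroups of $\Sym(3)$ are not normal, so their orbits need not be blocks.
\item Concretely, the first-entry partition $\{B_1\cup B_6,\, B_2\cup B_3,\, B_4\cup B_5\}$ is not a block system. The element swapping the first two slots ($T$ in the paper's ordering) sends $B_2\cup B_3$ to $B_1\cup B_4$.
\end{itemize}

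\textbf{The correct invariant.} The stated pairing is the position of $l_1$. The sets $B_1\cup B_6$, $B_2\cup B_5$, $B_3\cup B_4$ are exactly the origamis with $l_1$ in the first, second and third slot respectively. These are the blocks of Proposition~\ref{prop:l_1diff}. The position of $l_3$, which you suggest, gives a different block system, $\{B_1\cup B_2,\, B_3\cup B_6,\, B_4\cup B_5\}$. In group-theoretic terms, in the paper's ordering $B_1\cup B_6=\langle S\rangle B_1$. The other blocks are its translates $T(B_1\cup B_6)=B_2\cup B_5$ and $ST(B_1\cup B_6)=B_3\cup B_4$, not $\langle S\rangle$-orbits. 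With ``position of $l_1$'' in place, your argument closes.

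\textbf{Two smaller points.}
\begin{itemize}
\item To reproduce Figure~\ref{fig:blocksys1}, the slots must be ordered $((0,\tfrac12),(\tfrac12,\tfrac12),(\tfrac12,0))$. Then $T$ swaps the first two entries and $S$ the last two. Your ordering makes $T$ produce the paper's $S$-edges.
\item When $|\orb|=6$ the six blocks are singletons, so non-triviality needs $|\orb|>6$, as the paper notes. The proposition itself does not assert non-triviality.
\end{itemize}
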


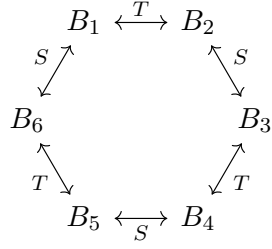
\begin{figure}[t]
\begin{center}
\begin{tikzcd}[column sep={0.75*1cm,between origins}, row sep={0.75*1.732050808cm,between origins},]
    & B_{1} \arrow[rr, "T", leftrightarrow] \arrow[ld, "S"', leftrightarrow] && B_{2} \arrow[rd, "S", leftrightarrow] &  \\
    B_{6} \arrow[rd, "T"', leftrightarrow]&  &&  & B_{3} \\
    & B_{5} \arrow[rr, "S"', leftrightarrow] && B_{4} \arrow[ru, "T"', leftrightarrow]
\end{tikzcd}
\end{center}
\caption{Block system for $\{B_{i}\}_{i = 1}^{6}$.}
\label{fig:blocksys1}
\end{figure}

\begin{figure}[t]
\begin{center}
\begin{tikzcd}[cells={nodes={}}]
        \arrow[loop left, distance=3em, start anchor={[yshift=-1ex]west}, end anchor={[yshift=1ex]west}]{}{S} \arrow[bend left]{r}{T} B_{1}\cup B_{6}  
        & \arrow[bend left]{l}{T} B_{2}\cup B_{5} \arrow[bend left]{r}{S} & \arrow[bend left]{l}{S} B_{3}\cup B_{4} \arrow[loop right, distance=3em, start anchor={[yshift=1ex]east}, end anchor={[yshift=-1ex]east}]{}{T}
\end{tikzcd}
\end{center}
\caption{Block system for $\{B_{1}\cup B_{6}, B_{2}\cup B_{5}, B_{3}\cup B_{4}\}$.}
\label{fig:blocksys2}
\end{figure}
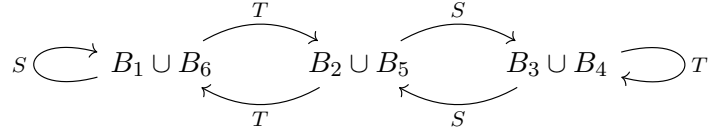

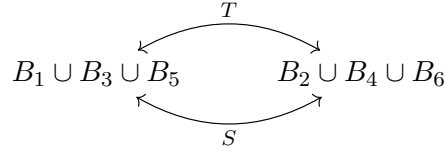
\begin{figure}[t]
\begin{center}
\begin{tikzcd}[cells={nodes={}}]
        \arrow[bend left, leftrightarrow]{r}{T} B_{1}\cup B_{3}\cup B_{5}  
        & \arrow[bend left, leftrightarrow]{l}{S} B_{2}\cup B_{4}\cup B_{6}
\end{tikzcd}
\end{center}
\caption{Block system for $\{B_{1}\cup B_{3}\cup B_{5},B_{2}\cup B_{4}\cup B_{6}\}$.}
\label{fig:blocksys3}
\end{figure}

\begin{proof}
We must prove that the sets above form a partition and that moreover this partition is invariant under the action of $\SL(2,\Z)$.

{\bf Partition:} By the definition of the HLK-invariant, any surface in this orbit lies in one of the sets $B_{i}$. These sets are disjoint as the $l_{i}$ differ. Furthermore, $\SL(2,\Z)$ acts by $\Sym(3)$ on the values $l_{1},l_{2}$, and $l_{3}$. Indeed, this can be seen by considering the action of $T$ and $S$ on the points $(1/2,0),(0,1/2)$ and $(1/2,1/2)$. Hence, we see that each $B_{i}$ is non-empty if $\orb$ is non-empty and that this is a non-trivial partition as long as $|\orb|>6$. 

{\bf Invariance:} The claimed actions of $\SL(2,\Z)$ on the sets $B_{i}$ and their unions follow from the fact that $\SL(2,\Z)$ has the action of $\Sym(3)$ on $l_{1}, l_{2}$, and $l_{3}$.
\end{proof}

\subsubsection{The case ${l_{1}\neq l_{2} = l_{3}}$}

\begin{proposition}\label{prop:l_1diff}
Let $\orb$ be an $\SL(2,\Z)$-orbit of primitive origamis all having $-I$ in their Veech group, with $|\orb|\geq3$, and such that the HLK-invariant $(l_{0},[l_{1},l_{2},l_{3}])$ of any origami in $\orb$ has $l_{1}\neq l_{2} =l_{3}$. Define the following subsets of $\orb$:
\[B_{1} = \left\{X \in \orb\,\,: \,\,\text{the number of fixed points above }\left(0,\frac{1}{2}\right)\text{ is }l_{1}\right\},\]
\[B_{2} = \left\{X\in \orb\,\,:\,\,\text{the number of fixed points above }\left(\frac{1}{2},\frac{1}{2}\right)\text{ is }l_{1}\right\},\]
\[B_{3} = \left\{X\in \orb\,\,:\,\,\text{the number of fixed points above }\left(\frac{1}{2},0\right)\text{ is }l_{1}\right\}.\]
Then the sets $B_{1}$, $B_{2}$, and $B_{3}$ form a block system for the action of $\SL(2,\Z)$ on $\orb$ with the dynamics shown in Figure~\ref{fig:blocksys4}.
\end{proposition}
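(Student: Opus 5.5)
The plan mirrors the proof of Proposition~\ref{prop:all_diff}: first show that $\{B_1,B_2,B_3\}$ partitions $\orb$, then show that $T$ and $S$ permute these sets. Invariance under the generators suffices, since then every element of $\SL(2,\Z)$ maps each $B_i$ onto some $B_j$.

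\textbf{Partition.} For each $X\in\orb$, the anti-involution $\iota$ has fixed points over the three non-zero $2$-torsion points $(1/2,0)$, $(0,1/2)$ and $(1/2,1/2)$ of $\mathbb{T}^2$. By the definition of the HLK-invariant, the three counts are $l_1,l_2,l_3$ in some order. Since $l_2=l_3\neq l_1$, the value $l_1$ occurs at exactly one of the three points. Hence $X$ lies in exactly one of $B_1,B_2,B_3$.

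\textbf{Action on the $2$-torsion points.} An element $g\in\SL(2,\Z)$ carries the fixed points of $\iota$ on $X$ lying over a $2$-torsion point $p$ bijectively to the fixed points of the anti-involution on $g(X)$ lying over $g(p)$, where $g$ acts linearly mod $\Z^2$. This uses the fact that $g$ commutes with $-I$, so the conjugated involution is again the anti-involution of $g(X)$. Consequently, if $X\in B_i$ with distinguished point $p_i$, then $g(X)$ lies in the block whose distinguished point is $g(p_i)$. The blocks are therefore permuted according to the action of $\SL(2,\Z)$ on $\{(1/2,0),(0,1/2),(1/2,1/2)\}$, which factors through $\SL(2,\Z/2)\cong\Sym(3)$.

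We compute this action on the generators:
\begin{itemize}
\item $T$ fixes $(1/2,0)$ and exchanges $(0,1/2)$ and $(1/2,1/2)$, since $(0,1/2)\mapsto(1/2,1/2)$.
\item $S$ fixes $(0,1/2)$ and exchanges $(1/2,0)$ and $(1/2,1/2)$, since $(1/2,0)\mapsto(1/2,1/2)$.
\end{itemize}
So $T$ fixes $B_3$ and swaps $B_1\leftrightarrow B_2$, while $S$ fixes $B_1$ and swaps $B_2\leftrightarrow B_3$. This should match Figure~\ref{fig:blocksys4}. Because $T$ and $S$ generate, each $g$ maps each block onto a block, so either $g(B_i)=B_i$ or $g(B_i)\cap B_i=\varnothing$. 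Transitivity of $\SL(2,\Z)$ on $\Sym(3)$ acting on the three points shows all three blocks are non-empty. Then $|\orb|\ge 3$ together with equal block sizes (transitivity) gives a valid block system, which is non-trivial when $|\orb|>3$.

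\textbf{Main obstacle.} The only delicate point is the equivariance of the fixed-point counts. One must check that the action of $g$ on the origami covers the linear action on $\mathbb{T}^2$ and conjugates $\iota$ to the anti-involution of $g(X)$. Everything else is bookkeeping. I would justify this equivariance exactly as in the proof of Proposition~\ref{prop:all_diff}.
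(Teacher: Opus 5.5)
Your proposal is correct and follows essentially the same route as the paper. The partition comes from $l_1$ occurring over exactly one of the three non-zero $2$-torsion points, and invariance comes from the action of $\SL(2,\Z)$ on those points through $\SL(2,\Z/2\Z)\cong\Sym(3)$. Your explicit check that $T$ fixes $(1/2,0)$ and $S$ fixes $(0,1/2)$, each swapping the other two points, spells out what the paper leaves implicit and correctly recovers the dynamics of Figure~\ref{fig:blocksys4}.
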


\begin{figure}[t]
\begin{center}
\begin{tikzcd}[cells={nodes={}}]
        \arrow[loop left, distance=3em, start anchor={[yshift=-1ex]west}, end anchor={[yshift=1ex]west}]{}{S} \arrow[bend left]{r}{T} B_{1}  
        & \arrow[bend left]{l}{T} B_{2} \arrow[bend left]{r}{S} & \arrow[bend left]{l}{S} B_{3} \arrow[loop right, distance=3em, start anchor={[yshift=1ex]east}, end anchor={[yshift=-1ex]east}]{}{T}
\end{tikzcd}
\end{center}
\caption{Block system for $\{B_{i}\}_{i = 1}^{3}$.}
\label{fig:blocksys4}
\end{figure}
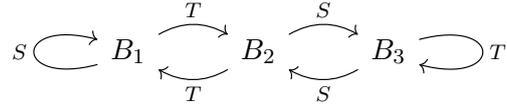

\begin{proof}
Again, we must prove that the sets above form a partition and that moreover this partition is invariant under the action of $\SL(2,\Z)$.

{\bf Partition:} By the definition of the HLK-invariant, any surface in this orbit lies in one of the sets $B_{i}$. These sets are disjoint since $l_{1}$ differs from $l_{2}$ and $l_{3}$. Further, since $\SL(2,\Z)$ acts by $\Sym(3)$ on the values $l_{1},l_{2}$, and $l_{3}$, we see that each $B_{i}$ is non-empty if $\orb$ is non-empty and that this is a non-trivial partition as long as $|\orb|>3$.

{\bf Invariance:} The claimed action of $\SL(2,\Z)$ on the sets $B_{i}$ follows from the fact that $\SL(2,\Z)$ has the action of $\Sym(3)$ on $l_{1}, l_{2}$, and $l_{3}$.
\end{proof}

%%%%%%%%%%%%%%%%%%%%%%%%%%%%%%%%%%%%%%%%%

\subsection{A block system for orbits with symmetric monodromy}

Here we give a block system for $\SL(2,\Z)$-orbits whose monodromy groups are the full symmetric group.

\begin{proposition}\label{prop:symblocksys}
Let $\orb$ be an $\SL(2,\Z)$-orbit of $n$-squared primitive origamis with $|\orb|\geq 3$ and such that the monodromy group of any origami in $\orb$ is the symmetric group $\Sym(n)$. Define the following subsets of $\orb$:
\[B_{1} = \{X = (h,v)\in \orb\,\,|\,\,h\not\in \Alt(n), v\in \Alt(n)\},\]
\[B_{2} = \{X = (h,v)\in \orb\,\,|\,\,h\not\in \Alt(n), v\not\in \Alt(n)\},\]
\[B_{3} = \{X = (h,v)\in \orb\,\,|\,\,h\in \Alt(n), v\not\in \Alt(n)\}.\]
Then the sets $B_{1}$, $B_{2}$, and $B_{3}$ form a block system for the action of $\SL(2,\Z)$ on $\orb$ with the dynamics shown in Figure~\ref{fig:blocksys4}.
\end{proposition}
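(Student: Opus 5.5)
The approach follows Propositions~\ref{prop:all_diff} and~\ref{prop:l_1diff}: first show that the three sets partition $\orb$, then track how $T$ and $S$ act on the parities of $h$ and $v$. The key tool is the sign homomorphism $\mathrm{sgn}:\Sym(n)\to\{\pm1\}$. To each origami $X=(h,v)\in\orb$ we attach the vector $(\mathrm{sgn}(h),\mathrm{sgn}(v))\in(\Z/2)^2$, written additively.

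\textbf{Partition.} Since the monodromy group $\langle h,v\rangle$ equals $\Sym(n)\neq\Alt(n)$, the permutations $h$ and $v$ cannot both lie in $\Alt(n)$. So the parity vector is never $(0,0)$, and every $X\in\orb$ lies in exactly one of $B_1$ (vector $(1,0)$), $B_2$ (vector $(1,1)$) or $B_3$ (vector $(0,1)$). The sets are pairwise disjoint by definition.

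\textbf{Action of the generators.} Using $T(h,v)=(h,vh^{-1})$ and $S(h,v)=(hv^{-1},v)$ together with multiplicativity of the sign, $T$ acts on parity vectors by $(a,b)\mapsto(a,a+b)$ and $S$ acts by $(a,b)\mapsto(a+b,b)$. These are just the reductions of $T$ and $S$ modulo $2$ acting on $(\Z/2)^2\setminus\{0\}$. Reading off the images:
\begin{itemize}
\item $T$ sends $(1,0)\mapsto(1,1)$, $(1,1)\mapsto(1,0)$ and $(0,1)\mapsto(0,1)$, so $T$ swaps $B_1$ and $B_2$ and preserves $B_3$;
\item $S$ fixes $(1,0)$ and swaps $(1,1)$ with $(0,1)$, so $S$ preserves $B_1$ and swaps $B_2$ and $B_3$.
\end{itemize}
This is exactly the dynamics of Figure~\ref{fig:blocksys4}. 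The inverses $T^{-1}$ and $S^{-1}$ induce the same maps, since these maps are involutions on the blocks. Hence every element of $\SL(2,\Z)$ maps each block onto a block, which is the block system property.

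\textbf{Non-triviality.} We still need each $B_i$ to be non-empty, and the partition to be neither $\{\orb\}$ nor the partition into singletons. Because $\orb$ is a single orbit and the induced action on the three parity classes is transitive (the reduction $\SL(2,\Z)\to\SL(2,\Z/2)\cong\Sym(3)$ is surjective), all three classes are hit. So there are exactly three non-empty blocks, and the partition is not $\{\orb\}$. If $|\orb|>3$, some block has at least two elements, so the partition is not into singletons either.

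The only point needing care is this edge case. When $|\orb|=3$ the partition consists of singletons, which is trivial in the sense of primitivity, but it is still a valid block system with the stated dynamics. The statement asks only for a block system, so no obstacle arises; the argument is otherwise a routine parity computation.
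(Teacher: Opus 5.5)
Your proof is correct and follows essentially the same route as the paper. You partition using the fact that $h,v$ cannot both lie in $\Alt(n)$, and then track the parities of $h$ and $v$ under $T$ and $S$; the paper shows non-emptiness via the explicit chain $X\mapsto T(X)\mapsto ST(X)$, which is your transitivity argument made concrete. One harmless imprecision: the maps $(a,b)\mapsto(a,a+b)$ and $(a,b)\mapsto(a+b,b)$ are the mod-$2$ reductions of $T$ and $S$ acting on row vectors (equivalently, of their transposes acting on column vectors), not on column vectors. This affects nothing, since you only use transitivity, which your explicit list of images already shows.
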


\begin{proof}
The proof is similar to the proofs above.

{\bf Partition:} We first prove that $\{B_{i}\}$ is a partition of $\orb$. Since we have $\sigma:=\langle h, v\rangle\cong \Sym(n)$ for all $X = (h,v)\in \orb$ we cannot have both $h$ and $v$ being elements of $\Alt(n)$. Therefore, every origami in $\orb$ lies in exactly one of the sets $B_{i}$. We must prove that each $B_{i}$ is nonempty. Suppose first that $X = (h,v) \in \orb$ lies in $B_{1}$. That is, we have $h\not\in \Alt(n)$ and $v\in \Alt(n)$. Under the action of $T$, $X$ is mapped to the origami $T(X) = (h,vh^{-1})$. Since $h\not\in \Alt(n)$ and $v\in \Alt(n)$ we have that $vh^{-1}\not\in \Alt(n)$. Hence, $T(X) \in B_{2}$. Now consider the image of $T(X)$ under the action of $S$. We have $S(T(X)) = (h^2v^{-1},vh^{-1})$ and we see that $h^2v^{-1} \in \Alt(n)$ and again $vh^{-1}\not\in \Alt(n)$. Hence, $S(T(X)) \in B_{3}$. A similar argument also works if $X$ was initially in $B_{2}$ or $B_{3}$. We have therefore shown that $\{B_{i}\}$ is indeed a partition of $\orb$. Note that this will be the trivial partition if $|\orb| = 3$.

{\bf Invariance:} Let $X = (h,v) \in B_{1}$. Then we have $h\not\in \Alt(n)$ and $v\in \Alt(n)$. Since $h^{-i}\in \Alt(n)$ if and only if $i$ is even, we have $T^{2k+1}(X)\in B_{2}$ and $T^{2k}(X)\in B_{1}$. A symmetric argument applies if $X\in B_{2}$. Now suppose that $X\in B_{3}$. Then we have $h\in \Alt(n)$ and $v\not\in \Alt(n)$. In particular, $h^{-i}\in \Alt(n)$ for all $i$ and so $vh^{-i}\not\in \Alt(n)$ for all $i$. Hence, $T^{i}(X)\in B_{3}$ for all $i$. So we have demonstrated that
\[T(B_{1}) = B_{2}, \,\,\,\,\,T(B_{2}) = B_{1},\,\,\,\,\,\,\text{and}\,\,\,\,\,\,T(B_{3}) = B_{3}.\]
A symmetric argument considering the powers of $v$ gives us that 
\[S(B_{1}) = B_{1}, \,\,\,\,\,S(B_{2}) = B_{3},\,\,\,\,\,\,\text{and}\,\,\,\,\,\,S(B_{3}) = B_{2}.\]
\end{proof}

\begin{corollary}
Let $\orb$ be an $\SL(2,\Z)$-orbit as in the statement of Proposition~\ref{prop:symblocksys}. Then the proportion of 1-cylinder origamis in $\orb$ that also have one vertical cylinder is at most $1/2$.
\end{corollary}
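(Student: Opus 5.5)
A 1-cylinder origami that also has one vertical cylinder has $h$ and $v$ both $n$-cycles. Since every origami in $\orb$ has monodromy $\Sym(n)$, not both can lie in $\Alt(n)$. An $n$-cycle has sign $(-1)^{n-1}$, so $h$ and $v$ have the same parity. Hence $n$ must be even, both are odd permutations, and every such origami lies in $B_{2}$ of Proposition~\ref{prop:symblocksys}. (For odd $n$ there are no such origamis and the bound is trivial.)

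It therefore suffices to show that the $1$-cylinder origamis in $B_{2}$ make up at most half of all $1$-cylinder origamis in $\orb$. The plan is to inject the $1$-cylinder origamis of $B_{2}$ into the $1$-cylinder origamis of $B_{1}$ (or $B_{3}$) by a map preserving the $1$-cylinder property. The natural candidate is $T$: the horizontal cylinder decomposition, and so the number of horizontal cylinders, is $T$-invariant. By Proposition~\ref{prop:symblocksys}, $T(B_{2}) = B_{1}$, so $T$ restricted to $B_{2}$ is a bijection onto $B_{1}$ preserving the number of horizontal cylinders. Thus the number of $1$-cylinder origamis in $B_{1}$ equals the number in $B_{2}$.

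Every $1$-cylinder origami with one vertical cylinder lies in $B_{2}$, and $B_{1}$ is disjoint from $B_{2}$. So the number of $1$-cylinder origamis having one vertical cylinder is at most the number of $1$-cylinder origamis in $B_{2}$, which equals the number in $B_{1}$. Since the $1$-cylinder origamis in $B_{1}$ and $B_{2}$ together are at most all $1$-cylinder origamis in $\orb$, the number with one vertical cylinder is at most half of the $1$-cylinder total, giving the bound $1/2$. Here "$1$-cylinder" means one horizontal cylinder.

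I expect the main point needing care to be the parity observation: $n$-cycles are odd exactly when $n$ is even, so for even $n$ the doubly $1$-cylinder origamis are forced into $B_{2}$. The rest is the block dynamics already established in Proposition~\ref{prop:symblocksys}, together with the $T$-invariance of the horizontal cylinder count.
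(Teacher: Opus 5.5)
Your proof is correct and is essentially the paper's argument: the parity observation puts every origami with one horizontal and one vertical cylinder into $B_{2}$, and the fact that $T$ swaps $B_{1}$ and $B_{2}$ while preserving the horizontal cylinder decomposition gives the factor $1/2$. The paper runs this cusp by cusp (within a cusp, such origamis can only recur along powers of $T^{2}$), which is just the $T$-orbit decomposition of your global bijection $B_{2}\to B_{1}$. The one step you left implicit is why $h$ and $v$ are $n$-cycles: a one-cylinder origami whose cylinder has height greater than one is imprimitive, because its horizontal rows form a block system, so primitivity forces height one.
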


\begin{proof}
If $\orb$ contains no 1-cylinder origamis with a single vertical cylinder then we are done. Otherwise, let $X = (h,v)$ be such an origami. By the primitivity of $X$, we then have $h$ and $v$ both being $n$-cycles. Since the monodromy group is $\Sym(n)$ we cannot have $n$ being odd as this would give us that $h,v\in \Alt(n)$ and so $\langle h,v\rangle \not \cong \Sym(n)$. We therefore have that $n$ is even and hence $h$ and $v$ are both not in $\Alt(n)$. That is, $X $ must lie in $B_{2}$. Any other 1-cylinder origami in the same cusp as $X$ having a single vertical cylinder must be an image of $X$ under a multiple of $T^{2}$. Hence at most half of the 1-cylinder origamis in the same cusp as $X$ have one vertical cylinder. This is true for all 1-cylinder cusps and so the statement follows.
\end{proof}

\begin{remark}
    We remark that Proposition~\ref{prop:symblocksys} works with $\Sym(n)$ and $\Alt(n)$ replaced by any monodromy group $G$ and an index two subgroup $H$.
\end{remark}

%%%%%%%%%%%%%%%%%%%%%%%%%%%%%%%%%%%%%%%%%

\subsection{A block system for origamis without an anti-involution}

Here, we give a block system for the action of $\SL(2,\Z)$ on primitive origamis that do not admit $-I$ as a symmetry.

\begin{proposition}\label{prop:nonprymnonhyp}
Let $\orb$ be an $\SL(2,\Z)$-orbit of primitive origamis that do not admit $-I$ as a symmetry (that is, $-I$ is not in the Veech group of any origami in $\orb$). Define the following collection of sets
\[\mathscr{B}:= \left\{\{X,-I(X)\}\right\}_{X\in \orb}.\]
Then $\mathscr{B}$ is a block system for the action of $\SL(2,\Z)$ on $X$.
\end{proposition}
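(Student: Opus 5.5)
The plan is to verify the two properties required of a block system: that $\mathscr{B}$ is a partition of $\orb$, and that every $g\in\SL(2,\Z)$ maps blocks to blocks. The key fact throughout is that $-I$ is central in $\SL(2,\Z)$.

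\textbf{Partition.} First note that $-I$ preserves $\orb$, since $-I\in\SL(2,\Z)$. Next, $-I$ acts as an involution on $\orb$, because $(-I)^2=I$. So the sets $\{X,-I(X)\}$ are exactly the orbits of the subgroup $\{\pm I\}$ acting on $\orb$. Orbits of a group action always partition the underlying set, so $\mathscr{B}$ is a partition.

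We also check that each block has exactly two elements. By hypothesis, $-I$ is not in the Veech group of any origami in $\orb$, so $-I(X)\neq X$ for every $X\in\orb$. This is not needed for the block property itself, but it shows the partition is non-trivial as long as $|\orb|>2$. In terms of permutations, $-I$ acts by $(h,v)\mapsto(h^{-1},v^{-1})$ up to relabelling, though this explicit form is not needed.

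\textbf{Invariance.} Take $g\in\SL(2,\Z)$ and a block $\{X,-I(X)\}$. Its image is $\{g(X),g(-I(X))\}$. Since the action is a group action and $-I$ commutes with $g$,
\[
g(-I(X))=(g\cdot(-I))(X)=((-I)\cdot g)(X)=-I(g(X)).
\]
Hence $g(\{X,-I(X)\})=\{g(X),-I(g(X))\}$, which is again a member of $\mathscr{B}$. Since blocks partition $\orb$, this image either equals the original block or is disjoint from it. That is precisely the defining property of a block system.

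There is no real obstacle in this argument. The only point needing care is that the $\SL(2,\Z)$-action on origamis is a genuine group action, so that the identity $g(-I(X))=(g(-I))(X)$ holds. This follows from the action being the restriction of the $\SL(2,\R)$-action on translation surfaces, where it holds by definition via post-composition of charts.
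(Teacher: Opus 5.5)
Your proof is correct and follows essentially the same route as the paper. The partition comes from $(-I)^2=I$, which you phrase as the blocks being the orbits of $\{\pm I\}$. The hypothesis $-I\notin\SL(X,\omega)$ only ensures the blocks are genuine pairs, and invariance follows from $-I$ being central in $\SL(2,\Z)$.
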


\begin{proof}
Again, the proof is similar to the proofs above.

{\bf Partition:} Since the origamis in $\orb$ are not symmetric by $-I$ we have that $-I(X)\neq X$ for all $X\in \orb$. Hence, all sets in $\mathscr{B}$ are genuine pairs. Disjointness follows from $-I^2 = I$.

{\bf Invariance:} This follows from the fact that $-I$ is central in $\SL(2,\Z)$.
\end{proof}

%%%%%%%%%%%%%%%%%%%%%%%%%%%%%%%%%%%%%%%%%
%%%%%%%%%%%%%%%%%%%%%%%%%%%%%%%%%%%%%%%%%

\section{Parabolic generators in $\calH(2)$}\label{sec:T-S-H(2)}

Here, we prove Theorem~\ref{thm:submain-1}.

%%%%%%%%%%%%%%%%%%%%%%%%%%%%%%%%%%%%%%%%%

\subsection{Hyperbolic faces in $\calH(2)$}\label{subsec:hyp-H2}

Of the words of length at most four in $T$ and $S$ (up to cyclic permutation and inversion), $ST, ST^2, S^2T, ST^3, S^2T^2, S^3T, (ST)^2$, and $(TS)^{-1}ST$ all give rise to hyperbolic elements in $\SL(2,\Z)$. Such a hyperbolic element corresponds to a pseudo-Anosov diffeomorphism on the underlying smooth surface (it can be realised by moving to the flat surface in the $\GL(2,\R)$-orbit that diagonalises the action of the corresponding element of $\SL(2,\Z)$).

We remark that, when $n$ is even and in the A-orbit for $n$ odd, we have the block system of Figure~\ref{fig:blocksys4}. This rules out $ST, ST^3, S^3T$ and $(TS)^{-1}ST$.

Since we have a finite list of words, the stretch factors of the corresponding pseudo-Anosov diffeomorphisms are bounded from above by some constant $D\geq1$. We can then make use of the following theorem of Arnoux-Yoccoz~\cite{AY} and Ivanov~\cite{Iva} which we quote as it appears in~\cite{FM}:

\begin{theorem}[{\cite[Theorem 14.9]{FM}}]
    For any $g,n\geq 0$ and $D\geq 1$, the mapping class group $\Mod(S_{g,n})$ contains only finitely many conjugacy classes of pseudo-Anosov diffeomorphism with stretch factor bounded above by $D$.
\end{theorem}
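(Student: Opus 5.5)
I would prove this through the combinatorics of invariant train tracks. Each step reduces the problem to choosing among finitely many combinatorial objects, so the finiteness claim follows once everything is bounded.

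Let $\phi\in\Mod(S_{g,n})$ be pseudo-Anosov with stretch factor $\lambda\leq D$.

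\textbf{Step 1: the invariant train track.} By Thurston's theory, $\phi$ admits an invariant birecurrent train track $\tau$ carrying its unstable lamination. This means $\phi(\tau)$ is carried by $\tau$, and the induced map on branch weights is given by a non-negative integer transition matrix $M_\phi$. Its properties are:
\begin{itemize}
\item its size is at most the maximal number $N=N(g,n)$ of branches of a train track on $S_{g,n}$;
\item it is Perron--Frobenius (in fact primitive), with leading eigenvalue exactly $\lambda$.
\end{itemize}
Up to the action of $\Mod(S_{g,n})$ there are only finitely many train tracks on $S_{g,n}$, since they are determined by finitely many combinatorial types of trivalent ribbon graphs with bounded data. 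So after replacing $\phi$ by a conjugate, $\tau$ lies in a fixed finite list.

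\textbf{Step 2: bounding the matrix entries.} I would show that the entries of $M_\phi$ are bounded in terms of $D$ and $N$. Let $v$ be the positive Perron--Frobenius eigenvector, so $M_\phi v=\lambda v$. Then each entry satisfies $(M_\phi)_{ij}\leq \lambda v_i/v_j$.

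Next, primitivity gives an exponent $k\leq (N-1)^2+1$ (Wielandt's bound) with $M_\phi^k>0$ entrywise. Since $M_\phi^k$ has integer entries, every entry is at least $1$. Hence $v_i\geq v_j$ for each pair, and comparing through $M_\phi^k v=\lambda^k v$ gives $v_i/v_j\leq \lambda^k$. Combining these, $(M_\phi)_{ij}\leq D^{k+1}$. Therefore only finitely many matrices can occur.

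\textbf{Step 3: recovering $\phi$.} Finally I would argue that the pair $(\tau, M_\phi)$ determines $\phi$ up to finitely many possibilities. The pair determines the homotopy class of the train track map $\tau\to\tau$ up to finitely many choices: the image of each branch is a train path of prescribed length whose branch-count vector is the corresponding column of $M_\phi$, and only finitely many such paths exist. A carrying map $\phi(\tau)\prec\tau$ with given combinatorics determines $\phi$ up to the finite group of symmetries of the ribbon structure. Together with Step 1, this bounds the number of conjugacy classes.

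\textbf{Main obstacle.} I expect Step 3 to be the delicate point. One has to check that a mapping class is determined up to finite ambiguity by its action on a filling train track. I would try to get this from the fact that $\tau$ fills (its complementary regions are discs or once-punctured discs), so that a homeomorphism is pinned down, up to isotopy and finite symmetry, by the images of the branches.

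A fallback is Ivanov's Teichmüller-space argument. Take $X$ on the axis of $\phi$, so $d(X,\phi X)=\log\lambda\leq\log D$. Wolpert's inequality then bounds the intersection of the systoles of $X$ with their $\phi$-images, and a Mumford-compactness argument over the finitely many systole configurations gives the same finiteness.
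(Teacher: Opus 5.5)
Your argument is correct, but it is not the proof behind the citation. The paper proves nothing here and simply imports Farb--Margalit's Theorem~14.9, whose proof (Ivanov's) is Teichm\"uller-theoretic. There one takes $X$ on the axis of $\phi$, so $d(X,\phi X)=\log\lambda\le\log D$. By Wolpert's lemma, if some curve $\alpha$ were very short on $X$, then $\alpha,\phi(\alpha),\dots,\phi^{3g-3+n}(\alpha)$ would all be short, hence pairwise disjoint or equal. That forces $\phi^j(\alpha)=\alpha$ for some $j\ge 1$, which a pseudo-Anosov forbids. So $X$ lies in a thick part depending only on $D$ and $S_{g,n}$. Mumford compactness then lets you conjugate $\phi$ so that $X$ lies in a fixed compact set $K$, and proper discontinuity leaves only finitely many $\phi$ with $d(K,\phi K)\le\log D$. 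Your route replaces this analysis with combinatorics: finitely many tracks up to $\Mod(S_{g,n})$, integer Perron--Frobenius matrices with bounded entries, and finitely many train-track maps. It buys an in-principle effective count and, for free, the discreteness of the set of stretch factors. It relies on the existence of invariant train tracks with irreducible transition matrix (Bestvina--Handel, Papadopoulos--Penner) rather than on Teichm\"uller's theorem, Wolpert's lemma and Mumford compactness.

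Three small points. First, in Step~2 the sentence ``$v_i\ge v_j$ for each pair'' is false as written. What you actually get is $\lambda^k v_i=(M_\phi^k v)_i\ge (M_\phi^k)_{ij}v_j\ge v_j$, which gives the ratio bound you use. Irreducibility alone suffices here: for each $(i,j)$ some $k\le N-1$ has $(M_\phi^k)_{ij}\ge 1$, so Wielandt's bound is unnecessary. Second, your Step~3 obstacle closes cleanly, with no finite ambiguity at all. Since $\tau$ carries a filling lamination, its complementary regions are discs or once-punctured discs, so $\pi_1(\tau)\to\pi_1(S_{g,n})$ is surjective. Collapsing $\phi(\tau)$ onto $\tau$ along ties is a homotopy in $S_{g,n}$ from $\phi|_\tau$ to the train-track map $f\colon\tau\to\tau$. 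Hence $f$ determines the outer automorphism induced by $\phi$, and therefore $\phi$ itself by injectivity of $\Mod(S_{g,n})\to\Out(\pi_1 S_{g,n})$. Third, your fallback is incomplete as stated: systoles need not fill, and Mumford compactness needs a lower bound on the systole of $X$. That lower bound is exactly the thick-part lemma above, which is where the absence of $\phi$-periodic curves enters.
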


We obtain the following corollary.

\begin{corollary}\label{cor:hyperbolic}
    Let $M$ be a fixed hyperbolic element of $\SL(2,\Z)$, and fix a stratum of translation surfaces $\calH$. There exists an $N$ so that for all $n\geq N$, the Veech group of any $n$-squared origami in $\calH$ does not contain $M$.
\end{corollary}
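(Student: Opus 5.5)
The argument is by contradiction, combining the finiteness theorem of Arnoux--Yoccoz and Ivanov quoted above with the observation that the topology of an $n$-squared origami in $\calH$ is independent of $n$. Fix $\calH = \calH_g(k_1,\ldots,k_s)$. Every origami $X=(M,\omega)$ in $\calH$ is a closed genus $g$ surface with $s$ marked points, the zeroes of $\omega$. So all such origamis, for all $n$, are modelled on one topological surface $S_{g,s}$. Let $\lambda>1$ be the spectral radius of $M$; as $M$ is hyperbolic, $|\mathrm{tr}\,M|>2$.

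First, if $M$ lies in the Veech group $\SL(X,\omega)$, there is an affine diffeomorphism $\phi$ of $X$ with derivative $M$. It fixes $\Sigma$ setwise, so it defines a mapping class in $\Mod(S_{g,s})$ once we choose an identification of $X$ with $S_{g,s}$ (well defined up to conjugacy). Second, as the paper notes, $\phi$ is pseudo-Anosov with stretch factor $\lambda$. Its stable and unstable foliations are the straight-line foliations in the two eigendirections of $M$, and conjugating by the diagonalising element of $\GL(2,\R)$ makes this explicit. The stretch factor therefore depends only on $M$, not on $X$ or $n$, so we may take $D=\lambda$ in Theorem~14.9 of~\cite{FM}. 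This yields finitely many conjugacy classes $[\phi_1],\ldots,[\phi_r]$ in $\Mod(S_{g,s})$ that could arise this way, over all $n$.

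The key step is to show that a single conjugacy class can only arise from origamis with a bounded number of squares. I will use the rigidity of the flat structure of a pseudo-Anosov: the invariant foliations of $\phi_j$ determine a quadratic differential, hence a flat structure, unique up to scaling of the transverse measures, i.e.\ up to the action of the diagonal group. Any two pairs $(X,\phi)$ and $(X',\phi')$ with $\phi,\phi'$ conjugate to $\phi_j$ thus have flat structures differing by an element of $\GL(2,\R)$ that conjugates the affine action; in particular $X'$ lies in the $\GL(2,\R)$-orbit of $X$. Both are origamis, each generating its own Veech group. An element of $\GL(2,\R)$ carrying one origami onto another sends the period lattice of relative periods, which for a primitive-type comparison is commensurable to $\Z^2$, onto a lattice of the same kind. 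So the area ratio, which equals $n'/n$, is pinned down up to finitely many possibilities. A cleaner alternative for this step is to avoid lattices and fix a curve $\gamma$ on $S_{g,s}$. Its flat length on $X$ satisfies $\ell_X(\phi^k\gamma)\asymp\lambda^k$, and the constants, after normalising area, are determined by the conjugacy class. Meanwhile the area $n$ is tied to the lengths of saddle connections, which are integer vectors for an origami. Either route should give, for each $j$, a finite set of $n$ for which an $n$-squared origami realises $[\phi_j]$.

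Taking $N$ larger than all of these finitely many values of $n$ proves the corollary. I expect the main obstacle to be this last step: making the ``unique flat structure up to $\GL(2,\R)$'' statement precise enough to bound $n$. In particular, the normalisation coming from the $\GL(2,\R)$ ambiguity has to be controlled using the integrality of periods of origamis. I would first try the lattice/period argument, using that the holonomy of relative homology of an $n$-squared origami spans a sublattice of $\Z^2$ of bounded index.
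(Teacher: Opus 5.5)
Your strategy matches the paper's: the paper deduces the corollary directly from the Arnoux--Yoccoz/Ivanov finiteness theorem and leaves implicit how finitely many conjugacy classes give finitely many $n$. The gap is exactly in that step, which you flagged yourself, and as you formulate it the step fails. Commensurability of the relative period group with $\Z^2$, or a bounded-index condition, does not pin down $n'/n$.

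Here is a counterexample. Let $X_0=(Y,\omega_0)$ be an $n_0$-squared origami in $\calH$ with $M\in\SL(Y,\omega_0)$. For every $k\geq 1$, the surface $(Y,k\omega_0)$ is again an origami in $\calH$: compose the covering with the unbranched map $z\mapsto kz$ of $\mathbb{T}^2$. It has $k^2n_0$ squares and the same Veech group, so it still contains $M$. Concretely, the remark in the paper says $((1,2,3,4,5),(3,4,5))$ is fixed by $ST$, so this gives $5k^2$-squared origamis in $\calH(2)$ with the hyperbolic element $ST$ in their Veech group, for every $k$. In your notation these surfaces differ by $A=kI$ and realise the same conjugacy class. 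The same objection defeats your length-based alternative. So the statement is false for arbitrary origamis, and your last step cannot be closed without an extra hypothesis.

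The missing hypothesis is primitivity, which holds in every application in the paper, since all orbit graphs there consist of primitive origamis. Suppose $X=(Y,\omega)$ is primitive, and let $\Lambda\subseteq\Z^2$ be the group generated by its relative periods. Then $\Lambda=\Z^2$. Otherwise, fix a zero $z_0$; the map $x\mapsto\int_{z_0}^{x}\omega \bmod \Lambda$ factors the covering as $Y\to\C/\Lambda\to\mathbb{T}^2$, branched only over $0$. This is an intermediate torus origami with $[\Z^2:\Lambda]>1$ squares, contradicting primitivity.

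Now grant your rigidity step, which gives $X'\cong \pm A\cdot X$ for some $A\in\GL(2,\R)$. Then $A\Z^2=\Z^2$, so $A\in\GL(2,\Z)$, $|\det A|=1$, and $n'=n$. Hence each of the finitely many conjugacy classes is realised by primitive origamis with a single number of squares. Taking $N$ larger than these finitely many values proves the corollary for primitive origamis.

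For the rigidity step itself, cite the uniqueness of pseudo-Anosov representatives up to conjugation by homeomorphisms isotopic to the identity. The conjugating homeomorphism carries the invariant measured foliations of one map to those of the other up to constant scalings. It is therefore affine for the two flat structures, which is what puts $X'$ in the $\GL(2,\R)$-orbit of $X$.
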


As a result, there are no faces in $\mathcal{G}_{n}$ corresponding to the words listed above once $n$ is large enough. In particular, the number of hyperbolic faces of length at most four is $o(|\mathcal{G}_{n}|)$. The following remark discusses computational estimates using~\cite{flatsurf,surf-dyn,Sage} for how large $n$ might need to be for the faces to stop appearing.

\begin{remark}
    The origamis $((1,2,3,4,5),(3,4,5))$ and $((3,4,5),(1,2,3,5,4))$ in $\mathcal{G}_{5}^{B}$ are fixed by $ST$. Computational experiments suggest that these are the only two primitive origamis fixed by $ST$ in $\calH(2)$.

    The origamis $((1,2,3,4),(2,3,4))$ in $\mathcal{G}_{4}$, $((1,2)(3,4,5),(1,3,2,4,5))$ in $\mathcal{G}_{5}^{A}$ and $$((4,5,6,7),(1,2,3,4,7,6,5))$$ in $\mathcal{G}_{7}^{A}$ are fixed by $ST^{2}$, while $((2,3,4),(1,2,4,3))$ in $\mathcal{G}_{4}$, $((1,2,3,4,5),(1,4,2)(3,5))$ in $\mathcal{G}_{5}^{A}$ and $((1,2,3,4,5,6,7),(4,5,6,7))$ in $\mathcal{G}_{7}^{A}$ are fixed by $S^2T$. Computationally, these appear to be the only primitive origamis in $\calH(2)$ fixed by either of these words.

    All three origamis in $\mathcal{G}_{3}$, the origamis $((2,3,4),(1,2,3,4))$ and $((1,2,3,4),(2,4,3))$ in $\mathcal{G}_{4}$, the origamis $((2,3,4,5),(1,2,5,4,3))$ and $((1,2,3,4,5),(2,3,4,5))$ in $\mathcal{G}_{5}^{A}$, and the origamis $((1,2)(3,4)(5,6,7),(1,3,5,6)(2,4,7))$ and $((1,2,3)(4,5,6,7),(1,4)(2,5)(3,7,6))$ in $\mathcal{G}_{7}^{A}$ are fixed by $S^{2}T^{2}$. The origamis $((3,4,5),(1,2,3))$ and $((1,2,3,4,5),(1,2,4,3,5))$ in $\mathcal{G}_{5}^{B}$ are fixed by $(TS)^{-1}ST$. The origamis $((1,2,3,4,5,6,7),(3,6,4,7,5))$ in $\mathcal{G}_{7}^{B}$ and $$((5,6,7,8,9),(1,2,3,4,5,9,8,7,6))$$ in $\mathcal{G}_{9}^{B}$ are fixed by $ST^{3}$. Finally, the origamis $((3,4,5,6,7),(1,2,3,6,4,7,5))$ in $\mathcal{G}_{7}^{B}$ and $$((1,2,3,4,5,6,7,8,9),(5,6,7,8,9))$$ in $\mathcal{G}_{9}^{B}$ are fixed by $S^{3}T$. Computations suggest that these are the only primitive origamis in $\calH(2)$ fixed by any of these words. Notice that $(ST)^2$ does not seem to appear in the Veech group of any primitive origami in $\calH(2)$.
\end{remark}

%%%%%%%%%%%%%%%%%%%%%%%%%%%%%%%%%%%%%%%%%

\subsection{Parabolic faces in $\calH(2)$}\label{subsec:para-H2}

Of the words of length at most four in $T$, $S$ and their inverses (again up to cyclic permutation and inversion), the words $T, S, T^2, S^2, T^3, S^3, T^4, S^4$, and $ST^{-2}S$ give rise to parabolic elements of $\SL(2,\Z)$.

Note that, when $n$ is even and in the A-orbit for $n$ odd, the block system of Figure~\ref{fig:blocksys4} does not rule out any of the above words.
\begin{remark}\label{rem:cusps}
    The words that are powers of $T$ and $S$ correspond to cusps on the \teichmuller curve and it suffices in this instance to use the known result (see~\cite{McM},~\cite{Bai} and~\cite{Muk}) that the total number of cusps and hence also the number of cusps of length at most 4 is $o(|\mathcal{G}_{n}|)$. We use a different method of counting here since it will apply below in situations where the cusp count is not already known.
\end{remark}

We will begin by bounding the number of origamis in an orbit that are fixed by $T$; i.e., the number of loops labelled by $T$ in the graph. Since each loop labelled by $T$ fixing some origami $X$ has a corresponding loop labelled by $S$ fixing the surface $R(X)$ where $R = T^{-1}ST^{-1}$ is anti-clockwise rotation by $\frac{\pi}{2}$, counting $T$-labelled loops gives a count of all loops.

Once $n\geq 4$, one-cylinder origamis have cusp width $n$ and so the only origamis that can be fixed by $T$ are two-cylinder origamis. Recall that the cusp width of a two-cylinder origami with surface parameters $(w_1,h_1,t_1,w_2,h_2,t_2)$ is
\[\cuspwidth.\]
Hence, if we require the cusp to have width one, then we need $\gcd(w_{1},h_{1}) = w_{1}$ and $\gcd(w_{2},h_{2}) = w_{2}$. Therefore, we must have positive integers $x$ and $y$ such that $h_{1} = xw_{1}$ and $h_{2} = yw_{2}$.

Recalling that the origamis have area $n$ so that $w_{1}h_{1}+w_{2}h_{2} = n$. We are therefore looking for lattice points on the ellipse with equation
\[xw_{1}^{2} + yw_{2}^{2} = n.\]

Given such a lattice point $(w_{1},w_{2})$ on the above ellipse, we can obtain the origamis with surface parameters $(w_{1},xw_{1},t_1,w_{2},yw_{2},t_{2})$ for any choice of $0\leq t_{i} < \gcd(w_{i},h_{i}) = w_{i}$. So each lattice point determines $w_{1}w_{2}$ distinct origamis fixed by $T$.

For example, when $n = 5$ the only ellipse of the above form that contains a lattice point is the ellipse
\[w_{1}^{2} + w_{2}^{2} = 5\]
with the solution $(w_{1},w_{2}) = (1,2)$ (recalling that $w_{1} < w_{2}$). This gives the two origamis with surface parameters $(1,1,0,2,2,0)$ and $(1,1,0,2,2,1)$. The first lies in the A orbit while the second lies in the B orbit. The images of these origamis under $R$ will be fixed points for $S$. Hence, each orbit contains two loops.

We will make use of the results of Bombieri-Pila~\cite[Section 3]{BoPi} that prove that an irreducible algebraic curve of degree $d$ contained inside a square of side length $N$ contains $O(N^{\frac{1}{d}+\epsilon})$ lattice points, where the implied constant only depends on $\epsilon>0$ and $d$. Our ellipses are irreducible algebraic curves of degree two.

Let $1\leq \min\{x,y\} < \sqrt{n}$. Each such ellipse is then contained inside the square of side length at most $N = \sqrt{n}$. It therefore contains $O(n^{\frac{1}{4}+\epsilon})$ lattice points. Given the bound on $\min\{x,y\}$, we have $O(n^{\frac{3}{2}})$ such ellipses and on each ellipse $w_{1},w_{2} = O(N) = O(\sqrt{n})$. Hence, each lattice point determines $w_{1}w_{2}=O(n)$ distinct origamis. So, in total, we get $O(n^{\frac{11}{4} + \epsilon})$ origamis.

Now suppose that $\sqrt{n}\leq x,y \leq n$. Here, each ellipse is contained inside a square of side length at most $N = n^{\frac{1}{4}}$. So each ellipse contains $O(n^{\frac{1}{8}+\epsilon})$ lattice points. This time, given the bounds on $x$ and $y$, we have $O(n^{2})$ possible ellipses. However, now that $w_{1},w_{2} = O(N) = O(n^{\frac{1}{4}})$, each lattice point only determines $w_1w_2 = O(n^{\frac{1}{2}})$ distinct origamis. So, in total, we get $O(n^{\frac{21}{8}+\epsilon})$ origamis.

Hence, we have $O(n^{\frac{11}{4}+\epsilon})$ loops in the graph.

Consider now surfaces fixed by $T^2$ or $S^2$. Again, it suffices to only count surfaces fixed by $T^2$ (we can overcount those fixed by $T$). Here, we want to count the two-cylinder origamis whose cusp width
\[\cuspwidth\]
is equal to two. Hence, we have $\gcd(w_{1},h_{1}) = \frac{w_{1}}{2}$ and $\gcd(w_{2},h_{2}) = w_{2}$, giving $h_{1} = x\frac{w_{1}}{2}$ for odd $x$ and $h_{2} = yw_{2}$, or vice versa (i.e., $h_{1} = xw_{1}$ and $h_{2} = y\frac{w_{2}}{2}$ for odd $y$).

So, now we will be searching for lattice points on ellipses of the form
\[\frac{x}{2}w_{1}^{2} + yw_{2}^{2} = n\]
or
\[xw_{1}^{2} + \frac{y}{2}w_{2}^{2} = n.\]

For example, when $n = 5$, only the ellipse
\[3w_{1}^{2} + \frac{1}{2}w_{2}^{2} = 5\]
has an integer solution. This solution being $(w_{1},w_{2}) = (1,2)$ giving rise to a single cusp whose representative has surface parameters $(1,3,0,2,1,0)$. This origami lies in the A orbit.

Since we have a finite number of ellipses with similarly bounded coefficients, the argument from above applies exactly as before and we get $O(n^{\frac{11}{4}+\epsilon})$ origamis fixed by $T^2$ or $S^2$.

This also works for $T^3, S^3, T^4$, and $S^4$.

Finally, we observe that $ST^{-2}S = \begin{bmatrix}
    -1 & -2 \\
    0 & -1
\end{bmatrix}$. Recall that all of our origamis have $-I$ in their Veech group. In particular, an origami fixed by $ST^{-2}S$ also has $\begin{bmatrix}
    1 & 2 \\
    0 & 1
\end{bmatrix} = T^{2}$ in its Veech group. That is, it lies in a cusp of width two or in a loop. We have bounded these above. So, since every face corresponding to $ST^{-2}S$ is attached to a cusp of width two or to a loop, the bound for cusps of width two and loops can be used here which is again $O(n^{\frac{11}{4}+\epsilon})$.

Hence, there are at most $O(n^{\frac{11}{4}+\epsilon})$ parabolic faces of length at most four in $\mathcal{G}_{n}$. Note that this is $o(|G_{n}|) = o(n^3)$.

%%%%%%%%%%%%%%%%%%%%%%%%%%%%%%%%%%%%%%%%%

\subsection{Elliptic faces in $\calH(2)$}\label{subsec:ell-H2}

Of the words of length at most four in $T$, $S$ and their inverses (again up to cyclic permutation and inversion), the words $S^{-1}T, T^{-1}ST^{-1} = R, ST^{-1}S, S^{-1}T^3, T^{-1}S^3, S^{-1}TST$, $T^{-1}STS$, and $(S^{-1}T)^2$ give rise to elliptic elements of $\SL(2,\Z)$.

Recall that for each $n$-squared $\SL(2,\Z)$-orbit and some choice of vertex stabiliser (i.e., Veech group) $\SL(X,\omega)$, we get a \teichmuller curve $\mathbb{H}/\SL(X,\omega)$ in $\mathcal{M}_{2}$ isomorphic to $W_{n^{2}}$. Each origami fixed by an elliptic element of $\SL(2,\Z)$ (other than $-I$ which is a global symmetry) gives rise to an orbifold point on $W_{n^{2}}$.

Since the finite order elements of $\SL(2,\Z)$ have orders 1, 2, 3, 4, or 6 and $-I$ is a global symmetry, the orbifold points on $W_{n^{2}}$ have orders 2 or 3. It follows from {\cite[Theorem 1.1]{Muk}} that all orbifold points on $W_{n^{2}}$ in fact have order 2.

The words $S^{-1}T, S^{-1}T^{3}, T^{-1}S^3, S^{-1}TST, T^{-1}STS$ and $S^{-1}TS^{-1}T$ all cube to $\pm I$ and so would give rise to orbifold points of order 3. Hence, there are no faces in $\mathcal{G}_{n}$ corresponding to these words. We remark that, when $n$ is even and in the A-orbit for $n$ odd, the block system of Figure~\ref{fig:blocksys4} rules out all of these words anyway.

The words $T^{-1}ST^{-1} = R$ and $ST^{-1}S$ both square to $-I$ and correspond to orbifold points on $W_{n^{2}}$ of order 2. It follows from {\cite[Proposition 4.7]{Muk}} that the number of orbifold points on $W_{n^{2}}$ of order 2 is at most $\frac{n^2}{2} = O(n^{2})$.

Hence, the number of elliptic faces of length at most four (in fact, the number of elliptic faces in total) is $O(n^2)$ and hence is $o(|G_{n}|) = o(n^3)$. This completes the proof of Theorem~\ref{thm:submain-1} in the case of $\calH(2)$.

%%%%%%%%%%%%%%%%%%%%%%%%%%%%%%%%%%%%%%%%%
%%%%%%%%%%%%%%%%%%%%%%%%%%%%%%%%%%%%%%%%%

\section{Prym loci in $\calH(4)$ and $\calH(6)$}\label{sec:prym}

Here we extend the above arguments to the setting of Prym loci in $\calH(4)$ and $\calH(6)$. We begin with a brief reminder of the main structures in this setting.

%%%%%%%%%%%%%%%%%%%%%%%%%%%%%%%%%%%%%%%%%

\subsection{Background}

The Prym locus $\Omega\mathcal{E}_{D}(4)$ is the subset of $\calH(4)$ consisting of those translation surfaces $(M,\omega)$ for which $M$ admits a holomorphic involution $\iota$ with 4 fixed points taking $\omega$ to $-\omega$, and admitting real multiplication by $\mathcal{O}_{D}$ with $\mathcal{O}_{D}\cdot\omega\subset\C\cdot\omega$. Lanneau-Nguyen~\cite{LN14} classify the $\GL^{+}(2,\R)$ connected components of $\Omega\mathcal{E}_{D}(4)$ and we direct the reader to their paper for more details on Prym loci. They prove that, for $D\geq 17$,  $\Omega\mathcal{E}_{D}(4)$ is non-empty if and only if $D\equiv 0,1,4\!\!\mod\! 8$.

Lanneau-Nguyen, as a consequence of their determination of the connected components of $\Omega\mathcal{E}_{D}(4)$, give the following classification of the $\SL(2,\Z)$-orbits of Prym origamis in $\calH(4)$.

\begin{theorem}[{\cite[Proposition B.1 and Corollary B.2]{LN14}}]
    Fix $n\geq 5$ and let $X$ be a primitive origami that is a Prym eigenform in $\calH(4)$. Then
    \begin{itemize}
        \item if $n$ is odd, there is a single $\SL(2,\Z)$-orbit of such origamis and the eigenforms have discriminant $D = n^{2}$.
        \item if $n\equiv 0\mod 4$ or $n = 6$, there is a single $\SL(2,\Z)$-orbit of such origamis and the eigenforms have discriminant $D = n^{2}$.
        \item if $n\equiv 2\mod 4$, $n\geq 10$, there are two $\SL(2,\Z)$-orbits of such origamis: one containing eigenforms of discriminant $D = n^{2}$ and one containing eigenforms of discriminant $D = \frac{n^{2}}{4}$.
    \end{itemize}
\end{theorem}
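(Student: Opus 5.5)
The statement is quoted from Lanneau--Nguyen, so the plan is to reconstruct their argument in two stages. The first stage identifies $\SL(2,\Z)$-orbits of primitive Prym origamis with $\GL^{+}(2,\R)$-components of the loci $\Omega\mathcal{E}_{D}(4)$ for square discriminants $D$. The second stage reads off the count from the classification of those components.

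\textbf{Step 1: orbits as components.} The locus $\Omega\mathcal{E}_{D}(4)$ has complex dimension $2$ (projectively, one), so it is a finite union of closed $\GL^{+}(2,\R)$-orbits, i.e.\ Prym \teichmuller curves. A primitive $n$-squared origamis $X$ in $\calH(4)$ with a Prym involution $\iota$ has its $\omega$-periods in $\Z[i]$. Its anti-invariant part $H_{1}^{-}(X,\Z)$ has rank $4$, and the period map to $\Z^{2}$ has a rank-$2$ kernel. The endomorphism acting as $n$ on $\omega$ (or as $n/2$, see Step~2) and as $0$ on the complementary direction then gives real multiplication by $\mathcal{O}_{D}$ with $D$ a square.

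Conversely, every square-discriminant Prym eigenform is, up to $\GL^{+}(2,\R)$, a primitive origami. The $\SL(2,\Z)$-orbits of $n$-squared primitive origamis therefore biject with the components of $\bigcup_{D}\Omega\mathcal{E}_{D}(4)$ whose area-normalised integral points have $n$ squares. Primitivity is what ensures that the lattice of absolute periods is all of $\Z[i]$, so that this normalisation is canonical.

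\textbf{Step 2: determining $D$ from $n$.} I will use Lanneau--Nguyen's prototype (cylinder-decomposition) normal form for $\Omega\mathcal{E}_{D}(4)$. Writing $D=e^{2}+4wh$-type relations for the prototypes and matching the area with $n$ should give $D=n^{2}$ generically. When the symplectic form restricted to $H_{1}^{-}$ is twice a unimodular form, which should happen only for $n\equiv 2 \pmod 4$, one gets instead $D=n^{2}/4$. Concretely, the polarisation on the Prym variety has type $(1,2)$, and the factor $2$ can be absorbed on the $\omega$-eigenline exactly when $n/2$ is odd. I would check this by computing the intersection form on $H_{1}^{-}$ in a two-cylinder prototype.

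\textbf{Step 3: counting components.} Here I invoke the main theorem of \cite{LN14}. For $D\geq 17$ with $D\equiv 0,4\pmod 8$ the locus $\Omega\mathcal{E}_{D}(4)$ is connected, and for $D\equiv 1\pmod 8$ it has two components.

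For $n$ odd we have $D=n^{2}\equiv 1\pmod 8$. I expect one of the two components to contain no primitive origamis, because its square-tiled points factor through a smaller origami. Showing this is the main obstacle. I would first try to exhibit explicit prototypes in each component, then check primitivity of the monodromy $\langle h,v\rangle$ directly, or use the invariant that Lanneau--Nguyen use to separate the components and show it forces a covering.

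For $n\equiv 0\pmod 4$ we have $n^{2}\equiv 0\pmod 8$, and for $n=6$ we have $n^{2}/4=9<17$. In both cases this leaves a single orbit, with the small discriminant excluded or empty, which I would check by hand. For $n\equiv 2\pmod 4$ with $n\geq 10$, both $D=n^{2}$ and $D=n^{2}/4\geq 25$ are admissible. Provided each yields exactly one primitive component, this gives two orbits. The small cases $n=5,6,7,8,9$ would be verified by direct enumeration.
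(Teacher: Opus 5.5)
Your two-stage framework is the right one: identify $\SL(2,\Z)$-orbits of primitive Prym origamis with components of the square-discriminant loci $\Omega\mathcal{E}_{D}(4)$, then read the count off the Lanneau--Nguyen classification. The paper gives no argument of its own; it cites \cite{LN14}, where the orbit count is derived from the component count in this way. But Step~3 rests on a false claim, and your plan is internally inconsistent because of it.

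For odd $n$ you expect one of the two components of $\Omega\mathcal{E}_{n^{2}}(4)$ to contain no primitive origamis. This cannot happen in $\calH(4)$. By Riemann--Hurwitz, an intermediate cover $X\to Y$ with $g(Y)\geq 2$ must be an unramified double cover, which would force the zeros of $\omega$ to come in pairs. An intermediate isogenous torus disappears once the period lattice is normalised to $\Z[i]$. So every component of a square-discriminant locus is an orbit of primitive origamis. In fact the second component of $\Omega\mathcal{E}_{n^{2}}(4)$ is the orbit of primitive $2n$-squared origamis with HLK-invariant $(3,[0,0,0])$, i.e.\ the discriminant-$(2n)^{2}/4$ orbit of the third bullet.

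Your own count for $n\equiv 2\pmod 4$ needs exactly this. The $D=n^{2}/4$ orbit must lie in one of the two components of $\Omega\mathcal{E}_{(n/2)^{2}}(4)$. One of these is the $n/2$-squared orbit, and your Step~3, applied to the odd number $n/2$, says the other contains no primitive origamis at all.

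What is missing is the correct bookkeeping and a criterion that makes it computable. Each component of $\Omega\mathcal{E}_{d^{2}}(4)$ is one \teichmuller curve, hence one orbit of primitive origamis with a single number of squares $n$. Granting $D\in\{n^{2},n^{2}/4\}$, this gives $n\in\{d,2d\}$. The theorem then says: for odd $d\geq 5$ the two components carry $n=d$ and $n=2d$, and for even $d$ the single component carries $n=d$.

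To prove this you must decide, for a given primitive origami, whether the real multiplication is by $\Z[T]\cong\mathcal{O}_{n^{2}}$ or by $\Z[T/2]\cong\mathcal{O}_{n^{2}/4}$. Here $T$ is the endomorphism of $H_{1}^{-}(X,\Z)$ induced by the covering $X\to\mathbb{T}^{2}$, so $T^{2}=nT$, and the question is whether $T/2$ preserves the lattice. This can be read off, for instance, from the fixed points of the Prym involution over the $2$-torsion points ($(1,[2,0,0])$ versus $(3,[0,0,0])$) or from prototype parameters. It must then be paired with explicit primitive examples of each type.

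Step~2 does not supply this. As you note yourself, the polarisation on $H_{1}^{-}$ has type $(1,2)$ for every Prym form in $\calH(4)$, so it is never `twice a unimodular form'. Moreover, for $n\equiv 2\pmod 4$ the discriminant depends on the orbit, not on $n$.

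The same gap affects $n\equiv 0\pmod 4$. For $n\geq 12$ the discriminant $n^{2}/4$ is admissible and $\Omega\mathcal{E}_{n^{2}/4}(4)$ is non-empty, so it is neither excluded nor empty. It is ruled out only because this connected locus is already the orbit of $n/2$-squared origamis, which is the same unresolved question.
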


In the language of HLK-invariants, when $n$ is odd the origamis have HLK-invariant $(0,[1,1,1])$; when $n\equiv 0\mod 4$ or $n = 6$ the origamis have HLK-invariant $(1,[2,0,0])$; and when $n\equiv 2\mod 4$, $n\geq 10$, then the origamis of discriminant $n^2$ have HLK-invariant $(1,[2,0,0])$ while the origamis of discriminant $\frac{n^{2}}{4}$ have HLK-invariant $(3,[0,0,0])$.

Similarly, the Prym locus $\Omega\mathcal{E}_{D}(6)$ is the subset of $\calH(6)$ consisting of those $(M,\omega)$ for which $M$ admits a holomorphic involution $\iota$ with 2 fixed points taking $\omega$ to $-\omega$, and admitting real multiplication by $\mathcal{O}_{D}$ with $\mathcal{O}_{D}\cdot\omega\subset\C\cdot\omega$. Lanneau-Nguyen~\cite{LN20} classify the $\GL^{+}(2,\R)$ connected components of $\Omega\mathcal{E}_{D}(6)$ and we again direct the reader to their paper for more details on Prym loci. They prove that, for $D\equiv 0,1\!\!\mod\! 4$ with $D\neq 4,9$,  $\Omega\mathcal{E}_{D}(6)$ is non-empty and connected. Moreoever, they show that $\Omega\mathcal{E}_{4}(6)$ and $\Omega\mathcal{E}_{9}(6)$ are both empty.

The relevant result for origamis is the following.

\begin{theorem}[{\cite[Theorem 1.2]{LN20}}]
    Fix an even $n\geq 8$ and let $X$ be a primitive origami that is a Prym eigenform in $\calH(6)$. Then there is a single $\SL(2,\Z)$-orbit of such origamis and the eigenforms have discriminant $D = \frac{n^{2}}{4}$.

    If $n$ is odd, then there are no such origamis.
\end{theorem}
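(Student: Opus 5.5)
The statement is quoted from \cite[Theorem 1.2]{LN20}. If we wanted to reprove it rather than cite it, the argument would have three steps: a parity argument on the fixed points of the Prym involution $\iota$, then identifying the discriminant, then appealing to the Lanneau--Nguyen connectedness result for $\Omega\mathcal{E}_{D}(6)$.

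\textbf{Step 1: no odd $n$, via the HLK-invariant.} Let $X$ be an $n$-squared Prym origami in $\calH(6)$. Then $\iota$ has exactly two fixed points and induces $-I$ on $\mathbb{T}^2$, so it permutes the fibre of each $2$-torsion point.
\begin{itemize}
\item Over $(\tfrac12,0)$, $(0,\tfrac12)$ and $(\tfrac12,\tfrac12)$ the fibre has $n$ points, so $l_i\equiv n \pmod 2$ for $i=1,2,3$.
\item Over $0$ the fibre has $n-6$ points, because the single zero of order $6$ absorbs $7$ sheets. Hence $l_0\equiv n\pmod 2$ as well.
\end{itemize}
Since $l_0+l_1+l_2+l_3=2$, odd $n$ would force every $l_i\geq 1$ and so a sum of at least $4$, a contradiction. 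So there are no odd-$n$ examples. For even $n$, each $l_i$ is even. Also $l_0\geq 1$, since the zero is fixed by $\iota$. This forces the HLK-invariant $(2,[0,0,0])$.

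\textbf{Step 2: the discriminant equals $n^2/4$.} I expect this to be the main obstacle. I would compute it using the Lanneau--Nguyen cylinder-decomposition prototypes for $\Omega\mathcal{E}_D(6)$, which express $D$ as a quadratic form in the integer prototype parameters (of the shape $D=e^2+c\,wh$).

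Concretely, put $X$ in a horizontal cylinder decomposition of the prototype type, normalised by $\GL^+(2,\R)$ so that its absolute periods form $\Z\oplus i\Z$. Then:
\begin{itemize}
\item Write down the real multiplication generator $T$ acting on $H_1(X,\Z)^-$, the $(-1)$-eigenspace of $\iota$. This lattice carries the restricted intersection form, of type $(1,2)$.
\item The eigenform condition $T^*\omega=\lambda\omega$ with $\lambda\in\Z$ (square discriminant) lets us read off $\lambda$ from the areas of the cylinders.
\item The total area is $n$, but the polarisation on $H_1^-$ is twice a principal one. Chasing this factor of $2$ should give $\sqrt{D}=n/2$.
\end{itemize}
Primitivity of $X$ is what rules out the discriminant dropping to $(n/(2k))^2$ for some $k>1$: otherwise the period map would factor through a smaller torus, making $X$ a non-trivial cover.

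\textbf{Step 3: a single $\SL(2,\Z)$-orbit.} By \cite{LN20}, $\Omega\mathcal{E}_D(6)$ is non-empty and connected for $D\equiv 0,1\pmod 4$ with $D\neq 4,9$. For even $n\geq 8$ we have $D=n^2/4\geq 16$, which is a square and so $\equiv 0,1\pmod 4$. So all our origamis lie in one $\GL^+(2,\R)$-orbit, that is, on one Teichm\"uller curve.

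Two origamis with the same number of squares on the same Teichm\"uller curve differ by an element of $\GL^+(2,\R)$ preserving the period lattice $\Z\oplus i\Z$ and the area. Such an element lies in $\SL(2,\Z)$, giving a single $\SL(2,\Z)$-orbit.

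It also remains to check non-emptiness for every even $n\geq 8$, which I would do by exhibiting an explicit prototype for each such $n$. The delicate part is the normalisation bookkeeping in Step 2, that is, getting the factor $1/4$ rather than $1$, which is exactly where the $(1,2)$-polarisation enters.
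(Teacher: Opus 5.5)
The paper gives no proof of this statement; it only quotes \cite[Theorem 1.2]{LN20}. On its own terms, your Step 1 is correct. Your $(2,[0,0,0])$ is the paper's $(1,[0,0,0])$: in the HLK convention $l_0$ does not count the zero, which is why the $\calH(2)$ invariants sum to $5$. Step 3 is a valid reduction to the connectedness of $\Omega\mathcal{E}_D(6)$, because a primitive origami in $\calH(6)$ has period lattice exactly $\Z\oplus i\Z$.

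The genuine gap is Step 2. It is only a sketch, and the mechanism you propose is wrong. For a genus-four surface whose involution has two fixed points, the restricted intersection form on $H_1(X,\Z)^-$ has type $(2,2)$, not $(1,2)$. Type $(1,2)$ is the $\calH(4)$ situation, where primitive $n$-squared origamis occur with both $D=n^2$ and $D=n^2/4$ (for $n\equiv 2 \bmod 4$). So a ``factor of $2$ in the polarisation'' cannot by itself produce the $1/4$. In any case, rescaling a polarisation does not change which endomorphisms are self-adjoint, so it can never change $D$. The prototype computation you mention is never carried out.

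The missing idea is what type $(2,2)$ actually encodes, namely $H_1(X,\Z)^-=(1-\iota_*)H_1(X,\Z)$. To see this, note that the preimage of an arc joining the two branch points on the genus-two quotient is a separating curve. So $H_1(X,\Z)$ is the sum of two copies of $H_1$ of a genus-two surface with one boundary component, and $\iota$ exchanges them.

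From this the discriminant follows.
\begin{itemize}
\item Since $\pi\circ\iota=-\pi$, we have $\pi_*(x-\iota_*x)=2\pi_*x$. Hence $\pi_*(H_1(X,\Z)^-)=2\pi_*(H_1(X,\Z))=2\Z^2$, the last equality by primitivity.
\item The transfer satisfies $\iota_*\pi^*=-\pi^*$, so it lands in $H_1(X,\Z)^-$. Therefore $\Phi:=\tfrac12\pi^*\pi_*$ is an integral endomorphism of $H_1(X,\Z)^-$.
\item $\Phi$ is self-adjoint by the projection formula $\langle\pi^*w,y\rangle_X=\langle w,\pi_*y\rangle_{\mathbb{T}^2}$. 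It satisfies $\Phi^2=\tfrac n2\Phi$, and $\omega=\pi^*dz$ is an eigenform with eigenvalue $n/2$.
\item The characteristic polynomial $x^2(x-n/2)^2$ must have integer coefficients, which forces $n$ to be even. This gives a second proof of Step 1. The ring $\Z[\Phi]$ has discriminant $n^2/4$.
\item $\Z[\Phi]$ is proper. The only larger orders are $\Z[\Phi/k]$ with $k\mid n/2$. If $\Phi/k$ were integral for some $k>1$, then $\pi^*w\in H_1(X,\Z)$ for some $w\in\tfrac1k\Z^2\setminus\Z^2$. The projection formula would then give $\langle w,\pi_*H_1(X,\Z)\rangle=\langle w,\Z^2\rangle\subseteq\Z$, which forces $w\in\Z^2$, a contradiction.
\item Any rational self-adjoint endomorphism with $\omega$ as an eigenform acts by rational scalars on $\pi^*H_1(\mathbb{T}^2,\Q)$ and on its symplectic complement, since a self-adjoint endomorphism of a symplectic plane is scalar. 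Such an endomorphism therefore lies in $\Q[\Phi]$, so $n^2/4$ really is the discriminant.
\end{itemize}

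This also makes your explicit prototypes for non-emptiness unnecessary. Eigenforms for a square discriminant are torus covers, so non-emptiness of $\Omega\mathcal{E}_{n^2/4}(6)$ already supplies the required $n$-squared origami via the same computation.
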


Here, the origamis have HLK-invariant $(1,[0,0,0])$.

All orbits again have size $\Omega(n^{3})$ and can be turned into 4-regular graphs as in $\calH(2)$.

Origamis in the Prym locus in $\calH(4)$ can have one, two or three cylinders, while the origamis in the Prym locus in $\calH(6)$ can have two or four cylinders.

The cusps of one-cylinder $n$-squared origamis have width $n$, while the cusps of two-cylinder $n$-squared origamis have width $n/2$. Both growing linearly with $n$, they do not feature in our consideration of short faces.

A three-cylinder origami in the Prym locus of $\calH(4)$ is given by surface parameters $(w_1,h_1,t_1,w_2,h_2,t_2)$ as shown in Figure~\ref{fig:H4-params} while a four-cylinder origami in the Prym locus of $\calH(6)$ has similar surface parameters as shown in Figure~\ref{fig:H6-params}. In both cases, the cusp width is given by the same formula used for two-cylinder origamis in $\calH(2)$:
\[\cuspwidth.\]

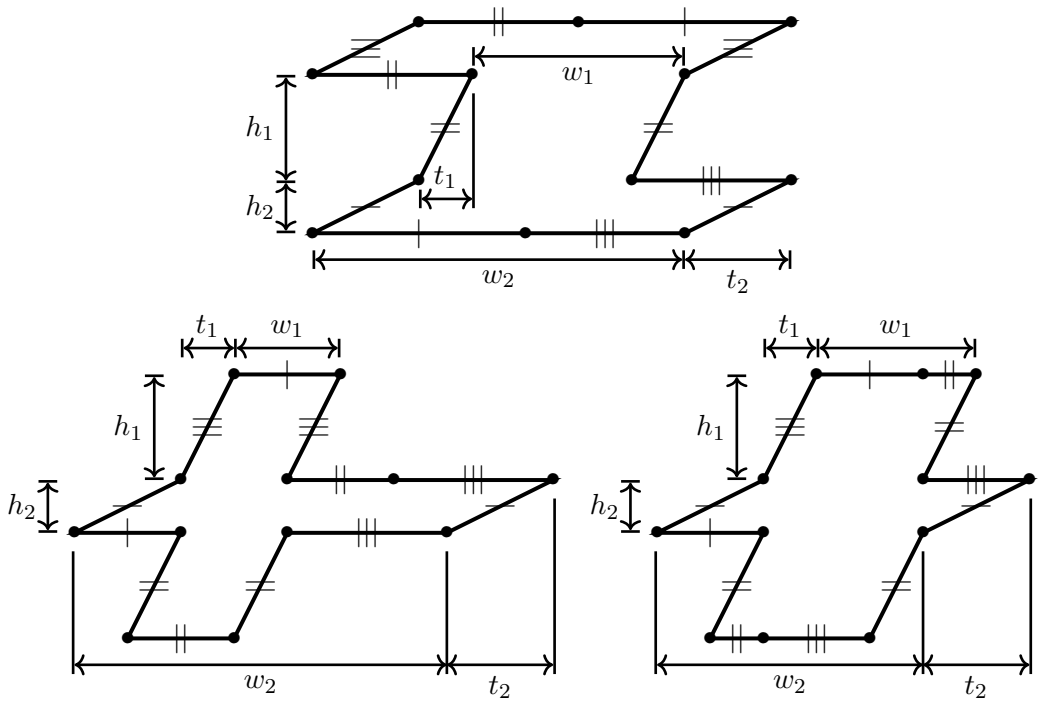
\begin{figure}[t]
    \centering
    \begin{tikzpicture}[scale = 0.7, line width = 1.5pt]
        \draw (0,0) -- node[rotate = 90]{$|$}(2,1) -- node[rotate = 90]{$||$}(3,3) -- node{$||$}(0,3) -- node[rotate = 90]{$|||$}(2,4) -- node{$||$}(5,4) -- node{$|$}(9,4) -- node[rotate = 90]{$|||$}(7,3) -- node[rotate = 90]{$||$}(6,1) -- node{$|||$}(9,1) -- node[rotate = 90]{$|$}(7,0) -- node{$|||$}(4,0) -- node{$|$} cycle;
        \draw[<->|,line width = 1pt] (-0.5,1) -- node[left]{$h_{1}$}(-0.5,3);
        \draw[|<->|,line width = 1pt] (-0.5,0) -- node[left]{$h_{2}$}(-0.5,1);
        \draw[|<->|,line width = 1pt] (3,3.35) -- node[below]{$w_{1}$}(7,3.35);
        \draw[|<->|,line width = 1pt] (0,-0.5) -- node[below]{$w_{2}$}(7,-0.5);
        \draw[|<->,line width = 1pt] (2,0.65) -- node[above]{$t_{1}$}(3,0.65);
        \draw[line width = 1pt] (3.025,0.475) -- (3.025,2.65);
        \draw[<->|,line width = 1pt] (7,-0.5) -- node[below]{$t_{2}$}(9,-0.5);
        \foreach \x/\y in {0/0,2/1,3/3,0/3,2/4,5/4,9/4,7/3,6/1,9/1,7/0,4/0}{
            \node at (\x,\y) {$\bullet$};
        }
    \end{tikzpicture}
    \begin{tikzpicture}[scale = 0.7, line width = 1.5pt]
        \draw (0,0) -- node[rotate = 90]{$||$}(1,2) -- node{$|$}(-1,2) -- node[rotate = 90]{$|$}(1,3) -- node[rotate = 90]{$|||$}(2,5) -- node{$|$}(4,5) -- node[rotate = 90]{$|||$}(3,3) -- node{$||$}(5,3) -- node{$|||$}(8,3) -- node[rotate = 90]{$|$}(6,2) -- node{$|||$}(3,2) -- node[rotate = 90]{$||$}(2,0) -- node{$||$} cycle;
        \draw[|<->|,line width = 1pt] (0.5,3) -- node[left]{$h_{1}$}(0.5,5);
        \draw[|<->|,line width = 1pt] (-1.5,2) -- node[left]{$h_{2}$}(-1.5,3);
        \draw[|<->|,line width = 1pt] (2,5.5) -- node[above]{$w_{1}$}(4,5.5);
        \draw[<->,line width = 1pt] (-1,-0.5) -- node[below]{$w_{2}$}(6,-0.5);
        \draw[|<->,line width = 1pt] (1,5.5) -- node[above]{$t_{1}$}(2,5.5);
        \draw[<->,line width = 1pt] (6,-0.5) -- node[below]{$t_{2}$}(8,-0.5);
        \draw[line width = 1pt] (-1.025,-0.675) -- (-1.025,1.65);
        \draw[line width = 1pt] (6,-0.675) -- (6,1.65);
        \draw[line width = 1pt] (8.025,-0.675) -- (8.025,2.65);
        \foreach \x/\y in {0/0,1/2,-1/2,1/3,2/5,4/5,3/3,5/3,8/3,6/2,3/2,2/0}{
            \node at (\x,\y) {$\bullet$};
        }
    \end{tikzpicture}
    \begin{tikzpicture}[scale = 0.7, line width = 1.5pt]
        \draw (0,0) -- node[rotate = 90]{$||$}(1,2) -- node{$|$}(-1,2) -- node[rotate = 90]{$|$}(1,3) -- node[rotate = 90]{$|||$}(2,5) -- node{$|$}(4,5) -- node{$||$}(5,5) -- node[rotate = 90]{$||$}(4,3) -- node{$|||$}(6,3) -- node[rotate = 90]{$|$}(4,2) -- node[rotate = 90]{$||$}(3,0) -- node{$|||$}(1,0) -- node{$||$} cycle;
        \draw[|<->|,line width = 1pt] (0.5,3) -- node[left]{$h_{1}$}(0.5,5);
        \draw[|<->|,line width = 1pt] (-1.5,2) -- node[left]{$h_{2}$}(-1.5,3);
        \draw[|<->|,line width = 1pt] (2,5.5) -- node[above]{$w_{1}$}(5,5.5);
        \draw[<->,line width = 1pt] (-1,-0.5) -- node[below]{$w_{2}$}(4,-0.5);
        \draw[|<->,line width = 1pt] (1,5.5) -- node[above]{$t_{1}$}(2,5.5);
        \draw[<->,line width = 1pt] (4,-0.5) -- node[below]{$t_{2}$}(6,-0.5);
        \draw[line width = 1pt] (-1.025,-0.675) -- (-1.025,1.65);
        \draw[line width = 1pt] (4,-0.675) -- (4,1.65);
        \draw[line width = 1pt] (6.025,-0.675) -- (6.025,2.65);
        \foreach \x/\y in {0/0,1/2,-1/2,1/3,2/5,4/5,5/5,4/3,6/3,4/2,3/0,1/0}{
            \node at (\x,\y) {$\bullet$};
        }
    \end{tikzpicture}
    \caption{Three-cylinder surface parameters in $\calH(4)$.}
    \label{fig:H4-params}
\end{figure}

\begin{figure}[t]
    \centering
    \begin{tikzpicture}[scale = 0.55, line width = 1.5pt]
        \draw (0,0) -- node[rotate = 90]{$|$}(2,1) -- node[rotate = 90]{$||$}(3,3) -- node{$|$}(5,3) -- node[rotate = 90]{$||$}(4,1) -- node{$||$}(6,1) -- node{$|||$}(9,1) -- node[rotate = 90]{$|$}(7,0) -- node{$|||$}(4,0) -- node[rotate = 90]{$|||$}(2,-1) -- node[rotate = 90]{$||||$}(1,-3) -- node{$||$}(-1,-3) -- node[rotate = 90]{$||||$}(0,-1) -- node{$|$}(-2,-1) -- node{$||||$}(-5,-1) -- node[rotate = 90]{$|||$}(-3,0) -- node{$||||$} cycle;
        \draw[|<->|,line width = 1pt] (1.5,1) -- node[left]{$h_{1}$}(1.5,3);
        \draw[|<->|,line width = 1pt] (-5.5,-1) -- node[left]{$h_{2}$}(-5.5,0);
        \draw[|<->|,line width = 1pt] (3,3.5) -- node[above]{$w_{1}$}(5,3.5);
        \draw[<->,line width = 1pt] (-5,-3.5) -- node[below]{$w_{2}$}(2,-3.5);
        \draw[|<->,line width = 1pt] (2,3.5) -- node[above]{$t_{1}$}(3,3.5);
        \draw[|<->|,line width = 1pt] (-5,0.5) -- node[above]{$t_{2}$}(-3,0.5);
        \draw[line width = 1pt] (-5.025,-3.675) -- (-5.025,-1.35);
        \draw[line width = 1pt] (2.025,-3.675) -- (2.025,-1.35);
        \foreach \x/\y in {0/0,2/1,3/3,5/3,4/1,6/1,9/1,7/0,4/0,2/-1,1/-3,-1/-3,0/-1,-2/-1,-5/-1,-3/0}{
            \node at (\x,\y) {$\bullet$};
        }
        \node at (2,6.5) {$\,$};
        \node at (2,-6.5) {$\,$};
    \end{tikzpicture}
    \begin{tikzpicture}[scale = 0.55, line width = 1.5pt]
        \draw (0,0) -- node[rotate = 90]{$|$}(1,3) -- node[rotate = 90]{$||$}(2,5) -- node{$|$}(3,5) -- node{$||$}(5,5) -- node[rotate = 90]{$||$}(4,3) -- node{$|||$}(6,3) -- node[rotate = 90]{$|$}(5,0) -- node{$\bigtimes$}(6,0) -- node[rotate = 90]{$|||$}(5,-3) -- node[rotate = 90]{$\bigtimes$}(4,-5) -- node{$\bigtimes$}(3,-5) -- node{$|||$}(1,-5) -- node[rotate = 90]{$\bigtimes$}(2,-3) -- node{$||$}(0,-3) -- node[rotate = 90]{$|||$}(1,0) -- node{$|$} cycle;
        \draw[|<->|,line width = 1pt] (6.5,3) -- node[right]{$h_{1}$}(6.5,5);
        \draw[|<->|,line width = 1pt] (-0.5,0) -- node[left]{$h_{2}$}(-0.5,3);
        \draw[|<->|,line width = 1pt] (2,5.5) -- node[above]{$w_{1}$}(5,5.5);
        \draw[<->,line width = 1pt] (0,-5.5) -- node[below]{$w_{2}$}(5,-5.5);
        \draw[<->,line width = 1pt] (1,5.5) -- node[above]{$t_{1}$}(2,5.5);
        \draw[|<->,line width = 1pt] (0,3.5) -- node[above]{$t_{2}$}(1,3.5);
        \draw[line width = 1pt] (-0.025,-5.675) -- (-0.025,-3.35);
        \draw[line width = 1pt] (5.025,-5.675) -- (5.025,-3.35);
        \draw[line width = 1pt] (1,3.5-0.225) -- (1,5.725);
        \foreach \x/\y in {0/0,1/3,2/5,3/5,5/5,4/3,6/3,5/0,6/0,5/-3,4/-5,3/-5,1/-5,2/-3,0/-3,1/0}{
            \node at (\x,\y) {$\bullet$};
        }
    \end{tikzpicture}
    \caption{Four-cylinder surface parameters in $\calH(6)$.}
    \label{fig:H6-params}
\end{figure}

%%%%%%%%%%%%%%%%%%%%%%%%%%%%%%%%%%%%%%%%%

\subsection{Modifications to ${\calH(2)}$ argument}

Here, we modify the bounds given for $\calH(2)$ above.

\subsubsection{Hyperbolic faces} As above, Corollary~\ref{cor:hyperbolic} proves that faces arising from words that give rise to hyperbolic elements of $\SL(2,\Z)$ do not appear in $\mathcal{G}_{n}$ once $n$ is large enough.

\subsubsection{Parabolic faces} Since, the 1-cylinder and 2-cylinder cusps in the Prym loci in $\calH(4)$ and $\calH(6)$ are of width $n$ or $n/2$, respectively, they asymptotically do not contribute to the small faces under consideration. As such, we need only bound the number of 3-cylinder and 4-cylinder cusps of length at most four. Since the formula for cusp width is the same here as it was for 2-cylinder cusps in $\calH(2)$, the same Bombieri-Pila arguments and resulting upper bounds hold in this setting as well. There is a slight modification of the coefficients in the ellipses (taking into account the new formulas for area), but the changes are bounded.
\begin{remark}As in Remark~\ref{rem:cusps}, it is known by~\cite{LN14,LN20} that the number of cusps in this situation is again $o(|\mathcal{G}_{n}|)$ which we could have also used.
\end{remark}

\subsubsection{Elliptic faces} In this setting, we have so-called Prym-\teichmuller curves $W_{n^{2}}$ and $W_{\frac{n^2}{4}}$ in $\mathcal{M}_{3}$ and $W_{\frac{n^2}{4}}$ in $\mathcal{M}_{4}$ whose orbifold points correspond to $n$-squared primitive origamis fixed by elliptic elements of $\SL(2,\Z)$.

In genus four, Torres-Teigell--Zachhuber~\cite{TTZ19} determined the number of orbifold points on $W_{D}$ for any discriminant $D>12$. The results of interest for our sake are~\cite[Lemma 7.6]{TTZ19} that $W_{D}$ has less than $D/2$ orbifold points of order two, and~\cite[Lemma 7.7]{TTZ19} that $W_{D}$ has less than $D/6$ orbifold points of order three. So, we see that the number of orbifold points on $W_{\frac{n^{2}}{4}}$ is $O(n^2)$ and we get the same bound for the number of elliptic faces as in $\calH(2)$.

In genus three, Torres-Teigell--Zachhuber~\cite{TTZ18} determined the number of orbifold points on $W_{D}$, but only for non-square discriminants $D$. We extend their work here to the case $D = n^2$ or $\frac{n^2}{4}$.

They prove~\cite[Section 5.2]{TTZ18} that the number $e_{3}(D)$ of orbifold points of order three on $W_{D}$ is $|H_{3}(D)|$ where
\begin{multline*}
    H_{3}(D) := \{(a,b,c)\in\Z^{3}:2a^2-3b^2-c^2=D,\gcd(a,b,c,f) = 1, -3\sqrt{D}<a<-\sqrt{D}, \\
    c< b\leq 0, (4a-3b-3c<0)\vee(4a-3b-3c = 0 \wedge c<3b) \},
\end{multline*}
and $f$ is the conductor of the discriminant $D$. It turns out that their proof of this fact does not require that $D$ is not a square and holds true if you take $f = \sqrt{D}$ when $D$ is a square. Half of these orbifold points will correspond to $n$-squared origamis where $D = n^{2}$ and half will correspond to $m$-squared origamis where $D = \frac{m^{2}}{4}$; i.e., $m = 2n$. The set $H_{3}(D)$ has size $O(D)$, and so we get that the number of elliptic faces in each graph is $O(n^2)$, where $n$ is the number of squares of the origamis in the graph.

\begin{remark}
    In~\cite{Zac}, Zachhuber demonstrates that, for non-square $D\equiv 1\!\!\mod 8$, the two components of the Prym-\teichmuller curve $W_{D}$ are Galois conjugate. It is possible to use the proven Galois conjugate action of~\cite[Proposition 4.7]{Zac} as the definition of a fake Galois conjugate action for three cylinder origamis. This takes an origami with an odd number of squares to an origami with twice that many squares but corresponding to the same discriminant. That is, we map a three-cylinder origami with HLK-invariant $(0,[1,1,1])$ to an origami with HLK-invariant $(3,[0,0,0])$. We can extend this map by sending one-cylinder origamis with HLK-invariant $(0,[1,1,1])$ to two-cylinder origamis with HLK-invariant $(3,[0,0,0])$. It can be checked that this map induces an isomorphism of the corresponding orbit graphs.
\end{remark}

Their proof of the count of orbifold points of order two does require that $D$ is not a square. However, it is easily modifiable to the case of square $D$. Torres-Teigell--Zachhuber define a family $\mathcal{X}\to\mathbb{P}^{1}\setminus\{0,1,\infty\}$ of genus two curves called the \textit{clover family} with the fibre above $t$ being the curve
\[\mathcal{X}_{t}: y^4 = x(x-1)(x-t).\]
We must then consider $\End(\mathcal{P}(\mathcal{X}_{t}))$, the endomorphism ring of the Prym variety of $\mathcal{X}_{t}$. For $(a,b,c)\in\Z^{3}$, they define
\[A_{\sqrt{D}}(a,b,c):=\begin{pmatrix}
    b & a\cdot\frac{1-i}{2}-c\cdot\frac{1+i}{2} \\
    a(1+i)-c(1-i) & -b
\end{pmatrix}\in\End(\mathcal{P}(\mathcal{X}_{t}))\]
and prove the following lemmas.

\begin{lemma}[{\cite[Lemma 5.2]{TTZ18}}]\label{lem:TTZ1}
    Let $A$ be an element of $\End(\mathcal{P}(\mathcal{X}_{t}))$. The following are equivalent:
    \begin{itemize}
        \item[(i)] $A$ is a self-adjoint endomorphism such that $A^2 = D$;
        \item[(ii)] $A = A_{\sqrt{D}}(a,b,c)$ for some $(a,b,c)\in\Z^{3}$ with $a^2+b^2+c^2 = D$.
    \end{itemize}
\end{lemma}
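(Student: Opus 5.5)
The plan is to make the endomorphism ring and its Rosati involution completely explicit, and then reduce both directions of the lemma to a matrix computation. First I would use the automorphisms of the clover curve $\mathcal{X}_{t}\colon y^4 = x(x-1)(x-t)$ to describe $\mathcal{P}(\mathcal{X}_{t})$. The order-four automorphism $(x,y)\mapsto(x,iy)$ acts on the Prym part and gives an embedding $\Z[i]\hookrightarrow\End(\mathcal{P}(\mathcal{X}_{t}))$. The further symmetries permuting the branch points (for instance $x\mapsto t/x$ composed with a suitable rescaling of $y$) supply non-commuting endomorphisms. Following Torres-Teigell--Zachhuber, I would fix a symplectic basis of $H_{1}$ of the Prym part and write the analytic representation of these generators. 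For generic $t$ this should identify $\End(\mathcal{P}(\mathcal{X}_{t}))$ with an explicit $\Z$-order $\mathcal{O}\subset M_{2}(\Q(i))$. Its elements would be the matrices $\begin{pmatrix}\alpha&\beta\\ \gamma&\delta\end{pmatrix}$ with $\alpha,\delta\in\Z[i]$ and $\beta\in\frac{1}{2}(1\pm i)\Z[i]$, $\gamma\in(1\pm i)\Z[i]$ satisfying the congruences visible in the shape of $A_{\sqrt{D}}$. Only the inclusion of these matrices in $\End$ and the self-adjoint part are really needed.

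Next I would compute the Rosati involution attached to the (non-principal, type $(1,2)$) polarization restricted to the Prym. In these coordinates it should take the form $A\mapsto P^{-1}\overline{A}^{\,T}P$ for a fixed diagonal matrix $P$ recording the polarization type. Imposing self-adjointness, I expect the fixed sublattice to be exactly
\[\left\{\begin{pmatrix} b & a\cdot\frac{1-i}{2}-c\cdot\frac{1+i}{2}\\ a(1+i)-c(1-i) & d\end{pmatrix} : a,b,c,d\in\Z\right\},\]
a rank-four $\Z$-module. Getting this lattice exactly right, including the half-integrality in the upper-right entry, is the step I expect to be the main obstacle. It depends on carefully matching the period matrix and polarization of the Prym with the action of the automorphisms. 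I would first try reproducing the period computation for the clover family directly, and check that the generators listed are integral endomorphisms and that nothing of smaller index is missed.

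Granting this description, both directions are a short computation. For (ii)$\Rightarrow$(i), $A_{\sqrt{D}}(a,b,c)$ lies in the lattice above, so it is self-adjoint. Writing $z = a\frac{1-i}{2}-c\frac{1+i}{2}$ and $z' = a(1+i)-c(1-i)$, one checks
\[zz' = a^2 + c^2 - \tfrac{ac}{2}\bigl((1-i)^2+(1+i)^2\bigr) = a^2+c^2,\]
so $A^2 = (b^2+zz')I = (a^2+b^2+c^2)I = D$.

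For (i)$\Rightarrow$(ii), take a self-adjoint $A$ with $A^2=D$, written as above with diagonal entries $b,d$. The off-diagonal entries of $A^2$ are $(b+d)z$ and $(b+d)z'$, so either $b+d=0$, or $z=z'=0$. In the second case $A$ is diagonal with $b^2=d^2=D$, and the scalar solutions $A=\pm\sqrt{D}$ must be excluded. This happens automatically when $D$ is not a square; when $D$ is a square I would either note that these are the degenerate cases or check that the intended statement concerns non-scalar $A$. Hence $d=-b$, and $A^2=(b^2+a^2+c^2)I$ gives $a^2+b^2+c^2=D$, so $A = A_{\sqrt{D}}(a,b,c)$.
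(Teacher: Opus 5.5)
The paper gives no proof of this lemma; it is imported from \cite{TTZ18}, so your proposal has to stand on its own. Your overall route is the natural one, and your closing computation is correct: $z=\tfrac12\overline{z'}$ with $z'=(1+i)(a+ic)$, so $zz'=a^2+c^2$, and the off-diagonal entries of $A^2$ are $(b+d)z$ and $(b+d)z'$.

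Your caveat about scalars is a genuine correction to the statement, not a technicality. For $D=n^2$, which is exactly the case in which this paper uses the lemma, $\pm n\cdot\mathrm{Id}$ satisfies (i) but is not traceless. So (i)$\Rightarrow$(ii) holds only for non-scalar $A$. This is harmless for Lemma~\ref{lem:TTZ2}: an injective $\mathcal{O}_{n^2}\hookrightarrow\End(\mathcal{P}(\mathcal{X}_t))$ cannot send $(n,-n)\in\mathcal{O}_{n^2}\subset\Z\times\Z$ to a scalar.

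The gap is the step you only predict, namely the determination of $\End$ and the Rosati involution, and your plan for it contains a misconception. You aim to describe $\End(\mathcal{P}(\mathcal{X}_t))$ ``for generic $t$''. But (i)$\Rightarrow$(ii) is needed at every $t$, including any special values where you expect extra endomorphisms.

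The missing observation is that $\sigma(x,y)=(x,iy)$ acts on $H^0(\mathcal{X}_t,\Omega^1)^-=\langle dx/y^3,\,x\,dx/y^3\rangle$ by the scalar $i$; the third differential $dx/y^2$ is $\sigma^2$-invariant. This has three consequences.
\begin{itemize}
\item Every endomorphism of the Prym commutes with $\sigma$.
\item Hence $\End(\mathcal{P}(\mathcal{X}_t))=\End_{\Z[i]}(\Lambda)\cong M_2(\Z[i])$, where $\Lambda=H_1(\mathcal{P}(\mathcal{X}_t),\Z)$ is free over $\Z[i]$.
\item The complex structure on $\Lambda\otimes\R$ is $\pm\sigma$, so the polarized Prym does not depend on $t$ at all.
\end{itemize}
So you need no further symmetries. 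What remains is to identify $\Lambda$ as a Hermitian $\Z[i]$-lattice, which has determinant $2$ because the type is $(1,2)$.

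There are exactly two such classes, $\mathrm{diag}(1,2)$ and $\begin{pmatrix}2&1+i\\1-i&2\end{pmatrix}$. In the first, traceless self-adjoint elements square to $b^2+2|w|^2$, a form of determinant $4$ rather than $a^2+b^2+c^2$. Only the second produces exactly the matrices $A_{\sqrt D}(a,b,c)$ with $(a,b,c)\in\Z^3$, once you write it as $L^{*}\mathrm{diag}(2,1)L$ with $L=\begin{pmatrix}1&\frac{1+i}{2}\\0&1\end{pmatrix}$. Deciding which class occurs is the period/intersection computation you anticipated.

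This also shows that your predicted lattice is not exactly right. Your order, with entries $\alpha,\delta\in\Z[i]$, $\beta\in\frac{1-i}{2}\Z[i]$, $\gamma\in(1+i)\Z[i]$, is the endomorphism ring of the split lattice $\Z[i]\oplus(1+i)\Z[i]$. On that lattice $\mathrm{diag}(2,1)$ induces a polarization of type $(k,k)$, not $(1,2)$.

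For the actual Prym, $\mathrm{diag}(1,0)$ is not an endomorphism. The self-adjoint part is $\Z\cdot\mathrm{Id}\oplus\{A_{\sqrt D}(a,b,c):(a,b,c)\in\Z^3\}$, which is your displayed lattice cut down by $b\equiv d\pmod 2$.

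Your argument survives this correction. (i)$\Rightarrow$(ii) only uses that self-adjoint elements lie inside your lattice. (ii)$\Rightarrow$(i) only uses the Rosati formula $A\mapsto P^{-1}\overline{A}^{T}P$ with $P\propto\mathrm{diag}(2,1)$.
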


\begin{lemma}[{\cite[Lemma 5.4]{TTZ18}}]\label{lem:TTZ2}
    Suppose that $\mathcal{P}(\mathcal{X}_{t})$ admits real multiplication by $\mathcal{O}_{D}$. Then $D\equiv 0\!\!\mod 4$.
    Moreover, there is a bijection between the choices of real multiplication $\mathcal{O}_{D}\hookrightarrow \End(\mathcal{P}(\mathcal{X}_{t}))$ and triples $(a,b,c)$ appearing in Lemma~\ref{lem:TTZ1}.
\end{lemma}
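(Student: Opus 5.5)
The proof will follow TTZ18 and will be insensitive to whether $D$ is a square. The whole argument reduces to deciding when one explicit element of $\mathbb{Q}\otimes\End(\mathcal{P}(\mathcal{X}_{t}))$ is integral. Throughout, I view $\mathcal{O}_{D}$ abstractly as $\Z[T]/(T^{2}-DT+\frac{D^{2}-D}{4})$, with $T=\frac{D+\sqrt{D}}{2}$. This presentation makes sense for every discriminant, square or not, so an embedding $\mathcal{O}_{D}\hookrightarrow\End(\mathcal{P}(\mathcal{X}_{t}))$ is simply a choice of an endomorphism $\tau$ satisfying this quadratic relation. As usual for real multiplication, I also require $\tau$ to be self-adjoint with respect to the polarisation.

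\textbf{Step 1: reduce to self-adjoint square roots of $D$.} Given such a $\tau$, put $A:=2\tau-D$. Then $A$ is self-adjoint and satisfies $A^{2}=D$. By Lemma~\ref{lem:TTZ1}, $A=A_{\sqrt{D}}(a,b,c)$ for a triple with $a^{2}+b^{2}+c^{2}=D$. Conversely, any such triple gives a self-adjoint $A$ with $A^{2}=D$. It defines an embedding exactly when $\tau=\frac{D+A}{2}$ actually lies in $\End(\mathcal{P}(\mathcal{X}_{t}))$. The assignment $\tau\mapsto(a,b,c)$ is injective because $A$ determines $\tau$. So the lemma amounts to the following claim: the integrality condition $\frac{D+A}{2}\in\End$ holds for some triple only if $D\equiv 0\bmod 4$, and when $D\equiv 0\bmod 4$ it holds for every triple.

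\textbf{Step 2: use the explicit lattice $\End(\mathcal{P}(\mathcal{X}_{t}))$.} For generic $t$ I would take the explicit description of $\End(\mathcal{P}(\mathcal{X}_{t}))$ from TTZ18, the one underlying Lemma~\ref{lem:TTZ1}. It is an order in $M_{2}(\Q(i))$ generated by the automorphism $y\mapsto iy$ of the clover family together with the natural correspondences. From it I read off the congruence conditions on the entries that characterise membership.

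I then test $\frac{D+A_{\sqrt{D}}(a,b,c)}{2}$ against these conditions.
\begin{itemize}
\item The diagonal entries $\frac{D\pm b}{2}$ force $b\equiv D\bmod 2$.
\item The off-diagonal entries, which involve $\frac{1\pm i}{4}$, force congruences on $a$ and $c$. I expect these to be $a\equiv c\equiv D\bmod 2$, possibly with a further mod-$2$ relation.
\end{itemize}
Feeding these into $a^{2}+b^{2}+c^{2}=D$ and reducing modulo $4$ (or $8$) should rule out odd $D$: an odd $D$ would force three odd squares summing to $D$, so $D\equiv 3\bmod 8$, and this clashes with the remaining congruence. For even $D$ the same relation forces $D\equiv 0\bmod 4$.

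\textbf{Step 3: surjectivity when $D\equiv 0\bmod 4$.} Here $\frac{D+A}{2}=\frac{D}{2}+\frac{A}{2}$ with $\frac{D}{2}$ an integer, so it suffices that $\frac{A}{2}\in\End$. This should follow from the parities forced by $a^{2}+b^{2}+c^{2}\equiv 0\bmod 4$, which make $a,b,c$ all even. Alternatively, the normalisation of $A_{\sqrt{D}}$, with its built-in factors $\frac{1\pm i}{2}$, may already absorb this. Checking that the element indeed lies in the order completes the bijection.

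\textbf{Main obstacle.} I expect the hard part to be Step 2: pinning down exactly which elements of $M_{2}(\Q(i))$ lie in $\End(\mathcal{P}(\mathcal{X}_{t}))$, and tracking the factors of $2$ coming from $(1\pm i)$ correctly. The parity bookkeeping has to come out as exactly $D\equiv 0\bmod 4$, with no extra restriction on the triples. I would first verify the membership criterion on the generators of the order from TTZ18. Only then would I run the congruence computation, re-checking it against a small case such as $D=4$ with triple $(0,2,0)$.
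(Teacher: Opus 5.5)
\textbf{Verdict.} Steps 1 and 3 are sound, but Step 2 is a genuine gap. The paper quotes this lemma from \cite{TTZ18} without proof, so there is nothing to compare your route against; your framework is the standard one. A real multiplication is the self-adjoint endomorphism $\tau=\frac{D+A}{2}$, and Lemma~\ref{lem:TTZ1} gives $A=A_{\sqrt D}(a,b,c)$. When $4\mid D$ the triple must be even, so $\frac{A}{2}=A_{\sqrt{D/4}}(\frac a2,\frac b2,\frac c2)$ is again an endomorphism.

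\textbf{Where it fails.} The first assertion, $D\equiv 0\bmod 4$, rests entirely on Step 2, and there you only state expectations. The congruences you anticipate cannot finish the argument. Take $D=3$ and $(a,b,c)=(1,1,1)$: it satisfies $a\equiv b\equiv c\equiv D\bmod 2$, and you never say what ``remaining congruence'' would clash with $D\equiv 3\bmod 8$.

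Membership in $\End(\mathcal{P}(\mathcal{X}_t))$ is also not an entrywise condition. In these coordinates the endomorphism ring is a maximal order of $M_2(\Q(i))$ other than $M_2(\Z[i])$, and it contains elements with diagonal entries $\pm\frac{1}{1+i}$. So ``the diagonal entries force $b\equiv D\bmod 2$'' is not a valid deduction on its own.

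\textbf{What is missing: the period lattice.}
\begin{itemize}
\item Conjugating by $\mathrm{diag}(1,1+i)$ turns $A_{\sqrt D}(a,b,c)$ into $\begin{pmatrix} b & a-ci\\ a+ci & -b\end{pmatrix}$, which is self-adjoint for the standard Hermitian form.
\item Up to scaling, the lattice becomes $\Lambda=\{(u,v)\in\Z[i]^2 : u\equiv v \bmod (1+i)\}$. This is the only $\Z[i]$-lattice stable under all the $A_{\sqrt D}(a,b,c)$ on which this form has type $(1,2)$.
\item The lattice does not depend on $t$: both Prym differentials lie in the same eigenspace of $y\mapsto iy$. So no genericity in $t$ is needed.
\item Since $\tau$ commutes with $i$, it suffices to test it on the generators $(1+i,0)$ and $(1,1)$ of $\Lambda$.
\item The first generator forces $D+b$, $a$ and $c$ to be even. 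The second forces $b\equiv c\bmod 2$.
\end{itemize}
Hence $\tau\in\End$ if and only if $a,b,c,D$ are all even. This gives $D\equiv 0\bmod 4$ directly, with odd $D$ excluded outright and no mod-$8$ argument. Conversely, when $4\mid D$ every triple works.

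\textbf{A further correction in Step 1.} You say the argument is insensitive to $D$ being a square, but it is not. For $D=n^2$ the quadratic relation is also satisfied by the scalars $\frac{n^2\pm n}{2}$. Moreover $A=\pm n$ is not of the form $A_{\sqrt D}(a,b,c)$. So you must impose injectivity of $\mathcal{O}_D\to\End$ explicitly in Step 1. This square case is exactly where the paper applies the lemma.
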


Since we have $D$ being a square and $a^2+b^2+c^2 = D$, the condition $D \equiv 0 \!\!\mod 4$ forces $D = n^2$ for $n$ even and then $a,b$, and $c$ must all be even as well. In particular, $\gcd(a,b,c,n)\geq 2$.

Now, $A_{\sqrt{D}}(a,b,c)$ has eigenvectors
\[\omega(a,b,c)^{+} = \begin{pmatrix}
    \frac{i-1}{2}\cdot\frac{a-ci}{b+\sqrt{D}} \\
    1
\end{pmatrix}\,\,\,\,\,\text{and}\,\,\,\,\,\omega(a,b,c)^{-} = \begin{pmatrix}
    \frac{i-1}{2}\cdot\frac{a-ci}{b-\sqrt{D}} \\
    1
\end{pmatrix}.\]

The part of the proof of Torres-Teigell--Zachhuber that needs modifying for $D = n^2$ is the proof of \cite[Lemma 5.3]{TTZ18}. The statement and modified proof are as follows.

\begin{lemma}\label{lem:TTZ3}
    Let $D$ be a square discriminant. Then the matrices $A_{\sqrt{D}}(a,b,c)$ and $A_{\sqrt{D'}}(a',b',c')$ have the same eigenvectors if and only if
    \begin{itemize}
        \item[(i)] $D = m^2E$ and $D' = (m')^2E$ for some square discriminant $E$ and with $\gcd(m,m') = 1$; and 
        \item[(ii)] both $(a,b,c)$ and $(a',b',c')$ are integer multiples of a triple $(a_0,b_0,c_0)\in\Z^{3}$ with $a_0^{2}+b_0^{2}+c_0^{2} = E$.
    \end{itemize}
    In particular, the matrices $A_{\sqrt{D}}(a,b,c)$ and $A_{\sqrt{D}}(a',b',c')$ have the same eigenvectors if and only if $(a',b',c') = \pm(a,b,c)$ and we have $\omega(a,b,c)^{+} = \omega(-a,-b,-c)^{-}$ and $\omega(a,b,c)^{-} = \omega(-a,-b,-c)^{+}$.
\end{lemma}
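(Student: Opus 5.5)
Since every eigenvector here is normalised to have second coordinate $1$, two matrices share eigenvectors exactly when their unordered pairs of first coordinates coincide. So the lemma reduces to comparing the quantities $\frac{a-ci}{b\pm\sqrt{D}}$, equivalently their reciprocals, and I will work with those.

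\textbf{Reformulation.} I will rationalise the quotient. Using $a^2+c^2 = D-b^2 = (\sqrt{D}-b)(\sqrt{D}+b)$, I get
\[\frac{b+\sqrt{D}}{a-ci} = \frac{(b+\sqrt{D})(a+ci)}{a^2+c^2} = \frac{a+ci}{\sqrt{D}-b},\]
and similarly with $\sqrt{D}$ replaced by $-\sqrt{D}$, provided $a^2+c^2\neq 0$. This step, and the corresponding ones below, require $\sqrt{D}\neq \pm b$ and $a^2+c^2\neq 0$. The degenerate case $a=c=0$, $b=\pm\sqrt{D}$, in which $A$ is diagonal, I will treat separately and directly.

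\textbf{Equality implies proportionality.} Suppose $\omega(a,b,c)^{+}=\omega(a',b',c')^{\pm}$. Then
\[(a-ci)(b'\pm\sqrt{D'}) = (a'-c'i)(b+\sqrt{D}).\]
Since $D$ and $D'$ are squares, every quantity here is rational, and comparing real and imaginary parts gives
\[a\,(b'\pm\sqrt{D'}) = a'\,(b+\sqrt{D}),\qquad c\,(b'\pm\sqrt{D'}) = c'\,(b+\sqrt{D}).\]
From these, $(a,c)$ and $(a',c')$ are proportional with ratio $\lambda=(b'\pm\sqrt{D'})/(b+\sqrt{D})$, unless $b+\sqrt{D}=0$.

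Next I need to recover the $b$-coordinate. Using the rationalised form $(a+ci)/(\sqrt{D}-b)$, matching the second root $\omega^{-}$ gives a second proportionality with ratio $(\sqrt{D'}\mp b')/(\sqrt{D}-b)$ relating the same $(a,c)$. Setting
\[\lambda=\frac{b'+s\sqrt{D'}}{b+\sqrt{D}}=\frac{s\sqrt{D'}-b'}{\sqrt{D}-b},\qquad s=\pm 1,\]
and solving the two linear equations yields $b'=\lambda b$ and $s\sqrt{D'}=\lambda\sqrt{D}$. So $(a',b',c')=\lambda(a,b,c)$ with $\lambda\in\Q$.

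Writing $\lambda = m'/m$ in lowest terms, I set $(a_0,b_0,c_0)=(a,b,c)/m$. This is integral: since $m'(a,b,c)=m(a',b',c')$ and $\gcd(m,m')=1$, the integer $m$ divides each of $a,b,c$. Then $E=a_0^2+b_0^2+c_0^2$ is a square with $D=m^2E$ and $D'=m'^2E$, which gives (i) and (ii). For the converse, the eigenvectors of $A_{\sqrt{D}}(a,b,c)$ are visibly invariant under positive scaling of the triple, since $\sqrt{D}$ scales along with it, and a negative scaling swaps $\pm$.

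\textbf{Special case $D=D'$.} Here $\lambda^2=1$, so $\lambda=\pm1$. The identities $\omega(a,b,c)^{+}=\omega(-a,-b,-c)^{-}$ and $\omega(a,b,c)^{-}=\omega(-a,-b,-c)^{+}$ follow by direct substitution: the factor $-1$ cancels between numerator and denominator.

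\textbf{Main obstacle.} I expect the hardest part to be the careful bookkeeping of the degenerate cases, namely vanishing denominators $b=\pm\sqrt{D}$ and the sign choice $s$. The generic linear algebra is routine. The genuinely new point compared with the non-square setting of \cite[Lemma 5.3]{TTZ18} is that $\sqrt{D}$ is rational, so I cannot separate rational and irrational parts the way their argument does, and the proportionality factor has to be extracted as above instead.
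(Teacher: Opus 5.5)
Your proof is correct and follows essentially the paper's route: use that $\sqrt{D}$ and $\sqrt{D'}$ are integers to split the eigenvector equality into real and imaginary parts, deduce $(a',c')=t(a,c)$, combine the $\omega^{+}$ and $\omega^{-}$ relations to obtain $b'=tb$ and $\sqrt{D'}=\pm t\sqrt{D}$, and handle the diagonal case $a=c=0$, $b=\pm\sqrt{D}$ separately. You still need to write that diagonal case out, e.g.\ with $E=\gcd(\sqrt{D},\sqrt{D'})^{2}$, but it is immediate; your lowest-terms step $t=m'/m$ forcing $m\mid a,b,c$ is more careful than the paper, since it actually justifies the integrality of $(a_0,b_0,c_0)$ and the coprimality in (i), which the paper only asserts.
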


\begin{proof}
    Suppose that $D = n^2$ and $D' = (n')^2$.
    
    If the matrices $A_{\sqrt{D}}(a,b,c)$ and $A_{\sqrt{D'}}(a',b',c')$ have the same eigenvectors then
    \begin{equation}\label{eqn:eigvals}
        \frac{a-ci}{b+n} = \frac{a'-c'i}{b'\pm n'}\,\,\,\,\text{and}\,\,\,\,\frac{a-ci}{b-n} = \frac{a'-c'i}{b'\mp n'}.
    \end{equation}
    This immediately forces $D = m^2E$ and $D' = (m')^2E$ for some square discriminant $E$ and we can force $\gcd(m,m') = 1$.

    Equating the real and imaginary parts of the first equality in Equation~\ref{eqn:eigvals} implies that
    \[a(b'\pm n') = a'(b+n)\,\,\,\,\,\text{and}\,\,\,\,\,c(b'\pm n')=c'(b+n).\]
    Multiplying by $c$ and $a$, respectively, and subtracting gives
    \[(a'c-ac')(b+n) = 0.\]

    If $b + n = 0$, then $a^2+b^2+c^2 = n^2$ forces $(a,b,c) = (0,-n,0)$ and also $(a',b',c') = (0,\pm n',0)$. Hence, both triples are integer multiples of $(a_0,b_0,c_0) = (0,\frac{n}{m},0)$ with $a_0^2+b_0^2+c_0^2 = E$.

    If $(a'c-ac') = \begin{vmatrix}
        a' & c' \\
        a & c
    \end{vmatrix} = 0$, then the vectors $(a,c)$ and $(a',c')$ are collinear. So $(a',c') = t(a,c)$ for some $t\in\Q$.
    
    If $a = c = 0$, then $(a,b,c) = (0,\pm n,0)$ and $(a',b',c') = (0,\pm n',0)$ and we are essentially in the previous situation again.
    
    So we may assume that one of $a$ or $c$ is non-zero. Hence, letting $t = \frac{a'}{a}$ or $\frac{c'}{c}$ appropriately, we have
    \[b'\pm n' = t(b+n) = tb+tn.\]
    Applying the same reasoning as above to the second equality in Equation~\ref{eqn:eigvals}, we will arrive at
    \[b'\mp n'= tb-tn.\]
    Therefore, we have $b' = tb$ and $n' = \pm tn$, giving $(a',b',c') = t(a,b,c) = tm(a_0,b_0,c_0)$ with $a_0^2+b_0^2+c_0^2 = E$.

    The converse follows immediately.
\end{proof}

We then obtain the following analogue of \cite[Lemma 5.5]{TTZ18}.

\begin{lemma}
    Let $D = n^2$ for even $n$. A form $\omega$ is an eigenform for real multiplication of discriminant $D$ if and only if it is the eigenform of some $A_{\sqrt{D}}(a,b,c)$ with $\gcd(a,b,c,n) = 2$.
\end{lemma}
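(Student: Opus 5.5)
We follow the proof of \cite[Lemma 5.5]{TTZ18} and change only the primitivity bookkeeping, which is where squareness of $D$ matters. First take the forward direction. Let $\omega$ be an eigenform for real multiplication by $\mathcal{O}_{D}$ on $\mathcal{P}(\mathcal{X}_{t})$, with $D = n^2$ and $n$ even. Then $\mathcal{O}_{D} = \Z[T]$, where $T = \frac{D+\sqrt{D}}{2}$ is the standard generator. The image of $\sqrt{D} = 2T - D$ is a self-adjoint endomorphism squaring to $D$ that has $\omega$ as an eigenvector. By Lemma~\ref{lem:TTZ1}, this endomorphism is $A_{\sqrt{D}}(a,b,c)$ for some triple with $a^2+b^2+c^2 = D$. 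By the remark after Lemma~\ref{lem:TTZ2}, $a$, $b$, $c$ are all even, so $\gcd(a,b,c,n)\geq 2$.

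For the upper bound we need properness of the real multiplication: the embedding $\mathcal{O}_{D}\hookrightarrow\End(\mathcal{P}(\mathcal{X}_{t}))$ must not extend to any order $\mathcal{O}_{D/k^2}$ with $k>1$. Suppose $\gcd(a,b,c,n) = 2k$ with $k>1$. Write $(a,b,c) = k(a_1,b_1,c_1)$ and $n = kn_1$. Then $a_1^2+b_1^2+c_1^2 = n_1^2$, and $a_1,b_1,c_1,n_1$ are still all even. So $A_{\sqrt{D}}(a,b,c) = k\,A_{\sqrt{D/k^2}}(a_1,b_1,c_1)$, and $A_{\sqrt{D/k^2}}(a_1,b_1,c_1)$ is again an endomorphism by Lemma~\ref{lem:TTZ1}. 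Its half-sum with the scalar $D/k^2$, namely $\frac{D/k^2 + A_{\sqrt{D/k^2}}(a_1,b_1,c_1)}{2}$, has to be checked to lie in $\End(\mathcal{P}(\mathcal{X}_{t}))$; this is exactly the integrality that Lemma~\ref{lem:TTZ2} encodes for triples with even entries. Granting this, the real multiplication extends to $\mathcal{O}_{D/k^2}$, contradicting properness. Hence $\gcd(a,b,c,n)=2$.

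For the converse, take $A_{\sqrt{D}}(a,b,c)$ with $\gcd(a,b,c,n) = 2$. By Lemma~\ref{lem:TTZ2} it determines real multiplication by $\mathcal{O}_{D}$, and $\omega(a,b,c)^{\pm}$ are its eigenforms. Suppose this real multiplication is not proper, so it extends to $\mathcal{O}_{D'}$ with $D = k^2 D'$, $k>1$. Then $\sqrt{D'}$ acts by some $A_{\sqrt{D'}}(a',b',c')$ with the same eigenvectors. By Lemma~\ref{lem:TTZ3}, $(a,b,c)$ is then a multiple $k(a',b',c')$ of a triple of norm $D'$. Since $D'$ is a square discriminant $\equiv 0 \bmod 4$ (Lemma~\ref{lem:TTZ2}), $a',b',c',\sqrt{D'}$ are all even. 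This gives $\gcd(a,b,c,n)\geq 2k>2$, a contradiction.

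The main obstacle is the integrality step in the forward direction: checking that after dividing out a common factor $k$, the triple still defines an endomorphism generating the larger order, including the half-integral generator. We would settle this by unwinding the explicit form of $A_{\sqrt{D}}(a,b,c)$ in terms of the Gaussian-integer action on the clover family, as in the proof of \cite[Lemma 5.4]{TTZ18}.
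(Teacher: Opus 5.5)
Your proof is correct and follows the paper's route, just unpacked: Lemma~\ref{lem:TTZ2} identifies real multiplications with triples, evenness gives $\gcd(a,b,c,n)\geq 2$, and properness is equivalent to $\gcd(a,b,c,n)=2$, which you show by dividing out a common factor in one direction and by Lemma~\ref{lem:TTZ3} in the other. The integrality step you flag as the main obstacle is in fact immediate, so nothing needs unwinding: $A_{\sqrt{D}}(a,b,c)$ is $\Z$-linear in $(a,b,c)$ and $a_1,b_1,c_1$ are even, hence $\tfrac12 A_{\sqrt{D/k^2}}(a_1,b_1,c_1)=A_{\sqrt{D/(4k^2)}}(a_1/2,b_1/2,c_1/2)$ is an endomorphism by Lemma~\ref{lem:TTZ1}, and $D/(2k^2)$ is an integer.
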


\begin{proof}
    By Lemma~\ref{lem:TTZ2}, any choice of real multiplication corresponds to a triple $(a,b,c)\in\Z^3$ as in Lemma~\ref{lem:TTZ1}. Since, as discussed above, we have $\gcd(a,b,c,n)\geq 2$, it follows from Lemma~\ref{lem:TTZ3} that any embedding $\mathcal{O}_{D} \hookrightarrow \End(\mathcal{P}(\mathcal{X}_{t}))$ is proper if and only if $\gcd(a,b,c,n) = 2$.
\end{proof}

Replacing the definition of 
\[H_{2}(D) = \{(a,b,c)\in\Z^3:a^2+b^2+c^2 = D, \gcd(a,b,c,f) = 1\}\]
in the proof of \cite[Theorem 5.1]{TTZ18} with
\[H^{sq}_{2}(n^2) = \{(a,b,c)\in\Z^{3}:a^2+b^2+c^2 = n^2, \gcd(a,b,c,n) = 2\}\]
we get that the number of orbifold points of order two for square discriminants is
\[e_{2}(n^2) = \left\lbrace\begin{array}{cl}
    0, & \text{if }n\text{ is odd}\\
    |H^{sq}_{2}(n^2)|/24, & \text{if }n\text{ is even}.
\end{array}\right.\]

Since $|H^{sq}_{2}(n^2)| = O(n^2)$, we again get that the number of elliptic faces in the orbit graphs corresponding to orbifold points of order two is $O(n^2)$.

So, in total, the number of elliptic faces is $O(n^2)$ and hence is $o(|\mathcal{G}_{n}|) = o(n^3)$.

\subsubsection{Conclusion} We conclude that the number of faces of length at most four in the $\SL(2,\Z)$-orbit graphs $\mathcal{G}_{n}$ of primitive $n$-squared origamis in the Prym loci of $\calH(4)$ and $\calH(6)$ behaves as $o(|\mathcal{G}_{n}|)$. This completes the proof of Theorem~\ref{thm:submain-1}.

%%%%%%%%%%%%%%%%%%%%%%%%%%%%%%%%%%%%%%%%%
%%%%%%%%%%%%%%%%%%%%%%%%%%%%%%%%%%%%%%%%%

\section{Orbits in $\calH(1,1)$}\label{sec:H(1,1)}

In the stratum $\calH(1,1)$, it is known by work of McMullen~\cite{McM-dec} that the only primitive \teichmuller curve is the $\SL(2,\R)$-orbit of the decagon; that is, the one-form $dx/y$ on the curve $y^2 = x^6 - x$. The remaining \teichmuller curves in $\calH(1,1)$ project to connected components of subloci of $\mathcal{M}_{2}$ of the form
\[W_{d^2}[n] := \left\{X\in\mathcal{M}_{2}\;\middle|\;\!\begin{array}{c}\exists\text{ a primitive degree }d\text{ cover }\pi:X\to E
\text{ of an}\\ \text{elliptic curve }E,\text{ and critical points }x_1\neq x_2\\\text{ in }X\text{ with }(\pi(x_1)-\pi(x_2))\in\text{Jac}(E)[n] \end{array}\!\right\}.\]
Note that the spaces $W_{d^2}[1]$ are the projections of the $\SL(2,\R)$-orbits of primitive $d$-squared origamis in $\calH(1,1)$. In general, for $N = dn$, components of $W_{d^2}[n]$ are projections of $\SL(2,\R)$-orbits of $N$-squared origamis in $\calH(1,1)$ that admit a primitive (i.e., with no intermediate covers) degree $d$ cover of the $n$-squared torus.

The classification of the connected components of such loci is still open in general. It is known that $W_{2^2}[1]$ and $W_{3^2}[1]$ are both empty, and that $W_{4^2}[1]$ and $W_{5^2}[1]$ are both irreducible. It is also known that $W_{d^2}[n]$ for odd $n$ has at least two components. Indeed, it follows from \cite[Theorem 3.1]{Dur} that an origami in $\Omega W_{d^2}[n]$ has one of the following HLK-invariants:
\[\left\lbrace\begin{array}{cl}
   (3,[1,1,1])\text{ or }(1,[3,1,1]),  & \text{if $d$ and $n$ are both odd; or} \\
   (0,[2,2,2])\text{ or }(2,[2,2,0]), & \text{if $d$ is even and $n$ is odd; or} \\
   (0,[4,2,0]), & \text{if $n$ is even.}
\end{array}\right.\]
For odd $n$, we then define $W_{d^2}^{0}[n]$ to be the projection of the $\SL(2,\R)$-orbit of the origamis with HLK-invariant $(3,[1,1,1])$ or $(0,[2,2,2])$, and $W_{d^2}^{1}[n]$ to be the projection of the $\SL(2,\R)$-orbit of the origamis with HLK-invariant $(1,[3,1,1])$ or $(2,[2,2,0])$.

The general situation is the subject of the so-called Parity Conjecture.

\begin{conjPar}[Parity Conjecture]
    Provided that $(d,n) \neq (2,1),(3,1),(4,1)$ or $(5,1)$, it holds that $W_{d^2}[n]$ is irreducible when $n$ is even, and consists of two irreducible components when $n$ is odd (exactly $W_{d^2}^{\epsilon}[n]$ for $\epsilon\in\{0,1\}$).
\end{conjPar}

This conjecture was partially established in work of Duryev~\cite{Dur} in which he proved the following. The case of $d = 2$ and $n$ arbitrary had already been established by Huang--Wu--Zhong~\cite{HWZ}.

\begin{theorem}[{\cite[Theorem 1.2]{Dur}}]\label{thm:Dur}
    The parity conjecture holds for all $(d,n)$ such that
    \begin{itemize}
        \item[(i)] $d = 2,3,4,5$; or
        \item[(ii)] $d$ and $n$ are prime and $n > (d^3-d)/4$; or
        \item[(iii)] $d$ is prime and $n> C_{d}$, where $C_{d}$ is a constant that depends on $d$.
    \end{itemize}
\end{theorem}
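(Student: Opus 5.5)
The statement is a cited theorem (Duryev, with the case $d=2$ due to Huang--Wu--Zhong). If I were to prove it, I would recast irreducibility of $W_{d^2}[n]$ as transitivity of a monodromy action. A point of $W_{d^2}[n]$ is a primitive degree $d$ cover $\pi:X\to E$ branched over two points $\pi(x_1),\pi(x_2)$ whose difference is $n$-torsion. Translating so that $\pi(x_1)=0$, this becomes a primitive $d$-squared origami $O$ (the cover branched over $0$ and $p$) together with the choice of a nonzero point $p\in E[n]$. The relevant components are then orbits of a finite-index subgroup $\Gamma\leqslant\SL(2,\Z)$ acting on pairs consisting of an origami and a torsion point.

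The plan, in order:

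\textbf{Step 1.} Fibre $W_{d^2}[n]$ over the Hurwitz-type space $\mathcal{H}_d$ of primitive degree $d$ covers $X\to E$ simply branched over two points. Components of $W_{d^2}[n]$ then correspond to orbits of $\pi_1(\mathcal{H}_d)$ acting on the finite fibre of admissible $n$-torsion differences.

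\textbf{Step 2.} Identify the parity invariant. Via the HLK-invariants listed above, the invariant $\epsilon$ is the only obstruction coming from $E[2]$, so for odd $n$ there are at least the two components $W^{\epsilon}_{d^2}[n]$.

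\textbf{Step 3.} Show that the image of the monodromy in $\SL(2,\Z/n)$, acting on $E[n]\setminus\{0\}$, is as large as possible. For $n$ prime, this means containing $\SL(2,\Z/n)$ (up to the parity constraint), which acts transitively on nonzero vectors.

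For case (i), $d\le 5$, the spaces are small enough that I would instead argue by explicit cylinder-diagram combinatorics. Concretely, I would show that every $N$-squared origami with the given HLK-invariant can be connected by $T,S$ moves to a fixed normal form with two cylinders.

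For cases (ii) and (iii), I would use that the Veech groups of primitive $d$-squared origamis (the stabilisers in $\Gamma$) are finite-index subgroups of $\SL(2,\Z)$. Their index is at most the number of such origamis, which is bounded in terms of $d$ and is of order $d^3$. Their reduction mod a prime $n$ is then surjective as soon as $n$ exceeds a bound depending on that index: a proper subgroup of $\SL(2,\mathbb{F}_n)$ either has index $\ge n+1$ or lies in an exceptional or dihedral family. Taking this together with the parabolic elements $T^{w}$ from cusps of width $w<n$, which generate unipotents mod $n$, gives surjectivity. This is where the explicit threshold $n>(d^3-d)/4$ would come from in (ii), and a non-explicit constant $C_d$ in (iii).

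\textbf{Main obstacle.} The hardest step is Step 3 once $n$ is not prime, or once one needs $d$ arbitrary rather than prime. Surjectivity mod $n$ then requires control at every prime power dividing $n$. The parabolic elements alone may fail to generate, and one must rule out that the monodromy preserves some hidden invariant beyond the spin/HLK parity. This is precisely why the general Parity Conjecture remains open, so I would only expect to complete the argument in the ranges (i)--(iii) stated.
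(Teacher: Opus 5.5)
The paper gives no proof of this statement; it quotes Duryev, with $d=2$ due to Huang--Wu--Zhong. So your sketch has to stand on its own, and its central reduction does not hold.

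\textbf{The product model is wrong.} For $n>1$, a point of $W_{d^2}[n]$ is a degree-$d$ cover of $E$ simply branched over two \emph{distinct} points $0$ and $p$. That is not a $d$-squared origami. The set of such covers is not ``origamis $\times\, E[n]$'' with a diagonal $\SL(2,\Z)$-action.

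To slide $p$ onto $0$ and obtain an origami you must choose a path, and the result depends on its homotopy class. By the Birman exact sequence, $\mathrm{PMod}(E,\{0,p\})$ is an extension of $\SL(2,\Z)$ by the point-pushing group $\pi_1(E\setminus\{0\})$. Point-pushing acts nontrivially on the finite set $F_p$ of covers branched over $\{0,p\}$.

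The correct reformulation is this: the components of $W_{d^2}[n]$ are the orbits on $F_p$ of the affine automorphisms of $(E,\{0,p\})$, whose derivatives form $\pm\Gamma_1(n)$. Here $|F_p|$ is a Hurwitz number independent of $n$. The mapping classes these affine maps realise, however, depend on where $p$ sits. Mod-$n$ surjectivity of a fixed Veech group controls only the torsion coordinate, on which $\SL(2,\Z)$ is already transitive. It says nothing about the action on $F_p$, which is where the whole difficulty lies.

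Two checks confirm the model cannot be right. For $d=2$ there are no genus-two $2$-squared origamis, since a connected double cover cannot branch over a single point; yet $W_{2^2}[n]\neq\varnothing$ for all $n\geq 2$. For prime $d\geq 7$, the equivariant projection from pairs to origamis would force at least as many components as there are $\SL(2,\Z)$-orbits of primitive $d$-squared origamis. That is at least two in either genus-two stratum (A and B in $\calH(2)$, the two HLK classes in $\calH(1,1)$), for every $n$. This contradicts the irreducibility for even $n>C_d$ asserted in (iii).

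\textbf{Other steps are asserted rather than argued.}
\begin{enumerate}
\item Step 1 is backwards. $W_{d^2}[n]$ is the restriction of the finite cover $\mathcal{H}_d\to\mathcal{M}_{1,2}$ to the torsion curve $Y_1(n)$. So one needs $\pi_1(Y_1(n))$-orbits on covers, not $\pi_1(\mathcal{H}_d)$-orbits on torsion differences.
\item Step 2's claim that $\epsilon$ is the \emph{only} obstruction is the conjecture itself. HLK-invariants give only the lower bound of two components for odd $n$.
\item The explicit threshold $(d^3-d)/4$ is matched by numerology, not derived.
\item Your index bound for proper subgroups of $\SL(2,\mathbb{F}_n)$ is specific to prime $n$, whereas (iii) covers all $n>C_d$.
\item For (i), reducing to a normal form by $T,S$ moves would need an argument uniform in $n$, which you do not give.
\end{enumerate}

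The missing idea is real control of the $\Gamma_1(n)$-action on covers of the twice-punctured torus. That is the substance of the cited theorem, and it does not reduce to strong approximation for a fixed Veech group.
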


In the same work, Duryev established the following result concerning orbit growth (extending previous work of Kappes--M\"oller~\cite[Theorem 1.1]{KM} to even $d$).

\begin{theorem}[{\cite[Theorem 1.8]{Dur}}]
    Let $t_{d,n,\epsilon}$ be the number of origamis in $\Omega W_{d^2}^{\epsilon}[n]$. Then for arbitrary $d$ and $n>1$ odd, we have
    \[t_{d,n,\epsilon} = \left\lbrace\begin{array}{cl}
        \frac{d-1}{12n}\cdot|\PSL(2,\Z/d\Z)|\cdot|\SL(2,\Z/n\Z)|, & \text{if $\epsilon = 0$} \\
        \\
    \frac{d-1}{4n}\cdot|\PSL(2,\Z/d\Z)|\cdot|\SL(2,\Z/n\Z)|, & \text{if $\epsilon = 1$.}
    \end{array}\right.\]
\end{theorem}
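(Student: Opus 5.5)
Since this is \cite[Theorem 1.8]{Dur}, we only indicate how we would reprove it; the $\epsilon$-splitting in Step 3 is where we are least sure of the details. The plan is to factor each origami in $\Omega W_{d^2}[n]$ through its intermediate torus and count the two layers separately. Write $N=dn$. By definition, an $N$-squared origami $X$ in $\Omega W_{d^2}[n]$ comes with a factorisation
\[X\xrightarrow{\ \pi\ } E\xrightarrow{\ \phi\ }\mathbb{T}^2,\]
with the following properties:
\begin{itemize}
\item $\phi$ is an unramified isogeny of degree $n$, that is, $E$ is an $n$-squared torus given by a sublattice of $\Z\oplus i\Z$;
\item $\pi$ is a primitive degree $d$ cover branched over two points $P,Q\in E$, each with a single simple ramification point (stratum $\calH(1,1)$);
\item $P$ and $Q$ both map to $0$, and $Q-P\in\ker\phi$;
\item primitivity of the composite forces $Q-P$ to generate $\ker\phi$, so $\ker\phi$ is cyclic of order $n$.
\end{itemize}
Translating $P$ to the origin, the count therefore splits as
\[
t_{d,n,\epsilon}=\#\{(E,Q)\}\cdot\#\{\text{primitive degree }d\text{ covers of }E\text{ branched over }\{0,Q\}\text{ of type }\epsilon\},
\]
modulo the overall symmetry identifying the two branch points.

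\textbf{Step 1: counting the pairs $(E,Q)$.} A pair $(E,Q)$ is equivalent to an element of exact order $n$ in $(\Z/n\Z)^2$, which recovers both the cyclic kernel and its generator. There are
\[
J_2(n)=n^2\prod_{p\mid n}(1-p^{-2})=\frac{|\SL(2,\Z/n\Z)|}{n}
\]
such elements. This produces the factor $|\SL(2,\Z/n\Z)|/n$ in the formula.

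\textbf{Step 2: counting the covers.} Since $d$ and $n$ play independent roles, the second factor should not depend on which point $Q$ of order $n$ is chosen. We would encode a cover by its monodromy, namely a pair $(h,v)\in\Sym(d)^2$ together with a permutation $c$ around $Q$, with $[h,v]c$ and $c$ both transpositions and the group generated being primitive. We then count these by the Frobenius character formula for $\Sym(d)$, as in Eskin--Masur--Schmoll and Kappes--M\"oller. The total should be $\frac{d-1}{3}|\PSL(2,\Z/d\Z)|$, and the non-primitive covers can be subtracted off using the $\calH(2)$ counts of Section~\ref{sec:prelims}.

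\textbf{Step 3: the $\epsilon$-splitting (main obstacle).} Since $1/12+1/4=1/3$, it remains to show that the two HLK-classes receive the shares $1/4$ and $3/4$ of the total.

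\begin{itemize}
\item \emph{First approach, via the involution.} We would use the involution $\iota$ lifting $-1$ on $E$. Its quotient is a degree-$d$ cover of $\mathbb{P}^1$ branched over the images of the four $2$-torsion points and of $Q$. The $\epsilon$-invariant is then read off from the number $l_0$ of fixed points over $0$. We would count the corresponding orbifold monodromies in $\Sym(d)\rtimes$(involutions), again via characters.
\item \emph{Fallback, via the $\SL(2,\Z)$-action.} If that computation proves intractable, we would argue through $\SL(2,\Z)$. Proposition~\ref{prop:l_1diff} gives the block system $\{B_1,B_2,B_3\}$ for the type-$1$ orbits, with HLK-invariants $(1,[3,1,1])$ and $(2,[2,2,0])$. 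For the type-$0$ orbits, with invariants $(3,[1,1,1])$ and $(0,[2,2,2])$, the $\Sym(3)$-action on $l_1,l_2,l_3$ is trivial. This asymmetry should account for the factor $3$ between the two counts, provided each block in the type-$1$ case contains as many origamis as a type-$0$ locus.
\end{itemize}

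Either way, the delicate part is the bookkeeping of $2$-torsion fixed points in terms of $d$ modulo $2$. This is also where the hypothesis $n>1$ odd enters: it guarantees that $Q$ is not $2$-torsion, so that the two branch points are distinguishable.
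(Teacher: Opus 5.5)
The paper does not prove this statement; it only quotes it from \cite{Dur}. So I judge your sketch on its own terms.

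Steps 1 and 2 do recover the sum over $\epsilon$. An origami corresponds to three pieces of data: an index-$n$ sublattice with cyclic quotient, a generator $Q$ of that quotient up to sign, and a primitive degree-$d$ cover of $E$ simply branched over $\{0,Q\}$. Translations by $\ker\phi$ act freely on this data because $n>2$. So the total is $\tfrac12 J_2(n)\,c_d$, where $c_d=\tfrac{2(d-1)}{3}|\PSL(2,\Z/d\Z)|$ is the count of such covers over two fixed points (e.g.\ $c_2=4$, $c_3=16$). Your $\tfrac{d-1}{3}|\PSL(2,\Z/d\Z)|$ is therefore correct only after this halving. Two smaller corrections:
\begin{itemize}
\item $\ker\phi=\langle Q-P\rangle$ comes from the relative periods generating $\Z\oplus i\Z$, not from primitivity of the composite. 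The composite factors through $E$, so it is never primitive for $n>1$.
\item Imprimitive covers in Step 2 factor through an isogeny $E'\to E$ and are lower-degree $\calH(1,1)$ covers of $E'$. They are removed by M\"obius inversion over $k\mid d$ using $\calH(1,1)$ counts, not the $\calH(2)$ counts.
\end{itemize}

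The genuine gap is Step 3, which is the real content beyond the total. Your fallback cannot work. The $\SL(2,\Z)$-action fixes the point over $0\in\mathbb{T}^2$ and only permutes the three nonzero $2$-torsion points. So no block system can compare an orbit whose distinguished fixed-point count sits at $l_0$ with one where it sits at some $l_i$, $i\geq 1$. Proposition~\ref{prop:l_1diff} only tells you that the three blocks of the $\epsilon=1$ orbit have equal size. Your proviso that each block is as large as the $\epsilon=0$ orbit is just a restatement of $t_{d,n,1}=3t_{d,n,0}$. Your first approach is only a plan.

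The missing idea is a symmetry that moves $0$ to a nonzero $2$-torsion point. It lives on Hurwitz spaces, not on $\SL(2,\Z)$-orbits.
\begin{itemize}
\item Translate so that $\pi$ is branched at $\pm a$, with $a=\tfrac{n+1}{2}(Q-P)$ the unique $n$-torsion midpoint of the branch points. Then the hyperelliptic involution covers $z\mapsto -z$, and the HLK-invariant is the ordered vector of fixed-point counts over $E[2]$, with $l_0$ over $0$.
\item The number of primitive degree-$d$ covers simply branched at $\pm a$ with a prescribed ordered vector does not depend on $a$ in the connected set $E\setminus E[2]$. The Hurwitz space is a finite covering there, and the fibres over $E[2]$ stay unramified.
\item Translation by $w\in E[2]$ carries covers branched at $\pm a$ to covers branched at $\pm(a+w)$ and turns $l_w$ into $l_0$. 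Since $a+w$ has order $2n$, this is not a map between your origami loci; the deformation invariance is what closes the argument.
\item Hence the distinguished entry sits at each of the four $2$-torsion points equally often. That entry is $3$ for odd $d$ and $0$ for even $d$; by the HLK classification quoted from \cite{Dur}, these are the only possible vectors. This gives shares $\tfrac14 c_d$ and $\tfrac34 c_d$, which are exactly the stated formulas.
\end{itemize}

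This is also where oddness of $n$ really enters. It makes $E[2]\to\mathbb{T}^2[2]$ bijective and places exactly one midpoint over $0$. By contrast, the fact that $Q$ is not $2$-torsion holds for every $n>2$, so it is not the reason.
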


The total for even $n$ is the sum of the two numbers in the formulae above.

For $n = 1$, and odd $d\geq 5$, Kappes--M\"oller also established the formulae
\[t_{d,1,\epsilon} = \left\lbrace\begin{array}{cl}
        \frac{(d-3)(d-5)}{12d}\cdot|\PSL(2,\Z/d\Z)|, & \text{if $\epsilon = 0$} \\
        \\
    \frac{(d-1)(d-3)}{4d}\cdot|\PSL(2,\Z/d\Z)|, & \text{if $\epsilon = 1$,}
    \end{array}\right..\]
Notice that these are the formulae conjectured by Zmiaikou in Conjecture~\ref{conj:zmiaikou}. To the best of our knowledge, the case $n = 1$ for even $d$ (also conjectured by Zmiaikou) is still open.

We notice that families of orbits with fixed $n$ and increasing $d$ have growth $\Omega(d^{4}) = \Omega(N^4)$ where $N=dn$ is the number of squares (fixing $n$ is key here). Unfortunately, no such family is covered by Theorem~\ref{thm:Dur}, and so we are not able to give a theorem without dependence on the Parity Conjecture. All possible families covered by Theorem~\ref{thm:Dur} have growth too slow for the bounds on parabolic faces given below to be useful.

So, for a fixed $n\geq 1$, we let $(\mathcal{G}_{N})_{N}$ be a family of orbit graphs associated to $W_{d^{2}}^{(\epsilon)}[n]$ with $N = dn$ increasing with $d$ and using the parabolic generators.

%%%%%%%%%%%%%%%%%%%%%%%%%%%%%%%%%%%%%%%%%

\subsection{Hyperbolic faces}

The fact that the hyperbolic faces of length at most 4 eventually vanish follows again by Corollary~\ref{cor:hyperbolic}.

%%%%%%%%%%%%%%%%%%%%%%%%%%%%%%%%%%%%%%%%%

\subsection{Parabolic faces}\label{subsec:H(1,1)-paras}

As above, it will suffice to bound the number of origamis fixed by $T$. Here, we have no known bounds on the number of cusps on the associated \teichmuller curves. In $\calH(1,1)$, an origami can have one, two or three cylinders. Again, we will have that a 1-cylinder $N$-squared origami has cusp width of length $N$ and so it will not be fixed by $T$. Hence, we must bound the number of two- and three-cylinder origamis fixed by $T$.

By considering the possible separatrix diagrams in $\calH(1,1)$ (see for example~\cite[\S 3]{Shr}), an origami with two cylinders has one of the two forms shown in Figure~\ref{fig:H(1,1)-two-cylinder}. Letting $l_i$ be the lengths of the horizontal saddle connections labelled by $i$ and $w_{i}$ be the widths of the cylinders, we see that in the first case $w_{1} = l_1+l_3$ and $w_{2} = l_2+l_3$. Hence, given $(w_1,w_2)$ a choice of $1\leq l_3\leq \min\{w_1,w_2\}-1$ determines the origami (up to cylinder twists). In the second case, $w_1 = l_1$ and $w_2 = l_1+l_2+l_3$. So, given $(w_1,w_2)$, a choice of $1\leq l_3\leq w_2-w_1-1$ determines the origami (up to cylinder twists). In both cases, we have $w_1w_2$ choices of twists $(t_1,t_2)$ for the two cylinders.

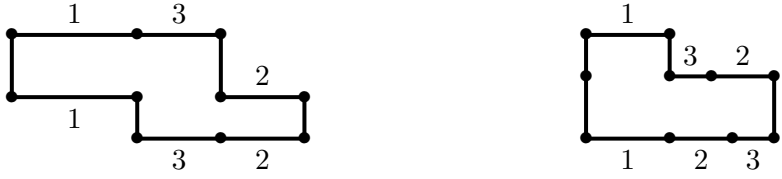
\begin{figure}[t]
    \centering
    \begin{tikzpicture}[scale = 0.55, line width = 1.5pt]
        \draw (0,0)node{$\bullet$} -- (0,1.5)node{$\bullet$} -- node[above]{1}(3,1.5)node{$\bullet$} -- node[above]{3}(5,1.5)node{$\bullet$} -- (5,0)node{$\bullet$} -- node[above]{2}(7,0)node{$\bullet$} -- (7,-1)node{$\bullet$} -- node[below]{2}(5,-1)node{$\bullet$} -- node[below]{3}(3,-1)node{$\bullet$} -- (3,0)node{$\bullet$} -- node[below]{1}(0,0);
    \end{tikzpicture}
    \hspace{3cm}
    \begin{tikzpicture}[scale = 0.55, line width = 1.5pt]
        \draw (0,0)node{$\bullet$} -- (0,1.5)node{$\bullet$} -- (0,2.5)node{$\bullet$} -- node[above]{1}(2,2.5)node{$\bullet$} -- (2,1.5)node{$\bullet$} -- node[above]{3}(3,1.5)node{$\bullet$} -- node[above]{2}(4.5,1.5)node{$\bullet$} -- (4.5,0)node{$\bullet$} -- node[below]{3}(3.5,0)node{$\bullet$} -- node[below]{2}(2,0)node{$\bullet$} -- node[below]{1}(0,0);
    \end{tikzpicture}
    \caption{Two-cylinder origamis in $\calH(1,1)$. Each cylinder can also be assigned a twist $1\leq t_i<w_i$ but we have not drawn them here.}
    \label{fig:H(1,1)-two-cylinder}
\end{figure}

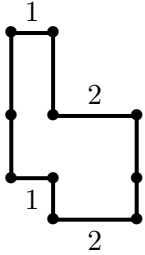
\begin{figure}[t]
    \centering
    \begin{tikzpicture}[scale = 0.55, line width = 1.5pt]
        \draw (0,0)node{$\bullet$} -- (0,1.5)node{$\bullet$} -- (0,3.5)node{$\bullet$} -- node[above]{1}(1,3.5)node{$\bullet$} -- (1,1.5)node{$\bullet$} -- node[above]{2}(3,1.5)node{$\bullet$} -- (3,0)node{$\bullet$} -- (3,-1)node{$\bullet$} -- node[below]{2}(1,-1)node{$\bullet$} -- (1,0)node{$\bullet$} -- node[below]{1}(0,0);
    \end{tikzpicture}
    \caption{A three-cylinder origami in $\calH(1,1)$. Each cylinder can also be assigned a twist $1\leq t_i<w_i$ but we have not drawn them here.}
    \label{fig:H(1,1)-three-cylinder}
\end{figure}

In both cases, for a cusp of width one, we are solving the equation
\[xw_1^2+yw_2^2 = N\]
for some $1\leq x,y\leq N$.

In the regime $1\leq \min\{x,y\}<\sqrt{N}$, the ellipses fit inside a box of side length $M = \sqrt{N}$. Hence, Bombieri--Pila again gives us that we have $O(N^{\frac{1}{4}+\epsilon})$ lattice points. We have $O(N^{\frac{3}{2}})$ ellipses and each solution $(w_1,w_2)$ determines a distinct origami (up to twists) for each of the choices of $l_3$. In both situations, we have $O(\sqrt{N})$ choices for $l_3$. Finally, we have $w_1w_2 = O(N)$ choices for the twists. So, we obtain a bound of $O(N^{\frac{13}{4}+\epsilon})$ origamis fixed by $T$.

If $\sqrt{N}\leq x,y\leq N$, then the ellipses fit inside a box of side length $M = N^{\frac{1}{4}}$. Hence, Bombieri--Pila again gives us that we have $O(N^{\frac{1}{8}+\epsilon})$ lattice points. We have $O(N^2)$ ellipses and each solution $(w_1,w_2)$ determines a distinct origami (up to twists) for each of the choices of $l_3$. This time, we have $O(N^{\frac{1}{4}})$ choices for $l_3$. Finally, we have $w_1w_2 = O(N^{\frac{1}{2}})$ choices for the twists. So, we obtain a bound of $O(N^{\frac{23}{8}+\epsilon})$ origamis fixed by $T$.

A three-cylinder origami in $\calH(1,1)$ has the form shown in Figure~\ref{fig:H(1,1)-three-cylinder}. Here, $w_1 = l_1$ and $w_3 = l_2$ and $w_2 = w_1+w_3$. Hence, the equation
\[xw_1^2+yw_2^2+zw_3^2 = N\]
becomes the conic
\[(x+y)w_1^2+2yw_1w_3+(y+z)w_3^2 = N\]
and we could apply Bombieri--Pila. However, it turns out that the bounds from Bombieri--Pila are not tight enough for our purposes and so we will instead appeal to a result of Browning--Gorodnik~\cite[Theorem 1.11]{BG} that for a quadric of rank $r$ gives $O(M^{r-2+\epsilon})$ lattice points inside a box of side length $M$. Here, we have $r = 2$. Notice that, up to twists, the origami is determined by $(w_1,w_3)$.

If $1\leq \min\{x,y,z\}\leq N^{\frac{5}{12}}$, then the conic fits into a box of side length $M = \sqrt{N}$. Hence, Browning--Gorodnik~\cite[Theorem 1.11]{BG} gives a bound of $O(M^{\epsilon}) = O(N^{\frac{1}{2}\epsilon})$ lattice points. We have $O(N^{2+\frac{5}{12}})$ conics and each solution $(w_1,w_3)$ determines a unique origami up to twists. We have $w_1w_2w_3= w_1(w_1+w_3)w_3 = O(N^{\frac{3}{2}})$ choices of twists. Hence, we have $O(N^{\frac{47}{12}+\frac{1}{2}\epsilon})$ origamis fixed by $T$.

If $N^{\frac{5}{12}}\leq x,y,z\leq N$, then the conic fits into a box of side length $M = N^{\frac{7}{24}}$. Hence, Browning--Gorodnik gives a bound of $O(N^{\frac{7}{24}\epsilon})$ lattice points. We have $O(N^{3})$ conics and each solution $(w_1,w_3)$ determines a unique origami up to twists. We have $w_1w_2w_3= w_1(w_1+w_3)w_3 = O(N^{\frac{7}{8}})$ choices of twists. Hence, we have $O(N^{\frac{31}{8}+\frac{7}{24}\epsilon})$ origamis fixed by $T$.

So, in total, we have $O(N^{\frac{47}{12}+\frac{1}{2}\epsilon})$ loops, and hence also $O(N^{\frac{47}{12}+\frac{1}{2}\epsilon})$ parabolic faces of length at most 4. This is $o(|\mathcal{G}_{N}|) = o(N^{4})$.
 
%%%%%%%%%%%%%%%%%%%%%%%%%%%%%%%%%%%%%%%%%

\subsection{Elliptic faces}

Here, we must count orbifold points on $W_{d^2}[n]$.

It can be checked (by considering the action of an automorphism on the zeros of a one-form) that there are no orbifold points of order two. Indeed, this would require an automorphism of the origami of order four. The square of this automorphism will fix the zeros of the one-form. However, the square of the automorphism must also be the hyperelliptic involution which is required to exchange the zeros, and this is a contradiction.

Therefore, we need only count the orbifold points of order three. In~\cite[Appendix A]{Muk}, Mukamel gives the following description (without proof) of genus two curves admitting an order six automorphism whose square is the hyperelliptic involution. We recall his definition of $\mathcal{M}_{2}(D_{12})$, where $D_{12}$ is the dihedral group of order 12:
\[\mathcal{M}_{2}(D_{12}) := \{(X,\rho):X\in\mathcal{M}_{2}\text{ and }\rho:D_{12}\xhookrightarrow{homo.}\Aut(X)\}/\!\sim,\]
where $(X_{1},\rho_{1})\sim(X_{2},\rho_{2})$ if there exists an isomorphism $f:X_{1}\to X_{2}$ satisfying $f\circ\rho_{1}\circ f^{-1} = \rho_{2}$.

\begin{theorem}\label{thm:M2(D12)}
    For a smooth curve $X\in\mathcal{M}_{2}$, the following are equivalent:
    \begin{itemize}
        \item $\exists\,\rho:D_{12}\xhookrightarrow{homo.}\Aut(X)$; i.e., $(X,\rho)\in\mathcal{M}_{2}(D_{12})$.
        \item The field of functions $\C(X)$ is isomorphic to the field
        \[\widetilde{K}_{a} = \C(x,z)\text{ with }z^2 = x^6-ax^3+1\]
        for some $a\in\C\setminus\{\pm2\}$.
        \item The Jacobian $\text{Jac}(X)$ is isomorphic to the principally polarised abelian variety $\widetilde{A}_{\tau} = \C^2/\widetilde{\Lambda}_{\tau}$, where
        \[\widetilde{\Lambda}_{\tau} = \Z\left\langle\begin{pmatrix}
            1 \\
            1/\sqrt{3}
        \end{pmatrix},\begin{pmatrix}
            \tau \\
            \sqrt{3}\tau
        \end{pmatrix},\begin{pmatrix}
            1 \\
            -1/\sqrt{3}
        \end{pmatrix},\begin{pmatrix}
        \tau \\
            -\sqrt{3}\tau
        \end{pmatrix}\right\rangle\]
        and the polarisation
        \[\left\langle\begin{pmatrix}
            a \\
            b
        \end{pmatrix},\begin{pmatrix}
            c \\
            d
        \end{pmatrix}\right\rangle = \frac{-{\rm{Im}}(a\bar{c}+b\bar{d})}{2\,{\rm{Im}}(\tau)}.\]
        \item The curve $X$ is isomorphic to a curve $\widetilde{X}_{\tau}$ obtained by gluing the hexagonal pinwheel $H_{\tau}$ in Figure~\ref{fig:pinwheel} to $-H_{\tau}$ for some $\tau$ in the domain
        \[\widetilde{U} := \{\tau\in\mathbb{H}: |{\rm{Re}}(\tau)|\leq 1/2, |\tau|^2\geq1/3,\text{ but } \tau\not\in\{\zeta_{12}/\sqrt{3},\zeta_{12}^5/\sqrt{3}\}\}.\]
    \end{itemize}
\end{theorem}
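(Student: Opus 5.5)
I will prove the equivalences by cycling through the four descriptions, using the curve model $\widetilde K_a$ as the hub. The link between the first two bullets is the classical classification of automorphism groups of genus two curves (Bolza). Suppose $D_{12}\hookrightarrow\Aut(X)$. The centre of $D_{12}$ is generated by the cube of the rotation of order six, and it must map to the hyperelliptic involution $\iota$. To see this, note that $\iota$ is the unique involution with quotient $\mathbb{P}^1$; the order-six rotation $r$ has $r^3$ central and commuting with everything, and a Riemann--Hurwitz count shows $r^3$ must act as $\iota$. So $D_{12}/\langle\iota\rangle\cong S_3$ acts on $\mathbb{P}^1$ preserving the six Weierstrass branch points. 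Normalising the order-three element to $x\mapsto\zeta_3x$ and the involution to $x\mapsto1/x$, the six branch points form two $\langle\zeta_3\rangle$-orbits swapped by $x\mapsto1/x$. Hence they are the roots of $x^6-ax^3+1$, with $a\neq\pm2$ exactly to avoid repeated roots. Conversely, $x\mapsto\zeta_3x$, $z\mapsto z$ together with $x\mapsto 1/x$, $z\mapsto z/x^3$ and $\iota$ generate a copy of $D_{12}$ in $\Aut$ of $\widetilde K_a$.

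For the third bullet, I will show that $\widetilde A_\tau$ carries the relevant automorphisms and that every Jacobian with a $D_{12}$-action is of this form. The key step is that $D_{12}$ forces $\mathrm{Jac}(X)$ to be isogenous to $E\times E$. Indeed, the involution $x\mapsto 1/x$ and its conjugates by the order-three element give degree-two quotient maps to elliptic curves $E=X/\langle\sigma\rangle$. The order-three symmetry then forces $\mathrm{Jac}(X)\cong(E\times E)/\Gamma$ with $\Gamma$ a $3$-torsion gluing, the Kani--Frey picture of a degree-three splitting. Writing $E=\C/\langle1,\tau\rangle$ and choosing the eigenbasis for the order-three action on $H^0(\Omega^1)$, I expect the period lattice to take exactly the shape $\widetilde\Lambda_\tau$; the factors $1/\sqrt3$ and $\sqrt3$ encode the index-three gluing. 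I then verify directly that the displayed Hermitian form is integral and unimodular on $\widetilde\Lambda_\tau$, and that the matrices $\mathrm{diag}(\zeta_6,\zeta_6^{-1})$-type maps, together with the coordinate swap, preserve both the lattice and the form. Torelli then turns the principally polarised abelian variety with $D_{12}$ back into a curve with $D_{12}$, because the automorphism group of a genus two curve surjects onto the automorphisms of its principally polarised Jacobian.

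For the fourth bullet, I glue $H_\tau$ to $-H_\tau$ and compute the absolute periods of the resulting one-forms. Rotation of the pinwheel by $2\pi/6$, combined with the swap $H_\tau\leftrightarrow-H_\tau$, realises the $D_{12}$-action geometrically. Reading the period matrix off the edge vectors of $H_\tau$ should give $\widetilde\Lambda_\tau$. The domain $\widetilde U$ is a fundamental domain for the residual action on $\tau$ of the commensurator of the level structure, a group containing $\tau\mapsto\tau+1$ and $\tau\mapsto-1/(3\tau)$. The two points removed are exactly where the glued surface degenerates or where the polarisation becomes a product, which are the images of $a=\pm2$.

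I expect the main obstacle to be the lattice computation. Showing that the $3$-gluing of $E\times E$ produces precisely $\widetilde\Lambda_\tau$ with that polarisation, and identifying $\widetilde U$ and the two excluded points, requires care with normalisations. I would handle it by computing $\tau$ for the two special curves $a=0$ (which has $\Aut\supset D_{12}$ extended by more) and $a\to\pm2$, and matching these against the boundary of $\widetilde U$.
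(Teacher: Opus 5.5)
Your first-to-second-bullet argument is fine, and it is a more elementary substitute for the paper's appeal to Mukamel's table. It needs one repair. Riemann--Hurwitz for $\langle\rho(Z)^3\rangle$ alone cannot force $\rho(Z)^3=\eta$: on $y^2=x^6+1$ the map $(x,y)\mapsto(\zeta_6x,y)$ has order six and a non-hyperelliptic cube. Use the whole group instead. If $\eta\notin\rho(D_{12})$, then $\rho(D_{12})\times\langle\eta\rangle$, of order $24$ and exponent $6$, acts on $X$. Riemann--Hurwitz then forces signature $(2,3,12)$, $(2,4,6)$ or $(3,3,4)$, each needing an element of order $4$ or $12$. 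Hence $\eta\in\rho(D_{12})$, it is central there, and so $\eta=\rho(Z^3)$. Your plan for third $\Rightarrow$ first also works: the form is principal on $\widetilde\Lambda_\tau$, $\frac12\begin{psmallmatrix}1&-\sqrt3\\ \sqrt3&1\end{psmallmatrix}$ and $\mathrm{diag}(1,-1)$ generate a $D_{12}$ preserving lattice and form, and Torelli finishes.

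The genuine gap is first $\Rightarrow$ third, which is the heart of the statement and which you only ``expect''. The step you rely on is wrong as stated. A Kani--Frey degree-three splitting requires a degree-three map from $X$ to an elliptic curve, and a generic curve $z^2=x^6-ax^3+1$ has none.

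The missing idea is that $D_{12}$ has \emph{two} conjugacy classes of non-hyperelliptic involutions, $\rho(r)$ and $\eta\rho(r)=\rho(Z^3r)$, swapped by its outer automorphism. The quotients $E=X/\rho(r)$ and $F=X/\eta\rho(r)$ are $3$-isogenous but in general not isomorphic. The paper makes this precise in three steps:
\begin{enumerate}
\item $E$ carries a canonical order-three subgroup $C$, read off from the $3$-division polynomial.
\item $(X,\rho)\mapsto(E,C)$ identifies $\mathcal M_2(D_{12})$ with $Y_0(3)$ minus its elliptic point.
\item The outer automorphism becomes the Fricke involution, so $F\cong E_{-1/(3\tau)}$.
\end{enumerate}
Mukamel's degree-\emph{two} splitting $\Jac(X)\cong(E\times F)/\Gamma^W$ then applies, with $\Gamma^W$ two-torsion and the principal polarization pulling back to twice the product one. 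Rescaling the lattice of $E_{-1/(3\tau)}$ by $\sqrt3\tau$ gives exactly $\widetilde\Lambda_\tau$; that rescaling is where the $\sqrt3$'s come from. Indeed $\widetilde\Lambda_\tau\supset\langle(2,0),(2\tau,0),(0,2/\sqrt3),(0,2\sqrt3\tau)\rangle$ with index $4$, not $3$.

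For non-CM $E$ this also shows there is no degree-three subcover: every elliptic curve on $\Jac(X)$ has $\Theta$-degree $2(m^2+3n^2)/k$ with $k\mid 4$, $\gcd(m,n)=1$ and $3\nmid m$, which is never $3$. The closest true version of your claim is a degree-three isogeny $E\times E\to\Jac(X)$ with cyclic kernel, built from two conjugate quotients. But the polarization does not pull back to a multiple of the product one, and the kernel is a copy of $C$, the $Y_0(3)$-datum your sketch never introduces.

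The pinwheel part also fails as planned, for three reasons.
\begin{itemize}
\item The excluded points $\zeta_{12}/\sqrt3,\zeta_{12}^5/\sqrt3$ are the elliptic point of $Y_0(3)$, i.e.\ $a=\infty$; there $H_\tau$ collapses to an equilateral triangle. The values $a=\pm2$ correspond instead to the cusp $\tau\to i\infty$ (and $\tau\to0$, identified with it by $w_3$). So matching $a\to\pm2$ against the excluded points cannot work.
\item Only the order-six rotation is visible on the flat surface, since $\rho(r)^*(dx/z)=-x\,dx/z$ is the other eigenform. You can still recover $D_{12}$ from your own normal-form argument: an order-three automorphism of $\mathbb P^1$ permuting the branch points already forces $z^2=x^6-ax^3+1$.
\item More seriously, the edge vectors of $H_\tau$ give only the periods of the single eigenform $\omega_\tau$. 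The coordinates of $\widetilde\Lambda_\tau$ are $\rho(r)$-eigencoordinates, so the full period matrix needs the action of $\rho(r)$ on $H_1$, which the flat picture does not show.
\end{itemize}
The paper avoids periods entirely. It shows $\tau\mapsto(\widetilde X_\tau,\rho_\tau)$ descends to a degree-one map $\widetilde U/{\sim}\to\mathcal M_2(D_{12})/\sigma$, since only $\tau=i/\sqrt3$ lands on the fixed point of $\sigma$. It then identifies this map with $\tau\mapsto g^{-1}(E_\tau,\langle\tfrac13\rangle)$ by comparing the two at the cusp.
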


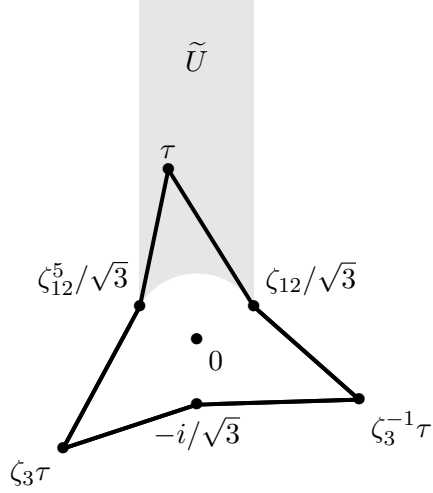
\begin{figure}[t]
    \centering
    \begin{tikzpicture}[scale = 1.5, line width = 1.5pt]
        \draw (1/2,0.288675134) -- (-1/4,1.5) -- (-1/2,0.288675134) -- (-1.1740381,-0.96650635) -- (0,-0.57735) -- (1.4240381,-0.5334936490) -- (1/2,0.288675134);
        \fill[gray!20]
        (-0.5,3)
        -- (-0.5,0.288675134)
        -- (0.5,0.288675134)
        -- (0.5,3)
        -- cycle;
        \draw [color = white, line width = 0pt] (-1.5,-1) -- (1.5,-1);
        \draw [fill = white, color = white, line width = 0pt] (0,0) circle (0.577350269);
        \node at (0,2.5) {$\widetilde{U}$};
        \node at (0,0) {$\bullet$};
        \node at (0,0) [below right]{0};
        \node at (1/2,0.288675134) {$\bullet$};
        \node at (1/2,0.288675134) [above right]{$\zeta_{12}/\sqrt{3}$};
        \node at (-1/2,0.288675134) {$\bullet$};
        \node at (-1/2,0.288675134) [above left]{$\zeta_{12}^{5}/\sqrt{3}$};
        \node at (0,-0.57735) {$\bullet$};
        \node at (0,-0.57735) [below]{$-i/\sqrt{3}$};
        \node at (-1/4,1.5) {$\bullet$};
        \node at (-1/4,1.5) [above]{$\tau$};
        \node at (-1.1740381,-0.96650635) {$\bullet$};
        \node at (-1.1740381,-0.96650635) [below left]{$\zeta_{3}\tau$};
        \node at (1.4240381,-0.5334936490) {$\bullet$};
        \node at (1.4240381,-0.5334936490) [below right]{$\zeta_{3}^{-1}\tau$};
        \draw (1/2,0.288675134) -- (-1/4,1.5) -- (-1/2,0.288675134) -- (-1.1740381,-0.96650635) -- (0,-0.57735) -- (1.4240381,-0.5334936490) -- (1/2,0.288675134);
    \end{tikzpicture}
    \caption{The hexagonal pinwheel $H_{\tau}$ for $\tau\in\widetilde{U}$ (the shaded domain). It has vertices $\{\zeta_{12}/\sqrt{3},\tau,\zeta_{12}^{5}/\sqrt{3},\zeta_3\tau,-i/\sqrt{3},\zeta_{3}^{-1}\tau\}$.}
    \label{fig:pinwheel}
\end{figure}

We will give the proof of this theorem following Mukamel's arguments for $\mathcal{M}_{2}(D_{8})$ that he studied to establish the counts of order two orbifold points that we used in Subsection~\ref{subsec:ell-H2}. The details of these proofs will allow us to prove the following result. Recall that $\tilde{h}(-D)$ is the reduced class number defined by
\[\tilde{h}(-D) = \frac{2h(-D)}{|\mathcal{O}_{-D}^{\times}|}.\]

\begin{theorem}\label{thm:e3}
    Let $e_{3}$ denote the number of order three orbifold points. If $n$ is even, then $e_{3}(W_{d^2}[n]) = 0$ for all $d\geq 2$. Otherwise, for odd $n$, if $d\equiv 0\!\!\mod 3$, we have
    \[e_{3}(W_{d^2}^{\epsilon}[n]) = \left\lbrace\begin{array}{cl}
        \frac{1}{2}(3\tilde{h}(-d^2/3) + \tilde{h}(-3d^2)), & \text{if $n = 3$ and $\epsilon = 0$} \\
        0, & \text{otherwise},
    \end{array}\right.\]
    if $d\equiv 1\!\!\mod 3$, we have
    \[e_{3}(W_{d^2}^{\epsilon}[n]) = \left\lbrace\begin{array}{cl}
        \frac{1}{2}\tilde{h}(-3d^2), & \text{if $n = 1$ and $\epsilon = 0$} \\
        0, & \text{otherwise},
    \end{array}\right.\]
    and, if $d\equiv 2\!\!\mod 3$, we have
    \[e_{3}(W_{d^2}^{\epsilon}[n]) = \left\lbrace\begin{array}{cl}
        \frac{1}{2}\tilde{h}(-3d^2), & \text{if $n = 3$ and $\epsilon = 0$} \\
        0, & \text{otherwise}.
    \end{array}\right.\]
\end{theorem}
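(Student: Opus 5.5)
Following Mukamel's treatment of $\mathcal{M}_{2}(D_{8})$, we first establish Theorem~\ref{thm:M2(D12)} and use it to parametrise all order three orbifold points. An order three orbifold point on a \teichmuller curve in $\calH(1,1)$ is a point $(X,\omega)$ fixed by an elliptic element of order six in $\SL(2,\Z)$ (modulo $-I$). Equivalently, $X$ has an automorphism $\alpha$ of order six whose cube is the hyperelliptic involution and with $\alpha^{*}\omega=\zeta_{6}^{\pm1}\omega$. Together with an involution this generates $D_{12}$, so $X=\widetilde{X}_{\tau}$ for some $\tau\in\widetilde{U}$.

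On $\widetilde{A}_{\tau}$ the automorphism acts diagonally. The eigenform $\omega$ is then, up to scale, one of the two coordinate forms $dz_{1}$ or $dz_{2}$. Its period lattice is computed directly from $\widetilde{\Lambda}_{\tau}$: it is $\Z\langle 1,\tau\rangle$ (respectively $\tfrac{1}{\sqrt{3}}\Z\langle 1,3\tau\rangle$).

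The condition that $(X,\omega)$ is an origami in $\Omega W_{d^{2}}[n]$ means that the periods span a lattice commensurable with $\Z[i]$ in the prescribed way. This forces $\Z\langle1,\tau\rangle$ to be similar to a lattice with a $\zeta_{6}$-symmetry of the right index, so $\tau$ is a CM point of discriminant $-3d^{2}$ or $-d^{2}/3$ in the second case. Counting such $\tau\in\widetilde{U}$ up to the residual $\SL(2,\Z)$-type identifications then reduces to counting proper ideal classes in $\mathcal{O}_{-3d^{2}}$ and $\mathcal{O}_{-d^{2}/3}$. This produces the reduced class numbers $\tilde{h}$, and the factor $\frac{1}{2}$ accounts for the identification $\omega\sim-\omega$ or the choice of $\zeta_{6}$ versus $\zeta_{6}^{-1}$.

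Next we pin down the invariants $n$ and $\epsilon$ for each such point. The two zeros of $\omega$ are the two fixed points of $\alpha$ that are exchanged by the hyperelliptic involution. Their images in $E=\C/\mathrm{Per}(\omega)$ differ by a point fixed by the order three rotation $\zeta_{3}$ of $E$. Such a point is $3$-torsion (in fact it lies in $\frac{1}{1-\zeta_3}\Lambda$), so $\pi(x_1)-\pi(x_2)$ has order $1$ or $3$. This immediately gives $n\in\{1,3\}$, and in particular $e_{3}=0$ for even $n$.

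We will determine which order occurs from $d\bmod 3$. When $3\mid d$, the degree-$d$ cover forces a nontrivial difference, so $n=3$, and both CM orders contribute. This is the source of the combination $3\tilde{h}(-d^{2}/3)+\tilde{h}(-3d^{2})$, where the factor $3$ comes from the three $\zeta_3$-fixed $3$-torsion choices. When $d\not\equiv 0 \bmod 3$, only $\mathcal{O}_{-3d^{2}}$ contributes, and the residue of $d\bmod 3$ decides whether the difference is trivial ($d\equiv1$, giving $n=1$) or not ($d\equiv 2$, giving $n=3$).

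For $\epsilon$, we compute the HLK-invariant of the $\zeta_6$-symmetric origami. The order three symmetry permutes the three nonzero $2$-torsion points cyclically, so $l_{1}=l_{2}=l_{3}$. Comparing with Duryev's list then forces $(3,[1,1,1])$ or $(0,[2,2,2])$, that is, $\epsilon=0$.

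The main obstacle will be the lattice bookkeeping in the middle steps. We must correctly identify which $\tau$ give primitive degree-$d$ covers, which is the analogue of the $\gcd$ condition in the Prym section. We must also get the multiplicities right, both the factor $3$ and the $\frac{1}{2}$. This requires translating carefully between points of $\widetilde{U}$, pairs $(X,\omega)$ up to the normaliser action, and actual orbifold points on each component. We would check the resulting formula against small cases such as $d=4,5$ computed by machine.
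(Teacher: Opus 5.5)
Your reduction to $\widetilde{X}_\tau$ is sound. So is your torsion argument that $\pi(x_1)-\pi(x_2)$ has order $1$ or $3$: it gives $e_3=0$ for even $n$ and for odd $n\notin\{1,3\}$ in one stroke, where the paper uses block systems and explicit period computations. Your HLK argument forcing $l_1=l_2=l_3$, hence $\epsilon=0$, is exactly the content of the paper's block-system argument. One slip: $\alpha$ \emph{exchanges} the two zeros. For example, $(x,z)\mapsto(\zeta_3x,-z)$ swaps the points over $x=\infty$, and $\alpha$ cannot fix them since $\alpha^3=\eta$ swaps them. Run that argument with $\alpha^2$ instead.

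The real gap is in the middle, where all the counting lives. The $\rho(Z)$-eigenform is not $dz_1$ or $dz_2$. Those are pulled back from $X/\rho(r)$ and $X/\eta\rho(r)$, so they are eigenforms of the involution $\rho(r)$, which does not commute with $\rho(Z)$. On $\widetilde{A}_\tau$ the order-six automorphism acts (up to inversion) by $\frac12\begin{psmallmatrix}1&-\sqrt3\\ \sqrt3&1\end{psmallmatrix}$, which is not diagonal. Its eigenforms are $dz_1\pm i\,dz_2$, with periods spanned by $\frac{2}{\sqrt3}\zeta_{12}^{\pm1}$ and $2\zeta_6^{\pm1}\tau$. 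With your choice, ${\rm Per}(\omega)=\zeta_6{\rm Per}(\omega)$ would force $\Z\langle1,\tau\rangle$ itself to be hexagonal, i.e.\ essentially one $\tau$, so your route cannot produce the CM families. With the correct form you only learn $\tau\in\Q(\sqrt{-3})$, and nothing in your sketch ties the conductor of $\tau$ to $d$.

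That link is the missing idea, and it is what the paper's Table~\ref{tab:discriminants} supplies. For $E\cong\C/\mathcal{O}_C$ and each of the four subgroups $C_i\le E[3]$, it computes the real order $\mathcal{O}\cap\R$ inside the CM order of $\widetilde{A}_\tau$. When $3\mid C$, three level structures give discriminant $-3C$ and exactly one ($C_2$) gives $-C/3$. Setting $d^2=-3C$ or $-C/3$ yields $3\tilde h(-d^2/3)+\tilde h(-3d^2)$, with only the second term when $3\nmid d$.

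So the coefficient $3$ counts level structures over a fixed CM curve. It is not the three $\zeta_3$-fixed points of $E$: those include $0$, locate $\pi(x_2)$, and govern $n$, not multiplicity.

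Likewise the $\frac12$ is the Fricke involution $w_3$, equivalently $\sigma\in\Out(D_{12})$, which identifies $(X,\rho)$ with $(X,\rho\circ\sigma)$ without changing the eigenform. It is not $\omega\sim-\omega$, which is automatic via $\eta$. Nor is it $\zeta_6$ versus $\zeta_6^{-1}$: those eigenforms are exchanged by $\rho(r)$, so they already give one point. The halving is precisely the passage from $Y_0(3)$, where the class-number count lives, to $\widetilde U$.

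Finally, which $n$ occurs for each residue of $d \bmod 3$ is asserted, not derived. The paper gets it from $[{\rm RPer}(\omega):{\rm Per}(\omega)]$ on explicit $\widetilde U$-representatives. You could partly recover it by counting $\alpha^2$-fixed points, with ramification, in the degree-$d$ fibres over the three $\zeta_3$-fixed points of $E$. That gives $d\equiv1\Rightarrow n=1$ and $d\equiv2\Rightarrow n=3$, but it does not by itself exclude $n=1$ when $3\mid d$.
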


In each case, we get a bound on the number of elliptic faces of $O(d^2)$ which is $o(|\mathcal{G}_{N}|) = o(N^{4}) = o(d^{4})$ and this completes the proof of Theorem~\ref{thm:submain-2}.

\subsubsection{Proof of Theorem~\ref{thm:M2(D12)}}

Here, we prove Theorem~\ref{thm:M2(D12)}. It follows the structure of~\cite[\S 2]{Muk}. We consider $D_{12}$ with the following presentation:
\[D_{12} = \langle r, Z\mid r^2 = Z^6 = (Zr)^2 = 1\rangle.\]

\begin{proposition}\label{prop:M2(D12)-1}
    Suppose that $X\in\mathcal{M}_{2}$ has an order six automorphism $\phi$. Then either:
    \begin{enumerate}
        \item $X$ is one of the algebraic curves $y^2 = x^6\pm 1$ with $\phi(x,y) = (\zeta_{6}x,y)$; or
        \item The restriction of $\phi$ to the Weierstrass points $X^{W}$ acts as a product of two 3-cycles, the eigenforms for $\phi$ have simple zeros, $\phi^3 = \eta$ the hyperelliptic involution of $X$, and there exists an injective homomorphism $\rho:D_{12}\hookrightarrow\Aut(X)$ with $\rho(Z) = \phi$.
    \end{enumerate}
\end{proposition}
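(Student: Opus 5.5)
The plan follows Mukamel's analysis for order four automorphisms (his $\mathcal{M}_{2}(D_{8})$ case). Everything is reduced to the action of $\phi$ on the six Weierstrass points $X^{W}$ and to the induced automorphism of $\mathbb{P}^{1}=X/\eta$.

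\textbf{Step 1: descend to $\mathbb{P}^1$.} Since $\eta$ is central in $\Aut(X)$, $\phi$ descends to a Möbius transformation $\bar\phi$ of $\mathbb{P}^{1}$ preserving the six branch points. The order of $\bar\phi$ is $6$, $3$, or (if $\phi^{3}=\eta$) $3$ again, since $\bar\phi^{2}$ or $\bar\phi^{3}$ could be trivial only in restricted ways; $\bar\phi$ cannot be trivial because $\phi\neq\eta$. A finite-order Möbius map is conjugate to $x\mapsto\zeta x$ with fixed points $0,\infty$, so its orbits on $\mathbb{P}^{1}\setminus\{0,\infty\}$ all have the same length $k=\operatorname{ord}(\bar\phi)$.

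\textbf{Step 2: case split on the orbit structure.} The six branch points split into full orbits of length $k$ together with some subset of $\{0,\infty\}$.
\begin{itemize}
\item If $k=6$, the branch points form one free orbit $\{\zeta_{6}^{j}c\}$. Normalising $c=1$ gives $y^{2}=x^{6}-1$, i.e.\ $y^{2}=x^{6}\pm1$ after rescaling. Checking the lift, $\phi(x,y)=(\zeta_{6}x,\pm y)$, and composing with $\eta$ if needed gives case (1).
\item If $k=3$, a branch point at $0$ or $\infty$ would force a count $6=3m+\{1,2\}$, which is impossible. Hence the branch points are two free $3$-orbits, and $\phi|_{X^{W}}$ is a product of two $3$-cycles. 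Normalise to $z^{2}=x^{6}-ax^{3}+1$ with $a\neq\pm2$ (the restriction $a\neq\pm2$ gives distinct roots). The lifts of $x\mapsto\zeta_{3}x$ are $(x,z)\mapsto(\zeta_{3}x,\pm z)$, so an order six lift is $\phi(x,z)=(\zeta_{3}x,-z)$, and $\phi^{3}=(x,-z)=\eta$. Note that $k=2$ is excluded, as that would make $\phi^{2}$ a lift of the identity, i.e.\ $\phi^{2}\in\{1,\eta\}$, contradicting order six.
\end{itemize}

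\textbf{Step 3: eigenforms and $D_{12}$ in case (2).} The basis $dx/z,\ x\,dx/z$ of $\Omega^{1}(X)$ diagonalises $\phi^{*}$, with eigenvalues $-\zeta_{3}$ and $-\zeta_{3}^{2}$. The form $dx/z$ has its zeros at the two points over $\infty$ (simple zeros, since $\infty$ is not a branch point). The form $x\,dx/z$ has simple zeros at the two points over $x=0$, again not a branch point. So the eigenforms have simple zeros. For the dihedral structure, take $r(x,z)=(1/x,\,z/x^{3})$. This preserves $z^{2}=x^{6}-ax^{3}+1$ because the polynomial is palindromic. One checks $r^{2}=1$ and $r\phi r=\phi^{-1}$, which gives $(\phi r)^{2}=1$, so $\rho(Z)=\phi$, $\rho(r)=r$ defines a homomorphism $D_{12}\to\Aut(X)$. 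It is injective because $r\notin\langle\phi\rangle$: $r$ swaps $0$ and $\infty$ on $\mathbb{P}^{1}$, while $\bar\phi$ fixes them. Since every proper quotient of $D_{12}$ kills $Z^{3}$ or collapses $r$ into the cyclic part, the kernel is trivial.

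\textbf{Main obstacle.} The delicate part is the case bookkeeping in Step 2. One must rule out that, when $k=6$, the lift is not of order six in a way that lands in case (2). One must also show that the curves $y^{2}=x^{6}\pm1$, which are isomorphic over $\C$, are genuinely separated from the family, i.e.\ that $\phi$ acting as a $6$-cycle on $X^{W}$ cannot occur in case (2). To handle this I would track the permutation of $X^{W}$ explicitly in each subcase, and use that $\phi^{3}=\eta$ acts trivially on $X^{W}$ exactly when $\bar\phi$ has order $3$.
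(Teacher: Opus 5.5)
Your argument is correct, and it reaches the normal form by a different route from the paper. The paper does not derive the dichotomy itself. It notes that $y^2=x^6\pm1$ carry $(\zeta_6x,y)$, and for the ``otherwise'' case it cites Mukamel's classification of genus-two automorphisms (\cite[Table 2]{Muk}) to get $y^2=(x^3-t^3)(x^3-t^{-3})$ with $\phi(x,y)=(\zeta_3x,-y)$. It then checks the Weierstrass-point action, the eigenforms $dx/y$ and $x\,dx/y$, and the embedding with $\rho(r)(x,y)=(1/x,y/x^3)$, exactly as in your Step 3.

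You instead prove the piece of that classification you need. You descend to $\bar\phi$ on $\mathbb{P}^1$ and count free orbits on the six branch points. Your Step 1 sentence on the possible orders should simply say: $\bar\phi^k=1$ iff $\phi^k\in\{1,\eta\}$, so $\operatorname{ord}(\bar\phi)=3$ exactly when $\phi^3=\eta$, and $6$ otherwise. Your route is self-contained, shows why the split is governed by $\phi^3=\eta$, and lands directly on the palindromic model $x^6-ax^3+1$ used in the next proposition. The paper's citation is shorter and comes with the full automorphism groups, which it relies on again later.

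Three small corrections.
\begin{enumerate}
\item In the case $k=6$, composing with $\eta$ replaces $\phi$ by a different automorphism, so it is not the right fix. For the other lift $\psi(x,y)=(\zeta_6x,-y)$ on $y^2=x^6-1$, take coordinates $u=1/x$, $w=iy/x^3$. These satisfy $w^2=u^6-1$ and $\psi(u,w)=(\zeta_6^{-1}u,w)$, so case (1) holds with the primitive root $\zeta_6^{-1}$.
\item Your sentence about proper quotients of $D_{12}$ is inaccurate. The quotient $D_{12}/\langle Z^2\rangle\cong(\Z/2\Z)^2$ neither kills $Z^3$ nor identifies $r$ with a rotation. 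It is also unnecessary: $\phi$ has order $6$, $r\notin\langle\phi\rangle$ and $r\phi r=\phi^{-1}$, so the image $\langle\phi\rangle\sqcup\langle\phi\rangle r$ has $12$ elements and $\rho$ is injective.
\item The ``main obstacle'' you describe is not one. The curve $y^2=x^6+1$ is the member $a=0$ of the family, as the paper remarks right after the proposition, so it is not separated from it. The dichotomy concerns the pair $(X,\phi)$, not the curve, and it is already settled by $\operatorname{ord}(\bar\phi)$ in your Step 2.
\end{enumerate}
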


\begin{proof}
    (1) The curves $y^2 = x^6\pm 1$ admit $\phi(x,y) = (\zeta_6,y)$. Notice though that $\phi^3\neq\eta$ where $\eta(x,y) = (x,-y)$ is the hyperelliptic involution.
    
    (2) Otherwise, by~\cite[Table 2]{Muk}, we have that $X$ is the curve
    \[y^2 = (x^3-t^3)(x^3-t^{-3}),\,\,\,\, t\in\C^{*}\]
    and $\phi(x,y) = (\zeta_{3}x,-y)$. Here, $\phi^3 = \eta$ and $\phi$ acts as a product of 3-cycles on the Weierstrass points $X^{W}$. The eigenforms of $\phi$ are
    \begin{itemize}
        \item $\frac{dx}{y}$ with two simple zeros at $x^{-1}(\infty)$; and
        \item $\frac{xdx}{y}$ with two simple zeros at $x^{-1}(0)$.
    \end{itemize}
    Moreover, we can define $\rho:D_{12}\to\Aut(X)$ by $\rho(Z) = \phi$ and $\rho(r)(x,y) = \left(\frac{1}{x},\frac{y}{x^3}\right)$. It can be checked that this is an injective homomorphism.
\end{proof}

\begin{remark}
    Recall that $y^2 = x^6 + 1$ and $y^2 = x^6-1$ are isomorphic over $\C$ and the former can be written in the form of (2) with $t^6 = -1$.
\end{remark}

Now define $\widetilde{K}_{a} = \C(x,z)\text{ with }z^2 = x^6-ax^3+1$ for $a \in\C\setminus\{\pm 2\}$. Let $Y_{a}$ be the curve with $\C(Y_{a}) = \widetilde{K}_{a}$. Let $\rho_{a}:D_{12}\hookrightarrow\Aut(Y_{a})$ be induced by the actions
\[Z(x,z) = (\zeta_{3}x,-z)\]
and
\[r(x,z) = \left(\frac{1}{x},\frac{z}{x^3}\right)\]
on $\widetilde{K}_{a}$.

\begin{proposition}\label{prop:M2(D12)-2}
    The map $a\mapsto(Y_{a},\rho_{a})$ defines a surjective holomorphic map $f:\C\setminus\{\pm 2\}\to\mathcal{M}_{2}(D_{12})$. In particular, $\dim_{\C}\mathcal{M}_{2}(D_{12}) = 1$ and $\mathcal{M}_{2}(D_{12})$ has one irreducible component.
\end{proposition}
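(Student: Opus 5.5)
The plan follows Mukamel's argument for $\mathcal{M}_{2}(D_{8})$ in \cite[\S 2]{Muk} and has three steps: well-definedness and holomorphy of $f$, surjectivity via Proposition~\ref{prop:M2(D12)-1}, and then the dimension and irreducibility statements.

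\textbf{Well-definedness and holomorphy.} For $a\in\C\setminus\{\pm 2\}$, the sextic $x^6-ax^3+1$ has discriminant a nonzero multiple of $(a^2-4)^{3}$, so it has distinct roots and $Y_a$ is a smooth genus two curve. The substitution $u=x^3$ gives $u^2-au+1$, whose discriminant is $a^2-4$, and $u\mapsto 1/u$ swaps its roots. I will then check directly that $Z$ and $r$ preserve $z^2=x^6-ax^3+1$. For $r$ this is the computation $(1/x)^6-a(1/x)^3+1=(x^6-ax^3+1)/x^6$. I will also verify the relations $r^2=Z^6=(Zr)^2=1$:
\begin{itemize}
\item $Z^3(x,z)=(x,-z)$ is the hyperelliptic involution, so $Z$ has order six;
\item $Zr(x,z)=(\zeta_3/x,\,-z/x^3)$, and applying it twice gives $(x,z)$.
\end{itemize}
So $\rho_a$ is a homomorphism. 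Injectivity follows because $Z$ has exact order six and $r\notin\langle Z\rangle$, since $r$ moves $x$ non-linearly. The family $\{Y_a\}$ is an algebraic family over $\C\setminus\{\pm2\}$ carrying a fibrewise $D_{12}$-action, so the induced map to the coarse space $\mathcal{M}_2(D_{12})$ is holomorphic.

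\textbf{Surjectivity.} Let $(X,\rho)\in\mathcal{M}_2(D_{12})$ and set $\phi=\rho(Z)$, an automorphism of order six. By Proposition~\ref{prop:M2(D12)-1}, either $X$ is $y^2=x^6\pm1$, or $X\cong\{y^2=(x^3-t^3)(x^3-t^{-3})\}$ with $\phi(x,y)=(\zeta_3x,-y)$. By the subsequent remark, the first case is also of the second form, taking $t^6=-1$. Expanding the second form gives
\[
y^2=x^6-(t^3+t^{-3})x^3+1,
\]
which is $Y_a$ with $a=t^3+t^{-3}$. Here $a\neq\pm2$ exactly because $t^6\neq1$, which is required for the curve to be smooth.

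It remains to match the embedding $\rho$ itself, not just the curve, with $\rho_a$ up to conjugation and automorphisms of $D_{12}$ (if needed, precomposing with an automorphism of $D_{12}$, replacing $Z$ by $Z^{-1}$). This is the main obstacle. The approach I would try first is as follows.
\begin{itemize}
\item An element $\rho(r)$ normalising $\langle\phi\rangle$ must preserve the two fibres $x^{-1}(0)$ and $x^{-1}(\infty)$, which are the zero sets of the $\phi$-eigenforms, as a set. Since it inverts $\phi$, it must swap these fibres (as the given $r$ does) or fix each of them.
\item Such an element therefore descends to a Möbius map of the $x$-line that commutes with $x\mapsto\zeta_3 x$ up to inversion. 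That forces the form $x\mapsto c/x$ or $x\mapsto cx$.
\item Preservation of the branch locus $\{x^3=t^{\pm3}\}$ then forces $c^3=1$ in the first case. Conjugating by the scaling $x\mapsto\lambda x$ with $\lambda^2=c$, or by a power of $\phi$, normalises $c=1$.
\item The sign on $z$ is absorbed by composing with $\eta=\phi^3$.
\item The alternative $x\mapsto cx$ would commute with $\phi$ on the quotient, which contradicts $r\phi r=\phi^{-1}$ unless it acts as an odd power of $\eta$. I will need to rule this out case by case.
\end{itemize}
Together with Proposition~\ref{prop:M2(D12)-1} this gives surjectivity.

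\textbf{Dimension and irreducibility.} The fibres of $f$ are finite. Indeed, $a$ is determined by $t^3$ up to $t\mapsto t^{-1}$, and the remaining normalisations (such as $x\mapsto \zeta x$ and scalings) are finite in number, so $f$ is finite-to-one. The domain $\C\setminus\{\pm2\}$ is irreducible of dimension one, and $f$ is a surjective holomorphic map with finite fibres. Hence its image $\mathcal{M}_2(D_{12})$ is one-dimensional with a single irreducible component.
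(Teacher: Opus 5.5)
Your skeleton is the paper's: Proposition~\ref{prop:M2(D12)-1} puts $X$ in the form $y^2=x^6-(t^3+t^{-3})x^3+1$ with $\rho(Z)=(\zeta_3x,-y)$, and smoothness gives $a\neq\pm2$. The paper then simply asserts $(X,\rho)\cong(Y_a,\rho_a)$ without matching $\rho(r)$. Your attempt to justify that matching contains a step that fails.

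\textbf{The sign normalisation.} The equivalence on $\mathcal{M}_2(D_{12})$ only allows conjugation by isomorphisms of curves. So you may precompose $\rho$ with \emph{inner} automorphisms of $D_{12}$ (your $Z\mapsto Z^{-1}$ is conjugation by $r$, so that one is harmless), but not with arbitrary ones. Absorbing the sign on $z$ by composing with $\eta=\phi^3$ replaces $\rho(r)$ by $\rho(Z^3r)$. That is exactly the outer automorphism $\sigma$, which the paper later shows acts as $\mu\colon a\mapsto -a$ with unique fixed point $f(0)$. If your step were legitimate, $\sigma$ would act trivially.

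Concretely, for $a\neq0$:
\begin{itemize}
\item The involutions of $Y_a$ inverting $\phi=\rho_a(Z)$ are exactly $\phi^k\rho_a(r)$ for $0\le k\le5$.
\item The centraliser of $\phi$ in $\Aut(Y_a)$ is $\langle\phi\rangle$.
\item Conjugation by $\phi^j$ sends $\phi^k\rho_a(r)$ to $\phi^{k+2j}\rho_a(r)$.
\end{itemize}
So you can reach $\rho(r)=(1/x,\pm y/x^3)$, but you cannot change the sign while staying on $Y_a$. The correct fix is the isomorphism $s(x,y)=(-x,y)\colon Y_{-a}\to Y_a$. It satisfies $s\rho_{-a}(Z)s^{-1}=\rho_a(Z)$ and $s\rho_{-a}(r)s^{-1}=\eta\rho_a(r)$. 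Hence the minus-sign case is $f(-a)$, not $f(a)$, and surjectivity survives because $-a\neq\pm2$.

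\textbf{Case (1) of Proposition~\ref{prop:M2(D12)-1}.} The remark identifies the curve $y^2=x^6\pm1$ with one of the second form, but not the automorphism. In case (1), $\rho(Z)=(\zeta_6x,y)$ has $\rho(Z)^3=(-x,y)\neq\eta$. By contrast, $\rho_a(Z)^3=\eta$ for every $a$, and this property is invariant under equivalence because isomorphisms conjugate hyperelliptic involutions to each other. So you must show that case (1) cannot occur for $\rho(Z)$. One way:
\begin{itemize}
\item An involution inverting $\rho(Z)$ must descend to $x\mapsto c/x$, since $x\mapsto cx$ commutes with $x\mapsto\zeta_6x$, which has order $6$.
\item A direct computation on $y^2=x^6\pm1$ shows that every involutive lift conjugates $\rho(Z)$ to $\eta\rho(Z)^{-1}$ rather than $\rho(Z)^{-1}$. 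This violates $(Zr)^2=1$.
\end{itemize}
The paper's proof is equally silent on this point.

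\textbf{Minor points.}
\begin{itemize}
\item Your ``$x\mapsto cx$'' alternative needs no case analysis. It would commute with $x\mapsto\zeta_3x$, which has order $3$ and so is not its own inverse.
\item At $a=0$ the branch locus is also preserved by $x\mapsto c/x$ with $c^3=-1$. It is $\rho(r)^2=1$, not the branch locus, that forces $c^3=1$: those other lifts have order four.
\end{itemize}
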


\begin{proof}
    Fix $(X,\rho)\in\mathcal{M}_{2}(D_{12})$. By Proposition~\ref{prop:M2(D12)-1}, we know that there exists $t\in\C^{*}$ such that $X$ is the curve
    \[y^2 = (x^3+t^3)(x^3-t^{-3}) = x^6 - (t^3+t^{-3})x^3+1.\]
    Since $X$ is non-singular, $t^3+t^{-3}\neq\pm 2$ and so $(X,\rho)$ is in the image of $f$ and is isomorphic to $(Y_{a},\rho_{a})$ with $a = t^3+t^{-3}$. It can be checked that $Y_{a}$ depends holomorphically on $a$.
\end{proof}

We now consider the quotient of a curve $X$ in $\mathcal{M}_{2}(D_{12})$ by the action of $\rho(r)$.

\begin{proposition}\label{prop:M2(D12)-3}
    For $(X,\rho)\in\mathcal{M}_{2}(D_{12})$ define $E_{\rho} = X/\rho(r)$. Then $E_{\rho}$ is an elliptic curve and exhibits a canonical choice of subgroup of order 3 in $E_{\rho}[3]$ denoted $C_{\rho}$.
\end{proposition}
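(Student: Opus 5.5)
We will work with the explicit model from Proposition~\ref{prop:M2(D12)-2}. By that result we may take $X = Y_{a}$, that is $z^2 = x^6 - ax^3 + 1$, with $\rho(r)(x,z) = (1/x, z/x^3)$ and $\rho(Z)(x,z) = (\zeta_3 x,-z)$.

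\textbf{Genus of the quotient.} The involution $\rho(r)$ is not the hyperelliptic involution $\eta(x,z) = (x,-z)$, so the quotient has genus one. To see this concretely, we count fixed points of $\rho(r)$. A fixed point requires $x = 1/x$, so $x = \pm 1$, and then $z = z/x^3$, which forces $x = 1$. Over $x = 1$ we have $z^2 = 2 - a \neq 0$, giving two points. Over $x = -1$ the involution sends $z$ to $-z$, so it fixes nothing there. Since $z^2 = 2+a \neq 0$, both points over $x = -1$ are non-Weierstrass and are swapped. The points over $0$ and $\infty$ are interchanged, so none of them is fixed. Hence $\rho(r)$ has exactly two fixed points, and Riemann--Hurwitz gives $2 = 2(2g'-2) + 2$, so $g' = 1$. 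Thus $E_\rho$ is an elliptic curve. To give it a group structure, we choose as origin the image of one of the two fixed points. A canonical alternative is to regard $E_\rho$ as $\mathrm{Pic}^0$, which avoids choosing an origin.

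\textbf{The subgroup $C_\rho$.} For this we use the dihedral relations. From $(Zr)^2 = 1$ we get $rZr = Z^{-1}$, so $Z$ does not descend to an automorphism of $E_\rho$. Instead, the conjugates $Z^k r Z^{-k}$ are the other reflections. Here $Z^3 = \eta$ commutes with $r$, so $\eta$ descends to $E_\rho$ as the involution $-1$ about a suitable origin. The cleanest route to $C_\rho$ is through the Jacobian. The map $\pi^*: E_\rho \to \Jac(X)$ realises $E_\rho$ as the $r$-invariant part, and the complementary curve is $E' = X/(\eta r)$. We then set
\[C_\rho := \pi^*(E_\rho) \cap E' \subset E_\rho,\]
viewed inside $E_\rho$ via $\pi^*$. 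For a degree two map from a genus two curve, the standard Kani/Frey theory says that this intersection is isomorphic to $E_\rho[2]$. That is the wrong order, so this cannot be the construction we want.

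\textbf{Revised construction.} Instead we will consider the degree $3$ structure coming from $Z$. The points of $E_\rho$ that are images of the orbit of a fixed point under $\langle Z \rangle$ should give the subgroup. Concretely, take the fixed points of the three reflections $r, rZ^2, rZ^4$ (all conjugate to $r$ by powers of $Z$) and the corresponding fibres. Their images in $E_\rho$, measured relative to the image of a fixed point of $r$, should be three points forming a subgroup of order $3$. The natural divisor to track is $D = P + \rho(Z^2)P + \rho(Z^4)P$, where $P$ is a fixed point of $r$. The relation $1 + Z^2 + Z^4 = 0$ on holomorphic forms anti-invariant under the relevant subgroup gives that $3$ times the difference of the pushed-forward points is linearly equivalent to $0$. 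Canonicity follows because the construction is invariant under isomorphisms $f$ conjugating $\rho_1$ to $\rho_2$.

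\textbf{Main obstacle.} The hard step is checking that these three points are distinct and genuinely close up under addition, so that they form a subgroup rather than just a set of $3$-torsion points. We will try to verify this directly in coordinates. Since $r$ swaps $x$ and $1/x$, the quotient $E_\rho$ is parametrised by $u = x + 1/x$. In the variable $u$, the curve becomes an explicit cubic $w^2 = (u-2)(u^3 - 3u - a)$-type equation. There we can check that the points $u$ with $x^3 = \pm 1$ differ from one another by $3$-torsion. The line through them shows the sum is zero, so the three points form a subgroup of order $3$.
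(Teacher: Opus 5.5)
Your proof that $E_\rho$ has genus one is correct: the two fixed points of $\rho(r)$ lie over $x=1$, and Riemann--Hurwitz does the rest. It is a fine substitute for the paper's explicit computation of the invariant field.

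\textbf{The gap is in $C_\rho$: the points you propose are not $3$-torsion, so the construction fails and cannot be completed.} Since $rZ^2=Z^{-2}r$ in $D_{12}$, a fixed point $P$ of $\rho(r)$ satisfies $\rho(r)\rho(Z^2)P=\rho(Z^4)P$. So $\rho(Z^2)P$ and $\rho(Z^4)P$ have the same image in $E_\rho$, and your three points are really two; they cannot form a subgroup of order $3$. The relation $1+Z^2+Z^4=0$ only encodes $X/\langle\rho(Z^2)\rangle\cong\mathbb{P}^1$, i.e.\ that all fibres of this degree-three quotient are linearly equivalent. Pushing forward gives $\pi(P)+2\pi(\rho(Z^2)P)\sim 3\pi(Q)$ with $Q$ a fixed point of $\rho(Z^2)$. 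It does not give $3\bigl(\pi(\rho(Z^2)P)-\pi(P)\bigr)\sim 0$.

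Here is a concrete check. Take $a=1$; the substitution $w=3-(a+2)/(u+2)$, $y=-(a+2)v/(u+2)$ turns $(u+2)v^2=u^3-3u-a$ into the paper's model $y^2=w^3-9w+9$. Your two points have $w=9/4$ and $w=0$. Their difference, for any choice of signs, has $w\in\{0,-8/9\}$, and neither is a root of $\psi_3(w)=3w^4-54w^2+108w-81$.

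Your last step has two further problems. Three collinear points summing to zero need not form a subgroup. Also, the equation with right-hand side $(u-2)(u^3-3u-a)$ is a model of $X/\rho(\eta r)$, not of $X/\rho(r)$. The $\rho(r)$-invariant function is $z(x+1)/x^2$, which gives $(u+2)(u^3-3u-a)$.

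The missing idea is to use the fixed points of $\rho(Z^2)$ rather than those of $\rho(r)$.
\begin{itemize}
\item These are the four points over $x\in\{0,\infty\}$. They form two $\rho(r)$-orbits, so they give two distinct points $q_\pm\in E_\rho$, exchanged by the involution induced by $\eta$.
\item For each such fixed point $Q$, the divisor $3Q$ is a full fibre of $X\to X/\langle\rho(Z^2)\rangle\cong\mathbb{P}^1$. Hence $3q_+\sim 3q_-$, while $q_+\not\sim q_-$ on a genus one curve.
\item Therefore $q_+-q_-$ has order exactly $3$ in $\mathrm{Pic}^0(E_\rho)$, and $C_\rho:=\langle q_+-q_-\rangle$ is canonical. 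It depends only on $\rho(Z^2)$ and $\rho(r)$, and it is transported by any $f$ with $f\circ\rho_1\circ f^{-1}=\rho_2$.
\end{itemize}
This is exactly the paper's subgroup. Under the substitution above, the origin of $y^2=w^3+(6a-15)w+a^2-14a+22$ is the image of the points over $x=-1$, and $(3,\pm(a+2))$ are precisely $q_\pm$. That is what the linear factor $w-3$ of $\psi_3$ detects in the paper's computation.
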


\begin{proof}
    By Proposition~\ref{prop:M2(D12)-2}, we have $(X,\rho)\cong(Y_{a},\rho_{a})$ for some $a\in\C\setminus\{\pm 2\}$. Recall that
    \[\C(Y_{a}) = \widetilde{K}_{a} = \C(x,z): z^2 = x^6-ax^3+1\]
    and $\rho_{a}(r)(x,z) = \left(\frac{1}{x},\frac{z}{x^3}\right)$. We must determine $\C(x,z)^{r}:=\widetilde{K}_{a}^{\rho_{a}(r)}$.

    We observe that $u = x + \frac{1}{x}$ is invariant. So
    \[\C(u)\subseteq\C(x,z)^{r}\subseteq\C(x,z).\]
    
    Since $\rho_{a}(r)^2 = {\rm id}$, $\C(x,z)/\C(x,z)^{r}$ is a degree two extension.

    Now, $\C(x)/\C(u)$ is also a degree two extension since $x^2 = ux - 1$. Hence, $[\C(x,z):\C(u)] = 4$ and so $[\C(x,z)^{r}:\C(u)] = 2$.

    Define $v = z\cdot\frac{1}{x^3+1}\cdot(x+\frac{1}{x}+1)$. It can then be checked that $\rho_{a}(r)^{*}(v) = v$ and 
    \[v^2 = \frac{u^3-3u-a}{u+2}.\]
    Hence, $v\in\C(x,z)^{r}$ and lies in a quadratic extension over $\C(u)$. Therefore,
    \[\C(x,z)^{r} \cong \C(u,v): v^2 = \frac{u^3-3u-a}{u+2}.\]

    Now, the cubic $(u+2)v^2 - u^3+3u+a = 0$ corresponds to the Weierstrass form
    \[y^2 = w^3 + (6a-15)w + a^2-14a+22.\]

    Hence, $E_{\rho}$ is an elliptic curve with $j$-invariant
    \[\frac{6912(2a-5)^3}{(a+2)^3(a-2)}.\]

    The third division polynomial is
    \begin{align*}\psi_{3}(w) &= 3w^4+(36a-90)w^2+(12a^2-168a+264)w-36a^2+180a-225 \\
    &= 3(w-3)(w^3+3w^2+(12a-21)w+4(a-5/2)^2).
    \end{align*}
    Hence, $w = 3$ gives rise to a 3-torsion point on $E_{\rho}$. At $w = 3$, $y^2 = (a+2)^2$ and we have the canonical subgroup $C_{\rho} = \langle(3,a+2)\rangle\leq E_{\rho}[3].$
\end{proof}

The modular curve $Y_{0}(3) = \mathbb{H}/\Gamma_{0}(3)$ parameterises elliptic curves with a choice of order three subgroup of the 3-torison points. $Y_{0}(3)$ has two cusps and one elliptic point of order 3. A fundamental domain for $\Gamma_{0}(3)$ is shown in Figure~\ref{fig:Y_0(3)}.

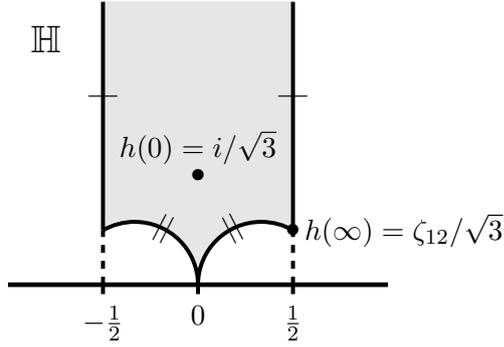
\begin{figure}[t]
    \centering
    \begin{tikzpicture}[scale = 2.5, line width = 1.5pt]
        \fill[gray!20]
          (-1/2,0)
          -- (-1/2,1.5)
          -- (1/2,1.5)
          -- (1/2,0)
          -- cycle;
        \draw [color = white, line width = 0pt] (-2.5,-0.25) -- (2.5,-0.25);
        \draw [fill = white, color = white, line width = 0pt] (0,0) arc (0:180:1/3) -- (0,0);
        \draw [fill = white, color = white, line width = 0pt] (2/3,0) arc (0:180:1/3) -- (2/3,0);
        \node [font = \Large] at (-0.8,1.3) {$\mathbb{H}$};
        \draw [line width = 1.75pt] (-1,0) -- (1,0);
        \draw [dashed] (-1/2,0) -- (-1/2,0.288675134);
        \draw (-1/2,0.288675134) -- (-1/2,1.5);
        \draw [dashed] (1/2,0) -- (1/2,0.288675134);
        \draw (1/2,0.288675134) -- (1/2,1.5);
        \draw (0,0) arc (180:60:1/3);
        \draw (0,0) arc (0:120:1/3);
        \node at (0,0.57735) [above]{$h(0) = i/\sqrt{3}$};
        \node at (0,0.57735) {$\bullet$};
        \node at (1/2,0.288675134) {$\bullet$};
        \node at (1/2,0.288675134) [right]{$h(\infty) = \zeta_{12}/\sqrt{3}$};
        \node at (1/2,1) [rotate = 90]{$|$};
        \node at (-1/2,1) [rotate = 90]{$|$};
        \node at (1/3-0.15,0.297676184991529) [rotate = 27]{$||$};
        \node at (-1/3+0.15,0.297676184991529) [rotate = -27]{$||$};
        \draw (-1/2,0) -- (-1/2,-0.05)node[below]{$-\frac{1}{2}$};
        \draw (0,0) -- (0,-0.05)node[below]{$0$};
        \draw (1/2,0) -- (1/2,-0.05)node[below]{$\frac{1}{2}$};
    \end{tikzpicture}
    \caption{A fundamental domain for $\Gamma_{0}(3)$. The point $h(\infty)$ is the elliptic point. The point $h(0)$ is the fixed point of the Fricke involution $w_{3}$ on $Y_{0}(3)$.}
    \label{fig:Y_0(3)}
\end{figure}

\begin{proposition}\label{prop:M2(D12)-4}
    Let $g:\mathcal{M}_{2}(D_{12})\to Y_{0}(3)$ be defined by $g(X,\rho) = (E_{\rho},C_{\rho})$. Then the composition $g\circ f:\C\setminus\{\pm 2\}\to Y_{0}(3)$ extends to a biholomorphism $h:\mathbb{P}^{1}\setminus\{\pm 2\}\to Y_{0}(3)$ with $h(\infty)$ equal to the elliptic point of $Y_{0}(3)$.
\end{proposition}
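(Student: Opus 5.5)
We will show that $g\circ f$ is an explicit rational function of $a$, factoring through the $j$-line, and compare degrees. By Proposition~\ref{prop:M2(D12)-3}, $E_{\rho_a}$ is the curve $y^2 = w^3+(6a-15)w+a^2-14a+22$ with the $3$-torsion point $P_a=(3,a+2)$. Both depend algebraically on $a$. Since $\Gamma_0(3)$ is a moduli space (away from its elliptic point), $a\mapsto (E_{\rho_a},\langle P_a\rangle)$ is a holomorphic map $\C\setminus\{\pm2\}\to Y_0(3)$. Composing with the forgetful map $\pi:Y_0(3)\to Y(1)$, which has degree $[\SL(2,\Z):\Gamma_0(3)]=4$, gives the $j$-invariant
\[
j(a)=\frac{6912(2a-5)^3}{(a+2)^3(a-2)},
\]
a rational function of degree $4$ on $\mathbb{P}^1$. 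Thus $j = \pi\circ(g\circ f)$ extends to $\mathbb{P}^1\to X(1)$. Since $\pi$ is a finite map of compact Riemann surfaces after compactification, $g\circ f$ extends to a holomorphic map $h:\mathbb{P}^1\to X_0(3)$ with $\pi\circ h=j$. Taking degrees, $4=\deg j = 4\deg h$, so $\deg h=1$ and $h$ is a biholomorphism $\mathbb{P}^1\to X_0(3)$.

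Next we identify which points go where. The poles of $j$ are $a=-2$ (order $3$) and $a=2$ (order $1$). These map to the two cusps of $X_0(3)$, which have widths $1$ and $3$ and therefore ramification $1$ and $3$ over $j=\infty$. Hence $h$ restricts to a biholomorphism $\mathbb{P}^1\setminus\{\pm2\}\to Y_0(3)$. Since $j(\infty)=6912\cdot 8/1\cdot\frac{1}{1}$, we need the leading term: $j(a)\sim 6912\cdot 8a^3/a^4\to 0$, so $j(\infty)=0$, and $j$ vanishes simply at $a=\infty$ but to order $3$ at $a=5/2$. Over $j=0$, $\Gamma_0(3)$ has one elliptic point of order $3$, which is unramified (ramification index $1$) over $j=0$, together with one point of ramification index $3$. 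The simple zero at $a=\infty$ must therefore be the elliptic point, so $h(\infty)$ is the elliptic point. As a sanity check, at $a=\infty$ (equivalently $t\to$ a degenerate value), the pinwheel becomes the point $\zeta_{12}/\sqrt3$, matching Figure~\ref{fig:Y_0(3)}.

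The main obstacle is justifying that $g\circ f$ is genuinely holomorphic into the orbifold $Y_0(3)$, in particular near the elliptic point where the moduli interpretation has extra automorphisms. We avoid this by working with the coarse space and the degree argument above. We lift holomorphically from the $j$-line using that $\pi$ is a branched cover and that $a\mapsto(E,\langle P_a\rangle)$ is continuous, then apply the Riemann extension theorem across the finitely many branch points. A secondary check is that $g\circ f$ is nonconstant, which is immediate since $j(a)$ is nonconstant. We should also confirm that $\langle P_a\rangle$ never degenerates for $a\ne\pm2$, which holds because $y=a+2\neq0$ there.
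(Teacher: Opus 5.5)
Your proof is correct, and its first half coincides with the paper's. Both arguments get $\deg h=1$ by comparing the degree $4$ of $j\circ h(a)=\frac{6912(2a-5)^3}{(a+2)^3(a-2)}$ with $[\SL(2,\Z):\Gamma_0(3)]=4$.

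The routes differ in how they locate $h(\infty)$.

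\textbf{The paper's route.} It rescales ($a=1/t^3$, $W=t^2w$, $Y=t^3y$) and lets $t\to0$. This gives the limiting pair $(Y^2=W^3+1,\langle(0,1)\rangle)$, which it then asserts is the elliptic point. That last step tacitly uses that $\langle(0,1)\rangle$ is the unique order-three subgroup fixed by the automorphism $(W,Y)\mapsto(\zeta_3W,Y)$; the other three subgroups are permuted cyclically. So the pair has extra automorphisms. This approach relies on the moduli description of the elliptic point and exhibits the limiting pair concretely, without needing the ramification data of $X_0(3)\to X(1)$.

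\textbf{Your route.} You read the answer off the $j$-formula already in hand. $j\circ h$ has a simple zero at $a=\infty$ and a triple zero at $a=5/2$. Over $j=0$, the map $X_0(3)\to X(1)$ has exactly one unramified point (the elliptic point) and one point of index $3$. Since $h$ is a biholomorphism, the order of vanishing of $j\circ h$ at $\infty$ equals the ramification index at $h(\infty)$, so $h(\infty)$ is the elliptic point. This needs no further computation. It also makes explicit two things the paper leaves implicit:
\begin{itemize}
\item $g\circ f$ extends holomorphically across $a=\pm2,\infty$, by properness of $X_0(3)\to X(1)$;
\item $h^{-1}(\text{cusps})=\{\pm2\}$, by matching the pole orders $3,1$ with the cusp widths $3,1$.
\end{itemize}

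A few small clean-ups:
\begin{itemize}
\item Delete the fragment ``$j(\infty)=6912\cdot 8/1\cdot\frac{1}{1}$''; you correct it immediately anyway.
\item Drop the pinwheel ``sanity check''. The pinwheel model is only derived later, using this proposition, and $\zeta_{12}/\sqrt{3}\notin\widetilde{U}$.
\item Your concern about holomorphy near the elliptic point is unnecessary. Any family of elliptic curves with a cyclic subgroup of order $3$ induces a holomorphic map to the coarse space $Y_0(3)$, by locally lifting the period ratio to $\mathbb{H}$.
\end{itemize}
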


\begin{proof}
    Recall that $E_{\rho}$ is isomorphic to the elliptic curve
    \[y^2 = w^3 + (6a-15)w + a^2-14a+22.\]
    Let $a = \frac{1}{t^3}$ and define $Y = t^3y$ and $W = t^2w$. Then we have
    \[Y^2 = W^3 + (6t-15t^4)W + 22t^6-14t^3+1.\]
    The 3-torsion point $(3,a+2)$ maps to $(3t^2,1+2t^3)$. As $t\to0$, we get the curve $Y^2 = W^3+1$ with order three subgroup $\langle(0,1)\rangle$. This is the elliptic point on $Y_{0}(3)$.

    The coarse spaces $\mathbb{P}^{1}\setminus\{\pm 2\}$ and $Y_{0}(3)$ are biholomorphic so we need only check that the map $h$ has degree one.

    Let $j:Y_{0}(3)\to\C$ be the map $j(E,C) = j(E)$. We calculated earlier that
    \[j\circ h(a) = \frac{6912(2a-5)^3}{(a+2)^3(a-2)}\]
    has degree 4. Since $[\SL(2,\Z):\Gamma_{0}(3)] = 4$, $j$ has degree 4. Hence, $h$ has degree one as required.
\end{proof}

We then obtain the following that is proved exactly as~\cite[Corollary 2.7]{Muk}.

\begin{corollary}
    The map $f:\C\setminus\{\pm 2\}\to\mathcal{M}_{2}(D_{12})$ is a biholomorphism and $g:\mathcal{M}_{2}(D_{12})\to Y_{0}(3)$ is a biholomrphism onto its image which is the complement of the elliptic point $(\zeta_{12}/\sqrt{3})\cdot\Gamma_{0}(3)$.
\end{corollary}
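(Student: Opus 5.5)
The corollary makes two claims: $f$ is a biholomorphism, and $g$ is a biholomorphism onto $Y_{0}(3)$ minus the elliptic point. Following \cite[Corollary 2.7]{Muk}, I would derive both formally from Propositions~\ref{prop:M2(D12)-2} and~\ref{prop:M2(D12)-4}.

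The starting point is Proposition~\ref{prop:M2(D12)-4}. It says that $g\circ f$ is the restriction of the biholomorphism $h:\mathbb{P}^{1}\setminus\{\pm 2\}\to Y_{0}(3)$ to $\C\setminus\{\pm 2\}$. Since $h(\infty)$ is the elliptic point $(\zeta_{12}/\sqrt{3})\cdot\Gamma_{0}(3)$, the composite $g\circ f$ is a biholomorphism from $\C\setminus\{\pm 2\}$ onto $Y_{0}(3)\setminus\{h(\infty)\}$. In particular $g\circ f$ is injective.

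\textbf{Step 1: $f$ is a biholomorphism.} Injectivity of $g\circ f$ forces $f$ to be injective. Proposition~\ref{prop:M2(D12)-2} says $f$ is surjective and holomorphic, so $f$ is a holomorphic bijection onto $\mathcal{M}_{2}(D_{12})$. Its inverse is $(g\circ f)^{-1}\circ g = h^{-1}\circ g$, restricted to $\mathcal{M}_{2}(D_{12})$. This inverse is holomorphic once $g$ is known to be holomorphic. That holds because the family $(Y_{a},\rho_{a})$ gives local holomorphic coordinates on $\mathcal{M}_{2}(D_{12})$, and in the parameter $a$ the pair $(E_{\rho},C_{\rho})$, given by the Weierstrass model and the 3-torsion point $(3,a+2)$ from Proposition~\ref{prop:M2(D12)-3}, depends holomorphically on $a$.

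\textbf{Step 2: $g$ is a biholomorphism onto its image.} We can write $g = (g\circ f)\circ f^{-1}$. This is a composition of a biholomorphism $\mathcal{M}_{2}(D_{12})\to\C\setminus\{\pm2\}$ with a biholomorphism $\C\setminus\{\pm 2\}\to Y_{0}(3)\setminus\{h(\infty)\}$. Hence $g$ is a biholomorphism onto $Y_{0}(3)$ minus the elliptic point, which is exactly the claimed image.

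\textbf{Main obstacle.} The only delicate point is orbifold versus coarse structure. Statements like ``$f$ is a biholomorphism'' should be read on coarse spaces, as in Mukamel. One should check that no two distinct values of $a$ give isomorphic pairs $(Y_{a},\rho_{a})$; a potential source of this would be the ambiguity $t\mapsto t^{-1}$ in $a=t^{3}+t^{-3}$. Injectivity of $h$ on the coarse space already settles this, because isomorphic pairs yield isomorphic $(E_{\rho},C_{\rho})$. The bijectivity argument above therefore suffices, and I expect no further computation is needed.
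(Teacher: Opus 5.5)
Your argument is correct and is essentially the paper's proof, which simply defers to \cite[Corollary 2.7]{Muk}: injectivity of $h$ from Proposition~\ref{prop:M2(D12)-4} makes $f$ injective, Proposition~\ref{prop:M2(D12)-2} gives surjectivity, and then $g=(g\circ f)\circ f^{-1}$ is a biholomorphism onto $Y_{0}(3)\setminus\{h(\infty)\}$. The one soft spot is your justification that $g$ is holomorphic because ``$a$ is a local coordinate on $\mathcal{M}_{2}(D_{12})$'', since that already presupposes $f$ is a local biholomorphism; it is cleaner to say that a holomorphic bijection between one-dimensional normal complex spaces (here, Riemann surfaces) is automatically biholomorphic.
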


We now introduce some involutions on our spaces of interest.

On $\C\setminus\{\pm 2\}$, we define $\mu(a) = -a$.

Now, any injective group homomorphism $h: D_{12} \to D_{12}$ gives rise to a holomorphic map $\mathcal{M}_{2}(D_{12})\to\mathcal{M}_{2}(D_{12})$ by $(X,\rho)\mapsto(X,\rho\circ h)$. In particular, $\Aut(D_{12})$ acts on $\mathcal{M}_{2}(D_{12})$. By the definition of the equivalence relation on $\mathcal{M}_{2}(D_{12})$, the inner automorphisms of $D_{12}$ fix every point of $\mathcal{M}_{2}(D_{12})$ and so we get an action of $\Out(D_{12})\cong \Z/2\Z$ on $\mathcal{M}_{2}(D_{12})$. The non-trivial outer automorphism of $D_{12}$ is represented by $\sigma(Z) = Z$ and $\sigma(r) = Z^3 r$. Therefore, $\sigma$ acts as an involution on $\mathcal{M}_{2}(D_{12})$.

Finally, on $Y_{0}(3)$, we have the Fricke involution $w_{3}$ that, in the fundamental domain of Figure~\ref{fig:Y_0(3)}, acts by
\[w_{3}(\tau) = \frac{-1}{3\tau}.\]
It acts on pairs by $(E,C)\mapsto (E/C,E[3]/C)$.

These involutions are related by the following proposition.

\begin{proposition}
    The following diagram commutes:
    \begin{center}
    \begin{tikzcd}
        \C\setminus\{\pm 2\} \arrow[r, "f"] \arrow[d, "\mu"'] 
        & \mathcal{M}_{2}(D_{12}) \arrow[r, "g"] \arrow[d, "\sigma"] 
        & Y_{0}(3) \arrow[d, "w_{3}"] \\
        \C\setminus\{\pm 2\} \arrow[r, "f"'] 
        &  \mathcal{M}_{2}(D_{12}) \arrow[r, "g"'] 
        & Y_{0}(3)
    \end{tikzcd}
    \end{center}
\end{proposition}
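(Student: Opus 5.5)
We need to show $g\circ f\circ\mu = w_{3}\circ g\circ f$ and $f\circ\mu = \sigma\circ f$. The plan is to handle the left square by exhibiting an explicit isomorphism on the algebraic models, and then to deduce the right square largely formally from it.

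\textbf{Left square.} Here we must show $(Y_{-a},\rho_{-a})\cong(Y_{a},\rho_{a}\circ\sigma)$. The substitution $x\mapsto -x$, $z\mapsto z$ sends $z^2=x^6-ax^3+1$ to $z^2=x^6+ax^3+1$, so it gives an isomorphism $F:Y_{a}\to Y_{-a}$. We then compute how $F$ intertwines the actions.
\begin{itemize}
\item For $Z$: $F^{-1}\rho_{-a}(Z)F(x,z)=(\zeta_3x,-z)$, which is $\rho_{a}(Z)$.
\item For $r$: $F^{-1}\rho_{-a}(r)F(x,z)=\bigl(1/x,\,-z/x^3\bigr)$. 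This equals $\rho_a(r)$ composed with the hyperelliptic involution $\eta=\rho_a(Z)^3$, i.e.\ $\rho_a(Z^3r)$.
\end{itemize}
Hence $F^{-1}\circ\rho_{-a}\circ F=\rho_a\circ\sigma$. This gives $f(\mu(a))=\sigma(f(a))$, with care only over the convention of which side $f$ is composed on in the equivalence relation.

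\textbf{Right square.} We need $g(X,\rho\circ\sigma)=w_3(g(X,\rho))$, i.e.\ that $E'=X/\langle \rho(Z^3r)\rangle$ with its canonical subgroup is $(E/C,E[3]/C)$ for $E=X/\langle\rho(r)\rangle$. I would check this at the level of coarse spaces, using the biholomorphisms already established. By the corollary, $g\circ f$ extends to $h:\mathbb{P}^1\setminus\{\pm2\}\to Y_0(3)$ with $h(\infty)$ the elliptic point. Both $h\circ\mu\circ h^{-1}$ and $w_3$ are then holomorphic involutions of $Y_0(3)\cong\mathbb{P}^1$ minus two cusps, so it suffices to check that they agree.

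First, $\mu$ fixes $\infty$ and $0$, and it swaps the two punctures $\pm2$ (the cusps). The Fricke involution $w_3$ also swaps the two cusps $\infty$ and $0$ of $\Gamma_0(3)$. A Möbius involution of $\mathbb{P}^1$ is determined by its action on three points, so it remains to match fixed points. The fixed points of $w_3$ are $h(0)=i/\sqrt3$ and the point $-1/(3\tau)$ fixed on the side identified with the elliptic point. One checks that $w_3$ fixes $\zeta_{12}/\sqrt3$ modulo $\Gamma_0(3)$: $-1/(3\zeta_{12}/\sqrt3)=-\zeta_{12}^{-1}/\sqrt3=\zeta_{12}^5/\sqrt3$, which is $\Gamma_0(3)$-equivalent via $\tau\mapsto\tau-1$. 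So $w_3$ fixes the elliptic point $h(\infty)$, as $\mu$ fixes $\infty$, and $w_3$ swaps the cusps as $\mu$ swaps $\pm2$. An involution of $\mathbb{P}^1$ fixing a given point and swapping a given pair is unique, so $h\mu h^{-1}=w_3$.

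\textbf{Main obstacle.} The delicate point is confirming which cusp of $Y_0(3)$ corresponds to which of $a=\pm2$. Strictly, though, only the swap matters, so this is harmless. As a sanity check I would verify the $j$-invariant identity
\[
j\bigl(E_{-a}\bigr)=j\bigl(E_a/C_a\bigr)
\]
using the Vélu isogeny formula on $y^2=w^3+(6a-15)w+a^2-14a+22$ with kernel $\langle(3,a+2)\rangle$, confirming that $h\circ\mu=w_3\circ h$ on a dense set.
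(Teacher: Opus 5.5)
Your left square is the paper's argument: the substitution $x\mapsto -x$ fixes $\rho(Z)$ and conjugates $\rho(r)$ to $\rho(Z^3r)=\rho(\sigma(r))$.

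For the right square you take a genuinely different route. The paper compares $j$-invariants, claiming that $\Phi_3\bigl(j\circ g\circ f(a),\,j\circ g\circ f(a')\bigr)=0$ singles out $a'=-a$. You argue by rigidity instead. By Proposition~\ref{prop:M2(D12)-4}, $h$ extends to an isomorphism $\mathbb{P}^1\to X_0(3)$ sending $\{\pm 2\}$ to the cusps and $\infty$ to the elliptic point. So $h\mu h^{-1}$ and $w_3$ are automorphisms of $X_0(3)\cong\mathbb{P}^1$ that both fix the elliptic point and swap the two cusps. They therefore coincide, since a M\"obius map is determined by the images of three points. Then $g\sigma f=gf\mu=h\mu=w_3h=w_3gf$, and surjectivity of $f$ gives $g\circ\sigma=w_3\circ g$.

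This is correct and essentially computation-free. It also avoids a soft spot in the $j$-invariant route. Vanishing of $\Phi_3$ only says that $E_a$ and $E_{a'}$ are $3$-isogenous, which for fixed $a$ happens for many $a'$ (generically sixteen, since $j\circ h$ has degree $4$). One would still have to isolate the value realising the Fricke pairing $(E,C)\mapsto(E/C,E[3]/C)$. The price is that your argument rests entirely on Proposition~\ref{prop:M2(D12)-4} (degree one, and $h(\infty)$ the elliptic point), rather than giving an independent check.

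Two small presentational points. First, $h(0)=i/\sqrt{3}$ is an output of your argument (it is the other fixed point of $w_3$), not an input, so do not list it among the facts you use. Second, the V\'elu sanity check is unnecessary.
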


\begin{proof}
    Let $a\in\C\setminus\{\pm 2\}$ and $f(a) = (X_{a},\rho)$ be such that $X_{a}$ is the curve
    \[y^2 = x^6-ax^3+1\]
    with $\rho:D_{12}\to\Aut(X_{a})$ defined by
    \[\rho(Z)(x,y) = (\zeta_{3}x,y)\,\,\,\,\,\,\,\text{and}\,\,\,\,\,\,\,\rho(r)(x,y) = \left(\frac{1}{x},\frac{y}{x^3}\right).\]
    Now, $f\circ\mu(a) = (X_{-a},\rho)$ with $X_{-a}$ given by
    \[y^2 = x^6+ax^3+1\]
    and $\rho$ as before.
    Let $s(x,y) = (-x,y)$. Then $s$ is an isomorphism from $X_{-a}$ to $X_{a}$ and we observe that
    \[s\circ\rho(Z)\circ s^{-1} (x,y) = (\zeta_{3}x,-y) = \rho(Z)(x,y)\]
    and
    \[s\circ\rho(r)\circ s^{-1}(x,y) = \left(\frac{1}{x},\frac{-y}{x^3}\right) = \rho(Z^3r)(x,y) = \rho(\sigma(r))(x,y).\]
    Hence, we have $\sigma\circ f= f\circ\mu$.

    Now, two points $(E_1,C_1), (E_2,C_2)\in Y_{0}(3)$ are sent to one another by $w_{3}$ if and only if $\Phi_{3}(j(E_1),j(E_2)) = 0$, where $\Phi_{3}$ is the modular polynomial of $\Gamma_{0}(3)$. We have worked out the $j$-invariants of our elliptic curves above and it can be checked that $\Phi_{3}(j\circ g\circ f(a),j\circ g\circ f(a')) = 0$ if and only if $a' = -a$ (we have removed the other two solutions $a'=\pm 2$ from our domain). Hence, we see that $w_{3}\circ g = g\circ \sigma$, as claimed.
\end{proof}

From this, we obtain the following.

\begin{corollary}
    Fix $(X,\rho)\in\mathcal{M}_{2}(D_{12})$. The following are equivalent:
    \begin{itemize}
        \item $\exists\,\phi\in\Aut(X)$ satisfying $\phi\neq\rho(Z)$ but $\phi$ has order six;
        \item $\sigma(X,\rho)\sim(X,\rho)$;
        \item $(X,\rho) = f(0)$;
        \item $g(X,\rho) = \frac{i}{\sqrt{3}}\cdot\Gamma_{0}(3).$
    \end{itemize}
\end{corollary}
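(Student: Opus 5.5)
I will prove the corollary by chaining the equivalences through the commutative diagram in the preceding proposition, in the order (2)$\Leftrightarrow$(3)$\Leftrightarrow$(4) and then (1)$\Leftrightarrow$(2).

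\textbf{Step 1: (2)$\Leftrightarrow$(3).} By the corollary following Proposition~\ref{prop:M2(D12)-4}, $f$ is a biholomorphism. Since $\sigma\circ f=f\circ\mu$, a point $f(a)$ is fixed by $\sigma$ exactly when $\mu(a)=a$, that is, when $-a=a$, i.e.\ $a=0$.

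\textbf{Step 2: (3)$\Leftrightarrow$(4).} The map $g$ is injective and $w_3\circ g=g\circ\sigma$. Hence $\sigma$-fixed points correspond to $w_3$-fixed points in the image of $g$. On $Y_0(3)$, solving $-1/(3\tau)=\gamma\tau$ for $\gamma\in\Gamma_0(3)$ shows that the fixed points of the Fricke involution are $i/\sqrt3$ together with possibly the elliptic point $\zeta_{12}/\sqrt3$. The latter is excluded from the image of $g$. So the unique fixed point in the image is $i/\sqrt{3}\cdot\Gamma_0(3)$, which must equal $g(f(0))$. As a sanity check, $j\circ h(0)=6912\cdot(-125)/(8\cdot(-2))=54000$. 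This is the $j$-invariant of $\Z[\sqrt{-3}]$, and that lattice matches the elliptic curve at $\tau=i/\sqrt3$.

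\textbf{Step 3: (2)$\Leftrightarrow$(1).} This is the main obstacle, since it has to pass between abstract automorphisms and the relation $\sim$. The argument mirrors Mukamel's proof of \cite[Corollary 2.7]{Muk}.

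For (2)$\Rightarrow$(1): suppose $\sigma(X,\rho)\sim(X,\rho)$. Then there is $\psi\in\Aut(X)$ with $\psi\circ\rho\circ\psi^{-1}=\rho\circ\sigma$ up to an inner automorphism. If $\psi$ lay in $\rho(D_{12})$, then $\sigma$ would be inner, which is false. Hence $\Aut(X)$ strictly contains $\rho(D_{12})$. For $a=0$ one can also see this directly: $X$ is $y^2=x^6+1$, whose automorphism group has order $24$ and contains the order-six automorphism $(x,y)\mapsto(\zeta_6x,y)$. That element is not $\rho(Z)$, because its cube is not the hyperelliptic involution, as noted in Proposition~\ref{prop:M2(D12)-1}(1).

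For (1)$\Rightarrow$(2): suppose $\phi\neq\rho(Z)$ has order six. Apply Proposition~\ref{prop:M2(D12)-1} to $\phi$.
\begin{itemize}
\item If $\phi$ falls into case (1), then $X$ is $y^2=x^6\pm1$, which by the remark after that proposition is $a=0$.
\item Otherwise $\phi^3=\eta$, and $\phi$ generates a second cyclic subgroup of order six. If $\langle\phi\rangle=\langle\rho(Z)\rangle$, then $\phi=\rho(Z)^{-1}=\rho(Z)^5$. We rule this out by noting that $\rho(r)$ conjugates $\rho(Z)$ to its inverse, so this $\phi$ comes from the same embedding up to an inner automorphism. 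So assume instead that the subgroups are distinct. Then $\Aut(X)/\langle\eta\rangle$ has order greater than $6$. By the classification in \cite[Table 2]{Muk}, the only genus two curves with reduced automorphism group strictly containing $D_6$ are $y^2=x^6-1$ (reduced group of order $12$) and $y^2=x^5-x$ (which has no $D_{12}$). This again forces $a=0$, and Step 1 then gives (2).
\end{itemize}
I expect the case analysis through \cite[Table 2]{Muk} to be the step requiring the most care.
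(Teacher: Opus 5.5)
Your Steps 1 and 2 are sound. They are exactly the diagram chase the paper intends: the paper gives no separate argument and reads the corollary off $\sigma\circ f=f\circ\mu$, $w_3\circ g=g\circ\sigma$, injectivity of $f$ and $g$, and the exclusion of the elliptic point from the image of $g$.

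The gap is in Step 3, direction (1)$\Rightarrow$(2). As literally worded, the first bullet is satisfied by $\phi=\rho(Z)^{-1}$ for every $(X,\rho)$. So ``we rule this out'' is not a deduction: you are silently strengthening the hypothesis to $\langle\phi\rangle\neq\langle\rho(Z)\rangle$, and you should say so.

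More seriously, even the strengthened implication fails, because your appeal to \cite[Table 2]{Muk} rests on a false claim: $y^2=x^5-x$ \emph{does} lie in $\mathcal{M}_2(D_{12})$.
\begin{itemize}
\item Its automorphism group is $\GL(2,3)$, with $\eta=-I$. The preimage of any $S_3\leq S_4\cong\mathrm{PGL}(2,3)$ is a dihedral group of order $12$ whose order-six elements cube to $\eta$. Indeed, an order-three rotation of the octahedron $\{0,\infty,\pm1,\pm i\}$ acts on the Weierstrass points as two $3$-cycles.
\item Concretely, conjugating the order-three map $x\mapsto(x+i)/(x-i)$ to $x\mapsto\zeta_3x$ turns $y^2=x^5-x$ into $Y_a$ with $a^2=-50$. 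As a check, for these $a$ one gets $6912(2a-5)^3/((a+2)^3(a-2))=8000=j(\Z[\sqrt{-2}])$.
\item For $a=\pm5\sqrt{-2}$, the group $\GL(2,3)$ has four cyclic subgroups of order six, and all their generators cube to $\eta$. So there are order-six $\phi\notin\rho(D_{12})$, yet $a\neq0$, and (2)--(4) all fail.
\end{itemize}
The error is that you tested for a dihedral subgroup of order $12$ in the reduced group $S_4$ (there is none) instead of in $\Aut(X)$.

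So (1)$\Rightarrow$(2) cannot be proved in either reading, and the first bullet has to be changed. The natural correct version, suggested by your own case split, is: there is $\phi\in\Aut(X)$ of order six with $\phi^{3}\neq\eta$. By Proposition~\ref{prop:M2(D12)-1}, such a $\phi$ forces $X\cong Y_0$. Your (2)$\Rightarrow$(1) argument with $(x,y)\mapsto(\zeta_6x,y)$ then gives the converse.

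Even in this version one step is missing. From $X\cong Y_0$ you still need $(X,\rho)=f(0)$, i.e.\ that $Y_0$ carries only one $D_{12}$-structure up to $\sim$; the remark after Proposition~\ref{prop:M2(D12)-1} only identifies the curve. This holds because $\rho_0(D_{12})$ is the unique dihedral subgroup of order $12$ in the order-$24$ group $\Aut(Y_0)$, and conjugation by $(x,z)\mapsto(\zeta_6x,z)$ induces the outer automorphism on it.
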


The coarse space $\mathcal{M}_{2}(D_{12})/\sigma\cong(\C\setminus\{\pm 2\})/\mu$ is isomorphic to $\C^{*}$ and has two cusps. Given
\[y^2 = x^6-ax^3+1,\]
to understand the behaviour as $a\to\infty$, make the coordinate change $y = \frac{v}{t^3}, x = \frac{u}{t}, a = \frac{1}{t^3}$ to get
\[v^2 = u^6 - u^3 + t^6 \to v^2 = u^6-u^3\]
which normalises to
\[Y^2 = X^6+1,\]
a genus two curve. This is not surprising since we have already argued that $g(X_{a},\rho_{a})$ converges to the elliptic point on $Y_{0}(3)$ as $a\to \infty$.

As we move towards the other cusp --- that is, as $a\to\{\pm2\}\cdot\mu$ --- we have
\[y^2= x^6-ax^3+1\to y^2 = (x^3\pm 1)^2\]
which normalises to
\[(y = x^3\pm 1)\sqcup (y = -x^3\mp 1)\]
a curve of geometric genus zero. Moreover, we see that the quotient elliptic curves $E_{\rho_{a}}:y^2 = w^3+(6a-15)w+a^2-14a+22$ diverge in $\mathcal{M}_{1}$.

We will use this difference in the limiting behaviour at the cusps below.

Recall that we can take
\[t_{3}(\tau):=-27\left(\frac{\eta(3\tau)}{\eta(\tau)}\right)^{12}\]
as a Hauptmodul for $\Gamma_{0}(3)$, where $\eta$ is the Dedekind eta function.

\begin{proposition}
    If $(X,\rho)\in\mathcal{M}_{2}(D_{12})$ and $\tau\in\mathbb{H}$ is such that $g(X,\rho) = (E_{\tau},\langle\frac{1}{3}+(\Z\oplus\tau\Z)\rangle)$, then $\C(X)\cong\widetilde{K}_{a}$ where
    \[a(\tau):= -2\cdot\frac{t_{3}(\tau)+1}{t_{3}(\tau)-1}.\]
\end{proposition}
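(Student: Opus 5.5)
We identify $a(\tau)$ by comparing two degree-one parametrisations of $Y_0(3)$. By Proposition~\ref{prop:M2(D12)-4}, $h=g\circ f$ extends to a biholomorphism $\mathbb{P}^1\setminus\{\pm2\}\to Y_0(3)$. The Hauptmodul $t_3$ also identifies $Y_0(3)$, together with its elliptic point, with an open subset of $\mathbb{P}^1$ of the same shape. Hence $t_3\circ h$ is a degree-one map from $\mathbb{P}^1$ to $\mathbb{P}^1$, so it is a Möbius transformation. The plan is to pin this Möbius transformation down by its values at three points, and then invert it to express $a$ in terms of $t_3(\tau)$.

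For the three points we use the special points already located on the $a$-line.
\begin{itemize}
\item The cusps. The two cusps of $Y_0(3)$ are $\infty$ and $0$, where $t_3$ takes the values $0$ and $\infty$. On the $a$-side, the punctures are $a=\pm2$, where the quotient elliptic curve $E_{\rho_a}$ degenerates: its $j$-invariant $6912(2a-5)^3/((a+2)^3(a-2))$ has poles exactly at $a=\pm2$.
\item The elliptic point. Proposition~\ref{prop:M2(D12)-4} gives that $a=\infty$ maps to the elliptic point $\zeta_{12}/\sqrt3$. Directly from the eta product, $t_3$ takes the value $1$ there; I would confirm this by checking that it is the value at which the local degree of $j$ is $3$.
\end{itemize}
We then match the cusps. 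The pole orders of $j\circ h$ are $3$ at $a=-2$ and $1$ at $a=2$. On the $\Gamma_0(3)$ side, the cusp widths are $1$ at $\infty$ and $3$ at $0$. So $a=2$ corresponds to the width-one cusp $\tau=i\infty$, where $t_3=0$, and $a=-2$ corresponds to the cusp $0$, where $t_3=\infty$.

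The Möbius map sending $a=2\mapsto 0$, $a=-2\mapsto\infty$ and $a=\infty\mapsto 1$ is $t_3=(a-2)/(a+2)$. Solving for $a$ gives $a=2(1+t_3)/(1-t_3)=-2(t_3+1)/(t_3-1)$, which is the claimed formula. As a consistency check, the Fricke involution $w_3$ acts on $t_3$ by $t_3\mapsto 1/t_3$ (from the eta transformation law), and this corresponds to $a\mapsto-a$, in agreement with the commuting diagram proved above. Using that diagram, the fixed point $i/\sqrt3$ of $w_3$ should correspond to $a=0$, i.e.\ $t_3=-1$.

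Finally, given $(X,\rho)$ with $g(X,\rho)=(E_\tau,\langle 1/3\rangle)$, injectivity of $g$ gives $(X,\rho)=f(a)$ with $h(a)=\tau\cdot\Gamma_0(3)$. Hence $a=a(\tau)$ and $\C(X)\cong\widetilde K_{a(\tau)}$. One should also note that the pair $(E_\tau,\langle1/3\rangle)$ is the standard point of $Y_0(3)$ at $\tau$, so that the normalisations agree.

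The main obstacle is the normalisation of the Hauptmodul. We must confirm that $-27(\eta(3\tau)/\eta(\tau))^{12}$ equals $1$ at $\zeta_{12}/\sqrt3$, rather than some other constant, and that its zero and pole sit at the cusps as claimed. I would first check this by writing $j$ as a rational function of $t_3$, namely $j=27(t_3-1)(9t_3-1)^3/t_3^{\,3}$ up to the sign convention, and comparing it with $j\circ h$ after the substitution $t_3=(a-2)/(a+2)$. Agreement of these two rational functions settles both the normalisation and the cusp matching at once.
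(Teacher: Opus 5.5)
Your framework is correct, and it is slightly sharper than the paper's argument. The paper never computes the M\"obius map. It asserts that, up to precomposition with $w_{3}$, $a$ is the unique function covering an isomorphism $Y_{0}(3)\to\mathbb{P}^{1}\setminus\{\pm2\}$ with a pole at the elliptic point. It then observes that $h^{-1}$ has the same two properties, and absorbs the leftover ambiguity $a\mapsto -a=a\circ w_{3}$ because $\sigma$ changes $\rho$ but not $X$. Your matching of pole orders of $j\circ h$ with cusp widths removes that ambiguity and gives $(X,\rho)=f(a(\tau))$ exactly. It yields $a(i\infty)=2$ and $a(0)=-2$, the reverse of the values noted right after the paper's proof; that slip is harmless there because the paper only works up to $w_{3}$. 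Both arguments, however, rest on the normalisation $t_{3}(\zeta_{12}/\sqrt{3})=1$, which the paper leaves implicit.

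This is where your proposal has a genuine gap: you assert the value, and both verifications you propose fail.
\begin{itemize}
\item At the elliptic point of $\Gamma_{0}(3)$ the map $j\colon Y_{0}(3)\to Y(1)$ is \emph{unramified}, because the order-three isotropy already lies in $\Gamma_{0}(3)$. Over $j=0$ the degree $4$ splits as $3+1$, and the elliptic point is the simple preimage. The preimage of local degree $3$ is $\frac12+\frac{\sqrt3}{2}i$, where $t_{3}=1/9$ and $a=5/2$ (the triple zero of $(2a-5)^{3}$, as the paper records). So your local-degree test would return $1/9$ and the wrong map $t_{3}=(a-2)/(9(a+2))$.
\item Your formula for $j$ is wrong beyond the sign. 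The correct one is $j=-27(t_{3}-1)(9t_{3}-1)^{3}/t_{3}$, equivalently $j=(u+27)(u+243)^{3}/u^{3}$ with $u=(\eta(\tau)/\eta(3\tau))^{12}=-27/t_{3}$. Your denominator $t_{3}^{3}$ puts the triple pole at $t_{3}=0$, which contradicts your own cusp matching, so the comparison as written would not come out equal.
\end{itemize}

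Either of the following closes the gap.
\begin{itemize}
\item Use the correct formula, and write $J(t)$ for its right-hand side. Substituting $a=2(1+t)/(1-t)$ gives $a+2=4/(1-t)$, $a-2=4t/(1-t)$ and $2a-5=(9t-1)/(1-t)$. Hence $6912(2a-5)^{3}/((a+2)^{3}(a-2))=27(1-t)(9t-1)^{3}/t=J(t)$. This does pin the map down: a M\"obius $N$ with $J\circ N=J$ must fix $0$ and $\infty$ (different pole orders) and $1$ (the simple zero), so $N=\mathrm{id}$.
\item Alternatively, use facts you already quote. Since $w_{3}$ normalises $\Gamma_{0}(3)$, it fixes the unique elliptic point; indeed $w_{3}(\zeta_{12}/\sqrt3)=\zeta_{12}/\sqrt3-1$. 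Since $t_{3}\circ w_{3}=1/t_{3}$, we get $t_{3}=\pm1$ there. Because $t_{3}<0$ on the imaginary axis, the other fixed point of $w_{3}$ has $t_{3}(i/\sqrt3)=-1$. Injectivity of $t_{3}$ on $Y_{0}(3)$ then forces $t_{3}(\zeta_{12}/\sqrt3)=1$.
\end{itemize}
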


\begin{proof}
    Up to precomposition by $w_{3}$, $a$ is the unique meromorphic function such that
    \begin{enumerate}
        \item $a$ covers an isomorphism $\bar{a}:Y_{0}(3)\to\mathbb{P}^{1}\setminus\{\pm 2\}$; and
        \item $a(\zeta_{12}/\sqrt{3}) = \infty.$
    \end{enumerate}
    The biholomorphic extension $h:\mathbb{P}^{1}\setminus\{\pm 2\}\to Y_{0}(3)$ has $h(\infty) = \zeta_{12}/\sqrt{3}$ and so $h^{-1}$ satisfies (1) and (2). Therefore, $f(\bar{a}(E_{\tau},\langle\frac{1}{3}+(\Z\oplus\tau\Z)\rangle))$ is equal to either $f(a(\tau))$ or $f(a(w_3(\tau))) = \sigma(f(a(\tau)))$. So, in either case, $\C(X)\cong\widetilde{K}_{a(\tau)}$.
\end{proof}

Note also that $a(i\infty) = -2$, $a(0) = 2$, $a(\frac{i}{\sqrt{3}}) = 0$, and $a(\frac{1}{2}+\frac{\sqrt{3}}{2}i) = \frac{5}{2}$.

We now attempt to understand the Jacobians of the curves $X\in\mathcal{M}_{2}(D_{12})$. The following paragraph and proposition are lifted directly from~\cite[\S 2]{Muk}.

Let $X\in\mathcal{M}_{2}$ with $\phi\in\Aut(X)$ such that $\phi^{2} = {\rm id}$ and $\phi\neq \eta$ the hyperelliptic involution. Let $E = X/\phi$ and $F = X/\eta\phi$. Let $\psi:X\to E\times F$ be the obvious map and set $\Gamma^{W} = \psi(X^{W})$ be the image of the Weierstrass points of $X$. Mukamel proves the following.

\begin{proposition}[{\cite[Proposition 2.12]{Muk}}]
    Let $X$ be as just discussed. Then
    \[\Jac(X)\cong (E\times F)/\Gamma^{W}\]
    and the principal polarisation on $\Jac(X)$ pullsback to twice the product polarisation on $E\times F$.
\end{proposition}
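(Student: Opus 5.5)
The plan is to realise $\Jac(X)$ as the quotient of $E\times F$ by the kernel of a natural isogeny, and then to read off the polarisation by pulling back the theta divisor. First, the quotient maps $p_E:X\to E=X/\phi$ and $p_F:X\to F=X/\eta\phi$ are both degree two, so $E$ and $F$ have genus one by Riemann--Hurwitz. Since $X$ has genus two and $\eta$ acts as $-1$ on $H^0(X,\Omega^1)$, the holomorphic one-forms split into the $\phi$-invariant line (pulled back from $E$) and the $\eta\phi$-invariant line (pulled back from $F$). Fixing base points, $\psi=(p_E,p_F)$ induces via Albanese functoriality a homomorphism $\Psi:\Jac(X)\to E\times F$, namely $(p_{E})_*\times(p_{F})_*$. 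This map is an isogeny because the differential is an isomorphism on tangent spaces, by the splitting of one-forms just described.

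Next I would compute $\ker\Psi$ and the composite in the other direction. The dual map $\Psi^\vee=p_E^*+p_F^*:E\times F\to\Jac(X)$ satisfies $\Psi\circ\Psi^\vee=[2]$, since $(p_E)_*p_E^*=[\deg p_E]=[2]$ on $E$. Moreover $(p_F)_*p_E^*=0$: on $X$ we have $p_E^*(p)=x+\phi x$, and pushing forward to $F$ gives $[x]+[\eta\phi\cdot\eta x]$, which is constant in $x$ up to translation because $\eta$ acts as $-1$ on $\Jac$. In the other direction, $\Psi^\vee\circ\Psi=1+\phi_*+1+(\eta\phi)_*=2$, using $\eta_*=-1$. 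Hence $\Psi^\vee$ is an isogeny of degree $2^4/\deg\Psi$, and the principal polarisation $\lambda$ satisfies $\Psi^\vee\circ\lambda_{E\times F}\circ\Psi=2\lambda$. Equivalently, $\Psi^\vee$ pulls the principal polarisation back to twice the product polarisation. This is the polarisation statement.

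To identify $\ker\Psi^\vee$ with $\Gamma^W$ (after translating the base point to a Weierstrass point $w_0$), I would note that for Weierstrass points $w$ the difference $w-w_0$ is $2$-torsion in $\Jac(X)$. Its images $\psi(w)-\psi(w_0)$ lie in $(E\times F)[2]$, and they are exactly the points killed when one passes through $\Psi^\vee$. In the reverse direction, since $\Psi\circ\Psi^\vee=[2]$, the kernel of $\Psi^\vee$ has order $4$, provided $\deg\Psi=4$ from the relation above. One then checks that the six Weierstrass points give exactly four distinct images. Indeed, $\phi$ permutes the Weierstrass points (fixing two of them and swapping the rest in pairs, or freely), so $\psi(X^W)$ has four elements, and these form a subgroup after translating $\psi(w_0)$ to the origin. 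The conclusion is $\Jac(X)\cong(E\times F)/\Gamma^W$.

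The main obstacle is this last identification: it requires showing that $\Gamma^W$ is a subgroup of order four equal to $\ker\Psi^\vee$, and not merely a subset of it. I would handle it by the explicit hyperelliptic model $y^2=q(x^2)$ with $\phi(x,y)=(-x,y)$, which makes both the Weierstrass images and the $2$-torsion completely computable.
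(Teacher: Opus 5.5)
Your isogeny set-up is correct: $\Psi^\vee=p_E^*+p_F^*$ satisfies $\Psi\circ\Psi^\vee=[2]$ and $\Psi^\vee\circ\Psi=[2]$. For the cross term, note that $p_F(\phi x)=p_F(\eta x)$, so $(p_F)_*p_E^*$ comes from $x\mapsto x+\eta x\sim K_X$, which is constant. Hence $\deg\Psi=\deg\Psi^\vee=4$ and $(\Psi^\vee)^*\lambda_X=2\lambda_{E\times F}$.

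The gap is in the identification of $\ker\Psi^\vee$ with $\Gamma^W$, where your count is wrong. The first alternative you allow never occurs: $\phi$ fixes no Weierstrass point. At a common fixed point $w$, both $\eta$ and $\phi$ act by $-1$ on $T_wX$ (if $\phi$ acted by $+1$ it would be the identity). Then $\eta\phi$ would act trivially, forcing $\phi=\eta$. So $\phi$ acts freely on $X^W$ with three orbits. Since $\eta\phi w=\phi w$ for $w\in X^W$, we get $\psi(\phi w)=\psi(w)$, so $\psi(X^W)$ has exactly three points, not four. In your model $y^2=q(x^2)$ this is visible at once: $\phi$ swaps $(\sqrt r,0)$ and $(-\sqrt r,0)$.

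With base point $w_0$, your computation $\Psi^\vee(\psi(w)-\psi(w_0))=2(w-w_0)+2(\phi w-\phi w_0)=0$ is correct. But it only produces three elements of the order-four group $\ker\Psi^\vee$, one of which is $0$. A three-element set cannot be a subgroup of a group of order $4$, so ``these form a subgroup after translating'' is false and the equality $\ker\Psi^\vee=\Gamma^W$ is not established. In fact $\Psi^\vee(\psi(x)-\psi(w_0))$ is the class of $2x-K_X$, so the missing fourth kernel element is not of the form $\psi(x)-\psi(w_0)$ for any $x\in X$.

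To repair this, choose the origins so that the missing element is $0$: set $o_E=p_E(\mathrm{Fix}(\eta\phi))$ and $o_F=p_F(\mathrm{Fix}(\phi))$. Neither fixed-point pair contains a Weierstrass point, since a Weierstrass point fixed by $\eta\phi$ is also fixed by $\phi$. As $\eta$ commutes with $\phi$, it preserves each pair and must swap it. Hence $p_E^*(o_E)\sim K_X\sim p_F^*(o_F)$. Then for $w\in X^W$, $\Psi^\vee(\psi(w))$ is the class of $2w+2\phi w-2K_X$, which is $0$ because $2w\sim 2\phi w\sim K_X$.

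The three points of $\psi(X^W)$ are nonzero and pairwise distinct. This is because the four $\phi$-orbits (three in $X^W$, plus $\mathrm{Fix}(\eta\phi)$) have distinct images in $E$. Therefore $\ker\Psi^\vee=\{0\}\cup\psi(X^W)$, which is the graph of an isomorphism $E[2]\to F[2]$. This is the sense in which $\Gamma^W$ has to be read in the statement (equivalently, as the subgroup generated by differences of points of $\psi(X^W)$). With that reading, your degree count gives $\Jac(X)\cong(E\times F)/\Gamma^W$.
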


We can use this to prove the following.

\begin{proposition}
    For $\tau\in\mathbb{H}$, define $\widetilde{A}_{\tau} = \C^2/\widetilde{\Lambda}_{\tau}$, where
        \[\widetilde{\Lambda}_{\tau} = \Z\left\langle\begin{pmatrix}
            1 \\
            1/\sqrt{3}
        \end{pmatrix},\begin{pmatrix}
            \tau \\
            \sqrt{3}\tau
        \end{pmatrix},\begin{pmatrix}
            1 \\
            -1/\sqrt{3}
        \end{pmatrix},\begin{pmatrix}
        \tau \\
            -\sqrt{3}\tau
        \end{pmatrix}\right\rangle\]
        with
        \[\left\langle\begin{pmatrix}
            a \\
            b
        \end{pmatrix},\begin{pmatrix}
            c \\
            d
        \end{pmatrix}\right\rangle = \frac{-{\rm{Im}}(a\bar{c}+b\bar{d})}{2\,{\rm{Im}}(\tau)}.\]
        Let $(X,\rho)\in\mathcal{M}_{2}(D_{12})$ and suppose that $g(X,\rho) = (E_{\tau},\langle\frac{1}{3}+(\Z\oplus\tau\Z)\rangle)$. Then
        \[\Jac(X)\cong\widetilde{A}_{\tau}.\]
\end{proposition}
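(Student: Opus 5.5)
We apply Mukamel's Proposition 2.12 with $\phi=\rho(r)$, which is an involution different from $\eta$ (since $\eta=\rho(Z)^3$ is central and $\rho$ is injective). This gives
\[\Jac(X)\cong (E\times F)/\Gamma^{W},\qquad E=X/\rho(r),\quad F=X/\rho(Z^3r),\]
with the principal polarisation pulling back to twice the product polarisation. The plan is to identify $E$ and $F$ as elliptic curves with lattices $\Z\oplus\tau\Z$ and $\Z\oplus 3\tau\Z$ (suitably scaled), identify $\Gamma^{W}$ as a specific subgroup of $E[2]\times F[2]$ (or rather of the relevant torsion), and then check that the resulting quotient is $\widetilde{A}_{\tau}$ with the stated form.

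\textbf{Step 1: the two quotients.} By hypothesis $g(X,\rho)=(E_\tau,\langle 1/3\rangle)$, so $E=E_\rho\cong \C/(\Z\oplus\tau\Z)$. Since $\sigma(r)=Z^3r$, the curve $F$ is the $E$-curve of $\sigma(X,\rho)$. By the commuting diagram, $g(\sigma(X,\rho))=w_3(g(X,\rho))$, so
\[F\cong E/C_\rho\cong \C/(\tfrac13\Z\oplus\tau\Z)\cong\C/(\Z\oplus 3\tau\Z).\]
In coordinates adapted to $\widetilde{\Lambda}_\tau$, we project $\C^2\to\C$ by the sum and difference of the coordinates (suitably scaled by $\sqrt3$). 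The first coordinate lattice is then generated by $1,\tau$ and the second by $1/\sqrt3,\sqrt3\tau$, i.e. $\frac{1}{\sqrt3}(\Z\oplus3\tau\Z)$. This matches $E\times F$ up to scaling of the second factor.

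\textbf{Step 2: the gluing subgroup.} We compute $\psi(X^W)$. The six Weierstrass points are permuted by $\rho(Z)$ as two $3$-cycles and are swapped by $\rho(r)$. Their images in $E$ and $F$ are $2$-torsion points, and $\psi(X^W)$ generates a subgroup isomorphic to $(\Z/2)^2$ embedded as the graph of an isomorphism $E[2]\to F[2]$. That isomorphism must be compatible with the $3$-isogeny $E\to F=E/C$, and since $3$ is odd, the isogeny induces an isomorphism on $2$-torsion.

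\textbf{Step 3: matching the lattice.} The lattice $\widetilde\Lambda_\tau$ should be exactly $\Lambda_E\oplus\Lambda_F$ enlarged by half-vectors that realise this graph. Concretely,
\[\tfrac12\bigl[(1,1/\sqrt3)+(1,-1/\sqrt3)\bigr]=(1,0),\]
and similarly for $\tau$. Checking index $4$ and the generators identifies $\widetilde\Lambda_\tau/(\Lambda_E\oplus\Lambda_F)$ with $\Gamma^W$. We then verify that the given Hermitian form restricts on $\Lambda_E\oplus\Lambda_F$ to twice the product polarisation, and that it is unimodular on $\widetilde\Lambda_\tau$ by computing its determinant on the four generators.

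\textbf{Main obstacle.} The hardest part is Step 2 together with the coordinate bookkeeping. We must show that the $2$-torsion identification produced by the Weierstrass points is the one induced by the isogeny rather than some other isomorphism $E[2]\to F[2]$. We would first try to argue by continuity and connectedness of $\mathcal{M}_2(D_{12})$: the gluing map is locally constant in $a$, so it suffices to check it at one explicit point, for instance $a=0$, i.e. $\tau=i/\sqrt3$, where everything is explicit. Alternatively, we could use equivariance under $\rho(Z)$, which acts on $\Jac(X)$ and should correspond to the order-$3$ structure encoded by $C_\rho$.
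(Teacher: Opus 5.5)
Your proposal is essentially the paper's proof: Mukamel's Proposition~2.12 applied with $\phi=\rho(r)$, $E\cong E_\tau$, and $F=X/\rho(\sigma(r))\cong E_{-1/3\tau}$ via $g\circ\sigma=w_3\circ g$, followed by recognising $\widetilde{\Lambda}_\tau$ as an index-$4$ overlattice of a product lattice on which the form is twice the product polarisation. Your direct identification of $F$ as the $E$-curve of $\sigma(X,\rho)$ is cleaner than the paper's conjugation by $\rho(Z)$.

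Two corrections and remarks:
\begin{itemize}
\item In Step~3 the product lattice must be $(\widetilde{\Lambda}_\tau\cap(\C\times 0))\oplus(\widetilde{\Lambda}_\tau\cap(0\times\C))=2\Z\langle 1,\tau\rangle\oplus 2\Z\langle 1/\sqrt{3},\sqrt{3}\tau\rangle$, spanned by $v_1\pm v_3$ and $v_2\pm v_4$ (with $v_1,\dots,v_4$ the four listed generators). It is not the projection lattices of Step~1: those contain $\widetilde{\Lambda}_\tau$ with index $4$, and $(1,0)\notin\widetilde{\Lambda}_\tau$.
\item You rightly flag the matching of $\Gamma^W$ with the resulting glue, and you correctly predict it to be the graph of the map on $2$-torsion induced by the $3$-isogeny $z\mapsto\sqrt{3}z$. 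The paper simply asserts this matching, so carrying out your proposed check would make the argument more complete than the published one.
\end{itemize}
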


\begin{proof}
    Set $\phi = \rho(r)$, $E = X/\phi$ and $F = X/\eta\phi$. By definition, $E\cong E_{\tau}$. Now,
    \[\rho(Z)\circ\sigma(\phi)\circ\rho(Z^{-1}) = \eta\phi\]
    so that $\rho(Z)$ induces an isomorphism between $F$ and $g(\sigma(X,\rho))\cong E_{w_{3}(\tau)} = E_{{-1}/{3\tau}}$. Therefore, $E\times F\cong E_{\tau}\times E_{{-1}/{3\tau}}$. After multiplying the standard lattice for $E_{{-1}/{3\tau}}$ by $\sqrt{3}\tau$, the standard product polarisation is
    \[\left\langle\begin{pmatrix}
            a \\
            b
        \end{pmatrix},\begin{pmatrix}
            c \\
            d
        \end{pmatrix}\right\rangle = \frac{-{\rm{Im}}(a\bar{c}+b\bar{d})}{{\rm{Im}}(\tau)}.\]
    Finally, since $\widetilde{\Lambda}_{\tau}$ contains the vectors $\begin{pmatrix}
            2 \\
            0
        \end{pmatrix},\begin{pmatrix}
            2\tau \\
            0
        \end{pmatrix},\begin{pmatrix}
            0 \\
            -2/\sqrt{3}
        \end{pmatrix}$, and $\begin{pmatrix}
        0 \\
        2\sqrt{3}\tau
        \end{pmatrix}$
    we get that $\widetilde{A}_{\tau}$ is isomorphic to $(E_{\tau}\times E_{{-1}/{3\tau}})/\Gamma^{W}$ with half the standard product polarisation.
\end{proof}

Now, let
\[\widetilde{U} := \{\tau\in\mathbb{H}: |{\rm{Re}}(\tau)|\leq 1/2, |\tau|^2\geq1/3,\text{ but } \tau\not\in\{\zeta_{12}/\sqrt{3},\zeta_{12}^5/\sqrt{3}\}\}.\]
This is a fundamental domain for $\Gamma_{0}(3)\cup\{w_{3}\}$. Recall that, for $\tau\in\widetilde{U}$, $\widetilde{X}_{\tau}$ is the curve obtained by gluing the hexagonal pinwheel $H_{\tau}$ in Figure~\ref{fig:pinwheel} to $-H_{\tau}$. Define $Z_{\tau}$ to be the automorphism of $\widetilde{X}_{\tau}$ obtained by rotation by $2\pi/3$ followed by negation.

\begin{proposition}
    Fix $\tau\in\widetilde{U}$. There is an injective homomorphism $\rho_{\tau}:D_{12}\hookrightarrow\Aut(\widetilde{X}_{\tau})$ with $\rho_{\tau}(Z) = Z_{\tau}$ and $g(\widetilde{X}_{\tau},\rho_{\tau}) = (E_{\tau},\langle\frac{1}{3}+(\Z\oplus\tau\Z)\rangle)$.
\end{proposition}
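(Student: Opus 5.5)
We construct $\rho_\tau$ directly from the symmetries of the pinwheel picture, following Mukamel's treatment of the analogous statement for $D_8$. The argument has three parts: realise $Z$ and $r$ as flat automorphisms, check the $D_{12}$ relations, and identify the quotient elliptic curve together with its $3$-torsion subgroup.

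\textbf{Step 1: the automorphisms.} The hexagonal pinwheel $H_\tau$ has vertices $\{\zeta_{12}/\sqrt{3},\tau,\zeta_{12}^5/\sqrt{3},\zeta_3\tau,-i/\sqrt{3},\zeta_3^{-1}\tau\}$. This vertex set is invariant under multiplication by $\zeta_3$. Hence rotation by $2\pi/3$ maps $H_\tau$ to itself and $-H_\tau$ to itself, respecting the side gluings. Composing with the swap $H_\tau\leftrightarrow -H_\tau$ (negation) gives $Z_\tau$, an automorphism of order $6$ whose cube is the hyperelliptic involution $\eta$ (negation).

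For $r$, we take the half-translation symmetry given by the point reflection $z\mapsto -z$ restricted to a suitable edge-pairing, composed with the swap. Concretely, we look for an order-two automorphism that acts on the one-form by $\omega\mapsto \pm\bar{\ }$-free linear maps, i.e.\ $\omega\mapsto -\omega$ or $\omega \mapsto \zeta\omega$. The natural candidate is the involution exchanging the two hexagons via $z\mapsto -z$ composed with one of the three rotations. This rotation is chosen so that the resulting map fixes the points of $H_\tau$ lying over the midpoints of the edges paired by the vertices $\tau$ and $\zeta_3\tau$, say.

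We then verify $r^2=1$ and $(Z_\tau r)^2=1$, the latter because $Z_\tau r$ is again a rotation-by-$\pi$-type symmetry. Injectivity follows since $Z_\tau$ has order $6$ and $r\notin\langle Z_\tau\rangle$, as $r$ has fixed points different from those of $\eta$.

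\textbf{Step 2: the quotient.} Compute $E=\widetilde{X}_\tau/\rho_\tau(r)$ by cutting $H_\tau\cup -H_\tau$ along the fixed locus of $r$. The expectation is that a fundamental domain for $r$ reassembles into a parallelogram. We then check that the form $\omega+r^*\omega$ (or the eigenform of $r$ with eigenvalue $+1$) has period lattice proportional to $\Z\oplus\tau\Z$. This is consistent with the lattice $\widetilde{\Lambda}_\tau$ in the Jacobian description: its projection to the first coordinate is $\Z\langle 1,\tau\rangle$, exactly as in the proof of the preceding proposition, so $E\cong E_\tau$.

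\textbf{Step 3: the canonical subgroup.} It remains to show that $C_{\rho_\tau}=\langle \tfrac13\rangle$. In the proof of Proposition~\ref{prop:M2(D12)-3}, $C_\rho$ came from the point $w=3$, i.e.\ $u=x+1/x$ at the fixed points of $Z$ in the $x$-coordinate ($x=0,\infty$). We therefore identify it intrinsically as the image in $E$ of the points fixed by $\rho(Z)^2$ (or by $\rho(Z)$ up to $\eta$), which differ by a $3$-torsion point. On the pinwheel, the fixed points of rotation by $2\pi/3$ are the centre $0$ and the vertex class $\{\zeta_{12}/\sqrt3,\zeta_{12}^5/\sqrt3,-i/\sqrt3\}$. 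The difference of their images in $E_\tau$ is represented by a vector like $\zeta_{12}/\sqrt3$ projected to the first coordinate, which should equal $\tfrac13$ modulo $\Z\oplus\tau\Z$ after the normalisation of Step 2.

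The main obstacle is this last identification: the choice of $r$ (among the three conjugates) and of the normalisation must be consistent, so that the subgroup is $\langle 1/3\rangle$ rather than $\langle\tau/3\rangle$ or another one. The latter would correspond to $w_3$, or to the other $\Gamma_0(3)$-translates. We would settle this by tracking the single segment from $0$ to $\zeta_{12}/\sqrt3$ through the quotient map explicitly. If necessary, we would use that $\widetilde{U}$ is a fundamental domain for $\Gamma_0(3)\cup\{w_3\}$, so that a wrong choice is corrected by replacing $r$ with $Z^3r$ via $\sigma$.
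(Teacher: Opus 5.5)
\textbf{Step 1 cannot work, and Steps 2 and 3 depend on it.} In $D_{12}$ the relation $(Zr)^2=1$ gives $rZr=Z^{-1}$. Suppose $\rho_\tau(r)^*\omega_\tau=c\,\omega_\tau$, so that $c^2=1$, and $Z_\tau^*\omega_\tau=\lambda\,\omega_\tau$. Pulling back by $rZ_\tau r$ gives $c^2\lambda\,\omega_\tau=\lambda\,\omega_\tau$. Pulling back by $Z_\tau^{-1}$ gives $\lambda^{-1}\omega_\tau$. Hence $\lambda=\pm1$. But $\lambda=-\zeta_3^{\pm1}$ is a primitive sixth root of unity.

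So no admissible $\rho_\tau(r)$ acts on $\omega_\tau$ by a scalar. It must send $\omega_\tau$ to the \emph{other} $Z_\tau$-eigenform. In the model $z^2=x^6-ax^3+1$ one has $r^*(dx/z)=-x\,dx/z$, and $r$ exchanges the two zeros of $\omega_\tau$ with the two hexagon centres. Such a map is not a cut-and-paste or half-translation symmetry of the pinwheel.

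Your candidates, $z\mapsto-\zeta_3^k z$ combined with the swap $H_\tau\leftrightarrow -H_\tau$, confirm this:
\begin{itemize}
\item for $k=0$ you get $\eta=Z_\tau^3$, whose fixed points are exactly the edge midpoints you single out, so $\rho_\tau$ would not be injective;
\item for $k=1,2$ you get $Z_\tau^{\pm1}$, which have order six.
\end{itemize}
This matches the paper's later computation $\SO(\widetilde{X}_\tau,\omega_\tau)=\langle Z_\tau\rangle$ for $\tau\neq i/\sqrt3$.

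Consequently you cannot compute $\widetilde{X}_\tau/\rho_\tau(r)$ by cutting the pinwheel along a fixed locus. Nor do $\omega_\tau$-periods such as $\zeta_{12}/\sqrt3$ compute points of $E$, which is uniformised by the $r$-invariant form, not by $\omega_\tau$. Since $r$ identifies each centre with a zero of $\omega_\tau$, the centre-versus-vertex difference you propose in Step 3 can even vanish in $E$.

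The consistency check with $\widetilde{\Lambda}_\tau$ in Step 2 is also circular. The preceding proposition takes $g(X,\rho)=(E_\tau,\langle\frac13+(\Z\oplus\tau\Z)\rangle)$ as a hypothesis, and $\Jac(\widetilde{X}_\tau)\cong\widetilde{A}_\tau$ is deduced from the statement you are proving.

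\textbf{The missing idea is to avoid writing $\rho_\tau(r)$ down at all, as the paper does (and as Mukamel does for $D_8$).}
\begin{itemize}
\item Existence of $\rho_\tau$ with $\rho_\tau(Z)=Z_\tau$ comes abstractly from Proposition~\ref{prop:M2(D12)-1}, since $Z_\tau$ has order six and $Z_\tau^3=\eta$.
\item Choose $\rho_\tau$ holomorphically on the simply connected $\widetilde U$. The cut-and-paste isomorphisms $\widetilde{X}_\tau\cong\widetilde{X}_{\tau+1}$ and $\widetilde{X}_\tau\cong\widetilde{X}_{w_3(\tau)}$ then give a map $\bar p:\widetilde U/\!\sim\;\to\mathcal{M}_2(D_{12})/\sigma$.
\item $\bar p$ meets $\text{Fix}(\sigma)$ only at $\tau=i/\sqrt3$, where the surface is two regular hexagons, so $\bar p$ has degree one.
\item There are only two such biholomorphisms, distinguished by the stable limit as $\tau\to i\infty$, where the geometric genus drops to zero.
\item The map $\bar p_1$ induced by $p_1(\tau)=g^{-1}(E_\tau,\langle\frac13+(\Z\oplus\tau\Z)\rangle)$ has the same fixed point and the same cusp degeneration, so $\bar p=\bar p_1$.
\end{itemize}
Your closing remark about correcting by $\sigma$, i.e.\ replacing $\rho_\tau$ by $\rho_\tau\circ\sigma$, only settles the residual $w_3$-ambiguity once this rigidity argument is in place.
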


\begin{proof}
    By Proposition~\ref{prop:M2(D12)-1}, since $\widetilde{X}_{\tau}$ admits an order six automorphism cubing to the hyperelliptic involution, we known that such a $\rho_{\tau}$ exists.
    
    Since $\widetilde{U}$ is simply connected, we can choose $\rho_{\tau}$ so that $\tau\mapsto(\widetilde{X}_{\tau},\rho_{\tau})$ gives a holomorphic map $p:\widetilde{U}\to\mathcal{M}_{2}(D_{12})$.

    It can be checked that $\widetilde{X}_{\tau}$ and $\widetilde{X}_{\tau+1}$ are isomorphic by cut-and-paste, and simiarly for $\widetilde{X}_{\tau}$ and $\widetilde{X}_{w_{3}(\tau)}$. So $p$ covers a holomorphic map $\bar{p}:\widetilde{U}/{\sim}\to\mathcal{M}_{2}(D_{12})/\sigma$, where $\tau\sim\tau+1$ and $\tau\sim w_{3}(\tau)$.

    The point $\tau = \frac{i}{\sqrt{3}}\in\widetilde{U}$ is the unique point such that $\widetilde{X}_{\tau}$ admits an order six automorphism distinct from $Z_{\tau}$ (since it is glue from two regular hexagons). So $p^{-1}(\text{Fix}(\sigma)) = \frac{i}{\sqrt{3}}$. Hence, $\bar{p}$ has degree one and is therefore a biholomorphism. There are exactly two such biholomorphisms $\widetilde{U}/{\sim}\to\mathcal{M}_{2}(D_{12})/\sigma$ with $\bar{p}(\frac{i}{\sqrt{3}}) = \text{Fix}(\sigma)/\sigma$. They are distinguished by the stable limit of $p(\tau)$ as $\tau\to i\infty$. Here, we have that the geometric genus of $\widetilde{X}_{\tau}$ goes to zero.

    We also have the map $p_{1}:\widetilde{U}\to\mathcal{M}_{2}(D_{12})$ given by $p_{1}(\tau) = g^{-1}(E_{\tau},\langle\frac{1}{3}+(\Z\oplus\tau\Z)\rangle)$. This map intertwines $w_{3}$ and $\sigma$, and we also have
    $$\left(E_{\tau},\left\langle\frac{1}{3}+(\Z\oplus\tau\Z)\right\rangle\right)\cong\left(E_{\tau+1},\left\langle\frac{1}{3}+(\Z\oplus(\tau+1)\Z)\right\rangle\right).$$
    So, $p_{1}$ covers $\bar{p}_{1}:\widetilde{U}/{\sim}\to\mathcal{M}_{2}(D_{12})/\sigma$. We saw earlier that $g^{-1}(E_{\tau},\langle\frac{1}{3}+(\Z\oplus\tau\Z)\rangle)$ has geometric genus zero as $\tau\to i\infty$ (equivalently, as $a(\tau)\to\{\pm 2\}\cdot\mu$). Moreover, $p_{1}(\frac{i}{\sqrt{3}}) = \text{Fix}(\sigma)/\sigma$.

    Therefore, $\bar{p} = \bar{p}_{1}$, and we are done.
\end{proof}

The following proposition then easily follows and completes the proof of Theorem~\ref{thm:M2(D12)}.

\begin{proposition}
    Fix $\tau\in\widetilde{U}$. Then there exists an injective homomorphism $\rho:D_{12}\hookrightarrow\Aut(\widetilde{X}_{\tau})$ with $\rho(Z) = Z_{\tau}$ and with $\widetilde{X}_{\tau}$ satisfying:
    \[\C(\widetilde{X}_{\tau}) = \widetilde{K}_{a(\tau)}\,\,\,\,\,\text{and}\,\,\,\,\,\Jac(\widetilde{X}_{\tau}) = \widetilde{A}_{\tau}.\]
    For any $(X,\rho)\in\mathcal{M}_{2}(D_{12})$, $\exists\,\tau\in\widetilde{U}$ and an isomorphism $\varphi:X\to\widetilde{X}_{\tau}$ such that $\varphi\circ\rho(Z)\circ\varphi^{-1} = Z_{\tau}$.
\end{proposition}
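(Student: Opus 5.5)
The plan is to assemble this final proposition from the three preceding results: the explicit form of $\mathcal{M}_{2}(D_{12})$ (Propositions~\ref{prop:M2(D12)-2} and \ref{prop:M2(D12)-4} together with the corollary that $f$ and $g$ are biholomorphisms), the Jacobian computation $\Jac(X)\cong\widetilde{A}_{\tau}$, and the preceding proposition producing $\rho_{\tau}$ on $\widetilde{X}_{\tau}$ with $g(\widetilde{X}_{\tau},\rho_{\tau}) = (E_{\tau},\langle\frac{1}{3}+(\Z\oplus\tau\Z)\rangle)$.

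For the first part, fix $\tau\in\widetilde{U}$ and take $\rho=\rho_{\tau}$ from the previous proposition, so that $\rho(Z)=Z_{\tau}$ and $g(\widetilde{X}_{\tau},\rho)=(E_{\tau},\langle\frac13+(\Z\oplus\tau\Z)\rangle)$. Applying the proposition describing $a(\tau)$ (the Hauptmodul formula $a(\tau)=-2\frac{t_{3}(\tau)+1}{t_{3}(\tau)-1}$) gives $\C(\widetilde{X}_{\tau})\cong\widetilde{K}_{a(\tau)}$. Applying the Jacobian proposition with the same $g$-value gives $\Jac(\widetilde{X}_{\tau})\cong\widetilde{A}_{\tau}$. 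Both of these results already absorb the ambiguity up to $w_{3}$/$\sigma$, so nothing further is needed here.

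For the second part, let $(X,\rho)\in\mathcal{M}_{2}(D_{12})$. Since $g$ is a biholomorphism onto $Y_{0}(3)$ minus the elliptic point, choose $\tau\in\mathbb{H}$ with $g(X,\rho)=(E_{\tau},\langle\frac13+(\Z\oplus\tau\Z)\rangle)$. Then use $\Gamma_{0}(3)$ to move $\tau$ into the standard fundamental domain of Figure~\ref{fig:Y_0(3)}, which is $\widetilde{U}\cup w_{3}(\widetilde{U})$ up to boundary identifications. There are two cases.
\begin{itemize}
\item If $\tau\in\widetilde{U}$, injectivity of $g$ gives $(X,\rho)\sim(\widetilde{X}_{\tau},\rho_{\tau})$. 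By the definition of $\sim$, this supplies an isomorphism $\varphi$ with $\varphi\circ\rho(Z)\circ\varphi^{-1}=\rho_{\tau}(Z)=Z_{\tau}$.
\item If instead $\tau$ lies in the image $w_{3}(\widetilde{U})$, write $\tau=w_{3}(\tau')$ with $\tau'\in\widetilde{U}$. The commuting diagram gives $g(\sigma(X,\rho))=w_{3}(g(X,\rho))$, which corresponds to $\tau'$. Hence $(X,\rho\circ\sigma)\sim(\widetilde{X}_{\tau'},\rho_{\tau'})$. Since $\sigma(Z)=Z$, the resulting $\varphi$ still conjugates $\rho(Z)$ to $Z_{\tau'}$.
\end{itemize}
The excluded points $\zeta_{12}/\sqrt3,\zeta_{12}^5/\sqrt3$ correspond to the elliptic point, which is not in the image of $g$, so they cause no loss.

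The main obstacle is bookkeeping rather than substance. One must check that $\widetilde{U}$ together with $w_{3}$ really covers a fundamental domain for $\Gamma_{0}(3)$, i.e.\ that every $\Gamma_{0}(3)$-orbit meets $\widetilde{U}\cup w_{3}(\widetilde{U})$. One must also check that passing through $\sigma$ does not change the image of $Z$, which holds because $\sigma$ fixes $Z$. I would verify the covering claim directly: the arcs $|\tau\pm\frac13|=\frac13$ are sent by $w_{3}$ to the arc $|\tau|^{2}=\frac13$.
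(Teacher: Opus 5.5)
Your proposal is correct and is the assembly the paper intends when it says this proposition ``easily follows'': take $\rho=\rho_{\tau}$ and apply the $a(\tau)$ and Jacobian propositions for the first part; for the second part, combine injectivity of $g$, the fact (already stated in the paper) that $\widetilde{U}$ is a fundamental domain for $\Gamma_{0}(3)\cup\{w_{3}\}$, and $g\circ\sigma=w_{3}\circ g$ with $\sigma(Z)=Z$. One slip in your bookkeeping remark: $w_{3}$ maps the arc $|\tau|^{2}=\frac{1}{3}$ to itself and exchanges the arcs $|\tau\pm\frac{1}{3}|=\frac{1}{3}$ with the lines $\mathrm{Re}\,\tau=\mp\frac{1}{2}$ (it does not send those arcs to $|\tau|^{2}=\frac{1}{3}$), and it is this that shows the standard fundamental domain of $\Gamma_{0}(3)$ is $\widetilde{U}\cup w_{3}(\widetilde{U})$ together with the two elliptic corners.
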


\subsubsection{Determining $e_{3}$}

We consider the Hilbert modular surface $X_{D}$ given by
\[\mathbb{H}\times\mathbb{H}/\PSL(\mathcal{O}_{D}^{\vee}\oplus\mathcal{O}_{D}).\]
As a quotient stack, this is the moduli space parameterising principally polarised abelian surfaces admitting real multiplication by $\mathcal{O}_{D}$. We use $\PSL(\mathcal{O}_{D}^{\vee}\oplus\mathcal{O}_{D})$ in order to kill the global order two automorphism $-I$. We let $[\tau]$ be the class of the point $\tau\in\mathbb{H}\times\mathbb{H}$, and let $B_{\tau}$ denote the corresponding abelian surface with $\iota_{\tau}:\mathcal{O_{D}}\to\End(B_{\tau})$.

For $\tau\in\mathbb{H}\times\mathbb{H}$, we define the orbifold order of $[\tau]$ in $X_D$ to be the order of the group ${\rm Stab}(\tau)\leq\PSL(\mathcal{O}_{D}^{\vee}\oplus\mathcal{O}_{D})$. We will call $[\tau]\in X_D$ an orbifold point if the orbifold order of $[\tau]$ is greater than one.

We first consider complex multiplication on the variety $\widetilde{A}_{\tau}$. This is analogous to~\cite[Proposition 3.2]{Muk}. Recall that we say that $\widetilde{A}_{\tau}$ admits complex multiplication if there exists a degree two extension of $\mathcal{O}_{D}$ that embeds into $\End(\widetilde{A}_{\tau})$.

\begin{proposition}\label{prop:CM}
    Fix $\tau\in\mathbb{H}$. The abelian variety $\widetilde{A}_{\tau}$ has complex multiplication if and only if $\tau$ is imaginary quadratic.
\end{proposition}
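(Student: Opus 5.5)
The plan is to use the isogeny already exhibited in the proof of the previous proposition. By that proof, $\widetilde{A}_{\tau}$ is isogenous to $E_{\tau}\times E_{-1/(3\tau)}$. The lattice $\widetilde{\Lambda}_{\tau}$ contains $2(\Z\oplus\tau\Z)\oplus\frac{2}{\sqrt{3}}(\Z\oplus 3\tau\Z)$ with finite index, and is contained in $\frac{1}{2}$ of it up to scaling. Moreover $E_{-1/(3\tau)}\cong E_{3\tau}$, which is $3$-isogenous to $E_{\tau}$. Hence $\widetilde{A}_{\tau}$ is isogenous to $E_{\tau}^{2}$. Since isogenous abelian varieties have isomorphic rational endomorphism algebras, we get
\[
\End(\widetilde{A}_{\tau})\otimes\Q\cong M_{2}\bigl(\End(E_{\tau})\otimes\Q\bigr).
\]

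The proof then splits into two directions, both following the template of \cite[Proposition 3.2]{Muk}.

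For the \emph{if} direction, suppose $\tau$ is imaginary quadratic. Then $E_{\tau}$ has CM by an order in $K=\Q(\tau)$, so $\End(\widetilde{A}_{\tau})\otimes\Q\cong M_{2}(K)$. The real multiplication by $\mathcal{O}_{D}$ already present (here $D$ is a square, so $\mathcal{O}_{D}\otimes\Q=\Q\times\Q$, realized by the self-adjoint endomorphisms coming from the product decomposition) commutes with the diagonal copy of $K$ acting through $E_{\tau}\times E_{3\tau}$. So $\mathcal{O}_{D}\otimes_{\Z}\mathcal{O}_{K'}$, for a suitable order $\mathcal{O}_{K'}$, gives a degree two extension of $\mathcal{O}_{D}$. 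Concretely, I would write this extension down as the diagonal matrices $\mathrm{diag}(\alpha,\alpha)$ with $\alpha$ a CM endomorphism of $E_{\tau}$ that also preserves $E_{3\tau}$. I would then clear denominators by a multiple of $6$ so that the element preserves $\widetilde{\Lambda}_{\tau}$, and check that it commutes with $\iota(\mathcal{O}_{D})$.

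For the \emph{only if} direction, suppose a degree two extension $L\supset\mathcal{O}_{D}$ embeds into $\End(\widetilde{A}_{\tau})$. If $\tau$ were not imaginary quadratic, $E_{\tau}$ would have no CM. Then $\End(\widetilde{A}_{\tau})\otimes\Q\cong M_{2}(\Q)$, and the Rosati-invariant commutative subalgebras are at most $2$-dimensional. But $L\otimes\Q$ is a $4$-dimensional commutative algebra, and it cannot embed into $M_{2}(\Q)$, since maximal commutative subalgebras of $M_{2}(\Q)$ have dimension $2$. This contradiction shows $\tau$ is imaginary quadratic.

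The main obstacle I expect is making the isogeny $\widetilde{A}_{\tau}\sim E_{\tau}\times E_{3\tau}$ fully explicit: one must check that the relevant sublattice inclusions really have finite index. That check is a direct computation with the four generators of $\widetilde{\Lambda}_{\tau}$. For the stated equivalence, only the rational endomorphism algebra matters, so integrality issues and the exact CM order can be ignored here and deferred to the later class number count.
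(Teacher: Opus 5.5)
Your ``only if'' half matches the paper's argument. For non-CM $E_\tau$ the rational endomorphism algebra is $M_2(\Q)$, which has no four-dimensional commutative subalgebra. Your explicit $3$-isogeny $E_{-1/(3\tau)}\cong E_{3\tau}\to E_\tau$ supplies the isogeny between the two factors that the paper uses without stating it.

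The ``if'' half takes a different route. You pair the scalar action of $K=\Q(\tau)$ with the split real multiplication from the product decomposition: the idempotents of $E_\tau\times E_{3\tau}$, i.e.\ the order generated by the involution $\rho(r)$, which acts as $\mathrm{diag}(1,-1)$ and has discriminant $4$. This gives CM by an order in $K\times K$. The paper instead pairs $\tau I$ with the order-six automorphism $\rho(Z)$, whose matrix $\tfrac12\begin{psmallmatrix}1&-\sqrt3\\ \sqrt3&1\end{psmallmatrix}$ preserves $\widetilde\Lambda_\tau$. This yields a Hermitian-adjoint embedding of $\Q(\zeta_6,\tau)$ whose real part has discriminant $-3C$ or $-C/3$, as in Table~\ref{tab:discriminants}. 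That discriminant is generally not a square; for $\tau=i$ it is $12$. So your aside ``here $D$ is a square'' misreads the setting: for arbitrary $\tau\in\mathbb{H}$ no $D$ is prescribed, and no real multiplication is ``already present''.

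Under the paper's literal definition (some degree-two extension of some $\mathcal{O}_D$ embeds in $\End(\widetilde A_\tau)$), your witness suffices, so the equivalence as stated is proved. However, it is the wrong CM for the way the proposition is used. Your algebra does not commute with $\rho(Z)$: the projection $\mathrm{diag}(1,0)$ does not preserve the eigenlines $\C(1,\mp i)$ of $\rho(Z)$. Consequently the $\rho(Z)$-eigenform $\omega_\tau$ is not an eigenform for your real multiplication. The very next proposition invokes this one as ``CM by an order $\mathcal{O}\subset\Q(\zeta_6,\tau)$''. It needs this so that $\omega_\tau$ is an $(\mathcal{O}\cap\R)$-eigenform, which places $(\widetilde X_\tau,[\omega_\tau])$ at an orbifold point of $X_D$ and lets $D$ be read off. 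Your construction cannot deliver this. The fix is to replace your idempotents by $\rho(Z)$, which automatically commutes with the scalar $\tau I$. The rest of your argument is unchanged.
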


\begin{proof}
    Suppose that $\tau$ is imaginary quadratic, then $\widetilde{\Lambda}_{\tau}\otimes\Q$ is stabilised by $\frac{1}{2}\begin{psmallmatrix}
        1 & -\sqrt{3} \\
        \sqrt{3} & 1
    \end{psmallmatrix}$ and $\begin{psmallmatrix}
        \tau & 0 \\
        0 & \tau
    \end{psmallmatrix}$. This gives a Hermitian-adjoint embedding $\iota:\Q(\zeta_{6},\tau)\to\End(\widetilde{A}_{\tau})\otimes\Q$.
    Restricting $\iota$ to $\mathcal{O}:=\iota^{-1}(\End(\widetilde{A}_{\tau}))$ gives complex multiplication by $\mathcal{O}$ on $\widetilde{A}_{\tau}$.

    If $\tau$ is not imaginary quadratic then both $E_{\tau}$ and $E_{w_{3}(\tau)}$ do not have complex multiplication. Hence, $\End(E_{\tau}\times E_{w_{3}(\tau)})\otimes\Q\cong M_{2}(\Q)$. We have a degree four surjection from $E_{\tau}\times E_{w_{3}(\tau)}$ to $\widetilde{A}_{\tau}$ which gives an isomorphism between their rational endomorphism rings. So any commutative ring in $\End(\widetilde{A}_{\tau})\otimes\Q$ has rank at most two over $\Q$ and we cannot find the image of an order giving complex multiplication.
\end{proof}

We then have the following propositions from~\cite{Muk}.

\begin{proposition}[{\cite[Proposition 3.3]{Muk}}]\label{prop:Muk-3.3}
    Fix $\tau\in\mathbb{H}\times\mathbb{H}$ and an integer $n>2$. The following are equivalent:
    \begin{enumerate}
        \item The point $\tau$ is fixed by an $A\in\SL(\mathcal{O}_{D}^{\vee}\oplus\mathcal{O}_{D})$ of order $n$.
        \item There is an automorphism $\phi\in\Aut(B_{\tau})$ of order $n$ that commutes with $\iota_{\tau}(\mathcal{O}_{D})$.
        \item The homomorphism $\iota_{\tau}:\mathcal{O}_{D}\to\End(B_{\tau})$ extends to complex multiplication by an order containing $\mathcal{O}_{D}[\zeta_{n}]$ where $\zeta_{n}$ is a primitive $n^{\text{th}}$ root of unity.
    \end{enumerate}
\end{proposition}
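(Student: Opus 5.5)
We prove the cycle of implications (1)$\Rightarrow$(2)$\Rightarrow$(3)$\Rightarrow$(1), using the description of $X_D$ as the moduli space of principally polarised abelian surfaces with real multiplication. Throughout, $\tau$ determines the marked surface $(B_\tau,\iota_\tau)$ with $B_\tau=\C^2/\Lambda_\tau$ and $\Lambda_\tau=\{(a+b\tau_1,\,a'+b'\tau_2)\}$, where $a\in\mathcal{O}_D^\vee$, $b\in\mathcal{O}_D$ and primes denote Galois conjugation. The real multiplication acts diagonally by $x\mapsto(x,x')$.

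\textbf{(1)$\Rightarrow$(2).} Write $A=\begin{psmallmatrix}\alpha&\beta\\ \gamma&\delta\end{psmallmatrix}$ and suppose $A\tau=\tau$. The linear map $\phi(z_1,z_2)=\bigl(z_1/(\gamma\tau_1+\delta),\;z_2/(\gamma'\tau_2+\delta')\bigr)$ preserves $\Lambda_\tau$, because $A$ preserves $\mathcal{O}_D^\vee\oplus\mathcal{O}_D$. It also preserves the polarisation, since $\det A=1$, and it commutes with the diagonal $\mathcal{O}_D$-action. The assignment $A\mapsto\phi$ is the standard injective homomorphism from ${\rm Stab}(\tau)$ into $\Aut(B_\tau,\iota_\tau)$, so $\phi$ has order $n$.

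\textbf{(2)$\Rightarrow$(1).} Let $\phi$ have order $n$ and commute with $\iota_\tau(\mathcal{O}_D)$. Then its analytic representation also commutes with the diagonal action, and since the two characters $x\mapsto x$ and $x\mapsto x'$ are distinct, $\phi$ must be diagonal. Consequently the rational representation of $\phi$ is $\mathcal{O}_D$-linear on $\Lambda_\tau\cong\mathcal{O}_D^\vee\oplus\mathcal{O}_D$, and so it is given by a matrix in $\GL(\mathcal{O}_D^\vee\oplus\mathcal{O}_D)$. Because $\phi$ preserves the polarisation, which is the trace form of the $\mathcal{O}_D$-valued symplectic form, this matrix has determinant $1$. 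Reading the diagonal entries as $(\gamma\tau_i+\delta)^{-1}$ then shows that the matrix fixes $\tau$, and its order is $n$.

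\textbf{(2)$\Leftrightarrow$(3).} Given $\phi$ as in (2), the subring $\mathcal{O}_D[\phi]\subset\End(B_\tau)$ is commutative. We have $\phi\notin\mathcal{O}_D\otimes\Q$, because the only roots of unity in a real field are $\pm1$ and $n>2$. Hence $\mathcal{O}_D[\phi]\otimes\Q\cong K(\zeta_n)$ is a quartic CM algebra, where $K=\mathcal{O}_D\otimes\Q$ may be $\Q\times\Q$ when $D$ is a square. Saturating inside $\End(B_\tau)$ gives an order containing $\mathcal{O}_D[\zeta_n]$, which is complex multiplication extending $\iota_\tau$. Conversely, if $\iota_\tau$ extends to such an order, the image of $\zeta_n$ is an automorphism of order $n$ commuting with $\mathcal{O}_D$. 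It preserves the polarisation because the Rosati involution acts as complex conjugation on a CM order, so $\zeta_n\bar\zeta_n=1$.

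\textbf{Main obstacle.} The delicate step is (2)$\Rightarrow$(1): we must verify that a polarisation-preserving, $\mathcal{O}_D$-linear automorphism lands in $\SL$ rather than merely $\GL$ of $\mathcal{O}_D^\vee\oplus\mathcal{O}_D$. The plan is to compare the symplectic form on $\Lambda_\tau$ with ${\rm tr}(\det(\cdot,\cdot))$. Preservation of this form forces ${\rm tr}\bigl((\det\phi-1)\,x\bigr)=0$ for all $x\in\mathcal{O}_D$, and nondegeneracy of the trace pairing then gives $\det\phi=1$. For square $D$ this bookkeeping must be done componentwise.
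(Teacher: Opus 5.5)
Your argument is correct and is the standard one. The paper gives no proof of its own and only cites Mukamel, where (1)$\Leftrightarrow$(2) comes from identifying ${\rm Stab}(\tau)$ with the $\C$-linear, $\mathcal{O}_D$-linear symplectic automorphisms of $\Lambda_\tau\cong\mathcal{O}_D^\vee\oplus\mathcal{O}_D$, and (2)$\Leftrightarrow$(3) comes from the commutative ring $\mathcal{O}_D[\phi]$, as in your proposal. The one step that needs an extra line is $\mathcal{O}_D[\phi]\otimes\Q\cong K(\zeta_n)$ for square $D=d^2$, because there $\phi\notin K$ is not enough: writing $\phi=(\phi_1,\phi_2)$ on $E_1\times E_2$, you need both $\phi_i$ to have order $>2$ and lie in the same imaginary quadratic field, which holds for $d\geq 2$ because $\phi_1=\phi_2$ on $E_1\cap E_2\cong E_1[d]$, but fails for $D=1$ (e.g.\ $(\zeta_3,1)$ on $\C/\Z[\zeta_3]\times E$ with $E$ non-CM satisfies (1) and (2) but not (3)).
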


\begin{proposition}[{\cite[Proposition 3.4]{Muk}}]
    The Jacobians of surfaces in $\mathcal{M}_{2}(D_{8})$ and $\mathcal{M}_{2}(D_{12})$ with complex multiplication are labeled by orbifold points in $\cup_{D}X_D$.
\end{proposition}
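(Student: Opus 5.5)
The claim is that the Jacobians of curves in $\mathcal{M}_{2}(D_{8})$ and $\mathcal{M}_{2}(D_{12})$ with complex multiplication are labelled by orbifold points of $\bigcup_{D}X_{D}$. The plan is to use Proposition~\ref{prop:Muk-3.3} as a dictionary between orbifold points and automorphisms commuting with real multiplication, and to read those automorphisms off from the dihedral group acting on the curve. The $D_{8}$ case is exactly Mukamel's, so I would only spell out $D_{12}$, using $\widetilde{A}_{\tau}$ in place of his lattice.

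\textbf{Constructing the point.} Fix $(X,\rho)\in\mathcal{M}_{2}(D_{12})$. By Theorem~\ref{thm:M2(D12)} we have $\Jac(X)\cong\widetilde{A}_{\tau}$ for some $\tau\in\widetilde{U}$, and by Proposition~\ref{prop:CM} complex multiplication means that $\tau$ is imaginary quadratic. The automorphism $\phi=\rho(Z)_{*}$ of $\Jac(X)$ has order six; on the lattice it is the matrix $\frac{1}{2}\begin{psmallmatrix}1&-\sqrt{3}\\ \sqrt{3}&1\end{psmallmatrix}$ composed with negation. Since it is an automorphism of a curve, it preserves the principal polarisation.

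The CM order $\mathcal{O}\subset\End(\Jac(X))$ is commutative of rank four over $\Z$ and contains $\Z[\zeta_{6}]$, and it is stable under the Rosati involution. Its Rosati-fixed subring is therefore a real quadratic order $\mathcal{O}_{D}$, for some discriminant $D$ that I would make explicit from $\tau$ (it is generated, for instance, by $\tau+\bar{\tau}$-type elements). This gives real multiplication $\iota:\mathcal{O}_{D}\to\End(\Jac(X))$, self-adjoint and proper after replacing $\mathcal{O}_{D}$ by its saturation. Because $\mathcal{O}$ is commutative, $\phi$ commutes with $\iota(\mathcal{O}_{D})$.

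Now $(\Jac(X),\iota)$ defines a point $[\tau']\in X_{D}$. Proposition~\ref{prop:Muk-3.3}, (2)$\Rightarrow$(1) with $n=6$, then shows that $\tau'$ is fixed by an element of order six in $\SL(\mathcal{O}_{D}^{\vee}\oplus\mathcal{O}_{D})$. Since $\phi^{3}=-1$ on the Jacobian (it is the hyperelliptic involution), its image in $\PSL$ has order three. So $[\tau']$ is an orbifold point.

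\textbf{The converse and the labelling.} Conversely, start from an orbifold point $[\tau']\in X_{D}$ whose stabiliser contains an element of order six (order three in $\PSL$). Proposition~\ref{prop:Muk-3.3} gives CM by an order containing $\mathcal{O}_{D}[\zeta_{6}]$, and hence an order-six automorphism of the principally polarised surface $B_{\tau'}$. If $B_{\tau'}$ is a Jacobian, Torelli lifts this to an order-six automorphism of the curve whose cube is $\eta$. Proposition~\ref{prop:M2(D12)-1} then places the curve in $\mathcal{M}_{2}(D_{12})$, apart from $y^{2}=x^{6}\pm1$, which I would treat separately since it also lies in $\mathcal{M}_{2}(D_{12})$ by the remark following that proposition. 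This makes the labelling a correspondence in both directions.

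\textbf{Main obstacle.} The hardest step is pinning down the discriminant $D$, and checking that $\iota$ is proper and that its image really commutes with $\phi$, rather than merely sitting inside $\End\otimes\Q$. Showing that the Rosati-fixed part of $\mathcal{O}$ is quadratic uses the fact that $\mathcal{O}\otimes\Q$ is a CM field, namely $\Q(\zeta_{6},\tau)$. I would verify this by an explicit lattice computation with $\widetilde{\Lambda}_{\tau}$, following Mukamel's $D_{8}$ computation.
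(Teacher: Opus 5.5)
Your strategy matches Mukamel's argument, which the paper only cites. The paper effectively reruns the $D_{12}$ forward direction in its later proposition that an imaginary quadratic $\tau\in\widetilde{U}$ maps to an orbifold point. The steps are: realise $\rho(Z)$ as an order-six automorphism $\phi$ of $\widetilde{A}_\tau$, take the commutative algebra generated by $\phi$ and $\tau$, use its Rosati-fixed part as real multiplication commuting with $\phi$, and apply Proposition~\ref{prop:Muk-3.3}.

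\textbf{The gap.} It lies in the step you single out as hardest. You justify that the Rosati-fixed part is quadratic by saying $\mathcal{O}\otimes\Q$ is the CM field $\Q(\zeta_6,\tau)$. That is false when $\Q(\tau)=\Q(\sqrt{-3})$. There $\Q(\zeta_6)\otimes\Q(\tau)\cong\Q(\sqrt{-3})\times\Q(\sqrt{-3})$, so the ``totally real subfield'' argument does not apply, and the fixed part is $\Q\times\Q$. This is exactly the square-discriminant case: in Table~\ref{tab:discriminants}, $D_i$ is a square only if $C=-3m^2$. That is the only case the paper uses for $W_{d^2}[n]$.

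You can repair this in either of two ways.
\begin{itemize}
\item Use positivity: a positive involution on $K\times K$, with $K$ imaginary quadratic, cannot swap the factors and must be complex conjugation on each.
\item Exhibit the generator directly. Put $\xi=(\phi-\phi^{-1})(\tau-\bar\tau)$. The Rosati involution is the Hermitian adjoint, so $\phi^\dagger=\phi^{-1}$ and $\tau^\dagger=\bar\tau$. Hence $\xi$ is a product of two commuting anti-self-adjoint elements and is self-adjoint. It commutes with $\phi$ and acts by $\mp2\sqrt{3}\,\mathrm{Im}\,\tau$ on the two $\phi$-eigenlines, so it is not scalar. Since $\phi+\phi^{-1}=1$, we get $\xi^2=-3(\tau-\bar\tau)^2$, which equals $-3C$ for $\tau=(C+\sqrt{C})/2$. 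This recovers $D_1$; it is the paper's $\sqrt{3}\tau i$.
\end{itemize}
Your suggested ``$\tau+\bar\tau$-type elements'' cannot work, since $\tau+\bar\tau$ acts by a rational scalar.

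\textbf{Smaller points.}
\begin{itemize}
\item The matrix $\frac{1}{2}\begin{psmallmatrix}1&-\sqrt{3}\\ \sqrt{3}&1\end{psmallmatrix}$ already has order six with cube $-I$. Composing it with negation gives an element of order three, contradicting your later use of $\phi^3=-1$. Take $\phi$ to be that matrix or its inverse.
\item $\Jac(X)$ is isogenous to $E_\tau\times E_{-1/3\tau}$, and the two factors are $3$-isogenous. So $\End(\Jac(X))\otimes\Q\cong M_2(\Q(\tau))$ is non-commutative, and there is no ``the'' CM order. Define $\mathcal{O}$ as the centraliser of $\phi$ in $\End(\Jac(X))$; equivalently, $\iota^{-1}(\End(\widetilde{A}_\tau))$ for the $\iota$ of Proposition~\ref{prop:CM}. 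Rosati-stability then follows from $\phi^\dagger=\phi^{-1}$. Properness is automatic because this centraliser is saturated, so no passage to a saturation is needed.
\item The converse direction is not required by the statement; it is essentially the next cited result, Mukamel's Proposition 3.5.
\item Within that converse, $y^2=x^6\pm1$ needs no separate treatment. Once the automorphism cubes to $\eta$, you are already in case (2) of Proposition~\ref{prop:M2(D12)-1}.
\end{itemize}
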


\begin{proposition}[{\cite[Proposition 3.5]{Muk}}]
    Fix an orbifold point $[\tau]\in X_{D}$. At least one of the following holds:
    \begin{itemize}
        \item $B_{\tau}$ is a product of elliptic curves;
        \item $[\tau]$ is a point of orbifold order five on $X_{5}$;
        \item $B_{\tau}$ is the Jacobian of a surface in $\mathcal{M}_{2}(D_8)$ with complex multiplication; or
        \item $B_{\tau}$ is the Jacobian of a surface in $\mathcal{M}_{2}(D_{12})$ with complex multiplication.
    \end{itemize}
\end{proposition}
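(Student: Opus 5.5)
The plan is to follow Mukamel's proof of \cite[Proposition 3.5]{Muk} essentially verbatim; the statement is quoted from there. Fix an orbifold point $[\tau]\in X_D$ with nontrivial stabiliser, and take a nontrivial $A\in\PSL(\mathcal{O}_{D}^{\vee}\oplus\mathcal{O}_{D})$ fixing $\tau$. Lift $A$ to $\SL$ and replace it by a suitable power; it is then an elliptic element of finite order $n>2$. Here we use that $-I$ has been killed, so an order two element of $\PSL$ lifts to an element of order four. By Proposition~\ref{prop:Muk-3.3}, $\iota_\tau$ extends to complex multiplication by an order containing $\mathcal{O}_D[\zeta_n]$, and $B_\tau$ carries an automorphism $\phi$ of order $n$ commuting with $\mathcal{O}_D$.

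Next I constrain $n$ and the possible shapes of $B_\tau$. Since $\Q(\zeta_n)$ sits inside a quartic CM algebra, we have $\varphi(n)\mid 4$. This leaves $n\in\{3,4,5,6,8,10,12\}$. I split into cases according to whether $B_\tau$ is simple.

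\textbf{Non-simple $B_\tau$.} If $B_\tau$ is isogenous to a product of elliptic curves, I use the classical fact that a principally polarised abelian surface is either a polarised product of elliptic curves or the Jacobian of a smooth genus two curve. In the product case we are in the first bullet.

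\textbf{Jacobians.} Otherwise $B_\tau=\Jac(X)$ for a genus two curve $X$. By Torelli, $\pm\phi$ (which preserves the principal polarisation, being in $\SL$ of the lattice) is induced by an automorphism of $X$. So $X$ has an automorphism $\psi$ of order $4$, $5$, $8$, $3$, $6$, $10$ or $12$, modulo the hyperelliptic involution $\eta$. I then run through the classification of automorphism groups of genus two curves \cite[Table 2]{Muk}.

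An automorphism of order $5$ or $10$ forces $X$ to be $y^2=x^5-1$. Its Jacobian has CM by $\Q(\zeta_5)$, which contains $\mathcal{O}_5$, giving the orbifold point of order five on $X_5$.

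An automorphism of order $4$ or $8$ whose square is not $\eta$ gives a $D_8$-action, so $X\in\mathcal{M}_2(D_8)$. An order $6$ or $12$ automorphism whose cube is $\eta$ gives, by Proposition~\ref{prop:M2(D12)-1}, either a $D_{12}$-action or the curve $y^2=x^6\pm1$, which also lies in $\mathcal{M}_2(D_{12})$ by the subsequent remark. In both cases $B_\tau$ has CM by construction, and we land in the third or fourth bullet.

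The remaining possibility is an order $3$ automorphism, or an order $6$ one not of the above type; here $\psi^2$ or $\eta\psi$ recovers one of the previous cases.

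\textbf{Main obstacle.} The delicate step is the one just described: showing that the automorphism supplied by Proposition~\ref{prop:Muk-3.3}, which commutes with real multiplication, really produces one of the dihedral configurations. The issue is excluding the cases where its square (or cube) equals $\eta$ in a way incompatible with commuting with $\mathcal{O}_D$. I would handle this by checking the action of $\psi$ on $H^0(X,\Omega^1)$: commuting with $\mathcal{O}_D$ forces the two eigenforms to be the real-multiplication eigenforms, and this determines the type of $\psi$ in Mukamel's table.
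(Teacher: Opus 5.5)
The paper gives no proof of this statement; it simply cites Mukamel. Your skeleton is the right one: Weil's product/Jacobian dichotomy, Torelli to realise the automorphism from Proposition~\ref{prop:Muk-3.3} on $X$, the classification of automorphisms of genus two curves, and CM read off from Proposition~\ref{prop:Muk-3.3}(3).

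There is a genuine gap, however. Your case analysis has the role of $\eta$ inverted, and as written it leaves the main $D_8$ case unassigned. In genus two, every automorphism $\psi$ of order four satisfies $\psi^2=\eta$. Otherwise its image in $\mathrm{PGL}_2$ has order four, so it is conjugate to $x\mapsto ix$. The six branch points are then $\{0,\infty,\pm c,\pm ic\}$, and the lifts $(x,y)\mapsto(ix,\pm\zeta_8 y)$ to $y^2=x(x^4-c^4)$ have order eight, a contradiction.

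So your clause ``order $4$ or $8$ whose square is not $\eta$'' only catches the order-eight automorphisms of $y^2=x^5-x$. The order-four case falls into no bullet, yet it is exactly what every order-two orbifold point on $W_D$ produces, since $R^2=-I$ as in Subsection~\ref{subsec:ell-H2}. Your proposed resolution of the ``main obstacle'', excluding square (or cube) equal to $\eta$, would be an attempt to prove something false. The order-four generator of a $D_8$ action always squares to $\eta$, and $\rho(Z)^3=\eta$ for the $D_{12}$ action $\rho_a$. You also misread Proposition~\ref{prop:M2(D12)-1}: cube $=\eta$ lands directly in case (2), while $y^2=x^6\pm1$ with $(x,y)\mapsto(\zeta_6x,y)$ is the cube $\neq\eta$ case.

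The repair is easy and needs no eigenform analysis. The statement only asserts $X\in\mathcal{M}_2(D_8)\cup\mathcal{M}_2(D_{12})$ with CM. It does not require the dihedral action to be compatible with $\iota_\tau$; that refinement matters only later, when simple zeros single out the $D_{12}$ family in $\calH(1,1)$.

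\begin{itemize}
\item If $\psi$ has order $4$ or $8$, some power $\psi'$ has order four, so $\psi'^2=\eta$. Its image is an involution of $\mathbb{P}^1$ fixing exactly two branch points; if it fixed none, its lifts would be involutions. Hence $X\cong\{y^2=x(x^4-ax^2+1)\}$, and $(x,y)\mapsto(1/x,y/x^3)$ inverts $\psi'$, giving $D_8$.
\item If $\psi$ has order $3$ or $6$, a power of its image has order three. The branch points then form two $\zeta_3$-orbits, and $X\cong Y_a$ lies in $\mathcal{M}_2(D_{12})$ via $\rho_a$ from Proposition~\ref{prop:M2(D12)-2}.
\item Orders $5$ and $10$ give $y^2=x^5-1$. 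Here you should argue that $\iota_\tau(\mathcal{O}_D)$ lies in the self-adjoint part $\mathcal{O}_5$ of $\End(B_\tau)=\Z[\zeta_5]$, so properness forces $D=5$. Saying ``$\Q(\zeta_5)$ contains $\mathcal{O}_5$'' is the wrong inclusion.
\end{itemize}

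Finally, your simple/non-simple split is a red herring. Curves in $\mathcal{M}_2(D_8)\cup\mathcal{M}_2(D_{12})$ carry non-hyperelliptic involutions, so their Jacobians are never simple. The only dichotomy you need, and the one you actually use, is polarised product versus Jacobian.
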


The notation $W_{d^{2}}[n]$ comes from the fact that, since each $X\in W_{d^{2}}[n]$ admits a primitive degree $d$ cover of an elliptic curve, the Jacobian of every curve in $W_{d^2}[n]$ admits real multiplication by $\mathcal{O}_{d^{2}}$. As discussed in~\cite[\S 4]{Muk}, given a curve $X\in\mathcal{M}_{2}$ whose Jacobian admits real multiplication by $\mathcal{O}_{D}$ and an $\mathcal{O}_{D}$-eigenform $[\omega]\in\mathbb{P}\Omega(X)$, we can send $(X,[\omega])$ to $(\Jac(X),\iota^{[\omega]})\in X_{D}$, where $\iota^{[\omega]}(x)\omega = \sigma_{1}(x)\omega$, for $\sigma_{1},\sigma_{2}:\mathcal{O}_{D}\otimes\Q\to\R$ the two places of $\mathcal{O}_{D}\otimes\Q$. For $D = d^{2}$, $\sigma_{1}$ is just the projection onto the first coordinate of $\mathcal{O}_{D}\otimes\Q = \Q\times\Q$.

Recall the following definitions for $X\in\mathcal{M}_{2}$ and $\omega\in\Omega(X)$:
\begin{itemize}
    \item $\Aut(X,\{\pm\omega\}) = \{\phi\in\Aut(X):\phi^{*}\omega = \pm \omega\}$;
    \item $\SO(X,\omega) = \{\phi\in\Aut(X):\phi^{*}\omega \in\C^{*} \omega\}$; and
    \item $\PSO(X,\omega) = \SO(X,\omega)/\Aut(X,\{\pm\omega\})$.
\end{itemize}
For a point $(X,[\omega]) \in W_{d^{2}}[n]$, we define the orbifold order of $(X,[\omega])$ to be the order of the group $\PSO(X,\omega)$ and we will call $(X,[\omega])$ an orbifold point if its orbifold order is greater than one. By Proposition~\ref{prop:Muk-3.3}, it
is straightforward to check that $(X,[\omega])\in W_{d^{2}}[n]$ is an orbifold point if and only if the pair $(\Jac(X),\iota^{[\omega]})$ is an orbifold point on $X_{d^2}$.

\begin{proposition}
    Given $\tau\in\widetilde{U}$, the group $\PSO(\widetilde{X}_{\tau},\omega_{\tau})$ is cyclic of order 3.
\end{proposition}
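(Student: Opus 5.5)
The plan is to identify $\PSO(\widetilde{X}_{\tau},\omega_{\tau})$ with a finite group of roots of unity, show that it contains an element of order $3$, and then rule out every larger possibility. Here $\omega_{\tau}$ is the form induced by $dz$ on $H_{\tau}$ and $-dz$ on $-H_{\tau}$. Sending $\phi\mapsto\lambda_{\phi}$, where $\phi^{*}\omega_{\tau}=\lambda_{\phi}\omega_{\tau}$, gives a homomorphism $\SO(\widetilde{X}_{\tau},\omega_{\tau})\to\C^{*}$. Its image is a finite (since $\Aut$ of a genus two curve is finite) cyclic group $\mu_{m}$ of roots of unity. The preimage of $\{\pm1\}$ is exactly $\Aut(\widetilde{X}_{\tau},\{\pm\omega_{\tau}\})$. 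Hence $\PSO(\widetilde{X}_{\tau},\omega_{\tau})\cong\mu_{m}/(\mu_{m}\cap\{\pm1\})$ is cyclic, and it remains to pin down $m$.

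\emph{Lower bound.} By construction, $Z_{\tau}$ is rotation by $2\pi/3$ followed by negation. Hence $Z_{\tau}^{*}\omega_{\tau}=-\zeta_{3}^{\pm1}\omega_{\tau}$, a primitive sixth root of unity times $\omega_{\tau}$. Also $Z_{\tau}^{3}=\eta$ acts by $-1$, by Proposition~\ref{prop:M2(D12)-1}. Thus $6\mid m$, and the image of $Z_{\tau}$ in $\PSO$ has order exactly $3$.

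\emph{Upper bound.} I would argue that $m=6$ by excluding any further factor.
\begin{itemize}
\item \emph{Even order in $\PSO$.} Suppose $\PSO$ had even order. Then some $\phi\in\SO$ would satisfy $\lambda_{\phi}=\pm i$, so $\phi^{2}$ acts by $-1$. By Proposition~\ref{prop:M2(D12)-1}, $\omega_{\tau}$ is a $Z_{\tau}$-eigenform with two simple zeros, and these are not Weierstrass points. Any element acting by $-1$ on $\omega_{\tau}$ must then either be $\eta$ or be an involution distinct from $\eta$. I would run the argument used earlier in this section for order two orbifold points: $\phi^{2}$ preserves the zero set, and comparing its local action at a simple zero ($z\mapsto\mu z$ with $\mu^{2}=\lambda$) with the action of $\eta$, which swaps the zeros, gives a contradiction.
\item \emph{Odd order in $\PSO$.} An odd-order element of $\PSO$ other than those of order $1$ or $3$ would give an automorphism of $\widetilde{X}_{\tau}$ of order $5$, $9$, $15,\dots$. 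The Wiman bound $4g+2=10$ for genus two excludes every order above $10$. Order $5$ occurs only on $y^{2}=x^{5}-1$, whose order-$5$ eigenforms include forms with a double zero. Moreover, that curve's automorphism group has no element of order $3$, which is incompatible with $Z_{\tau}$.
\end{itemize}
Together these force $m=6$, and hence $|\PSO|=3$.

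The main obstacle is the even-order step. I have to make sure that a $\phi$ with $\lambda_{\phi}=\pm i$ genuinely contradicts the simple-zero structure. Such $\phi$ do exist on special curves like $y^{2}=x^{6}-1$, so the argument must use that $\omega_{\tau}$ itself (rather than another eigenform) is the form being scaled by $\pm i$. My first attempt would be a direct local computation at the two zeros of $\omega_{\tau}$ combined with $\phi^{2}\in\{\eta\}\cup\{\text{involutions}\}$. If that stalls, I would fall back on the classification of automorphism groups in~\cite[Table 2]{Muk} and check the finitely many special curves $\tau$ directly.
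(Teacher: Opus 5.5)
Your proof is correct in outline, but it takes a different route from the paper. The paper reads everything off an explicit determination of $\Aut(\widetilde X_\tau)$. For $\tau\neq i/\sqrt3$ it asserts $\Aut(\widetilde X_\tau)=\langle\rho(Z),\rho(r)\rangle\cong D_{12}$, whence $\SO=\langle\rho(Z)\rangle\cong\Z/6\Z$ and $\Aut(\widetilde X_\tau,\{\pm\omega_\tau\})=\langle\rho(Z^3)\rangle$. At the hexagonal point $\tau=i/\sqrt3$ (the curve $y^2=x^6+1$) it lists $\SO=\langle t,s\rangle\cong\Z/6\Z\times\Z/2\Z$ and $\Aut(\widetilde X_\tau,\{\pm\omega_\tau\})=\langle t^3,s\rangle$, then takes quotients. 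That identifies the groups themselves, but it requires knowing the full automorphism group at every $\tau$.

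Your character $\lambda:\SO\to\mu_m$ with $\PSO\cong\mu_m/\{\pm1\}$ works differently: $6\mid m$ from $Z_\tau$, $4\nmid m$ from the simple zeros, and no further odd part by the Wiman bound. It is uniform in $\tau$ and uses nothing about $\Aut(\widetilde X_\tau)$ beyond $Z_\tau$. This robustness is a real gain. The curve $y^2=x^5-x$, with automorphism group $\GL(2,3)$ of order $48$, also contains a $D_{12}$: the preimage of a subgroup $S_3\subset S_4$ of its reduced automorphism group. So it is some $\widetilde X_\tau$ with $\tau\neq i/\sqrt3$ at which the paper's generic description of $\Aut$ is not literally right. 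There $\SO$ is still $\Z/6\Z$, so the statement survives, and your argument covers this point with no case analysis.

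Two steps need tightening.

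In the even case, the claim that anything scaling $\omega_\tau$ by $-1$ is $\eta$ or an involution carries all the content, and it does not follow from the local picture alone. One way to close it:
\begin{itemize}
\item If $\lambda_\phi=\pm i$, then $\phi^2$ fixes both zeros. Since $\lambda_{\phi^2}=-1$, it rotates by $\pm i$ at each, so $\phi^2$ has order exactly $4$.
\item Meanwhile $\phi^4$ scales $\omega_\tau$ by $+1$, hence $\phi^4\neq\eta$.
\item This contradicts the standard fact that every order-$4$ automorphism of a genus-two curve squares to $\eta$. That fact is what underlies the paper's exclusion of order-two orbifold points in $\calH(1,1)$.
\end{itemize}
Alternatively, use Riemann--Hurwitz. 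Since $\phi^4$ would be a non-hyperelliptic involution, it has exactly two fixed points, namely the zeros. So those are the only points with non-trivial stabiliser under $\langle\phi^2\rangle\cong\Z/4\Z$, and Riemann--Hurwitz then gives quotient genus $1/2$.

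In the odd case, order $9$ is not above the Wiman bound as you phrase it. Instead, note that $m$ is even with $3\mid m/2$. So any odd $m/2>3$ forces $m\geq 18$, hence an automorphism of order at least $18>10$. This also makes your separate discussion of $y^2=x^5-1$ unnecessary.
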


\begin{proof}
    Let $\tau\neq\frac{i}{\sqrt{3}}$. It can be checked that $\Aut(\widetilde{X}_{\tau}) \cong \langle\rho(Z),\rho(r)\rangle\cong D_{12}$, $\SO(\widetilde{X}_{\tau},\omega_\tau)\cong\langle\rho(Z)\rangle\cong\Z/6\Z$, and $\Aut(\widetilde{X}_{\tau},\{\pm\omega_{\tau}\})\cong \langle\rho(Z^{3})\rangle\cong \Z/2\Z$. Hence, $\PSO(\widetilde{X}_{\tau},\omega_\tau)\cong\Z/3\Z$.

    For $\tau = \frac{i}{\sqrt{3}}$,
    $$\Aut(\widetilde{X}_{\tau}) = \langle t:(x,y)\mapsto(\zeta_{6}x,y), s:(x,y)\mapsto(x,-y),r:(x,y)\mapsto(1/x,y/x^3)\rangle,$$ $\SO(\widetilde{X}_{\tau},\omega_\tau)\cong\langle t, s\rangle\cong\Z/6\Z\times\Z/2\Z$, and $\Aut(\widetilde{X}_{\tau},\{\pm\omega_{\tau}\})\cong \langle t^3, s\rangle\cong\Z/2\Z\times\Z/2\Z$ . Hence, again we have $\PSO(\widetilde{X}_{\tau},\omega_\tau)\cong\Z/3\Z$.
\end{proof}

\begin{proposition}
    Let $\tau\in\widetilde{U}$ be imaginary quadratic. Then $(\widetilde{X}_{\tau},[\omega_{\tau}])$ maps to an orbifold point on $X_{D}$ for some discriminant $D>0$.
\end{proposition}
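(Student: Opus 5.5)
To prove this, I will follow Mukamel's argument for the analogous statement about $\mathcal{M}_{2}(D_{8})$. The plan has three steps: produce real multiplication on $\Jac(\widetilde{X}_{\tau})=\widetilde{A}_{\tau}$ commuting with the order-six automorphism $\rho_{\tau}(Z)=Z_{\tau}$, check that $\omega_{\tau}$ is an eigenform for it, and then invoke Proposition~\ref{prop:Muk-3.3} with $n=3$.

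First, since $\tau$ is imaginary quadratic, Proposition~\ref{prop:CM} gives the Hermitian-adjoint embedding $\iota:\Q(\zeta_{6},\tau)\to\End(\widetilde{A}_{\tau})\otimes\Q$. This is generated by the rotation $\frac{1}{2}\begin{psmallmatrix}1&-\sqrt{3}\\ \sqrt{3}&1\end{psmallmatrix}$ and the scalar $\tau I$. The field $L=\Q(\zeta_{6},\tau)$ is a biquadratic CM field when $\Q(\tau)\neq\Q(\sqrt{-3})$. In that case it contains the real quadratic subfield $K=\Q(\sqrt{-3}\,\tau')$, where $\tau'$ is an element of $\Q(\tau)$ with $\tau'^{2}\in\Q_{<0}$; the product of two imaginary square roots is real, so $K$ is real. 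I then set $\mathcal{O}_{D}:=\iota^{-1}(\End(\widetilde{A}_{\tau}))\cap K$, an order in $K$, which defines the discriminant $D>0$. Because $\iota(L)$ is Hermitian-adjoint, elements of $K$ act by self-adjoint endomorphisms with respect to the polarisation. Hence $\mathcal{O}_{D}$ gives real multiplication on $\widetilde{A}_{\tau}$, so $(\widetilde{A}_{\tau},\iota|_{\mathcal{O}_{D}})$ defines a point of $X_{D}$.

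Next I will check that $\omega_{\tau}$, the form $dz$ on the pinwheel, is an $\mathcal{O}_{D}$-eigenform. The two coordinates of $\C^{2}$ in $\widetilde{\Lambda}_{\tau}$ diagonalise both generators of $\iota(L)$, because both are simultaneously diagonal after the obvious change of basis. So each coordinate form is an eigenform for all of $\iota(K)$, and $\omega_{\tau}$ is one of them. The same diagonalisation shows that $Z_{\tau}$, acting on this coordinate by the sixth root of unity coming from rotation by $2\pi/3$ followed by negation, commutes with $\iota(\mathcal{O}_{D})$. Thus $(\widetilde{X}_{\tau},[\omega_{\tau}])$ maps to $(\Jac(\widetilde{X}_{\tau}),\iota^{[\omega_{\tau}]})\in X_{D}$.

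Finally, $Z_{\tau}^{2}$ is an automorphism of order three of $B=\widetilde{A}_{\tau}$ commuting with $\iota_{\tau}(\mathcal{O}_{D})$. By Proposition~\ref{prop:Muk-3.3} ((2)$\Rightarrow$(1)), the point is fixed by an element of order three of $\SL(\mathcal{O}_{D}^{\vee}\oplus\mathcal{O}_{D})$, whose image in $\PSL$ is nontrivial. Hence it is an orbifold point of $X_{D}$. This is consistent with the preceding proposition that $\PSO(\widetilde{X}_{\tau},\omega_{\tau})\cong\Z/3\Z$.

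I expect the main obstacle to be the degenerate case $\Q(\tau)=\Q(\sqrt{-3})$, where $L$ is only quadratic and does not contain the needed real quadratic field. There I would instead use the extra rational endomorphisms $\tau'\mapsto\tau'\cdot$ that act on one factor of $E_{\tau}\times E_{w_{3}(\tau)}$ only. Via the isogeny to $E_{\tau}\times E_{w_{3}(\tau)}$, I would look for a self-adjoint, non-scalar endomorphism commuting with $Z_{\tau}$, for instance $\operatorname{diag}(1,-1)$ or $\operatorname{diag}(\sqrt{-3},-\sqrt{-3})$ suitably scaled. This yields real multiplication by an order of square discriminant, such as $D=d^{2}$, which is exactly the case relevant to $W_{d^{2}}[n]$. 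A secondary technical point is verifying self-adjointness with respect to the specified polarisation, but this follows from $\iota$ being Hermitian-adjoint.
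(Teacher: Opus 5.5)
Your argument is the paper's. Proposition~\ref{prop:CM} gives complex multiplication on $\widetilde{A}_\tau$, its self-adjoint part gives the real multiplication $\mathcal{O}_D$, $\omega_\tau$ is an eigenform, and the order-three symmetry $Z_\tau^2$ forces an orbifold point. The only difference is cosmetic: you apply Proposition~\ref{prop:Muk-3.3} ((2)$\Rightarrow$(1)) to $Z_\tau^2$ directly. The paper instead quotes $|\PSO(\widetilde{X}_\tau,\omega_\tau)|=3$ together with the correspondence, itself derived from Proposition~\ref{prop:Muk-3.3}, between orbifold points on the curve and on $X_D$.

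One correction concerns your last paragraph: the case $\Q(\tau)=\Q(\sqrt{-3})$ is not degenerate.
\begin{itemize}
\item Write $J:=\frac12\begin{psmallmatrix}1&-\sqrt3\\ \sqrt3&1\end{psmallmatrix}$. The image of $\zeta_6$ is the non-scalar rotation $J$, while $\tau$ acts by the scalar $\tau I$. So the algebra they generate is $\Q(\zeta_6)\otimes_\Q\Q(\tau)$. This is still $4$-dimensional when $\tau\in\Q(\sqrt{-3})$, where it is isomorphic to $\Q(\sqrt{-3})\times\Q(\sqrt{-3})$. It is not ``only quadratic''.
\item Your own element then works uniformly. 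The element $A:=(2J-1)\,\tau' I$ is a product of two commuting skew-adjoint elements, so it is self-adjoint. It is non-scalar, commutes with $J$, and satisfies $A^2=-3\tau'^2\in\Q_{>0}$.
\item Hence $\Q[A]$ is a real quadratic field when $-3\tau'^2$ is not a rational square. It is the split algebra $\Q\times\Q$, i.e.\ square discriminant, exactly when $\Q(\tau)=\Q(\sqrt{-3})$.
\item Your fallback via endomorphisms acting on only one factor of $E_\tau\times E_{w_3(\tau)}$ would not work. Such endomorphisms are diagonal in the given coordinates, and a diagonal matrix commutes with $J$ only if it is scalar. The elements $\mathrm{diag}(1,-1)$ and $\mathrm{diag}(\sqrt{-3},-\sqrt{-3})$ (suitably scaled) must be taken in the eigenbasis of $J$, where they are rational multiples of $A$ or of $2J-1$.
\end{itemize}
This matters because the paper only uses square discriminants, where $C=-3d^2$ or $C=-d^2/3$ forces $\tau\in\Q(\sqrt{-3})$. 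The paper's ``$\mathcal{O}\cap\R$'' has to be read in this tensor-product sense.
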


\begin{proof}
    By Proposition~\ref{prop:CM}, we know that $\Jac(\widetilde{X}_{\tau})$ admits complex multiplication by an order $\mathcal{O}\subset\Q(\zeta_{6},\tau)$. Letting $D$ be the discriminant of $\mathcal{O}_{D}:=\mathcal{O}\cap\R$, we get that $\omega_{\tau}$ is an $\mathcal{O}_{D}$-eigenform. Hence, since $|\PSO(\widetilde{X}_{\tau},\omega_{\tau})| = 3 > 1$, $(\widetilde{X}_{\tau},[\omega_{\tau}])$ maps to an orbifold point on $X_{D}$.
\end{proof}

From~\cite[Table 2]{Muk}, it can be checked that the only automorphisms $\phi$ of a genus two curve having eigenforms $\omega$ with simple poles, and with $\phi^{*}\omega\neq \pm \omega$ are the order 5 and order 10 automorphisms of $y^2 = x^5+1$, and the order 3 and order 6 automorphisms on our curves in $\mathcal{M}_{2}(D_{12})$.

\begin{proposition}
    Let $(X,[\omega])\in\mathbb{P}\calH(1,1)$ map to an orbifold point in $X_{D}$ for some $X>0$. Then either
    \begin{enumerate}
        \item $(X,[\omega])$ maps to an order 5 orbifold point on $X_{5}$ and, up to scale, $(X,\omega)$ is the decagon; or
        \item $(X,[\omega]) = (\widetilde{X}_{\tau},\omega_{\tau})$ for some imaginary quadratic $\tau\in\widetilde{U}$ and maps to an orbifold point of order 3 on $X_{D}$.
    \end{enumerate}
\end{proposition}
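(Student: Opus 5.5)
The plan is to go from the orbifold point in $X_D$ back to a nontrivial symmetry of $(X,\omega)$, and then use the classification of automorphisms recalled just before the statement. First, since $(X,[\omega])$ maps to an orbifold point, Proposition~\ref{prop:Muk-3.3} together with the remark after the definition of orbifold order on $W_{d^2}[n]$ gives $|\PSO(X,\omega)|>1$. So there is $\phi\in\Aut(X)$ with $\phi^{*}\omega=\lambda\omega$ and $\lambda\neq\pm1$. Because $\omega\in\calH(1,1)$ has two simple zeros, the observation drawn from \cite[Table 2]{Muk} applies directly. The only possibilities are the order $5$ or $10$ automorphisms of $y^2=x^5+1$, or the order $3$ or $6$ automorphisms of a curve in $\mathcal{M}_{2}(D_{12})$.

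\emph{First case: $X\cong\{y^2=x^5+1\}$.} Here the eigenforms of the order $5$ automorphism $(x,y)\mapsto(\zeta_5x,y)$ are $dx/y$ and $x\,dx/y$. The form $dx/y$ has a double zero at infinity, so it does not lie in $\calH(1,1)$. Hence $[\omega]=[x\,dx/y]$, which is the decagon form $dx/y$ on $y^2=x^6-x$ after the coordinate change $x\mapsto1/x$. Its Jacobian has real multiplication by $\mathcal{O}_5$ with $\omega$ as an eigenform, and $\PSO$ has order $5$. Therefore the image point in $X_5$ is the order five orbifold point appearing in \cite[Proposition 3.5]{Muk}.

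\emph{Second case: $X$ carries $\rho:D_{12}\hookrightarrow\Aut(X)$.} By Theorem~\ref{thm:M2(D12)} and the final proposition of its proof, we may choose $\tau\in\widetilde{U}$ and an isomorphism $X\cong\widetilde{X}_\tau$ carrying $\phi$ (or $\phi^{\pm1}$, after composing with $\eta$) to a power of $Z_\tau$. The eigenforms of $Z_\tau$ are $dx/y$ and $x\,dx/y$ from Proposition~\ref{prop:M2(D12)-1}, and both have simple zeros. The involution $\rho(r)$ swaps them, since $r^{*}(dx/y)=-x\,dx/y$, so after possibly replacing $\rho$ by its conjugate we get $[\omega]=[\omega_\tau]$.

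\emph{Main obstacle: showing $\tau$ is imaginary quadratic.} The point $(\Jac X,\iota^{[\omega]})$ lies in $X_D$ and has orbifold order $3$, because $|\PSO(\widetilde{X}_\tau,\omega_\tau)|=3$. Then Proposition~\ref{prop:Muk-3.3}(3) with $n=3$ says $\iota$ extends to CM by an order containing $\mathcal{O}_D[\zeta_3]$. In particular $\widetilde{A}_\tau\cong\Jac X$ has complex multiplication. Proposition~\ref{prop:CM} then forces $\tau$ to be imaginary quadratic, and the orbifold order is $3$ as claimed.

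The step needing most care is the identification $[\omega]=[\omega_\tau]$ in the second case. I would handle it by noting that any $\lambda$-eigenform with $\lambda\neq\pm1$ must be an eigenform of $\rho(Z)$ itself, because the eigenvalues of $\rho(Z)$ are distinct. The outer automorphism $\sigma$ then exchanges the two eigenlines.
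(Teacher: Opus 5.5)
Your proposal is correct and follows the paper's route. The paper's proof is only the two-line reduction via Mukamel's Table~2 to either the decagon or a curve in $\mathcal{M}_2(D_{12})$, followed by an appeal to the preceding propositions (the $\PSO$ computation, Proposition~\ref{prop:Muk-3.3} and Proposition~\ref{prop:CM}); you have simply spelled these steps out.

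Two small slips need tidying. First, the order-six automorphism $(x,y)\mapsto(\zeta_6x,y)$ of $y^2=x^6+1$ does not lie in $\langle\rho(Z),\eta\rangle$ for any $\rho$. In that case pass to $\phi^2$, which has order three and the same eigenlines, and satisfies $\phi^2\eta=\rho(Z)$ for a suitable $\rho$. Second, it is $\rho(r)$ (i.e.\ $Z\mapsto Z^{-1}$) that exchanges the two eigenlines, not the outer automorphism $\sigma$, since $\sigma(Z)=Z$.
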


\begin{proof}
    From the paragraph above, $X$ either admits an automorphism of order 5 or 10 and is (up to scale) the decagon curve $(X:y^2=x^5+1,xdx/y)$ which maps to $X_{5}$, or $X\in\mathcal{M}_{2}(D_{12})$ and, from the above propositions, the second claim holds.
\end{proof}

Given a CM elliptic curve $E$, there are four curves $(X_i,\rho_i)\in\mathcal{M}_{2}(D_{12})$ that cover $E$ --- one for each choice of order three subgroup $C_{i}\leq E[3]$. Namely, if $E\cong\C/\mathcal{O}_{C}$ for some discriminant $C<0$, then we have $g(X_i,\rho_i) = (E,C_i)$ with 
\[C_i = \left\lbrace\begin{array}{cl}
    \langle\frac{1}{3}+\mathcal{O}_{C}\rangle, & i = 1 \\
    \langle\frac{C+\sqrt{C}}{6}+\mathcal{O}_{C}\rangle, & i = 2 \\
    \langle\frac{1}{3}+\frac{C+\sqrt{C}}{6}+\mathcal{O}_{C}\rangle, & i = 3 \\
    \langle\frac{1}{3}+\frac{C+\sqrt{C}}{3}+\mathcal{O}_{C}\rangle, & i = 4.
\end{array}\right.\]
Hence, $X_{i} = \widetilde{X}_{\tau}$ for 
\[\tau = \left\lbrace\begin{array}{cl}
    \frac{C+\sqrt{C}}{2}, & i = 1 \\
    \\
    \frac{2\sqrt{C}-2C}{C(C-1)}, & i = 2 \\
    \\
    \frac{2\sqrt{C}+C^2+C}{C^2+3C+4}, & i = 3 \\
    \\
    \frac{2\sqrt{C}+3C-C^2}{C^2-5C+4}, & i = 4.
\end{array}\right.\]
Since CM elliptic curves correspond to imaginary quadratics in $\mathbb{H}$, the Jacobians of the curves $\widetilde{X}_{\tau}$ will admit complex multiplication. The following proposition determines the discriminant of the corresponding real multiplication.

\begin{proposition}
    Suppose that $g(X_{i},\rho_i) = (E,C_{i})$, with $E\cong\C/\mathcal{O}_{C}$ and let $D_{i}$ be the discriminant of real multiplication on $\Jac(X_{i})$. Then $D_{i}$ is determined by Table~\ref{tab:discriminants}.
\end{proposition}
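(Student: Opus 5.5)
We will compute $\End(\Jac(X_i))$ explicitly from the lattice $\widetilde{\Lambda}_{\tau}$ and read off the real quadratic order as its self-adjoint part. By the preceding propositions, $\Jac(X_i)\cong\widetilde{A}_{\tau_i}$ with $\tau_i$ given by the four formulas above. As in the proof of Proposition~\ref{prop:CM}, $\End(\widetilde{A}_{\tau})\otimes\Q$ contains $\Q(\zeta_6,\tau)$. Here $\zeta_6$ acts by $\frac12\begin{psmallmatrix}1&-\sqrt3\\ \sqrt3&1\end{psmallmatrix}$ and $\tau$ acts diagonally. The real multiplication order is $\mathcal{O}_{D_i}=\iota^{-1}(\End(\widetilde{A}_{\tau}))\cap\R$, and the natural generator of its fraction field is $\sqrt{3}\,\sqrt{C}\cdot(\text{const})$, i.e.\ a real element of $\Q(\sqrt{-3},\sqrt{C})$. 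So $D_i$ is a square when $C=-3m^2$, matching the $\tilde h(-d^2/3)$ and $\tilde h(-3d^2)$ terms expected in Theorem~\ref{thm:e3}.

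The first step is to write, for each $i$, the lattice $\widetilde{\Lambda}_{\tau_i}$ as the image of $E_{\tau_i}\times E_{w_3(\tau_i)}$ modulo $\Gamma^W$, using Mukamel's Proposition~2.12 as recalled above. We then determine which elements $\alpha+\beta\,\iota(\sqrt{-3})\iota(\sqrt{C})$ with $\alpha,\beta\in\Q$ preserve $\widetilde{\Lambda}_{\tau_i}$. Concretely, we take the real element $\theta=\zeta_6$-rotation composed with multiplication by $\sqrt{C}$ (suitably normalised so it is self-adjoint for the given polarisation) and express its action in the $\Z$-basis of $\widetilde{\Lambda}_{\tau_i}$ as a $4\times 4$ rational matrix. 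We then find the minimal integer $k$ and the half-integrality conditions making $\frac{a+k\theta}{2}$ integral. From this, $D_i$ is a fixed multiple of $|C|$ (something like $3|C|/9$, $|C|/3$, $3|C|$ or $12|C|$), with the multiplier depending on $i$ and on congruences of $C$ mod $3$ (and mod $4$). These cases are the rows of Table~\ref{tab:discriminants}.

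To simplify, we use the symmetry $w_3$, which the earlier commutative diagram shows corresponds to $\sigma$ and preserves the Jacobian. It pairs the cases: $i=1$ with one of $i=2,3,4$ via $\tau\mapsto -1/(3\tau)$, and the remaining two are swapped by $\tau\mapsto\tau+1$ composed with $w_3$. So only two genuinely different lattice computations are needed: $\tau=\frac{C+\sqrt C}{2}$, where $E=\C/\mathcal{O}_C$ itself carries $C_1$, and the case where the subgroup $C_i$ is the kernel of an endomorphism of $E$ (possible exactly when $3$ ramifies or splits in $\mathcal{O}_C$). In the latter case $E/C_i$ is again $\C/\mathcal{O}_C$ or has conductor multiplied or divided by $3$. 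This is why the table splits according to $C\bmod 3$.

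The main obstacle is the integrality computation: deciding precisely when $\frac{1}{2}(\cdots)$ or $\frac13(\cdots)$ combinations of $\theta$ preserve the lattice, since the index-$4$ gluing by $\Gamma^W$ and the $3$-isogeny both introduce denominators. I would handle it by computing the conductor of $\mathcal{O}\cap\R$ locally at $2$ and at $3$ separately. At primes not dividing $6C$ nothing changes. At $2$ the gluing by $\Gamma^W$ is symmetric and yields the factor determined by the parity of $C$. At $3$ the answer depends on whether $E_{\tau}$ and $E_{w_3(\tau)}$ have the same CM order, which is read off from $\tau_i$. As a consistency check, the resulting table should give $\tau=i/\sqrt3$ (i.e.\ $C=-3$, the hexagon-glued curve) a small discriminant, and should reproduce the $D$ values matching Theorem~\ref{thm:e3}.
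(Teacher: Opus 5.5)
Your top-level strategy matches the paper's: find the largest order of $\Q(\zeta_6,\tau_i)$ preserving $\widetilde{\Lambda}_{\tau_i}$, then intersect it with $\R$. The gap is the shortcut you use to reduce to two computations, which is false. The Fricke involution does not permute the four points $(E,C_i)$ lying over the fixed curve $E=\C/\mathcal{O}_C$. It sends $(E,C_i)$ to $(E/C_i,E[3]/C_i)$, which in general lies over a different elliptic curve. For example, $E/C_1\cong\C/(\Z+3\tau_1\Z)=\C/\Z[3\tau_1]$ has CM by the order of discriminant $9C$, so $w_3(\tau_1)$ is none of $\tau_2,\tau_3,\tau_4$ modulo $\Gamma_0(3)$. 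Also, $\tau\mapsto\tau+1$ lies in $\Gamma_0(3)$, so it acts trivially on $Y_0(3)$ and swaps nothing. The identity $D(X,\rho)=D(X,\rho\circ\sigma)$ is true, but it relates points over different $j$-invariants. Concretely, $\tau_2=-1/\tau_1$, $\tau_3=1-1/(\tau_1+1)$ and $\tau_4=-1-1/(\tau_1-1)$. No pairing into equal discriminants can exist: for $3\mid C$ the table has $D_1=D_3=D_4=-3C$ but $D_2=-C/3$. This is why the paper treats the four $\tau_i$ one at a time, with a mod-$3$ case split only for $i=2$.

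Second, the proposal never actually reaches the table. The values you anticipate ($12|C|$, or a dependence on $C \bmod 4$) do not occur. Your set-up can be finished cleanly as follows. In coordinates $(z_1,\sqrt3 z_2)$ one has $\widetilde{\Lambda}_\tau=\{(x,y)\in\Lambda\times\Lambda' : x-y\in 2\Lambda\}$, where $\Lambda=\Z+\tau\Z$ and $\Lambda'=\Z+3\tau\Z$. The self-adjoint elements commuting with $\zeta_6$ are $p+q\theta$, where $\theta(x,y)=(-\sqrt{C}\,y,\,3\sqrt{C}\,x)$ and $\theta^2=-3C$. Using only $\mathcal{O}_C\Lambda=\Lambda$ and $3\Lambda\subset\Lambda'\subset\Lambda$, one checks that $\tfrac{C+\theta}{2}$ always preserves the lattice. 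This gives $D_i=-3C$ unless the order is larger, and in particular $D_i=-12C$ never occurs. A further division by $3$ forces $3\mid C$, by looking at the trace and norm of $\tfrac{e+\theta}{3}$. In that case it succeeds exactly when $\Lambda'$ is $\mathcal{O}_C$-stable, i.e.\ for $C_2=\langle\tau_1/3\rangle$, which gives $D_2=-C/3$. When $3\nmid C$ and $3$ splits, $\mathcal{O}_C$-stability of some $C_i$ does not change $D_i$. So ``same CM order on both factors'' is not by itself the right criterion. Your sanity check is also misassigned: $\tau=i/\sqrt3$ corresponds to $C=-12$, $i=2$, $D=4$, not to $C=-3$.
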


\begin{table}[H]
    \centering
    \begin{tabular}{|c|c|c|c|c|}
        \hline
        $C\!\!\mod 3$ & $D_1$ & $D_2$ & $D_3$ & $D_4$ \\
        \hline
        $0$ & $-3C$ & $-C/3$ & $-3C$ & $-3C$ \\
        $1,2$ & $-3C$ & $-3C$ & $-3C$ & $-3C$ \\
        \hline
    \end{tabular}
    \vspace{5pt}
    \caption{The elliptic curve $E= \C/\mathcal{O}_C$ is covered by four curves $(X_i,\rho_i)\in\mathcal{M}_{2}(D_{12})$. The discriminant $D_i$ of the order for real multiplication on $\Jac(X_i)$ is determined from $C$.}
    \label{tab:discriminants}
\end{table}

\begin{proof}
    Let $\tau = \frac{C+\sqrt{C}}{2}$. It can be checked that the largest order for complex multiplication in $\Q(\zeta_6,\tau)$ is $\mathcal{O} = \Z[\zeta_6,\sqrt{3}\tau i]$ giving the order for real multiplication $\mathcal{O}_{D} = \mathcal{O}\cap\R = \Z\left[\frac{-3C-\sqrt{-3C}}{2}\right]\cong\mathcal{O}_{-3C}$. This completes the $D_{1}$ column of Table~\ref{tab:discriminants}.

    Now let $\tau = \frac{2\sqrt{C}-2C}{C(C-1)}$. It can be checked that the maximal order for complex multiplication is
    \[\mathcal{O} = \left\lbrace\begin{array}{ll}
        \Z\left[\zeta_{6},\sqrt{3}\frac{C(C-1)}{4}\tau i\right] = \Z\left[\zeta_6,\frac{-\sqrt{-3C}+\sqrt{3}Ci}{2}\right], & 3 \not{\mid}\,\,C,(C-1) \\
        \\
        \Z\left[\zeta_{6},\sqrt{3}\frac{C(C-1)}{12}\tau i\right]= \Z\left[\zeta_6,\frac{-\sqrt{-3C}+\sqrt{3}Ci}{6}\right], & 3 \mid (C-1) \\
        \\
        \Z\left[\zeta_{6},\sqrt{3}\frac{C(C-1)}{12}\tau i\right]= \Z\left[\zeta_6,\frac{-\sqrt{-C/3}+\sqrt{3}(C/3)i}{2}\right], & 3 \mid C. \\
    \end{array}\right.\]
    In the first two cases, $\mathcal{O}_{D}\cong\mathcal{O}_{-3C}$ while in the latter case it is isomorphic to $\mathcal{O}_{-C/3}$. This completes the proof of the $D_{2}$ column of Table~\ref{tab:discriminants}.

    For $\tau = \frac{2\sqrt{C}+C^2+C}{C^2+3C+4}$, $\mathcal{O} = \Z\left[\zeta_6,\sqrt{3}\frac{C^2+3C+4}{4}\tau i\right]$ and $\mathcal{O}_{D}\cong\mathcal{O}_{-3C}.$

    Finally, for $\tau = \frac{2\sqrt{C}+3C-C^2}{C^2-5C+4}$, $\mathcal{O} = \Z\left[\zeta_6,\sqrt{3}\frac{C^2-5C+4}{4}\tau i\right]$ and again $\mathcal{O}_{D}\cong\mathcal{O}_{-3C}.$

    For elliptic curves $E_{\tau}$ with $\End(E_\tau)\cong\mathcal{O}_{C}$ but not of the form $\C/\mathcal{O}_{C}$, one can apply the same arguments of Mukamel --- namely, the proof of~\cite[Proposition 4.4]{Muk} and the statement of~\cite[Proposition 4.5]{Muk} --- to deduce that the associated discriminants of real multiplication behave as in Table~\ref{tab:discriminants}.
\end{proof}

Solving $d^2 = -3C$ or $-C/3$ gives the dependence on $d^2$ seen in Theorem~\ref{thm:e3}. It is well known that the class number counts the number of elliptic curves with endomorphism ring an imaginary quadratic extension of a given discriminant. The factor of a $1/2$ arises because $(X,\rho)$ and $\sigma\cdot(X,\rho)$ have isomorphic $\rho(Z)$-eigenforms and so map to the same orbifold point. The reduced class number handles the additional symmetries of elliptic curves with complex multiplication of small discriminant.

The dependence on $\epsilon$ can be argued as follows. For $\epsilon = 1$, the associated HLK-invariants satisfy the conditions giving rise to the block system of Figure~\ref{fig:blocksys4}. This block system rules out all of the words that give rise to order 3 or order 6 elements of $\SL(2,\Z)$ and so we cannot have orbifold points of order 3 in these orbits.

We are then left to determine the dependence on $n$. When $n$ is even, the same block system argument of the previous paragraph holds to rule out orbifold points. For odd $n$, we determine the index of the absolute periods ${\rm Per}(\omega)$ inside the relative periods ${\rm RPer}(\omega)$ for the curves $\widetilde{X}_{\tau}$. We use the pinwheel description of Figure~\ref{fig:pinwheel} to carry out this calculation.

We first remark that $C\equiv 0\!\!\mod 3$ when the discriminants $D_i$ are square. Indeed, $d^2\equiv0,1\!\!\mod 3$ and so $-d^2\equiv0,2\!\!\mod 3$. In the first case, either $C = -d^2/3$ in which case it is also divisible by $3$ since $d^2$ is divisible by 9, or $C = -3d^2$. In the latter case, we can only have $C = -3d^2$. So we need only care about the top row of Table~\ref{tab:discriminants}.

In the case of $D_{1}$, we have $d\equiv 0\!\!\mod 3$, $C = -\frac{d^2}{3}$ and $\tau = \frac{C+\sqrt{C}}{2} = -\frac{d^2}{6}+d\frac{\sqrt{3}}{6}i$. By applying powers of $\begin{bsmallmatrix}1 & 1 \\ 0 & 1\end{bsmallmatrix}$, the representative of $\tau$ inside $\widetilde{U}$ will be
\[\left\lbrace\begin{array}{cc}
    d\frac{\sqrt{3}}{6}i, & d\equiv0\!\!\mod 6 \\
    \\
    \frac{1}{2}+d\frac{\sqrt{3}}{6}i, & d\equiv3\!\!\mod 6. 
\end{array}\right.\]
In both cases, it can be checked that ${\rm Per}(\omega) = \Z\oplus\zeta_{3}\Z$ while ${\rm RPer}(\omega) = \Z\oplus(\zeta_{12}/\sqrt{3})\Z$. Hence, $[{\rm RPer}(\omega):{\rm Per}(\omega)] = 3$ and we have $\widetilde{X}_{\tau}\in W_{d^2}[3]$.

For $D_2$, $d$ can have any value modulo 3, $C = -3d^2$ and $\tau = \frac{2\sqrt{C}-2C}{C(C-1)}$. The representative in $\widetilde{U}$ can be calculated to be
\[\left\lbrace\begin{array}{cc}
    d\frac{\sqrt{3}}{6}i, & d\equiv0,2,4\!\!\mod 6 \\
    \\
    \frac{1}{2}+d\frac{\sqrt{3}}{6}i, & d\equiv1,3,5\!\!\mod 6. 
\end{array}\right.\]
In all cases, ${\rm Per}(\omega) = \Z\oplus\zeta_{3}\Z$, as above. For $d\equiv0,2,3,5\!\!\mod 6$, ${\rm RPer}(\omega) = \Z\oplus(\zeta_{12}/\sqrt{3})\Z$ and we again have that $\widetilde{X}_{\tau}\in W_{d^2}[3]$. While, for $d\equiv1,4\!\!\mod 6$, we get ${\rm RPer}(\omega) =  \Z\oplus\zeta_{3}\Z = {\rm Per}(\omega)$ and so $\widetilde{X}_{\tau}\in W_{d^2}[1]$.

For $D_3$, again we have $d\equiv 0\!\!\mod 3$ and $C = -\frac{d^2}{3}$. This time $\tau = \frac{2\sqrt{C}+C^2+C}{C^2+3C+4}$ has $\widetilde{U}$ representative
\[\left\lbrace\begin{array}{cc}
    \frac{1}{3}+d\frac{\sqrt{3}}{18}i, & d\equiv0\!\!\mod 6 \\
    \\
    -\frac{1}{6}+d\frac{\sqrt{3}}{18}i, & d\equiv3\!\!\mod 6. 
\end{array}\right.\]
Here, ${\rm Per}(\omega) = \Z\oplus(\zeta_{12}/\sqrt{3})\Z$ and ${\rm RPer}(\omega) = (1/3)\Z\oplus(\zeta_{12}/\sqrt{3})\Z$. So again, $[{\rm RPer}(\omega):{\rm Per}(\omega)] = 3$ and we have $\widetilde{X}_{\tau}\in W_{d^2}[3]$.

For $D_4$, the $\widetilde{U}$ representatives of $\tau = \frac{2\sqrt{C}+3C-C^2}{C^2-5C+4}$ are 
\[\left\lbrace\begin{array}{cc}
    -\frac{1}{3}+d\frac{\sqrt{3}}{18}i, & d\equiv0\!\!\mod 6 \\
    \\
    \frac{1}{6}+d\frac{\sqrt{3}}{18}i, & d\equiv3\!\!\mod 6. 
\end{array}\right.\]
and similar calculations to the above give $[{\rm RPer}(\omega):{\rm Per}(\omega)] = 3$ and we have $\widetilde{X}_{\tau}\in W_{d^2}[3]$.

This completes the proof of Theorem~\ref{thm:e3}.

%%%%%%%%%%%%%%%%%%%%%%%%%%%%%%%%%%%%%%%%%
%%%%%%%%%%%%%%%%%%%%%%%%%%%%%%%%%%%%%%%%%

\section{Two families of non-Prym orbits in $\calH(4)$}\label{sec:non-prym}

Here, we consider $\SL(2,\Z)$-orbits of primitive origamis in $\calH(4)$ outside of the Prym locus. Recall that these are subject to Conjecture~\ref{conj:Del-Lel} of Delecroix--Leli\`evre.

\begin{conjDelLel}
    For $n > 8$, the number of $\SL(2,\Z)$-orbits of primitive $n$-squared origamis in $\calH(4)$ is as follows:
    \begin{itemize}
        \item there are precisely two such $\SL(2,\Z)$-orbits in $\odd(4)$ outside of the Prym locus, distinguished by their monodromy groups being $\Alt(n)$ or $\Sym(n)$;
        \item for odd $n$, there are precisely four such $\SL(2,\Z)$-orbits in $\hyp(4)$, distinguished by their HLK-invariant being $(4,[1,1,1]), (2,[3,1,1]), (0,[5,1,1])$ or $(0,[3,3,1])$;
        \item for even $n$, there are precisely three such $\SL(2,\Z)$-orbits in $\hyp(4)$, distinguished by their HLK-invariant being $(3,[2,2,0]), (1,[4,2,0])$ or $(1,[2,2,2])$.
    \end{itemize}    
\end{conjDelLel}

We will assume that this conjecture holds throughout this section and also that the conjecture is implicitly stating that each orbit has growth $\Omega(n^{\dim_\C\mathcal{H}(4)-1}) = \Omega(n^{5})$.

\subsection{The symmetric monodromy orbits}\label{subsec:H4-sym}

Assuming Conjecture~\ref{conj:Del-Lel}, let $(\mathcal{G}_{n})_{n}$ be the family of orbits in $\odd(4)$ with monodromy group $\Sym(n)$.

Hyperbolic faces of length at most 4 are handled as above and asymptotically do not appear.

By Proposition~\ref{prop:symblocksys}, such an orbit permits the block system of Figure~\ref{fig:blocksys4}. This block system then rules out all words in $T$ and $S$ that gives rise to elements of order 3 or 6. Moreover, since the orbit is non-Prym and non-hyperelliptic there are no elements of order 2 or 4 in the Veech groups of any surface in the orbit. Indeed, if $-I$ (the only element of order 2 in $\SL(2,\Z)$ and the square of any order 4 element) is in the Veech group of some origami in the orbit, then it appears in the Veech group of all origamis in the orbit because $-I$ lies in the center of $\SL(2,\Z)$ and so commutes with $T$ and $S$. If the orbit permitted $-I$ as a global symmetry, then it would be Prym of hyperelliptic, which is a contradiction. Hence, $\mathcal{G}_{n}$ has no elliptic faces.

Notice that the previous argument proves that the associated \teichmuller curves have no orbifold points.

We are left to bound the parabolic faces of length at most 4, which again can be done by bounding the number of $T$-loops. This is carried out in Subsection~\ref{subsec:H4-paras}.

\subsection{The $(1,[4,2,0])$-orbits}

Consider now, for even $n$, the orbit graphs $(\mathcal{G}_{n})_n$ of the $\SL(2,\Z)$-orbits in $\hyp(4)$ with HLK-invariant $(1,[4,2,0])$.

By Proposition~\ref{prop:all_diff}, we have the block system of Figure~\ref{fig:blocksys1}. This block system rules out all faces of length at most 4 apart from $T^2,S^2, T^4$ and $S^4$. In particular, there are no hyperbolic faces of length at most 4, and no elliptic faces, which also implies no orbifold points on the associated \teichmuller curves.

Although there are no loops, the count of loops in the next subsection also bounds the cusps of length two or four.

\subsection{Parabolic faces in $\calH(4)$ outside of the Prym locus}\label{subsec:H4-paras}

We will refer to the separatrix diagrams in $\calH(4)$ determined by Leli\`evre in~\cite[Appendix C]{MMY}. We will not reproduce the figures in this paper.

An origami in $\calH(4)$ can have one, two or three cylinders. One cylinder origamis will not asymptotically contribute to small faces. So we count the number of two- and three-cylinder origamis fixed by $T$.

In the case of three cylinders and checking the appropriate figures in~\cite[Appendix C]{MMY}, each origami is uniquely determined modulo twists by the cylinder widths $w_i$. There are also only finitely many three-cylinder configurations. The equations for loops are now
\[xw_1^2+yw_2^2+zw_3^2 = n\]
and they do not simplify to conics. As such, we can no longer make use of Bombieri--Pila. Instead, we again make use of the result of Browning--Gorodnik~\cite[Theorem 1.11]{BG} that for a quadric of rank $r$ gives $O(N^{r-2+\epsilon})$ lattice points inside a box of side length $N$. Here, we have $r = 3$.

If $1\leq \min\{x,y,z\}< \sqrt{n}$, then the quadric lies inside a box of side length $N=\sqrt{n}$. So, by Browning--Gorodnik, we have $O(n^{\frac{1}{2}+\frac{1}{2}\epsilon})$ lattice points. We have $O(n^{\frac{5}{2}})$ quadrics. Each solution determines a unique origami modulo cylinder twists. We have $w_1w_2w_3 = O(n^{\frac{3}{2}})$ choices of cylinder twists. Hence, we get $O(n^{\frac{9}{2}+\frac{1}{2}\epsilon})$ origamis fixed by $T$.

If $\sqrt{n}\leq x, y, z\leq n$, the box now has side length $N = n^{\frac{1}{4}}$. So, we have $O(n^{\frac{1}{4}+\frac{1}{4}\epsilon})$ lattice points. We have $O(n^3)$ quadrics. We have $w_1w_2w_3 = O(n^{\frac{3}{4}})$ choices of twists. So, in total, we get $O(n^{4+\frac{1}{4}\epsilon})$ origams fixed by $T$.

For two-cylinder origamis, given the cylinder widths $(w_1,w_2)$ there are two saddle connection lengths, say $l_1$ and $l_2$, that can be varied to achieve different origamis modulo cylinder twists. Each $l_i$ is certainly bounded above by $\max\{w_1,w_2\}$. Here, we are back to our familiar equation
\[xw_1^2+yw_2^2 = n.\]
Although we could now apply Bombieri--Pila again, we will continue with the improved bounds of Browning--Gorodnik.

If $1\leq \min\{x,y\}< \sqrt{n}$, the ellipse fits inside a box of side length $\sqrt{n}$ and so we have $O(n^{\frac{1}{2}\epsilon})$ lattice points. We have $O(n^{\frac{3}{2}})$ ellipses. Each solution $(w_1,w_2)$ determines $(\max\{w_1,w_2\})^{2} = O(n)$ origamis modulo twists and we have $w_1w_2 = O(n)$ choices of twists. So we have $O(n^{\frac{7}{2}+\frac{1}{2}\epsilon})$ origamis fixed by $T$.

If $\sqrt{n}\leq x,y\leq n$, the ellipse fits inside a box of side length $n^{\frac{1}{4}}$ and so we have $O(n^{\frac{1}{4}\epsilon})$ lattice points. We have $O(n^2)$ ellipses. Each solution $(w_1,w_2)$ determines $(\max\{w_1,w_2\})^{2} = O(n^{\frac{1}{2}})$ origamis modulo twists and we have $w_1w_2 = O(n^{\frac{1}{2}})$ choices of twists. So we have $O(n^{3+\frac{1}{4}\epsilon})$ origamis fixed by $T$.

Hence, we have at most $O(n^{\frac{9}{2}+\frac{1}{4}\epsilon})$ loops in the orbit graphs and the same bound works for all parabolic faces of length at most four. This completes the proof of Theorem~\ref{thm:submain-4}.

%%%%%%%%%%%%%%%%%%%%%%%%%%%%%%%%%%%%%%%%%
%%%%%%%%%%%%%%%%%%%%%%%%%%%%%%%%%%%%%%%%%

\section{Low-complexity strata}\label{sec:general}

In this section, we consider a family of $\SL(2,\Z)$-orbit graphs of primitive $n$-squared origamis in a stratum $\calH(k_1,\ldots,k_s)$ with $1\leq s\leq 2$ and assume that these orbits are not contained in any strictly smaller arithmetic subvariety. We will throughout assume Conjecture~\ref{conj:wild}.

\begin{conjwild}
    Given a family $(\mathcal{G}_{n})_{n}$ of $\SL(2,\Z)$-orbit graphs of primitive $n$-squared origamis in a connected component of a stratum $\mathcal{H}:=\calH_{g}(k_{1},\dots,k_{s})$ not contained in a strictly smaller arithmetic subvariety, we have $|\mathcal{G}_{n}| = \Omega(n^{\dim_{\C}\mathcal{H}-1}) = \Omega(n^{2g+s-2})$.
\end{conjwild}

\subsection{Hyperbolic faces}

As we have seen throughout this work, Corollary~\ref{cor:hyperbolic} is sufficient to rule out all hyperbolic faces of length at most four in a family of orbit graphs once the number of squares is large enough.

\subsection{Parabolic faces}

The behaviour that we saw in $\calH(2), \calH(1,1)$ and $\calH(4)$ continues in a general stratum $\calH(k_1,\ldots,k_s)$. In this case, any origami has between $1$ and $g+s-1$ cylinders.

If we have a one-cylinder origami, then the top and bottom of the cylinder are both composed of $2g+s-2$ horizontal saddle connections. If we have fixed the width of the cylinder, then we have $2g+s-3$ saddle connections whose lengths can be altered.

Going up to a two-cylinder origami where we have fixed the cylinder widths, we will have $2g+s-5$ saddle connections that can be altered and the cusp equation --- that is, solving for cusps of width one --- will be a rank two quadric.

As a sanity check, compare this to the two-cylinder calculations we performed above. In $\calH(2)$, we solved a rank two quadric (in particular an ellipse) for the widths and had no degrees of freedom in the saddle connection lengths, and indeed $2g+s-5 = 0$ here. In $\calH(1,1)$, we solved a rank two quadric for the widths and had $1 = 2g+s-5$ degrees of freedom in the saddle connection lengths. While in $\calH(4)$, we solved a rank two quadric for the widths and had $2 = 2g+s-5$ degrees of freedom in saddle connection lengths.

Going further still, a three-cylinder origami with fixed cylinder widths has $2g+s-7$ degrees of freedom in the saddle connection lengths. The cusp equation is now a rank 3 quadric, up to some modification if $2g+s-7 < 0$.

To make sense of the previous sentence, let us compare the three-cylinder calculations we performed in $\calH(1,1)$ and $\calH(4)$. We start with the latter. Here, we solved a rank 3 quadric for the widths and we had $0 = 2g+s-7$ degrees of freedom in the saddle connection lengths. For $\calH(1,1)$, $2g+s-7 = -1$. So what does it mean to have $-1$ degrees of freedom in the saddle connection lengths? Recall from Subsection~\ref{subsec:H(1,1)-paras}, that the cusp equation was initially a rank 3 quadric. However, we realised that one of the cylinder widths was equal to the sum of the other two widths and this allowed us to reduce to a rank 2 quadric. So the negative degrees of freedom remove rank from the cusp equation.

Carrying on in this fashion, assume that we have $2\leq c\leq g+s-1$ cylinders. The cusp equation will initially have rank $c$ and we will have $2g+s-1-2c$ degrees of freedom in saddle connection lengths, with the caveat that if $2g+s-1-2c = -t < 0$ then the cusp equation instead reduces to a quadric of rank $c-t$.

Suppose then that we have an $n$-squared origami in $\calH(k_1,\ldots,k_s)$ with $2\leq c\leq g+s-1$ cylinders. The non-reduced cusp equation has coefficients $x_{i}$ for $1\leq i \leq c$. The rank of the equation will be $r = \min\{c,2g+s-1-c\}$. There will be $k = \max\{0,2g+s-1-2c\}$ degrees of freedom in saddle connection lengths. It can be checked that $r = c \Leftrightarrow k = 2g+s-1-2c$, so that $r+k = 2g+s-1-c$.

Next, we fix $0<\beta<1$ such that $\frac{s-1}{2}<\beta<\frac{2g-2}{2g+s-3}$: note that the existence of such a parameter follows from the fact that $(s-1)(2g-2+s-1) = (s-1)(2g-2)+(s-1)^2 < 2(2g-2)$ for $s\leq 2\leq g$.

Suppose $1\leq \min x_{i}<n^{1-\beta}$. Then we have $O(n^{c-\beta})$ quadrics. Each quadric sits inside a box of side length $N = \sqrt{n}$. Each quadric has $O(n^{\frac{r}{2}-1+\frac{\epsilon}{2}})$ lattice points. Each solution gives rise to $O(n^{\frac{k}{2}})$ origamis modulo twists. Each origami has $O(n^{\frac{c}{2}})$ choices of twists. So we get $O(n^{c-\beta+\frac{2g+s-1}{2}-1+\frac{\epsilon}{2}}) \leq O(n^{2g+s-2-\beta+\frac{s-1}{2}+\frac{\epsilon}{2}}) = o(n^{2g+s-2})$ origamis fixed by $T$ thanks to our choice of $\beta>\frac{s-1}{2}$.

If instead $n^{1-\beta}\leq x_{i}\leq n$, then we have $O(n^c)$ quadrics. Each quadric sits inside a box of side length $N = n^{\frac{\beta}{2}}$. Each quadric has $O(n^{\frac{\beta}{2}(r-2+\epsilon)})$ lattice points. Each solution gives rise to $O(n^{\frac{\beta}{2}k})$ origamis modulo twists. Each origami has $O(n^{\frac{\beta}{2}c})$ choices of twists. So we get $O(n^{c+\frac{\beta}{2}(r+k+c-2+\epsilon)}) \leq O(n^{g+s-1+\frac{\beta}{2}(2g+s-3+\epsilon)}) = o(n^{2g+s-2})$ origamis fixed by $T$ due to our choice of $\beta<\frac{2g-2}{2g+s-3}$.

Hence, assuming Conjecture~\ref{conj:wild}, the number of parabolic faces of length at most four is $o(|\mathcal{G}_{n}|) = o(n^{\dim_\C\mathcal\calH(k_1,\ldots,k_s) -1}) = o(n^{2g+s-2})$.

\subsection{Elliptic faces}\label{subsec:gen-ells}

We will now argue that for a density one subsequence of $n\in\N$, the number of $n$-squared origamis in $\calH(k_1,\dots,k_s)$ that have finite order elements of the Veech group (of order at least three) behaves as $o(n^{\dim_\C(k_1,\dots,k_s) - 1})$.

Denote by ${\rm Cov}_d(\underline{\kappa})$ the set of $d$-squared origamis in $\calH(\underline{\kappa})$, $\underline{\kappa}=(k_1,\dots,k_s)$, and let $\mu_n$ be the measure 
$$\mu_n=\frac{1}{n^{\dim_\C\calH(k_1,\ldots,k_s)}}\sum\limits_{d=1}^n \sum\limits_{S\in {\rm Cov}_d(\underline{\kappa})}\delta_S.$$ 
As it is shown in \cite[Section 3]{EsOk}, this sequence of measures converges in the weak-$\ast$ topology to a multiple of the Masur-Veech probability measure $\mu_{\underline{\kappa}}$: in particular, the cardinality of ${\rm Cov}_d(\underline{\kappa})$ has order $d^{\dim_\C\calH(k_1,\ldots,k_s)-1}$ on average. On the other hand, it is known (see, e.g., the proof of~\cite[Theorem 1.1]{Mo}) that the locus of orbifold points in $\calH(\underline{\kappa})$ is a locus of zero Masur--Veech measure consisting of a locally finite union of countably many subvarieties: consequently, the average of the number of orbifold points in ${\rm Cov}_d(\underline{\kappa})$ for $1\leq d\leq n$ is $o(n^{\dim_\C\calH(k_1,\ldots,k_s)-1})$.

Thus, assuming Conjecture~\ref{conj:wild}, we conclude that the number of elliptic faces in $\mathcal{G}_{n}$ is $o(|\mathcal{G}_{n}|)$ for a typical large $n$.

This completes the proof of Theorem~\ref{thm:general}.

\begin{remark}
    If we were in a situation as in Subsection~\ref{subsec:H4-sym} ---  that is, if our origamis had $-I\not\in\SL(X,\omega)$ and had symmetric monodromy --- then we would have no orbifold points and so would not need to pass to a density one subsequence. There are reasons to believe that such a situation is reasonably generic. 
\end{remark}

%%%%%%%%%%%%%%%%%%%%%%%%%%%%%%%%%%%%%%%%%
%%%%%%%%%%%%%%%%%%%%%%%%%%%%%%%%%%%%%%%%%

\section{Elliptic generators and arithmetic \teichmuller curve genera}\label{sec:curve-genus}

In this final section, we will prove the elliptic generator part of Theorem~\ref{thm:main} and Theorems~\ref{thm:prym-genus}, ~\ref{thm:H(1,1)-genus} and~\ref{thm:general-genus} at the same time. Here, we consider orbit graphs $\mathcal{G}_{n}$ with the elliptic generators $R:=\begin{bsmallmatrix}
    0 & -1 \\
    1 & 0
\end{bsmallmatrix}$ and $U:=\begin{bsmallmatrix}
    0 & 1 \\
    -1 & 1
\end{bsmallmatrix}$ and restrict to orbits where $-I$ is in the Veech group of every origami. This includes $\calH(2)$, $\calH(1,1)$, the Prym loci of $\calH(4)$ and $\calH(6)$, and hyperelliptic connected components in general. Let $\mathcal{C}_{n}$ denote the arithmetic \teichmuller curve associated to the orbit $\mathcal{G}_{n}$.

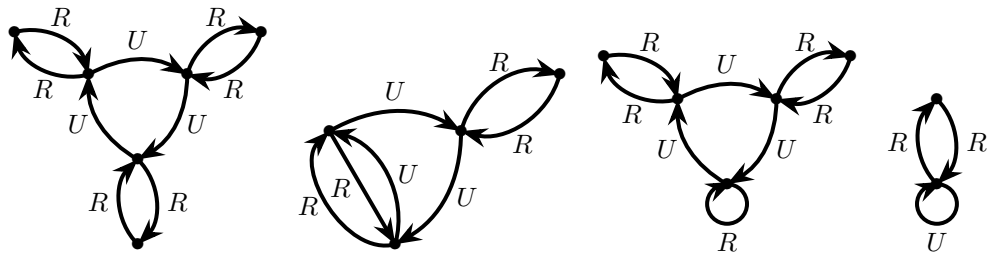
\begin{figure}[b]
    \centering
    \begin{tikzpicture}[scale = 0.75, line width = 1.5pt]
        \draw [-Stealth, bend left = 30, font = \small] (0,-1) to node[midway, left]{$U$} (-0.8660254038,0.5);
        \draw [-Stealth, bend left = 30, font = \small] (-0.8660254038,0.5) to node[midway, above]{$U$} (0.8660254038,0.5);
        \draw [-Stealth, bend left = 30, font = \small] (0.8660254038,0.5) to node[midway, right]{$U$} (0,-1);
        \draw [-Stealth, bend left = 45, font = \small] (0,-1) to node[midway, right]{$R$} (0,-2.5);
        \draw [-Stealth, bend left = 45, font = \small] (0,-2.5) to node[midway, left]{$R$} (0,-1);
        \draw [-Stealth, bend left = 45, font = \small] (-0.8660254038,0.5) to node[midway, below]{$R$} (-0.8660254038*2.5,0.5*2.5);
        \draw [-Stealth, bend left = 45, font = \small] (-0.8660254038*2.5,0.5*2.5) to node[midway, above]{$R$} (-0.8660254038,0.5);
        \draw [-Stealth, bend left = 45, font = \small] (0.8660254038,0.5) to node[midway, above]{$R$} (0.8660254038*2.5,0.5*2.5);
        \draw [-Stealth, bend left = 45, font = \small] (0.8660254038*2.5,0.5*2.5) to node[midway, below]{$R$} (0.8660254038,0.5);
        \draw (0.8660254038,0.5)node{$\bullet$};
        \draw (-0.8660254038,0.5)node{$\bullet$};
        \draw (0,-1)node{$\bullet$};
        \draw (0,-2.5)node{$\bullet$};
        \draw (0.8660254038*2.5,0.5*2.5)node{$\bullet$};
        \draw (-0.8660254038*2.5,0.5*2.5)node{$\bullet$};
    \end{tikzpicture}
    \begin{tikzpicture}[scale = 1, line width = 1.5pt]
        \draw [-Stealth, bend right = 45, font = \small] (0,-1) to node[midway, right]{$U$} (-0.8660254038,0.5);
        \draw [-Stealth, bend left = 30, font = \small] (-0.8660254038,0.5) to node[midway, above]{$U$} (0.8660254038,0.5);
        \draw [-Stealth, bend left = 30, font = \small] (0.8660254038,0.5) to node[midway, right]{$U$} (0,-1);
        \draw [-Stealth, font = \small] (-0.8660254038,0.5) to node[midway, left]{$R$} (0,-1);
        \draw [-Stealth, bend left = 75, font = \small] (0,-1) to node[midway, left]{$R$} (-0.8660254038,0.5);
        \draw [-Stealth, bend left = 45, font = \small] (0.8660254038,0.5) to node[midway, above]{$R$} (0.8660254038*2.5,0.5*2.5);
        \draw [-Stealth, bend left = 45, font = \small] (0.8660254038*2.5,0.5*2.5) to node[midway, below]{$R$} (0.8660254038,0.5);
        \draw (0.8660254038,0.5)node{$\bullet$};
        \draw (-0.8660254038,0.5)node{$\bullet$};
        \draw (0,-1)node{$\bullet$};
        \draw (0.8660254038*2.5,0.5*2.5)node{$\bullet$};
    \end{tikzpicture}
    \begin{tikzpicture}[scale = 0.75, line width = 1.5pt]
        \draw [-Stealth, bend left = 30, font = \small] (0,-1) to node[midway, left]{$U$} (-0.8660254038,0.5);
        \draw [-Stealth, bend left = 30, font = \small] (-0.8660254038,0.5) to node[midway, above]{$U$} (0.8660254038,0.5);
        \draw [-Stealth, bend left = 30, font = \small] (0.8660254038,0.5) to node[midway, right]{$U$} (0,-1);
        \draw [-Stealth, bend left = 45, font = \small] (-0.8660254038,0.5) to node[midway, below]{$R$} (-0.8660254038*2.5,0.5*2.5);
        \draw [-Stealth, bend left = 45, font = \small] (-0.8660254038*2.5,0.5*2.5) to node[midway, above]{$R$} (-0.8660254038,0.5);
        \draw [-Stealth, bend left = 45, font = \small] (0.8660254038,0.5) to node[midway, above]{$R$} (0.8660254038*2.5,0.5*2.5);
        \draw [-Stealth, bend left = 45, font = \small] (0.8660254038*2.5,0.5*2.5) to node[midway, below]{$R$} (0.8660254038,0.5);
        \draw (0,-1) arc[start angle=90, end angle=-270, radius=0.35];
        \draw[-Stealth, rotate around={25:(0,-1)}] (0,-1) -- ++(0.1,0);
        \draw (0,-1.65)node[below, font = \small]{$R$};
        \draw (0.8660254038,0.5)node{$\bullet$};
        \draw (-0.8660254038,0.5)node{$\bullet$};
        \draw (0,-1)node{$\bullet$};
        \draw (0.8660254038*2.5,0.5*2.5)node{$\bullet$};
        \draw (-0.8660254038*2.5,0.5*2.5)node{$\bullet$};
    \end{tikzpicture}
    \begin{tikzpicture}[scale = 0.75, line width = 1.5pt]
        \draw [-Stealth, bend left = 45, font = \small] (0,0.5) to node[midway, right]{$R$} (0,-1);
        \draw [-Stealth, bend left = 45, font = \small] (0,-1) to node[midway, left]{$R$} (0,0.5);
        \draw (0,-1) arc[start angle=90, end angle=-270, radius=0.35];
        \draw[-Stealth, rotate around={25:(0,-1)}] (0,-1) -- ++(0.1,0);
        \draw (0,-1.65)node[below, font = \small]{$U$};
        \draw (0,-1)node{$\bullet$};
        \draw (0,0.5)node{$\bullet$};
    \end{tikzpicture}
    \caption{The possible local structures of the elliptic generator orbit graphs when $-I$ is a global symmetry.}
    \label{fig:R-U-local}
\end{figure}

Since we have that $-I$ is in the Veech group of every origami in the orbit, and since $R^{2} = -I = U^3$, local pictures of the graph look as in Figure~\ref{fig:R-U-local}.

We see that there will be $e_{2}(\mathcal{C}_{n})$ $R$-loops. Every other vertex will lie in an $R^{2}$ 2-cycle, and so we will have $\frac{V-e_{2}(\mathcal{C}_{n})}{2}$ such 2-cycles. There will be $e_{3}(\mathcal{C}_{n})$ $U$-loops. Every other vertex will lie in a $U^{3}$ 3-cycle, and so there will be $\frac{V-e_{3}(\mathcal{C}_{n})}{3}$ such 3-cycles.

From Figure~\ref{fig:R-U-local}, we see that a minimal embedding can be chosen so that all other faces correspond to words of the form $(RU)^{k}$ for some $k$. Now, $RU = T^{-1}$. So we see that every other face of the graph corresponds exactly to a cusp of $\mathcal{C}_{n}$ and we also see that every cusp will appear as such a face. So, letting $c(\mathcal{C}_{n})$ denote the number of cusps of $\mathcal{C}_{n}$, we have $c(\mathcal{C}_{n})$ remaining faces all of the form $(RU)^{k}$ for some $k$.

Hence, for such a minimal embedding, we have
\begin{align*}
    2 - 2 g(\mathcal{G}_{n}) &= V-E+F \\
    &= V - 2V + e_{2}(\mathcal{C}_{n}) + \frac{V-e_{2}(\mathcal{C}_{n})}{2} + e_{3}(\mathcal{C}_{n}) + \frac{V-e_{3}(\mathcal{C}_{n})}{3} + c(\mathcal{C}_{n}) \\
    & = -\frac{V}{6} + \frac{e_{2}(\mathcal{C}_{n})}{2} + \frac{2e_{3}(\mathcal{C}_{n})}{3} + c(\mathcal{C}_{n}) \\
    \Rightarrow g(\mathcal{G}_{n}) &= \frac{V}{12} - \frac{e_{2}(\mathcal{C}_{n})}{4} - \frac{e_{3}(\mathcal{C}_{n})}{3} - \frac{c(\mathcal{C}_{n})}{2} +1\\
    &= -\frac{\chi(\mathcal{C}_{n})}{2} - \frac{e_{2}(\mathcal{C}_{n})}{4} - \frac{e_{3}(\mathcal{C}_{n})}{3} - \frac{c(\mathcal{C}_{n})}{2} + 1 \\ 
    & = g(\mathcal{C}_{n}).
\end{align*}
The fact that $V = -6\chi(\mathcal{C}_{n})$ follows as in~\cite[Proof of Theorem 1.1]{KM}, and the last inequality holds by the definition of the orbifold Euler characteristic.

Now, the genus two case of Theorem~\ref{thm:main} for the elliptic generators follows by the work of Mukamel~\cite{Muk}. For the Prym loci in $\calH(4)$ and $\calH(6)$, Torres-Teigel--Zachhuber do not establish the same (they only handle non-square discriminants). However, we will now give a more general argument that these graphs still have genus going to infinity. The argument generalises the one we gave in the introduction for proving Theorem~\ref{thm:main} given Theorem~\ref{thm:submain-1}.

So, let $\chi = \chi(\mathcal{G}_{n}) = V -E+F$ for a minimal embedding. Recall that we have a 4-valent graph and so
\[2E = 4V \Rightarrow 11E = 22V.\]
Again, let $f_{i}$ denote the number of faces of length $i$ so that $F = \sum_i f_i$. Similar to before,
\begin{equation*}
    \begin{split}
    2E = \sum_{i} i\cdot f_{i} = \sum_{i\geq 13}i\cdot f_{i} + \sum_{i=1}^{12} i \cdot f_{i} &\geq \sum_{i\geq 13}13\cdot f_{i} + \sum_{i=1}^{12} (13-(13-i)) \cdot f_{i} \\
    &= 13\sum_{i}f_{i} - \sum_{i = 1}^{12}(13-i)\cdot f_{i} \\
    &= 13F - \sum_{i = 1}^{12}(13-i)\cdot f_{i}. 
    \end{split}
\end{equation*}
From this, we get 
\[
    2E\geq 13E - 13V + 13\chi - \sum_{i = 1}^{12}(13-i)\cdot f_{i} \Rightarrow -13\chi \geq 11E - 13V - \sum_{i = 1}^{12}(13-i)\cdot f_{i},
\]
which simplifies to
\[-13\chi \geq 9V - \sum_{i = 1}^{12}(13-i)\cdot f_{i}.\]

Now, from the discussion above, we know that $f_1$ is the number of loops in the graph which is equal to $e_{2}(\mathcal{C}_{n})+e_{3}(\mathcal{C}_{n})$. In all cases under consideration, this behaves as $o(V)$ (either via the explicit orbifold point counts in $\calH(2)$ and the Prym loci of $\calH(4)$ and $\calH(6)$, the orbifold point counts in $\calH(1,1)$ with the Parity Conjecture, or for a density one subsequence via the work of the previous section assuming Conjecture~\ref{conj:wild}).

We have that $f_{2}$ is equal to the number of $R^{2}$ 2-cycles plus the number of $RU$ 2-cycles. The count of the former is $\frac{V-e_{2}(\mathcal{C}_{2})}{2} = \frac{V}{2}-o(V)$, while the count of the latter is $o(V)$ since such a 2-cycle corresponds to a cusp of width one which we have bounded via the Browning--Gorodnik counting methods above.

We have that $f_{3}$ is the number of $U^{3}$ 3-cycles which is equal to $\frac{V-e_{3}(\mathcal{C}_{n})}{3} = \frac{V}{3}-o(V)$.

Finally, for $4\leq k\leq 12$ even, $f_k$ is equal to the number of cusps of width $\frac{k}{2}$ which is $o(V)$ since our Browning--Gorodnik counting methods above still work as long as the cusp width is bounded in terms of the number of squares $n$. If $4\leq k\leq 12$ is odd then $f_k = 0$.

Hence, we see that 
\[-13\chi \geq 9V - 12\cdot o(V) - 11\left(\frac{V}{2}-o(V)\right) - 10\left(\frac{V}{3}-o(V)\right) - 9\cdot o(V) = \frac{V}{6}-o(V)\]
and so $-\chi$ goes to infinity with $V$. Hence, $g(\mathcal{C}_{n}) = g(\mathcal{G}_{n})$ goes to infinity with $n$ (or along a density one subsequence), and we are done.

\end{document}